\newcommand{\bf}{\mathbf}
\newcommand{\rm}{\mathrm}
\documentclass[reqno]{amsart}
\usepackage{amsmath,amssymb,amsthm,amsfonts}
\usepackage{hyperref}
\usepackage{appendix}
\usepackage{graphicx}
\usepackage{setspace}

\usepackage{color}

\newtheorem{lemma}{Lemma}[section]
\newtheorem{theorem}{Theorem}[section]
\newtheorem{definition}{Definition}[section]
\newtheorem{proposition}{Proposition}[section]
\newtheorem{remark}{Remark}[section]
\newtheorem{corollary}{Corollary}[section]
\numberwithin{equation}{section}
\allowdisplaybreaks    
\arraycolsep=1.5pt
\newcommand{\p}{\partial}
\newcommand{\eps}{\varepsilon}
\newcommand{\y}{\langle y \rangle}
\newcommand{\intx}{\int_0^x}
\newcommand{\inty}{\int_0^\infty}
\arraycolsep=1.5pt
\begin{document}
\title[The boundary layer theory for steady MHD: nonshear flows]{Validity of Prandtl layer theory for steady magnetohydrodynamics over a moving plate with nonshear outer ideal MHD flows}
\thanks{$^*$Corresponding author}
\thanks{{\it Keywords}:Steady incompressible magnetohydrodynamics; Inviscid limit; Boundary layer expansion; Nonshear flows. }
\thanks{{\it AMS Subject Classification}: 76N10, 35Q30, 35R35}
\author[Shijin Ding]{Shijin Ding}
\address[S. Ding]{South China Research Center for Applied Mathematics and Interdisciplinary Studies, South China Normal University,
Guangzhou, 510631, China}\address{School of Mathematical Sciences, South China Normal University,
Guangzhou, 510631, China}
\email{dingsj@scnu.edu.cn}
\author[Zhijun Ji]{Zhijun Ji$^*$}
\address[Z. Ji]{South China Research Center for Applied Mathematics and Interdisciplinary Studies, South China Normal University,
Guangzhou, 510631, China}\address{School of Mathematical Sciences, South China Normal University,
Guangzhou, 510631, China}
\email{zhijunji@m.scnu.edu.cn}
\author[Zhilin Lin]{Zhilin Lin}
\address[Z. Lin]{School of Mathematical Sciences, South China Normal University,
Guangzhou, 510631, China}
\email{zllin@m.scnu.edu.cn}

\date{\today}

\begin{abstract}
In this paper, we validate the boundary layer theory for 2D steady viscous incompressible magnetohydrodynamics (MHD) equations in a domain $\{(X, Y)\in[0, L]\times\mathbb{R}_+\}$ under the assumption of a moving boundary at $\{Y=0\}$. The validity of the boundary layer expansion and the convergence rates are established in Sobolev sense. We extend the results for the case with the shear outer ideal MHD flows \cite{DLX} to the case of the nonshear flows.
\end{abstract}
\maketitle
\vspace{-5mm}

\section{Introduction}
\subsection{Formulation of the problem}
This paper is concerned with the following steady viscous incompressible magnetohydrodynamics system in $\Omega:=[0,L]\times\mathbb{R}_+$ with moving boundary conditions on velocity field and the perfect conducting boundary conditions on magnetic field at $\{Y=0\}$:
\begin{equation}\label{1.1}
\begin{cases}
  (U\p_X+V\p_Y)U-(H\p_X+G\p_Y)H+\p_X P=\nu\eps(\p_{XX}+\p_{YY})U, \\
  (U\p_X+V\p_Y)V-(H\p_X+G\p_Y)G+\p_Y P=\nu\eps(\p_{XX}+\p_{YY})V, \\
  (U\p_X+V\p_Y)H-(H\p_X+G\p_Y)U=\kappa\eps(\p_{XX}+\p_{YY})H,\\
  (U\p_X+V\p_Y)G-(H\p_X+G\p_Y)V=\kappa\eps(\p_{XX}+\p_{YY})G,\\
  \p_X U+\p_Y V=0,\quad\p_X H+\p_Y G=0,\\
  (U,V,\p_Y H,G)|_{Y=0}=(u_b,0,0,0),
\end{cases}
\end{equation}
where $(U,V)$ and $(H,G)$ are velocity and magnetic field respectively, and the given constant $u_b>0$ stands for the moving speed of the plate. Here we assume that the viscosity and resistivity coefficients have the same order of a small parameter $\eps>0$.

It is interesting to study the asymptotic behavior of solutions to (\ref{1.1}) as $\eps\rightarrow 0$. In the present paper, we suppose that the outer ideal MHD flows are prescribed by
\begin{equation*}
  (u_e^0,v_e^0,h_e^0,g_e^0,p_e^0)(X,Y),
\end{equation*}
which satisfy the following ideal MHD system with non-penetration boundary conditions for both velocity and magnetic field at $Y=0$ and $Y\rightarrow+\infty$:
\begin{align}\label{u_e^0}
\begin{cases}
  (u_e^0\p_X+v_e^0\p_Y)u_e^0-(h_e^0\p_X+g_e^0\p_Y)h_e^0+\p_X p_e^0=0,\\
  (u_e^0\p_X+v_e^0\p_Y)v_e^0-(h_e^0\p_X+g_e^0\p_Y)g_e^0+\p_Y p_e^0=0,\\
  (u_e^0\p_X+v_e^0\p_Y)h_e^0-(h_e^0\p_X+g_e^0\p_Y)u_e^0=0,\\
  (u_e^0\p_X+v_e^0\p_Y)g_e^0-(h_e^0\p_X+g_e^0\p_Y)v_e^0=0,\\
  \p_X u_e^0+\p_Y v_e^0=0,\quad \p_X h_e^0+\p_Y g_e^0=0,\\
  (v_e^0,g_e^0)|_{Y=0}=(v_e^0,g_e^0)|_{Y\rightarrow +\infty}=0.
\end{cases}
\end{align}

Obviously, there is a mismatch in the tangential velocity and magnetic field between the viscous MHD flows $(U,\p_YH)(X,0)=(u_b,0)$ and the ideal MHD flows $(u_e^0,h_e^0)(X,0)=(\overline u_e^0,\overline h_e^0)$ on the boundary $\{Y=0\}$, which contradicts to the convergence in the vanishing viscosity and resistivity limit process. Following the idea of Prandtl \cite{Prandtl}, to correct the mismatch, the boundary layer corrector functions should be introduced in a thin layer with width of $\sqrt\eps$ near $\{Y=0\}$. For the boundary layer functions, we will work with the scaling boundary layer variable $(x,y)$ as follows:
\begin{equation*}
  x=X,\quad y=\frac{Y}{\sqrt\eps}.
\end{equation*}
And we introduce the scaled unknowns
\begin{equation}\label{scale}
  (U^{\eps},V^{\eps},H^{\eps},G^{\eps},P^{\eps})(x,y)= (U,\frac{V}{\sqrt\eps}, H,\frac{G}{\sqrt\eps},P)(X,Y),
\end{equation}
which satisfy the divergence-free conditions as well. With the boundary layer scaled variables, the problem (\ref{1.1}) can be rewritten as
\begin{align}\label{scalesystem}
\begin{cases}
  (U^{\eps}\p_x+V^{\eps}\p_y)U^{\eps}-(H^{\eps}\p_x+G^{\eps}\p_y)H^{\eps}+\p_x P^{\eps}=\nu\eps\p_{xx}U^{\eps}+\nu\p_{yy}U^{\eps}, \\
  (U^{\eps}\p_x+V^{\eps}\p_y)V^{\eps}-(H^{\eps}\p_x+G^{\eps}\p_y)G^{\eps}+\frac{\p_y P^{\eps}}{\eps}=\nu\eps\p_{xx}V^{\eps}+\nu\p_{yy}V^{\eps}, \\
  (U^{\eps}\p_x+V^{\eps}\p_y)H^{\eps}-(H^{\eps}\p_x+G^{\eps}\p_y)U^{\eps}
  =\kappa\eps\p_{xx}H^{\eps}+\kappa\p_{yy}H^{\eps},\\
  (U^{\eps}\p_x+V^{\eps}\p_y)G^{\eps}-(H^{\eps}\p_x+G^{\eps}\p_y)V^{\eps}
  =\kappa\eps\p_{xx}G^{\eps}+\kappa\p_{yy}G^{\eps},\\
  \p_x U^\eps+\p_y V^\eps=0,\quad \p_x H^\eps+\p_y G^\eps=0,\\
  (U^\eps,V^\eps,\p_y H^\eps,G^\eps)|_{y=0}=(u_b,0,0,0).
\end{cases}
\end{align}

The key point in this paper is to construct the approximate solutions to (\ref{scalesystem}) and derive the convergence rates in Sobolev space. To this end, we introduce the following asymptotic expansions:
\begin{equation}\label{expansion}
  (U^{\eps},V^{\eps},H^{\eps},G^{\eps},P^{\eps})
  =(u_{app},v_{app},h_{app},g_{app},p_{app})
   +\eps^{\frac{1}{2}+\gamma}(u^{\eps},v^{\eps},h^{\eps},g^{\eps},p^{\eps}),
\end{equation}
for some constant $\gamma>0$, where $(u_{app},v_{app},h_{app},g_{app},p_{app})$ and $(u^{\eps},v^{\eps},h^{\eps},g^{\eps},p^{\eps})$ are the approximate solutions and the error solutions to the exact solutions of problem (\ref{scalesystem}), respectively. The approximate solutions are defined by
\begin{equation}\label{app}
\begin{cases}
  u_{app}=u_e^0(x,\sqrt\eps y)+u_p^0(x,y)+\sqrt{\eps}[u_e^1(x,\sqrt\eps y)+u_p^1(x,y)],\\
  v_{app}=\frac{v_e^0}{\sqrt{\eps}}(x,\sqrt\eps y)+v_p^0(x,y)+v_e^1(x,\sqrt\eps y)+\sqrt{\eps}v_p^1(x,y),\\
  h_{app}=h_e^0(x,\sqrt\eps y)+h_p^0(x,y)+\sqrt{\eps}[h_e^1(x,\sqrt\eps y)+h_p^1(x,y)],\\
  g_{app}=\frac{g_e^0}{\sqrt{\eps}}(x,\sqrt\eps y)+g_p^0(x,y)+g_e^1(x,\sqrt\eps y)+\sqrt{\eps}g_p^1(x,y),\\
  p_{app}=p_e^0(x,\sqrt\eps y)+p_p^0(x,y)+\sqrt{\eps}[p_e^1(x,\sqrt\eps y)+p_p^1(x,y)]+\eps p_p^2(x,y).
\end{cases}
\end{equation}
Here we note that the ideal MHD profiles $(u_e^i,v_e^i,h_e^i,g_e^i,p_e^i)$ are functions of Eulerian variables $(x,\sqrt\eps y)$, while the boundary layer profiles $(u_p^i,v_p^i,h_p^i,g_p^i,p_p^i)$ are of boundary layer scaled variables $(x,y)$. Then we can calculate the error equations for the remainder terms $(u^{\eps},v^{\eps},h^{\eps},g^{\eps},p^{\eps})$(see \eqref{u_epssystem} in Section \ref{sec3}) by subtracting the following approximations:
\begin{equation}\label{R_app_expansion}
\begin{cases}
  R_{app}^1=(u_{app}\p_x+v_{app}\p_y)u_{app}-(h_{app}\p_x+g_{app}\p_y)h_{app}+\p_x p_{app}-\nu\Delta_{\eps}u_{app}, \\
  R_{app}^2=(u_{app}\p_x+v_{app}\p_y)v_{app}-(h_{app}\p_x+g_{app}\p_y)g_{app}+\frac{\p_y p_{app}}{\eps}-\nu\Delta_{\eps}v_{app}, \\
  R_{app}^3=(u_{app}\p_x+v_{app}\p_y)h_{app}-(h_{app}\p_x+g_{app}\p_y)u_{app}-\kappa\Delta_{\eps}h_{app},\\
  R_{app}^4=(u_{app}\p_x+v_{app}\p_y)g_{app}-(h_{app}\p_x+g_{app}\p_y)v_{app}-\kappa\Delta_{\eps}g_{app},
\end{cases}
\end{equation}
where $\Delta_{\eps}=\eps\p_x^2+\p_y^2$. In addition, we denote $\Delta=\p_X^2+\p_Y^2$ for later use.

To construct the approximate solutions of problem \eqref{scalesystem}, we would first discuss the boundary conditions of each profile.

(i) Concerning the leading-order profiles, the boundary conditions for the prescribed outer ideal MHD flows $(u_e^0,v_e^0,h_e^0,g_e^0)(x,Y)$ are given by \eqref{u_e^0}$_6$ and
\begin{equation*}
  u_e^0(x,0):=\overline u_e^0(x),\quad h_e^0(x,0):=\overline h_e^0(x).
\end{equation*}
Since the boundary layer profile $(u_p^0,v_p^0,h_p^0,g_p^0)(x,y)$ is introduced to correct the mismatch between the viscous MHD flows and the ideal MHD flows on the boundary $\{Y=0\}$, we impose the following boundary conditions by matching the corresponding order of $\eps$ in the approximate solutions in \eqref{app}:
\begin{equation*}
  \overline u_e^0+u_p^0(x,0)=u_b,\quad\p_y h_p^0(x,0)=0,\quad (v_p^0,g_p^0)(x,0)=-(v_e^1,g_e^1)(x,0).
\end{equation*}
And the tangential components vanish at the infinity
\begin{equation*}
\lim_{y\rightarrow\infty}u_p^0(x,y)=\lim_{y\rightarrow\infty}h_p^0(x,y)=0.
\end{equation*}
In addition, when taking $x$-variable as ``time"-variable, $(u_p^0,v_p^0,h_p^0,g_p^0)$ enjoy a parabolic type system, so the boundary value on $\{x=0\}$ taken as the ``initial data" is required as well, we give
\begin{equation*}
  (u_p^0,h_p^0)(0,y)=(u_{p,0}^0,h_{p,0}^0)(y).
\end{equation*}

(ii) With regard to the first-order ideal MHD flows $(u_e^1,v_e^1,h_e^1,g_e^1)$, we turn to study the equations for stream functions $(\Phi,\Psi)$, which are defined by $\nabla^\bot\Phi=(u_e^1,v_e^1)$ and $\nabla^\bot\Psi=(h_e^1,g_e^1)$. Thanks to the analysis in Subsection \ref{sec2.4}, the two stream functions enjoy an equality $\eqref{streamrelation}$, then we only need to discuss the elliptic equation \eqref{phieq} for the stream function $\Phi$. Therefore, it is necessary for us to impose the boundary conditions at $\{x=0,L\}$ that
\begin{equation*}
  \Phi(0,Y)=\Phi_0(Y),\quad \Phi(L,Y)=\Phi_L(Y),
\end{equation*}
and the boundary condition at $\{Y=0\}$ is given by
\begin{align*}
    \Phi|_{Y=0}=1+\int_0^x v_p^0(s,0)\rm{ds}.
\end{align*}
 Note that the condition at $\{Y=0\}$ is equivalent to $\Phi_x(x,0)=-v_e^1(x,0)=v_p^0(x,0)$, where the general constant has been selected to 1, without loss of generality. The data $\Phi_0(Y)$ and $\Phi_L(Y)$ are taken to be sufficiently smooth and decay exponentially fast at infinity, and we assume the compatibility conditions $\Phi_0(0)=1$ and $\Phi_L(0)=1+\int_0^L v_p^0(s,0)\rm{ds}$. Moreover, the boundary values are supposed to satisfy the compatibility conditions stated in Definition \ref{wellprepared}.

(iii) Since the first-order boundary profile $(u_p^1,v_p^1,h_p^1,g_p^1)$ satisfies a linear parabolic type system \eqref{u_p}, we impose the following boundary conditions
\begin{align*}
  (u_p^1,\p_y h_p^1)(x,0)=-(\overline u_e^1,\overline{\p_Y h_e^0})(x),\quad (v_p^1,g_p^1)(x,0)=(0,0),
\end{align*}
and
\begin{align*}
    \lim_{y\rightarrow\infty}u_p^1(x,y)=\lim_{y\rightarrow\infty}h_p^1(x,y)=0,\quad (u_p^1,h_p^1)(0,y)=(u_{p,0}^1,h_{p,0}^1)(y).
\end{align*}
To keep the boundary condition $\p_yh_{app}(x,y)|_{y=0}=0$ for the approximate solutions, we will introduce a boundary corrector to cancel the boundary value of $\p_Yh_e^1(x,Y)$ on $\{Y=0\}$ in Subsection \ref{subsec2.5}. Moreover, it is noted that a cut-off function will be also introduced to localize $(v_p^1,g_p^1)$ in Subsection \ref{subsec2.5}, since the vertical components $(v_p^1,g_p^1)$ are constructed by $(u_p^1,h_p^1)$ through the divergence free conditions, which possibly leads to $\lim_{y\rightarrow \infty}(v_p^1,g_p^1)\neq 0$.

(iv) For the final profile $(u^\eps,v^\eps,h^\eps,g^\eps)$, the boundary conditions are given by
\begin{align*}
  \begin{cases}
  (u^\eps ,v^\eps ,\p_y h^\eps ,g^\eps)|_{y=0}=(0,0,0,0),\quad
  (u^\eps,v^\eps,h^\eps,g^\eps)|_{x=0}=(0,0,0,0),\\
  p^\eps-2\nu\eps\p_x u^\eps|_{x=L}=0,\quad
  \p_yu^\eps+\nu\eps\p_x v^{\epsilon}|_{x=L}=0,\quad
  (h^\eps,\p_x g^\eps)|_{x=L}=(0,0).
\end{cases}
\end{align*}

Furthermore, we remark that the above boundary functions are assumed to be smooth and exponentially decay to zero at infinity.

\subsection{Review about some known works}

Let us briefly review some well known results for the Prandtl layer theory and some related works. It should be noted that the known results are concluded in some special framework of analytic functions \cite{Maekawa,Sammartino1,Sammartino2,Zhang} and then relaxed to Gevrey class \cite{Gerard1,Gerard2,LWX}. However, in the time dependent case, the validity for the Prandtl layer theory in Sobolev space still remains open. It is very natural and interesting to consider the problem in the steady case. This type of results was initiated by Guo and Nguyen \cite{GN17}, in which the local in $x$ validity for the Prandtl layer theory was established for the case of the outside shear Euler flows $(u_e^0(Y),0)$  in the domain $[0,L]\times\mathbb{R}_+$ for small constant $L>0$. Similar results was extended to the rotating disk \cite{Iyerrotating}, the bounded domain \cite{Iyerbounded,LD} and the case with outside nonshear Euler flows \cite{Iyernonshear}. The global in $x$ expansion for the case that the Euler flow is $(1,0)$ was verified by Iyer in a series of works \cite{Iyerglobal1,Iyerglobal2,Iyerglobal3}. However, all these results are obtained with the moving boundary condition, and then this moving boundary assumption was removed by Guo and Iyer \cite{GuoIyer} through taking the self-similar Blasius profile as the zero-order boundary layer corrector functions. Gao and Zhang \cite{GaoZhang} removed the moving boundary condition and the small condition of $L$ simultaneously for the case of shear Euler flows by estimating stream functions for the remainders. Very recently, S. Iyer and N. Masmoudi \cite{IyerMasmoudi} verified Prandtl boundary layer theory globally in the $x$-variable for a large class of boundary layers, including the entire one parameter family of the classical Blasius profiles, with sharp decay rates.

Considering the Prandtl layer theory in magnetohydrodynamics is very interesting and challenging. As mentioned in the famous literature by Oleinik et al. \cite{Oleinik} (page 500-503),

{\it ``15. For the equations of the magnetohydrodynamic boundary layer, all problems of the above type are still open,"}

\noindent the boundary layer theory in MHD has been an important topic for long time.
Due to the coupling magnetic field with velocity field, the analysis for the boundary layer in MHD is more difficult than that in Navier-Stokes flows. To study the problem, it is very natural to find the stabilizing effect of the magnetic filed. With this idea, there are some works on the boundary layer theory of the MHD equations. On one hand, for unsteady flows, Liu, Xie and Yang established the well-posedness theory of boundary layer equations \cite{LXYwell} and the convergence theory for boundary layer expansion \cite{LXYjus} with the assumption of nondegenerate tangential magnetic field instead of monotonicity condition on the velocity filed. On the other hand, for steady MHD flows, the MHD equations without magnetic diffusion was studied by Wang and Ma \cite{WangMa}, in which global existence and non-existence theory of the boundary layer system were obtained for different ratios of the magnetic field and the velocity field. Very recently, Ding, Lin and Xie \cite{DLX} verified the Prandtl boundary layer ansatz of the steady MHD flows with a moving boundary on the domain $[0,L]\times\mathbb{R}_+$ for the case with outer shear ideal MHD flows $(U_0(Y),0,H_0(Y),0)$.

\subsection{Main result}
To state our main results, we define the following ${\mathcal{X}}-Norm$ which will be used to control our remaider solutions:
\begin{align}\label{X-norm}
  \|u^\eps,v^\eps,h^\eps,g^\eps\|_{\mathcal{X}}
  &:=\| \{u^\eps_y,h^\eps_y,\sqrt\eps(u^\eps_x,h^\eps_x)\}\cdot y \|_{ L^2}+\|v^\eps_y,g^\eps_y,\sqrt\eps(v^\eps_x,g^\eps_x)\|_{ L^2}\nonumber\\
  &\quad +\| \{u^\eps_{yy},h^\eps_{yy},\sqrt\eps (u^\eps_{xy},h^\eps_{xy}),\eps(u^\eps_{xx},h^\eps_{xx})\}\cdot y \|_{ L^2}\nonumber\\
  &\quad +\|u^\eps,v^\eps,h^\eps,g^\eps\|_B+\eps^{\frac{\gamma}{2}}\| u^\eps,h^\eps,\sqrt\eps(v^\eps,g^\eps) \|_{ L^\infty},
\end{align}
where the boundary term is defined by
\begin{equation}\label{B-norm}
  \|u^\eps,v^\eps,h^\eps,g^\eps\|_B:=\| \{u^\eps_y,\sqrt\eps(u^\eps_x,h^\eps_x)\}\cdot y \|_{ L^2(x=L)}+\| \sqrt\eps(u^\eps_x,h^\eps_x) \|_{ L^2(x=L)}.
\end{equation}

Now we can state our main result:
\begin{theorem}\label{th1.1}
Let $u_b > 0$ be a constant tangential velocity of the viscous MHD flows on the boundary $\{Y = 0\}$, and the given positive non-shear ideal MHD flows $[u_e^0,v_e^0,h_e^0,g_e^0,p_e^0](X,Y)$ satisfy the following hypotheses£º
\begin{align}
\label{condition_for_idealMHD1}
  &0< c_0\leq  h_e^0\ll u_e^0 \leq C_0<\infty,\\
\label{condition_for_idealMHD3}
  &\bigg\| \frac{v_e^0}{Y} \bigg\|_{ L^\infty}\ll 1,\\
\label{condition_for_idealMHD4}
  &\|Y^k \nabla^m (v_e^0,g_e^0)\|_{ L^\infty}<\infty\qquad \mathrm{for\  sufficiently \ large \ } k,m\geq0,\\
\label{condition_for_idealMHD5}
  &\|Y^k \nabla^m (u_e^0,h_e^0)\|_{ L^\infty}<\infty\qquad \mathrm{for\  sufficiently \ large \ }  k\geq0, m\geq1,\\
\label{condition_for_idealMHD6}
  &\|\langle Y\rangle\p_Y (u_e^0,h_e^0)\|_{ L^\infty}<\delta_0\qquad \mathrm{for\  suitable \ small \ }  \delta_0>0.
\end{align}
In addition, let $m\geq 5$ be an integer, the outer ideal MHD flows are assumed to enjoy
\begin{equation}\label{M_0}
  M_0:=\sum_{i=0}^{m+2}\|(\overline u_e^0,\overline h_e^0,\overline p_e^0)(x)\|_{H^{m+2-i}(0,L)}<+\infty.
\end{equation}
Moreover, for some positive constants $\vartheta_0,\eta_0$ and small $\sigma_0$, suppose that
\begin{align}\label{condition_for_u_p^0}
\begin{cases}
  &\overline u_e^0+u_p^0(0,y)>\overline h_e^0+h_p^0(0,y)\geq \vartheta_0,\\
  &|u_e^0(0,Y)+u_p^0(0,y)|\gg |h_e^0(0,Y)+h_p^0(0,y)|\geq \eta_0,\\
  &|\y^{l+1}\p_y(u_p^0,h_p^0)(0,y)|\leq \frac{1}{2}\sigma_0,\\
  &|\y^{l+1}\p_y^2(u_p^0,h_p^0)(0,y)|\leq \frac{1}{2}\vartheta_0^{-1}.
\end{cases}
\end{align}
Then, there exist the remainder solutions $[u^\eps,v^\eps,h^\eps,g^\eps]$ in the space $\mathcal{X}$ satisfying
\begin{equation}\label{boundednessforXnorm}
  \|u^\eps,v^\eps,h^\eps,g^\eps\|_{ \mathcal{X}}\lesssim 1,
\end{equation}
in $[0,L]\times[0,+\infty)$, where the positive number $L$ is sufficiently small relative to universal constant.
\end{theorem}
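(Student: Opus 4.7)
The plan is to construct the asymptotic expansion \eqref{app} order by order, reduce Theorem \ref{th1.1} to an a priori estimate for the remainder $(u^\eps,v^\eps,h^\eps,g^\eps)$ in the norm $\|\cdot\|_{\mathcal{X}}$, and then conclude existence by a standard contraction/continuity argument. First I would construct the profiles sequentially. The leading boundary layer $(u_p^0,v_p^0,h_p^0,g_p^0)$ satisfies a Prandtl–MHD system in the variables $(x,y)$, which under the non-degeneracy condition $\overline h_e^0+h_p^0\ge\eta_0$ from \eqref{condition_for_u_p^0} admits a local-in-$x$ smooth solution decaying rapidly in $y$ (the magnetic lower bound plays the role of the Oleinik monotonicity used in the purely hydrodynamic case). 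The first-order ideal correction $(u_e^1,v_e^1,h_e^1,g_e^1)$ is produced via the stream function $\Phi$: using the relation \eqref{streamrelation} connecting $\Phi$ and $\Psi$, the problem reduces to the elliptic equation \eqref{phieq} for $\Phi$ on $[0,L]\times\mathbb{R}_+$ with boundary data determined by $v_p^0(x,0)$ and the prescribed $\Phi_0,\Phi_L$. The first-order boundary layer $(u_p^1,v_p^1,h_p^1,g_p^1)$ then solves a linear parabolic system of \eqref{u_p} type; a cut-off and a boundary corrector for $\p_Y h_e^1$ are inserted to enforce the exact boundary conditions at $y=0$ and the decay of $(v_p^1,g_p^1)$ at infinity.

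Once the approximation is fixed, I would compute the source $R_{app}$ from \eqref{R_app_expansion}, verify that each $R_{app}^i$ is of size $O(\eps^{1/2+\gamma})$ in the relevant weighted norms (the factor $\eps^{1/2+\gamma}$ in \eqref{expansion} is precisely what absorbs this), and then study the linearized error system for $(u^\eps,v^\eps,h^\eps,g^\eps)$. The backbone will be an energy-type estimate carried out on the stream functions $\phi^\eps,\psi^\eps$ with $\nabla^\perp\phi^\eps=(u^\eps,v^\eps)$ and $\nabla^\perp\psi^\eps=(h^\eps,g^\eps)$, tested against carefully chosen multipliers. I expect three layers of estimates: a positivity/basic estimate obtained by multiplying by the leading tangential profiles $u_{app}$ and $h_{app}$, which extracts the dissipation $\|(u_y^\eps,h_y^\eps)\|_{L^2}$; a weighted estimate obtained by testing with $y\cdot(\text{stream function})$, which yields the $y$-weighted $L^2$ terms in \eqref{X-norm}; and a second-order estimate tested against $\Delta_\eps$ of the stream functions, which controls $(u_{yy}^\eps,h_{yy}^\eps)$ together with their mixed and tangential derivatives. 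Boundary contributions at $x=L$ feed into the $\|\cdot\|_B$ portion of the norm and are shown to have the correct sign thanks to the Robin-type conditions $p^\eps-2\nu\eps\p_xu^\eps|_{x=L}=0$ and $\p_y u^\eps+\nu\eps\p_x v^\eps|_{x=L}=0$.

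The principal obstacle is the presence of the nonshear terms $v_e^0\p_y$ and $g_e^0\p_y$ in the coefficients of the linearized operator, which destroys the pure transport structure exploited in the shear analysis of \cite{DLX}. I would control them by writing $v_e^0=Y\cdot(v_e^0/Y)$, invoking \eqref{condition_for_idealMHD3} to get a small prefactor $\|v_e^0/Y\|_{L^\infty}\ll 1$, and pairing with $\sqrt\eps\,y\,\p_y u^\eps$ (and analogously with $g_e^0\p_y h^\eps$); hypothesis \eqref{condition_for_idealMHD6} on $\|\langle Y\rangle\p_Y(u_e^0,h_e^0)\|_{L^\infty}$ plays the symmetric role for horizontal transport. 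A second delicate point is the appearance of $\p_y p^\eps/\eps$ in the vertical momentum equation, which forces the stream-function reformulation and a careful treatment of boundary integrals at $x=L$ to eliminate the pressure. The nonlinear contributions are absorbed through the $L^\infty$ component $\eps^{\gamma/2}\|u^\eps,h^\eps,\sqrt\eps(v^\eps,g^\eps)\|_{L^\infty}$ of $\|\cdot\|_{\mathcal{X}}$ using $\gamma>0$ and the smallness of $L$.

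Combining the three layered estimates and absorbing the nonlinear and remainder terms yields an inequality of the schematic form $\|u^\eps,v^\eps,h^\eps,g^\eps\|_{\mathcal{X}}^2\le C L\,\|u^\eps,v^\eps,h^\eps,g^\eps\|_{\mathcal{X}}^2+C$, so that choosing $L$ sufficiently small relative to universal constants closes \eqref{boundednessforXnorm}. Existence of the remainder in $\mathcal{X}$ then follows by a standard iteration: one solves the linearized error system at each step (well-posedness of the linearization is guaranteed by essentially the same energy estimates), and the a priori bound ensures that the map is a contraction on a ball of $\mathcal{X}$ of size $O(1)$, producing the desired solution $(u^\eps,v^\eps,h^\eps,g^\eps)$ and completing the proof of Theorem \ref{th1.1}.
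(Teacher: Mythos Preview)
Your overall architecture matches the paper: construct the profiles order by order, bound $R_{app}$, prove linear stability estimates for the remainder, and close by contraction. The treatment of the nonshear convection via $v_e^0=Y\cdot(v_e^0/Y)$ and the role of \eqref{condition_for_idealMHD3} is also the paper's mechanism.

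However, your multiplier scheme for the remainder differs from the paper in a way that matters. The paper does \emph{not} pass to stream functions for $(u^\eps,v^\eps,h^\eps,g^\eps)$, and it never ``multiplies by $u_{app}$ and $h_{app}$.'' Its linear estimate has three pieces: (i) a basic energy estimate, testing against $(u^\eps,\eps v^\eps,h^\eps,\eps g^\eps)$, which yields $\|\nabla_\eps(u^\eps,h^\eps)\|_{L^2}$; (ii) a positivity estimate (Lemma~\ref{positivitylemma}), testing the system against the Guo--Nguyen quotient multipliers $\bigl(\partial_y(\tfrac{v^\eps}{u_s}),-\eps\partial_x(\tfrac{v^\eps}{u_s}),\partial_y(\tfrac{g^\eps}{u_s}),-\eps\partial_x(\tfrac{g^\eps}{u_s})\bigr)$, which extracts $\|\nabla_\eps(v^\eps,g^\eps)\|_{L^2}$ but, because of the nonshear pieces $\tfrac{v_e^0}{\sqrt\eps}\partial_y$ and $\tfrac{g_e^0}{\sqrt\eps}\partial_y$, leaves behind the weighted quantities $\|(u^\eps_y,h^\eps_y)\cdot y\|_{L^2}$; and (iii) a \emph{new} weighted estimate (Lemma~\ref{weightedestimatelemma}) on the $\partial_y$-differentiated system, tested against
\[
\Bigl(\partial_y\bigl\{\tfrac{u^\eps(1-x)y^2}{u_s}\bigr\},\,-\eps\partial_x\bigl\{\tfrac{u^\eps(1-x)y^2}{u_s}\bigr\},\,\partial_y\bigl\{\tfrac{h^\eps(1-x)y^2}{u_s}\bigr\},\,-\eps\partial_x\bigl\{\tfrac{h^\eps(1-x)y^2}{u_s}\bigr\}\Bigr).
\]
The factor $(1-x)$ is what, after an $x$-integration by parts, produces the missing $y$-weighted first- and second-order terms in $\|\cdot\|_{\mathcal X}$; the factor $1/u_s$ relaxes the smallness needed on $h_s$. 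Your proposed ``test with $y\cdot(\text{stream function})$'' and ``multiply by $u_{app},h_{app}$'' do not encode this structure, and in particular do not explain how the weighted terms generated by the nonshear convection are closed.

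A smaller discrepancy concerns the closure. The paper's final a~priori bound is $\|(u^\eps,v^\eps,h^\eps,g^\eps)\|_{\mathcal X}^2\lesssim \eps^{\gamma_0}+\eps^{\gamma/2}\|(\tilde u,\tilde v,\tilde h,\tilde g)\|_{\mathcal X}^4$ (Theorem~\ref{th3.1}); the nonlinear terms are absorbed by the explicit $\eps^{\gamma/2}$ gained from the $L^\infty$ component of $\|\cdot\|_{\mathcal X}$, while $L$-smallness (together with $\|h_s/u_s\|_{L^\infty}\ll1$, $\|y\partial_y(u_s,h_s)\|_{L^\infty}\ll1$, $\|v_e^0/Y\|_{L^\infty}\ll1$) is used only inside the linear lemmas to absorb lower-order errors. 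Your schematic $\|\cdot\|_{\mathcal X}^2\le CL\,\|\cdot\|_{\mathcal X}^2+C$ misplaces the source of smallness in the contraction step.
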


\begin{corollary}\label{convergencelemma}
Under the hypothesis stated in Theorem \ref{th1.1} with the profile $(u_p^0,h_p^0)$ constructed in Section \ref{sec2}, it holds that
\begin{equation}\label{convergence}
  \|(U-u_e^0-u_p^0,H-h_e^0-h_p^0)\|_{ L^\infty}+\|(V-v_e^0,G-g_e^0)\|_{ L^\infty}\lesssim\sqrt\eps.
\end{equation}
\end{corollary}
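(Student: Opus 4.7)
The plan is that Corollary \ref{convergencelemma} follows essentially by unfolding the asymptotic expansion \eqref{expansion}--\eqref{app} back to the original $(X,Y)$ variables and invoking the remainder bound \eqref{boundednessforXnorm} from Theorem \ref{th1.1}. The argument is largely algebraic and amounts to bookkeeping of the various $\eps$-prefactors.

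First, I would rewrite the expansion in Eulerian coordinates using the rescaling \eqref{scale}. For the tangential components, $U(X,Y)=U^\eps(X,Y/\sqrt\eps)$ and $y=Y/\sqrt\eps$, and substituting \eqref{expansion}--\eqref{app} and cancelling $u_e^0(X,Y)+u_p^0(X,Y/\sqrt\eps)$ on both sides produces
\begin{equation*}
U-u_e^0-u_p^0=\sqrt\eps\bigl[u_e^1(X,Y)+u_p^1(X,Y/\sqrt\eps)\bigr]+\eps^{\frac12+\gamma}u^\eps,
\end{equation*}
together with the identical identity for $H-h_e^0-h_p^0$. For the normal components, the factor $\sqrt\eps$ coming from $V=\sqrt\eps\,V^\eps$ is precisely what absorbs the $1/\sqrt\eps$ weight attached to $v_e^0$ inside $v_{app}$, leaving
\begin{equation*}
V-v_e^0=\sqrt\eps\bigl[v_p^0(X,Y/\sqrt\eps)+v_e^1(X,Y)\bigr]+\eps\,v_p^1(X,Y/\sqrt\eps)+\eps^{1+\gamma}v^\eps,
\end{equation*}
and analogously for $G-g_e^0$.

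Second, I would bound the profile contributions. By the construction carried out in Section \ref{sec2}, all of the first-order profiles $u_e^1,u_p^1,v_e^1,v_p^0,v_p^1,h_e^1,h_p^1,g_e^1,g_p^0,g_p^1$ are uniformly bounded in $L^\infty$ independently of $\eps$, so the bracketed pieces on the right-hand sides above contribute $O(\sqrt\eps)$ to the $L^\infty$ difference in every equation. Third, I would control the remainder using the weighted $L^\infty$ piece of the $\mathcal X$-norm \eqref{X-norm}: \eqref{boundednessforXnorm} yields $\|u^\eps,h^\eps\|_{L^\infty}\lesssim\eps^{-\gamma/2}$ and $\|v^\eps,g^\eps\|_{L^\infty}\lesssim\eps^{-1/2-\gamma/2}$, so that after multiplication by the prefactors one gets $\eps^{\frac12+\gamma}\|u^\eps,h^\eps\|_{L^\infty}\lesssim\eps^{\frac12+\frac{\gamma}{2}}$ and $\eps^{1+\gamma}\|v^\eps,g^\eps\|_{L^\infty}\lesssim\eps^{\frac12+\frac{\gamma}{2}}$, both of which are dominated by $\sqrt\eps$ since $\gamma>0$. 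Combining these observations yields \eqref{convergence}.

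There is no serious analytical obstacle here; the corollary is a transparent consequence of the expansion once Theorem \ref{th1.1} is in hand. The only point to guard against is a bookkeeping error when matching $\eps^{\frac12+\gamma}$ from \eqref{expansion} against the embedded $\sqrt\eps$ weight that sits on $(v^\eps,g^\eps)$ inside the $\mathcal X$-norm, which is exactly what saves the missing half-power of $\eps$ for the normal components $V,G$.
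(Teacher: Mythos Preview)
Your proposal is correct and is exactly the argument the paper implicitly intends: the corollary is stated without proof, as an immediate consequence of Theorem \ref{th1.1} obtained by substituting the expansion \eqref{expansion}--\eqref{app} into the original variables and reading off the $L^\infty$ bounds on the profiles and on the remainder from the $\mathcal X$-norm. Your bookkeeping of the $\eps$-powers is accurate, in particular the use of the $\eps^{\gamma/2}\|\sqrt\eps(v^\eps,g^\eps)\|_{L^\infty}$ piece of \eqref{X-norm} to recover the missing half-power for the normal components.
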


\begin{remark}\label{remark1}
The ideal MHD flows $[u_e^0,v_e^0,h_e^0,g_e^0,p_e^0]$ which satisfy the assumptions of \eqref{condition_for_idealMHD1}-\eqref{M_0} do exist. See Appendix \ref{ap1} for details.
\end{remark}

\begin{remark}\label{remark2}
In our arguments, the conditions in \eqref{condition_for_u_p^0} play important role in the analysis for constructing the boundary layer profiles and the remainder profiles.  See Section \ref{sec2.1}, Section \ref{sec2.4}, Section \ref{sec3} and Appendix \ref{ap2} for more details. The readers can refer to the paper \cite{DLX} in pages 5-6 as well. It will be discussed whether the conditions are essential or not in our forthcoming work.

\end{remark}

\subsection{Main ideas and the sketch of the proof}
In this paper, we are going to justify the boundary layer expansion for the steady MHD equations with regard to outer non-shear ideal flows, which is different from the shear case in \cite{DLX}. Therefore, it is necessary to compare the key points in our analysis with \cite{DLX}. In particular, the main ideas and some key observations are explained as follows:

(i) The main difficulty of constructing the leading-order boundary correctors $(u_p^0,v_p^0,h_p^0,g_p^0)$ is the loss of $x$-derivatives, which is the same as the case of non-stationary boundary layer equations. To overcome this obstacle, we apply a modified energy method inspired by the work from Liu, Yang and Xie \cite{LXYwell}, see also \cite{DLX} for the steady version. Under the essential assumption that the tangential magnetic field has a lower positive bound, the cancelation is applied to avoid the loss of regularity by using the stream function of the magnetic fields. See Subsection \ref{sec2.1} and Appendix \ref{ap2}  for more details.

(ii) As we will see in Subsection \ref{sec2.3}, first-order ideal MHD profile $(u_e^1,v_e^1,h_e^1,g_e^1)$ enjoy system \eqref{u_e^1system}. To establish the estimates, the key point is to use the positivity of the second-order operator $-\p_{yy}+\frac{u_{syy}}{u_s}$ (see \cite{GN17} in pages 8-9 for details). The well-posedness and the estimates for system can not be deduced directly by the standard theory because of the coupling effects of velocity and magnetic fields. However, due to the non shear structure of the ideal MHD flows, one can not reduce this coupling system to a simple decoupling system by using the method introduced in \cite{DLX}. Therefore, it is very necessary to find a new relationship to decouple the unknowns. To this end, it is convenient to introduce the stream functions $\Phi,\Psi$ for the velocity and magnetic fields, respectively.

The first key observation is that the equations for the magnetic fields in the  first-order ideal MHD system can be rewritten as
$$\nabla_{x,Y}(v_e^0h_e^1+h_e^0v_e^1-u_e^0g_e^1-u_e^1g_e^0)=0,$$
which implies that
$$v_e^0h_e^1+h_e^0v_e^1-u_e^0g_e^1-u_e^1g_e^0\equiv \mathrm{constant}=:b.$$
Let $Y \to \infty$, we have
$$b=0,$$
where we have used the behavior of $(v^0_e,g^0_e,v^1_e,g^1_e)\to (0,0,0,0)$ at $Y\to \infty$.
Or equivalently, in the formulation of the stream functions:
\begin{equation}\label{observe1}
  (u_e^0\p_x+v_e^0\p_Y)\Psi=(h_e^0\p_x+g_e^0\p_Y)\Phi.
\end{equation}
The other key observation is deduced from the third equation in system \eqref{u_e^0} for the ideal MHD flows $(u_e^0,v_e^0,h_e^0,g_e^0)$:
$$ v_e^0h_e^0 -g_e^0u_e^0=0,$$
which implies that
\begin{equation}\label{observe2}
  h_e^0=k(x,Y)u_e^0,\quad g_e^0=k(x,Y)v_e^0,
\end{equation}
for some known $k(x,Y)$. It is noted that $0<k<1$ uniform in $(x,Y)$ by using the assumption \eqref{condition_for_idealMHD1} stated in our main theorem.

Combing the above two observations, it gives
\begin{equation*}
  (u_e^0\p_x+v_e^0\p_Y)\Psi=(u_e^0\p_x+v_e^0\p_Y)k\Phi,
\end{equation*}
in which we have used the following equality by virtue of the divergence-free conditions for velocity field and magnetic field:
\begin{equation}\nonumber
  (u_e^0\p_x+v_e^0\p_Y)k(x,Y)=\p_xh_e^0+\p_Yg_e^0-k(x,Y)(\p_x u_e^0+\p_Y v_e^0)=0.
\end{equation}
Therefore, we obtain a linear first-order partial differential equation read as
\begin{equation}\nonumber
  (\p_x+\frac{v_e^0}{u_e^0}\p_Y)f=0,
\end{equation}
in which we denote $f:=\Psi-k\Phi$. Define the characteristic curve of the above problem as the following ordinary differential equation
\begin{equation}\nonumber
  \frac{dY}{dx}=\frac{v_e^0}{u_e^0}(x,Y)
\end{equation}
with data $f_0:=f(0,Y)=\Psi_0-k_0\Phi_0$. According to characteristic method and local well-posedness theory of ODE, we get
\begin{equation}\label{observe}
  \Psi=k\Phi+F(u_e^0,v_e^0,f_0),
\end{equation}
in which $F$ is a function determined by the data $u_e^0,v_e^0,f_0$. And hence, the last equality \eqref{observe} gives the relationship between two stream functions.

Using the above equality \eqref{observe} together with the first observation \eqref{observe1}, we can deduce the following elliptic equation \eqref{phieq} for $\Phi$,
\begin{equation}\label{equaPhi}
\begin{split}
  &-\Delta\Phi=\mathcal{F}(\Phi),
\end{split}
\end{equation}
with the source term
\begin{align*}
\mathcal{F}(\Phi)=&-\frac{k\Delta k}{1-k^2}\cdot\Phi-\frac{2k\nabla k}{1-k^2}\cdot\nabla\Phi-\frac{k}{1-k^2}\Delta F\\
 &+\frac{1}{1-k^2}\intx H(\Phi(s,Y))ds+\frac{1}{1-k^2}G_0(\Phi_0(Y)),
\end{align*}
where the definition of $H(\Phi)$ and the data $G_0(\Phi_0(Y))$ will be given in Subsection \ref{sec2.3}.

Let us give the sketch of contraction mapping principle to determine $\Phi$ via \eqref{equaPhi} with suitable boundary conditions. Indeed, for any $\tilde{\Phi}$, by the standard theory of elliptic system, there exists a unique solution to the following equation
$$-\Delta\Phi=\mathcal{F}(\tilde{\Phi}),$$
which produces a solution mapping $\mathbf{T}(\tilde{\Phi})=\Phi$. Recall the following facts about the source term: $k$ is small in $L^\infty$ sense, the integral term is also small for uniform suitably small $L>0$, then one can verify the contraction of the mapping $\mathbf{T}:\tilde\Phi\mapsto \Phi$ for $\tilde\Phi,\Phi\in X:=\{\Phi|\|Y^n\Phi\|_{H^m}<C(n,m)\}$, provided that $k,L$ are suitably small, which gives the fixed point $\mathbf{T}(\Phi)=\Phi$, and thus produces the solution to the original problem \eqref{equaPhi}. With this, the magnetic fields can be determined by the formula \eqref{observe2}. Refer to Subsection \ref{sec2.3} for the details.

(iii) The first-order boundary layer profile $(u_p^1,v_p^1,h_p^1,g_p^1)$ satisfy a linear parabolic system with nonlocal terms $v_p^1\p_y u_p^0-g_p^1\p_yh_p^0$ and $v_p^1\p_y h_p^0-g_p^1\p_y u_p^0$. Since the second equation of the system of $(u_p^1,v_p^1,h_p^1,g_p^1)$ can be rewritten to a total differential form, it is possible for us to deduce a equation for the stream function of the magnetic field, which inspires us to introduce the new functions to cancel the nonlocal terms. The proof of this part is similar to that of the zero-order boundary layer $(u_p^0,v_p^0,h_p^0,g_p^0)$ with a modified energy method. See Subsection \ref{sec2.4} for details.

(iv) To figure out the nonlinear problem of the remainder profile $(u^\eps,v^\eps,h^\eps,g^\eps)$, the key point lies in establishing the estimates for the linearized problem. Compared with the case of the shear flows in \cite{DLX}, there is a leading order effect of the non-shear flows
resulted from the presence of nonzero $v_e^0, g_e^0$ of scaling $\mathcal{O}(\frac{1}{\sqrt\eps})$. It arouses us to focus on the terms $v_s\p_y(u^\eps,v^\eps,h^\eps,g^\eps)$ and $g_s\p_y(u^\eps,v^\eps,h^\eps,g^\eps)$ from the linear elements defined by \eqref{S_u} in the positivity estimate process (see Lemma \ref{positivitylemma}). Precisely, take $g_s\p_y h^\eps$ as an example, we rewrite the term as follows:
$$g_s\p_y h^\eps=\frac{g_e^0}{\sqrt\eps y}\cdot y\p_y h^\eps+(g_s-\frac{g_e^0}{\sqrt\eps})\cdot \p_y h^\eps,$$
the first term in the right-hand side will produce a $y$-weighted term $\|h^\eps_y\cdot y\|_{L^2}$. This new term can be made small by $\left\| \frac{g_e^0}{Y} \right\|_{ L^\infty}$ through using the following estimate
$$\bigg\|\frac{g_e^0}{Y}\bigg\|_{L^\infty}\lesssim \bigg\|\frac{v_e^0}{Y}\bigg\|_{L^\infty}\left\|\frac{h^0_e}{u^0_e}\right\|_{L^\infty} \ll 1,$$
where the equation
$$v_e^0h_e^0 -g_e^0u_e^0=0$$
derived from \eqref{u_e^0} is used.

To handle the leading order effect, inspired by the paper \cite{Iyernonshear}, we introduce a new $y$-weighted estimate to control the terms $\|u^\eps_y\cdot y\|_{L^2}$ and $\|h^\eps_y\cdot y\|_{L^2}$ which are generated from the positivity estimate. In our approach, the multipliers $\frac{\p_yu^\eps y^2(1-x)}{u_s}$ and $\frac{\p_yh^\eps y^2(1-x)}{u_s}$ will be tested in the equations. With the factor $1-x$, one can integrate by parts with the respect of $\p_x$ to obtain the desired $y$-weighted terms. In addition, the factor $\frac{1}{u_s}$ is taken to relax the smallness condition imposed on the approximate solution $h_s$.

The rest of the paper is arranged as follows. In Section \ref{sec2}, we construct the approximate solutions by obtaining the weighted estimates of each profile step by step, in which the rigorous proof of the zero-order profile $(u_p^0,v_p^0,h_p^0,g_p^0)$ is postponed to Appendix \ref{ap2}. In Section \ref{sec3}, the existence and the estimates for the remainder profile $(u^\eps,v^\eps,h^\eps,g^\eps)$ in $ {\mathcal{X}}-Norm$ will be achieved.

{\bf{Notation.}}

For reader's convenience, we introduce some weighted Sobolev spaces. For any number $l\in\mathbb{R}$, denote by $L^2_{l}$ the weighted Lebesgue space:
\begin{equation*}
  L^2_l:=\bigg\{f(x,y)\big|~~ f(x,\cdot):\mathbb{R}_+\rightarrow \mathbb{R},\|f\|_{L^2_l}:=(\inty\langle y\rangle^{2l}|f|^2dy)^\frac{1}{2}< +\infty\bigg\},
\end{equation*}
where $\langle y\rangle=\sqrt{1+y^2}$. For any integers $m,\beta,k$ with $\beta+k=m$, and $D^\alpha:=\p_x^\beta\p_y^k$, define the weighted Sobolev space $H_l^m$ by
\begin{equation*}
  H_l^m:=\bigg\{f(x,y)\big|~~ f(x,\cdot):\mathbb{R}_+\rightarrow \mathbb{R},\|f\|_{H_l^m}:=(\inty \langle y\rangle^{2(l+k)}|D^\alpha f|^2dy)^\frac{1}{2}< +\infty\bigg\}.
\end{equation*}
For simplicity, we denote $L^2(\Omega)$ and $L^2(0,+\infty)$ by $L^2$ and $L^2_y$. The integral form $\int_\Omega fdxdy$ and $\inty fdy$ will be simplified to $\iint f$ and $\inty f$, respectively. Also, for simplicity, we denote the trace of a function $f$ on $\{Y=0\}$ as $\bar f$, i.e., $\bar f:=f(x,0)$. In addition, the usual notations will be adopted unless extra statement.

\section{The approximate solutions}\label{sec2}

\subsection{Zero-order boundary layer}\label{sec2.1}
In this subsection, we will construct the zeroth-order boundary layer $(u_p^0,v_p^0,h_p^0,g_p^0,p_p^0)$ and build the well-posedness theory.

Plugging the expansion \eqref{app} into equations \eqref{R_app_expansion}, then the leading order terms in \eqref{R_app_expansion}$_{1,3,4}$ read as
\begin{align*}
   R^{u,0}=
  &(u_e^0+u_p^0)\p_x(u_e^0+u_p^0)+(v_p^0+v_e^1)\p_y(u_e^0+u_p^0)+v_e^0\p_y(u_e^1+u_p^1)+v_e^0\p_Y u_e^0\\
  &-(h_e^0+h_p^0)\p_x(h_e^0+h_p^0)-(g_p^0+g_e^1)\p_y(h_e^0+h_p^0)-g_e^0\p_y(h_e^1+h_p^1)-g_e^0\p_Y h_e^0\\
  &+\p_x(p_e^0+p_p^0)-\nu\p_y^2(u_e^0+u_p^0)+\frac{1}{\sqrt\eps}(v_e^0\p_yu_p^0-g_e^0\p_yh_p^0),\\
   R^{h,0}=
  &(u_e^0+u_p^0)\p_x(h_e^0+h_p^0)+(v_p^0+v_e^1)\p_y(h_e^0+h_p^0)+v_e^0\p_y(h_e^1+h_p^1)+v_e^0\p_Y h_e^0\\
  &-(h_e^0+h_p^0)\p_x(u_e^0+u_p^0)-(g_p^0+g_e^1)\p_y(u_e^0+u_p^0)-g_e^0\p_y(u_e^1+u_p^1)-g_e^0\p_Y u_e^0\\
  &-\kappa\p_y^2(h_e^0+h_p^0)+\frac{1}{\sqrt\eps}(v_e^0\p_yh_p^0-g_e^0\p_yu_p^0),\\
    R^{g,0}=
  &\frac{1}{\sqrt\eps}[u_p^0g_{ex}^0+v_e^0g_{py}^0-h_p^0v_{ex}^0-g_e^0v_{py}^0]+v_e^0(g_{eY}^1+g_{py}^1)-g_e^0(v_{eY}^1+v_{py}^1)+v_p^1g_{ey}^0\\
  &+(u_e^0+u_p^0)\p_x(g_p^0+g_e^1)+(v_p^0+v_e^1)\p_y(g_p^0+g_e^1)+(v _p^0+v_e^1)g_{eY}^0+(u_e^1+u_p^1)g_{ex}^0\\
  &-g_p^1v_{ey}^0-(h_e^0+h_p^0)\p_x(v_p^0+v_e^1)-(g_p^0+g_e^1)\p_y(v_p^0+v_e^1)-(g_p^0+g_e^1)v_{eY}^0\\
  &-(h_e^1+h_p^1)v_{ex}^0-\kappa\p_y^2(g_p^0+g_e^1).
\end{align*}
It should be noted that the ideal MHD profiles $(u_e^i,v_e^i,h_e^i,g_e^i,p_e^i)$ are always evaluated at $(x,Y)$, while the boundary layer profiles $(u_p^i,v_p^i,h_p^i,g_p^i,p_p^i)$ are at $(x,y)$. So we rewrite the following terms in $R^{u,0}, R^{h,0}$ as
\begin{align*}
  (v_p^0+v_e^1)\p_y u_e^0=\sqrt\eps(v_p^0+v_e^1)\p_Y u_e^0,\quad
  v_e^0\p_y u_e^1=\sqrt\eps v_e^0\p_Y u_e^1,\\
  (g_p^0+g_e^1)\p_y h_e^0=\sqrt\eps(g_p^0+g_e^1)\p_Y h_e^0,\quad
  g_e^0\p_y h_e^1=\sqrt\eps g_e^0\p_Y h_e^1,\\
  (v_p^0+v_e^1)\p_y h_e^0=\sqrt\eps(v_p^0+v_e^1)\p_Y h_e^0,\quad
  v_e^0\p_y h_e^1=\sqrt\eps v_e^0\p_Y h_e^1,\\
  (g_p^0+g_e^1)\p_y u_e^0=\sqrt\eps(g_p^0+g_e^1)\p_Y u_e^0,\quad
  g_e^0\p_y u_e^1=\sqrt\eps g_e^0\p_Y u_e^1,\\
  -\nu\p_y^2u_e^0=-\nu\eps\p_Y^2u_e^0,\quad
  -\kappa\p_y^2h_e^0=-\kappa\eps\p_Y^2h_e^0,
\end{align*}
which will be put into the next order in $\eps$. In addition, the following terms in $R^{u,0}$ and $R^{h,0}$ shall be carefully treated respectively
\begin{align*}
  &u_e^0\p_x u_p^0+v_e^1\p_y u_p^0+u_p^0\p_x u_e^0-h_e^0\p_x h_p^0-g_e^1\p_y h_p^0-h_p^0\p_x h_e^0\\
  =&\overline u_e^0\p_x u_p^0+\overline v_e^1\p_y u_p^0+u_p^0\overline{u_{ex}^0}+\sqrt\eps y(u_{eY}^0\p_x u_p^0+v_{eY}^1\p_y u_p^0)+u_p^0( u_{ex}^0-\overline{u_{ex}^0})\\
  &-\overline h_e^0\p_x h_p^0-\overline g_e^1\p_y h_p^0-h_p^0\overline{h_{ex}^0}-\sqrt\eps y(h_{eY}^0\p_x h_p^0+g_{eY}^1\p_y h_p^0)-h_p^0( h_{ex}^0-\overline{h_{ex}^0})+E_1,
\end{align*}
\begin{align*}
  &u_e^0\p_x h_p^0+v_e^1\p_y h_p^0+u_p^0\p_x h_e^0-h_e^0\p_x u_p^0-g_e^1\p_y u_p^0-h_p^0\p_x u_e^0\\
  =&\overline u_e^0\p_x h_p^0+\overline v_e^1\p_y h_p^0+u_p^0\overline{h_{ex}^0}+\sqrt\eps y(u_{eY}^0\p_x h_p^0+v_{eY}^1\p_y h_p^0)+u_p^0(h_{ex}^0- \overline{h_{ex}^0})\\
  &-\overline h_e^0\p_x u_p^0-\overline g_e^1\p_y u_p^0-h_p^0\overline{u_{ex}^0}-\sqrt\eps y(h_{eY}^0\p_x u_p^0+g_{eY}^1\p_y u_p^0)-h_p^0(u_{ex}^0- \overline{u_{ex}^0})+E_3,
\end{align*}
the same for the following terms in $R^{g,0}$
\begin{align*}
  &u_e^0\p_x g_p^0+v_e^1\p_y g_p^0+u_p^0\p_x g_e^1-h_e^0\p_x v_p^0-g_e^1\p_y v_p^0-h_p^0\p_x v_e^1\\
  &~~~+\frac{1}{\sqrt\eps}[u_p^0g_{ex}^0+v_e^0g_{py}^0-h_p^0v_{ex}^0-g_e^0v_{py}^0]\\
  =&\overline u_e^0\p_x g_p^0+\overline v_e^1\p_y g_p^0+\sqrt\eps y(u_{eY}^0\p_x g_p^0+v_{eY}^1\p_y g_p^0)+u_p^0\overline{g_{ex}^1}\\
  &-\overline h_e^0\p_x v_p^0-\overline g_e^1\p_y v_p^0-\sqrt\eps y(h_{eY}^0\p_x v_p^0+g_{eY}^1\p_y v_p^0)-h_p^0\overline{ v_{ex}^1}\\
  &+y u_p^0\p_x\overline{g_{eY}^0}+y g_{py}^0\overline{v_{eY}^0}-y h_p^0\p_x\overline{v_{eY}^0}-y v_{py}^0\overline{g_{eY}^0}+E_4,
\end{align*}
where
\begin{align}\label{2.6}
\begin{cases}
  E_1=
  &\eps\p_x u_p^0\int_0^y \int_y^\theta\p_Y^2 u_e^0(\sqrt\eps\tau)d\tau d\theta
  +\eps\p_y u_p^0\int_0^y \int_y^\theta\p_Y^2 v_e^1(\sqrt\eps\tau)d\tau d\theta\\
  &\quad-\eps\p_x h_p^0\int_0^y \int_y^\theta\p_Y^2 h_e^0(\sqrt\eps\tau)d\tau d\theta
  -\eps\p_y h_p^0\int_0^y \int_y^\theta\p_Y^2 g_e^1(\sqrt\eps\tau)d\tau d\theta,\\
  E_3=
  &\eps\p_x h_p^0\int_0^y \int_y^\theta\p_Y^2 u_e^0(\sqrt\eps\tau)d\tau d\theta
  +\eps\p_y h_p^0\int_0^y \int_y^\theta\p_Y^2 v_e^1(\sqrt\eps\tau)d\tau d\theta\\
  &\quad-\eps\p_x u_p^0\int_0^y \int_y^\theta\p_Y^2 h_e^0(\sqrt\eps\tau)d\tau d\theta
  -\eps\p_y u_p^0\int_0^y \int_y^\theta\p_Y^2 g_e^1(\sqrt\eps\tau)d\tau d\theta,\\
  E_4=
  &\eps\p_x g_p^0\int_0^y \int_y^\theta\p_Y^2 u_e^0(\sqrt\eps\tau)d\tau d\theta
  +\eps\p_y g_p^0\int_0^y \int_y^\theta\p_Y^2 v_e^1(\sqrt\eps\tau)d\tau d\theta\\
  &\quad -\eps\p_x v_p^0\int_0^y \int_y^\theta\p_Y^2 h_e^0(\sqrt\eps\tau)d\tau d\theta
  -\eps\p_y v_p^0\int_0^y \int_y^\theta\p_Y^2 g_e^1(\sqrt\eps\tau)d\tau d\theta\\
  &\quad +\sqrt\eps u_p^0\int_0^y\int_0^\theta\p_x\p_Y^2g_e^0(\sqrt\eps\tau)d\tau d\theta
  +\sqrt\eps g_{py}^0\int_0^y\int_0^\theta\p_Y^2v_e^0(\sqrt\eps\tau)d\tau d\theta\\
  &\quad -\sqrt\eps h_p^0\int_0^y\int_0^\theta\p_x\p_Y^2v_e^0(\sqrt\eps\tau)d\tau d\theta
  -\sqrt\eps v_{py}^0\int_0^y\int_0^\theta\p_Y^2g_e^0(\sqrt\eps\tau)d\tau d\theta\\
  &\quad +\sqrt\eps u_p^0\int_0^y\p_Yg_{ex}^1(\sqrt\eps\tau)d\tau-\sqrt\eps h_p^0\int_0^y\p_Yv_{ex}^1(\sqrt\eps\tau)d\tau.
\end{cases}
\end{align}

Therefore, we obtain an nonlinear MHD boundary layer system for the leading order terms $(u_p^0,v_p^0,h_p^0,g_p^0,p_p^0)$
\begin{align}\label{u_p^0}
\begin{cases}
  &[(\overline u_e^0+u_p^0)\p_x+(v_p^0+\overline v_e^1+y\overline{v_{eY}^0 })\p_y] u_p^0+u_p^0\overline{u_{ex}^0}\\
  &\quad -[(\overline h_e^0+h_p^0)\p_x+(g_p^0+\overline g_e^1+y\overline{g_{eY}^0}  )\p_y] h_p^0-h_p^0\overline{h_{ex}^0} +\p_x p_p^0
  =\nu\p_y^2 u_p^0,\\
  &[(\overline u_e^0+u_p^0)\p_x+(v_p^0+\overline v_e^1+y\overline{v_{eY}^0} )\p_y] h_p^0+u_p^0\overline{h_{ex}^0}\\
  &\quad -[(\overline h_e^0+h_p^0)\p_x+(g_p^0+\overline g_e^1+y\overline{g_{eY}^0} )\p_y] u_p^0-h_p^0\overline{u_{ex}^0}
  =\kappa\p_y^2 h_p^0,\\
  &[(\overline u_e^0+u_p^0)\p_x+(v_p^0+\overline v_e^1+y\overline{v_{eY}^0} )\p_y] (g_p^0+\overline g_e^1+y\overline{g_{eY}^0})\\
  &\quad -[(\overline h_e^0+h_p^0)\p_x+(g_p^0+\overline g_e^1+y\overline{g_{eY}^0} )\p_y] (v_p^0+\overline v_e^1+y\overline{v_{eY}^0})
  =\kappa\p_y^2 g_p^0,\\
  &p_{py}^0=0,\\
  &
  \p_x u_p^0+\p_y v_p^0=\p_x h_p^0+\p_y g_p^0=0,\\
  &(v_p^0,g_p^0)(x,y)=\int_y^{+\infty}\p_x(u_p^0,h_p^0)(x,z)dz,\\
  &(\overline{v_e^1},\overline{g_e^1})=-(v_p^0,g_p^0)(x,0)=-\int_0^{\infty}\p_x(u_p^0,h_p^0)(x,z)dz,\\
  &(u_p^0,\p_y h_p^0)(x,0)=(u_b-\overline u_e^0,0),\quad (u_p^0,h_p^0)(0,y)=(u_{p,0}^0,h_{p,0}^0)(y).
\end{cases}
\end{align}
The fourth equlity of \eqref{u_p^0} implies that the leading order of boundary layers for pressure $p_p^0(x,y)$ should be matched to the outflow pressure on $\{y=0\}$, that is $p_p^0(x,y)=\overline p_e^0(x)$. Next, we will study system \eqref{u_p^0} but ignore the third equation, since the third equation is equivalent to the second equation, by using Bernoulli's law, divergence-free conditions and the boundary condition \eqref{u_p^0}$_7$.

Then, $R^{u,0},R^{h,0}$ are reduced to
\begin{align}\label{R^u,0}
\begin{cases}
  R^{u,0}=
 &\sqrt\eps(v_p^0+v_e^1)\p_Y u_e^0+\sqrt\eps v_e^0\p_Y u_e^1-\sqrt\eps(g_p^0+g_e^1)\p_Y h_e^0-\sqrt\eps g_e^0\p_Y h_e^1\\
 &+\sqrt\eps y(u_{eY}^0 u_{px}^0+v_{eY}^1 u_{py}^0)-\sqrt\eps y(h_{eY}^0 h_{px}^0+g_{eY}^1  h_{py}^0)-\nu\eps\p_Y^2 u_e^0+E_1\\
 &+v_e^0 u_{py}^1-g_e^0 h_{py}^1-y\overline{ v_{eY}^0} u_{py}^0+y\overline{g_{eY}^0} h_{py}^0+u_p^0(u_{ex}^0-\overline{u_{ex}^0})-h_p^0(h_{ex}^0-\overline{h_{ex}^0})\\
 &+\frac{1}{\sqrt\eps}(v_e^0\p_yu_p^0-g_e^0\p_yh_p^0),\\
  R^{h,0}=
 &\sqrt\eps(v_p^0+v_e^1)\p_Y h_e^0+\sqrt\eps v_e^0\p_Y h_e^1-\sqrt\eps(g_p^0+g_e^1)\p_Y u_e^0-\sqrt\eps g_e^0\p_Y u_e^1\\
 &+\sqrt\eps y(u_{eY}^0 h_{px}^0+v_{eY}^1 h_{py}^0)-\sqrt\eps y(h_{eY}^0 u_{px}^0+g_{eY}^1 u_{py}^0)-\kappa\eps\p_Y^2 h_e^0+E_3\\
 &+v_e^0 h_{py}^1-g_e^0 u_{py}^1-y\overline{v_{eY}^0} h_{py}^0+y\overline{g_{eY}^0} u_{py}^0+u_p^0(h_{ex}^0-\overline{h_{ex}^0})-h_p^0(u_{ex}^0-\overline{u_{ex}^0})\\
 &+\frac{1}{\sqrt\eps}(v_e^0\p_yh_p^0-g_e^0\p_yu_p^0),\\
\end{cases}
\end{align}
which would be put into next order in $\eps$.
And $R^{g,0}$ can be simplified to
\begin{align}\label{R^g,0}
  R^{g,0}=
  &E_4+\sqrt\eps y(u_{eY}^0\p_x g_p^0+v_{eY}^1\p_y g_p^0)-\sqrt\eps y(h_{eY}^0\p_x v_p^0+g_{eY}^1\p_y v_p^0)\nonumber\\
  &+(u_p^1\p_x-v_{py}^1)(\sqrt\eps y\overline{g_{eY}^0}+\eps\int_0^y\int_0^\theta\p_Y^2g_e^0(\sqrt\eps\tau)d\tau d\theta)\nonumber\\
  &-(h_p^1\p_x -g_{py}^1)(\sqrt\eps y\overline{v_{eY}^0}+\eps\int_0^y\int_0^\theta\p_Y^2v_e^0(\sqrt\eps\tau)d\tau d\theta)\\
  &+\sqrt\eps v_p^0\int_0^y\p_Y^2g_e^0(\sqrt\eps\tau)d\tau-\sqrt\eps g_p^0\int_0^y\p_Y^2v_e^0(\sqrt\eps\tau)d\tau\nonumber\\
  &+\sqrt\eps[(v_p^0+v_e^1)g_{eY}^1-(g_p^0+g_e^1)v_{eY}^1+v_p^1g_{eY}^0-g_p^1v_{eY}^0]-\kappa\eps\p_Y^2g_e^1,\nonumber
  \end{align}
where we have used the following two facts, one is
\begin{align*}
  \overline {u_e^0g_{eY}^0-h_e^0v_{eY}^0}=0
\end{align*}
from \eqref{u_e^0}$_4$, the other is
\begin{align*}
  \overline{ u_e^0g_{ex}^1+v_e^1g_{eY}^0-h_e^0v_{ex}^1-g_e^1v_{eY}^0}=0
\end{align*}
from \eqref{u_e^1system}$_4$.

At present, to solve the nonlinear boundary layer system \eqref{u_p^0}, we introduce an auxiliary function $\phi(y)\in C^{\infty}(\mathbb{R}_+)$ to  homogenize the boundary on $y=0,\infty$, that is
\begin{equation*}
  \phi(y)=
\begin{cases}
  y,~~y\geq 2R_0,\\
  0,~~0\leq y \leq R_0,
\end{cases}
\end{equation*}
for some constant $R_0>0$, and define the new unknowns as follows
\begin{align}\label{2.9}
\begin{cases}
  u=u_p^0+\overline u_e^0-u_b-(\overline u_e^0-u_b)\phi'(y),\\
  v=\overline v_e^1+v_p^0+ \overline{u_{ex}^0}(\phi(y)-y)=(\overline v_e^1+v_p^0+y\overline{v_{eY}^0})+\overline{u_{ex}^0}\phi(y),\\
  h=h_p^0+\overline h_e^0-\overline h_e^0\phi'(y),\\
  g=\overline g_e^1+g_p^0+\overline{h_{ex}^0}(\phi(y)-y)=(\overline g_e^1+g_p^0+y\overline{g_{eY}^0})+\overline{h_{ex}^0}\phi(y).
\end{cases}
\end{align}
Then, it is easy to see that the new unknowns $(u,v,h,g)$ satisfy the following boundary conditions and divergence-free conditions
\begin{align}\label{2.10}
\begin{cases}
  (u,h)|_{x=0}=(u_{p,0}^0+(\overline u_e^0-u_b)(1-\phi'),h_{p,0}^0+\overline h_e^0-\overline h_e^0\phi')\triangleq(u_0,h_0)(y),\\
  (u,v,\p_y h,g)|_{y=0}=(0,0,0,0),\\
  (u,h)\rightarrow (0,0),~{\rm{as}}~ y\rightarrow \infty,\\
  \p_x u+\p_y v=\p_x h+\p_y g=0.
\end{cases}
\end{align}

Moreover, we rewrite the system \eqref{u_p^0} for $(u_p^0,v_p^0,h_p^0,g_p^0)$ by the new unknowns $(u,v,h,g)$ as
\begin{align}\label{2.11}
\begin{cases}
  &[\{u+u_b+(\overline u_e^0-u_b)\phi'\}\p_x+(v-\overline{u_{ex}^0}\phi)\p_y]u+u\overline{ u_{ex}^0}\phi'+v(\overline u_e^0-u_b)\phi''\\
  &\quad -[(h+\overline h_e^0\phi')\p_x+(g-\overline{h_{ex}^0}\phi)\p_y]h-h\overline{ h_{ex}^0}\phi'-g\overline h_e^0\phi''-\nu\p_y^2 u=r_1,\\
  &[\{u+u_b+(\overline u_e^0-u_b)\phi'\}\p_x+(v-\overline{u_{ex}^0}\phi)\p_y]h+u\overline{ h_{ex}^0}\phi'+v\overline h_e^0\phi''-\kappa\p_y^2 h\\
  &\quad -[(h+\overline h_e^0\phi')\p_x+(g-\overline{h_{ex}^0\phi})\p_y]u-h\overline{ u_{ex}^0}\phi'-g(\overline u_e^0-u_b)\phi''=r_2,
\end{cases}
\end{align}
where $r_1,r_2$ are defined by
\begin{equation}\label{2.12}
\begin{cases}
  r_1=&\overline{p_{ex}^0}[(\phi')^2-\phi\phi''-2]+u_b\overline{u_{ex}^0}[(\phi')^2-\phi'-\phi\phi'']+\nu(\overline u_e^0-u_b)\phi^{(3)},\\
  r_2=&u_b\overline{h_{ex}^0}[(\phi')^2-\phi'+\phi\phi'']+\kappa\overline h_e^0\phi^{(3)},
\end{cases}
\end{equation}
in which we have used the Bernoulli's law in the calculation of \eqref{2.12}.

By the construction of $\phi(y)$, it is easy to get that
\begin{align}\label{2.13}
  r_1(x,y)=-\overline{p_{ex}^0}(x),\quad r_2(x,y)\equiv 0,&\quad y\geq 2R_0,\nonumber \\
  r_1(x,y)=-2\overline{p_{ex}^0}(x),\quad r_2(x,y)\equiv 0,&\quad 0\leq y\leq R_0.
\end{align}
Then for any real number $\lambda\geq 0,~\text{and}~|\alpha|\leq m$, we have
\begin{align}\label{2.14}
  &\|{\y}^\lambda(D^\alpha r_1,D^\alpha r_2)\|_{L^2(\Omega)}\nonumber\\
  \leq&~ C(u_b)+C\sum_{\gamma\leq|\alpha|+1}\|D^\gamma(\overline u_e^0,\overline h_e^0,\overline p_e^0)(x)\|_{L^2(\mathbb{R})}\leq C(u_b)+C M_0.
\end{align}
In addition, there holds that
\begin{equation}\label{2.15}
  \|(u,h)\|_{H_l^m}-CM_0-C(u_b)\leq \|(u_p^0,h_p^0)\|_{H_l^m}\leq \|(u,h)\|_{H_l^m}+CM_0+C(u_b),
\end{equation}
and similarly, for the initial data, it gives
\begin{equation}\label{2.16}
  \|(u_0,h_0)\|_{H_l^m}-CM_0-C(u_b)\leq \|(u_{p,0}^0,h_{p,0}^0)(y)\|_{H_l^m}\leq \|(u_0,h_0)\|_{H_l^m}+CM_0+C(u_b).
\end{equation}

Now, we state the well-posedness result for $(u,v,h,g)$ as follows.
\begin{proposition}\label{prop2.1}Let $m\geq 5$ be an integer, and $l\geq 0$ a real number. Assume that the outer ideal MHD flows $u_e^0,h_e^0$ are smooth and positive, $\p_Y u_e^0,\p_Y h_e^0$ and their derivatives decay exponentially fast to zero at infinity. Furthermore, suppose that there exist some positive constants $\vartheta_0,\eta_0$ and suitable small $\sigma_0 >0$ satisfying
\begin{align}\label{conditionforu_p^0}
\begin{split}
  &\overline u_e^0+u_p^0(0,y)>\overline h_e^0+h_p^0(0,y)\geq \vartheta_0,\\
  &|u_e^0(0,Y)+u_p^0(0,y)|\gg |h_e^0(0,Y)+h_p^0(0,y)|\geq \eta_0,\\
  &|\y^{l+1}\p_y(u_p^0,h_p^0)(0,y)|\leq \frac{1}{2}\sigma_0,\\
  &|\y^{l+1}\p_y^2(u_p^0,h_p^0)(0,y)|\leq \frac{1}{2}\vartheta_0^{-1}.
\end{split}
\end{align}
Then, there exist smooth solutions $(u,v,h,g)$ to problem \eqref{2.10}-\eqref{2.11} in $[0,L_1]\times[0,\infty)$ with small $L_1>0$, such that
\begin{align}\label{2.20}
  \sup_{0\leq x\leq L_1}\|\y^l D^\alpha(u,h)\|_{L_y^2}+\|\y^l\p_yD^\alpha(u,h)\|_{L^2(0,L_1;L^2(0,\infty))}\leq C.
\end{align}
Moreover, for any $(x,y)\in[0,L_1]\times[0,\infty)$, it holds that
\begin{align}\label{estimatesforu_p^0}
\begin{split}
  &\overline u_e^0+u_p^0(x,y)>\overline h_e^0+h_p^0(x,y)\geq \frac{1}{2}\vartheta_0>0,\\
  &|u_e^0(x,Y)+u_p^0(x,y)|\gg |h_e^0(x,Y)+h_p^0(x,y)|\geq \frac{1}{2}\eta_0>0,\\
  &|\y^{l+1}\p_y(u_p^0,h_p^0)(x,y)|\leq\sigma_0,\\
  &|\y^{l+1}\p_y^2(u_p^0,h_p^0)(x,y)|\leq \vartheta_0^{-1}.
\end{split}
\end{align}
\end{proposition}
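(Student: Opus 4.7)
The plan is to solve the homogenized system \eqref{2.10}-\eqref{2.11} locally in $x$ via a weighted-energy method in $H_l^m$, working with the natural energy $\mathcal{E}(x):=\sum_{|\alpha|\leq m}\|\y^{l+k} D^\alpha(u,h)(x,\cdot)\|_{L_y^2}^2$ (where $k$ counts the $y$-derivatives in $D^\alpha=\p_x^\beta\p_y^k$) together with the associated dissipation $\mathcal{D}(x):=\sum_{|\alpha|\leq m}\|\y^{l+k}\p_y D^\alpha(u,h)\|_{L_y^2}^2$. First I would regularize by adding a small $\delta\p_x^2$ term to each of the two equations in \eqref{2.11} and mollify the initial data $(u_0,h_0)$, so as to obtain a family of smooth approximate solutions $(u^\delta,v^\delta,h^\delta,g^\delta)$ by a standard parabolic iteration; the heart of the argument is to produce a differential inequality for $\mathcal{E}$ that is uniform in $\delta$, after which compactness yields a genuine solution to the original system together with the bound \eqref{2.20}.

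The core obstacle is the loss of one $x$-derivative, inherited from the quasilinear transport structure of the system. For example, using the divergence-free identity $v-\overline{u_{ex}^0}\phi=-\int_0^y\p_xu\,dz$, applying $\p_x^m$ to the transport term $(v-\overline{u_{ex}^0}\phi)\p_y u$ produces a nonlocal contribution involving $\p_x^{m+1}u$ which cannot be absorbed by the diffusion $\nu\|\y^{l+k}\p_yD^\alpha u\|_{L_y^2}^2$ alone. The remedy, following the modified energy method of Liu--Xie--Yang \cite{LXYwell} as adapted to the steady setting in \cite{DLX}, is that the analogous term $-(g-\overline{h_{ex}^0}\phi)\p_y h$ in the same equation, together with the symmetric structure of the $h$-equation, produces matching nonlocal contributions that cancel exactly when one forms a suitable linear combination of the energy identities for $D^\alpha u$ and $D^\alpha h$, or equivalently when one reformulates the top-order identity using the magnetic stream function. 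This cancellation requires the non-degeneracy assumption $\overline h_e^0+h_p^0\geq\vartheta_0$ from \eqref{conditionforu_p^0}: by continuity on a small $x$-interval, $h+\overline h_e^0\phi'$ stays uniformly bounded away from zero, so the modified unknown used in the cancellation is well-defined and the division does not destroy the positivity of the dissipation. Setting up this cancellation cleanly at every order $|\alpha|\leq m$ is the step I expect to consume most of the appendix.

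Once the cancellation is in place, the remaining commutator and lower-order contributions can be absorbed into $\mathcal{E}$ and a small fraction of $\mathcal{D}$ using Hardy's inequality in $\y$, Sobolev embedding in $y$ (valid since $m\geq 5$), and the weighted bound \eqref{2.14} on the source terms $r_1,r_2$, with the Eulerian boundary data contributions controlled by $M_0$ from \eqref{M_0}. This produces a differential inequality of the schematic form $\frac{d}{dx}\mathcal{E}(x)+\nu\mathcal{D}(x)\leq C(1+\mathcal{E}(x))^N$ uniformly in $\delta$, which closes by a Gronwall-type argument on $[0,L_1]$ for $L_1$ sufficiently small, delivers \eqref{2.20} after passing to the limit $\delta\to 0$, and also gives $L^2_x$-in-$x$ control of the $\p_y D^\alpha$ terms. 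Finally, the pointwise bounds \eqref{estimatesforu_p^0} follow from a standard continuity argument: the initial hypotheses \eqref{conditionforu_p^0} are strict, the uniform $H_l^m$ bound together with Sobolev embedding gives uniform control of $u_p^0,h_p^0$ and their first two $y$-derivatives weighted by $\y^{l+1}$, and by shrinking $L_1$ further if necessary these quantities cannot leave the open neighborhoods of the initial profile on $[0,L_1]$.
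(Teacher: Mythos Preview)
Your plan is essentially the paper's: a priori weighted $H_l^m$ estimates closed by Gronwall, with the Liu--Xie--Yang cancellation via the magnetic stream function to handle the top-order tangential derivative loss, followed by a continuity argument for the pointwise bounds. Two points of precision are worth flagging against the actual execution in Appendix~\ref{ap2}. First, the cancellation trick is \emph{only} needed at the pure top tangential order $\beta=m$; for all $D^\alpha$ with $\beta\leq m-1$ (including mixed derivatives with $|\alpha|=m$) standard energy estimates suffice, since one $x$-derivative loss is harmless there. So the work splits cleanly into Lemma~\ref{lemmaA.1} ($\beta\leq m-1$, standard) and Lemma~\ref{lemmaA.2} ($\beta=m$, modified unknowns). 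Second, the mechanism is not a linear combination of the $D^\alpha u$ and $D^\alpha h$ energy identities: one derives a separate transport-diffusion equation \eqref{A.50} for $\p_x^\beta\bar\psi$ (the magnetic stream function), then introduces the new unknowns $u_\beta=\p_x^\beta u-\eta_1\p_x^\beta\bar\psi$ and $h_\beta=\p_x^\beta h-\eta_2\p_x^\beta\bar\psi$ with $\eta_i$ as in \eqref{A.54}, chosen precisely so that the dangerous $\p_x^\beta v$, $\p_x^\beta g$ contributions cancel in the equations \eqref{A.55} for $(u_\beta,h_\beta)$; one then closes via the norm equivalence Lemma~\ref{lemmaA.3}, which is where the lower bound on $h+\overline h_e^0\phi'$ is used.
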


\begin{remark}\label{remark}
It should be pointed out that the condition \eqref{conditionforu_p^0}$_2$ is not necessary for the well-posedness of the approximate solutions, which will be applied to control the remainder profile in Section \ref{sec3}. Actually, the condition \eqref{conditionforu_p^0}$_2$ implies that
\begin{align*}
\begin{split}
  |u_e^0(0,Y)+u_p^0(0,y)|-|h_e^0(0,Y)+h_p^0(0,y)|\geq \sigma,
\end{split}
\end{align*}
for a positive constant $\sigma\gg0$.  So we can deduce that
\begin{align*}
\begin{split}
  &|u_e^0(x,Y)+u_p^0(x,y)|-|h_e^0(x,Y)+h_p^0(x,y)|\\
  \geq & |u_e^0(0,Y)+u_p^0(0,y)|-\intx |\p_xu_e^0(x,Y)+\p_xu_p^0(x,y)|\\
  &-|h_e^0(0,Y)+h_p^0(0,y)|-\intx |\p_xu_e^0(x,Y)+\p_xh_p^0(x,y)|\\
  \geq &\sigma-L\|(\p_xu_e^0(x,Y)+\p_xu_p^0(x,y),\p_xh_e^0(x,Y)+\p_xh_p^0(x,y))\|_{L^\infty},
\end{split}
\end{align*}
and hence, for sufficiently small $L$, we have $|u_e^0(x,Y)+u_p^0(x,y)|\gg |h_e^0(x,Y)+h_p^0(x,y)|$ as stated in \eqref{estimatesforu_p^0}$_2$, which will be frequently used in Section \ref{sec3}.
\end{remark}

Thanks to the definition in \eqref{2.9}, the boundedness of \eqref{2.15} and \eqref{2.16}, the proposition for the zero-order boundary layer corrector $(u_p^0,v_p^0,h_p^0,g_p^0)$ follows.

\begin{proposition}\label{prop2.2}
Under the assumptions of Proposition \ref{prop2.1}, there exists a small constant $L_1 > 0$, such that problem \eqref{u_p^0} admits local-in-x classical solutions $(u_p^0,v_p^0,h_p^0,g_p^0)$ in $[0,L_1]\times[0,\infty)$ satisfying
\begin{align}\label{2.24}
  \sup_{0\leq x\leq L_1}\|\y^l D^\alpha(u_p^0,v_p^0,h_p^0,g_p^0)\|_{L_y^2}\leq C,\
  \rm{with}\ |\alpha|\leq m.
\end{align}
\end{proposition}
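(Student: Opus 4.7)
The plan is to derive Proposition \ref{prop2.2} as a direct corollary of Proposition \ref{prop2.1} by inverting the lifting \eqref{2.9} and reconstructing the vertical components via the divergence-free constraints.

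First, I would apply Proposition \ref{prop2.1}, with weight and regularity chosen slightly larger than the target (as explained below), to produce a smooth solution $(u, v, h, g)$ of \eqref{2.10}--\eqref{2.11} on $[0, L_1] \times [0, \infty)$ for some small $L_1 > 0$, satisfying the weighted bound \eqref{2.20}. Its hypotheses \eqref{conditionforu_p^0} transfer from \eqref{condition_for_u_p^0} of Theorem \ref{th1.1} via \eqref{2.16}, since the additive corrections $(\overline u_e^0 - u_b)(1 - \phi'(y))$ and $\overline h_e^0 (1 - \phi'(y))$ appearing in the shift of the initial data are smooth, compactly supported in $y$, and bounded in terms of $M_0$.

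Second, I would invert \eqref{2.9} to set
\begin{equation*}
u_p^0 := u - (\overline u_e^0 - u_b)(1 - \phi'(y)), \qquad h_p^0 := h - \overline h_e^0 (1 - \phi'(y)),
\end{equation*}
and, enforcing \eqref{u_p^0}$_6$--\eqref{u_p^0}$_7$,
\begin{equation*}
v_p^0(x, y) := \int_y^\infty \p_x u_p^0(x, z)\, dz, \qquad g_p^0(x, y) := \int_y^\infty \p_x h_p^0(x, z)\, dz,
\end{equation*}
with $(\overline{v_e^1}, \overline{g_e^1}) := -(v_p^0, g_p^0)|_{y=0}$ and $p_p^0(x, y) := \overline p_e^0(x)$. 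A direct substitution, using Bernoulli's law, the divergence-free relations, and the matching condition \eqref{u_p^0}$_7$, verifies that the resulting quintuple solves \eqref{u_p^0}; the third equation of \eqref{u_p^0} is automatic by the reduction remarked immediately after \eqref{u_p^0}.

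Third, the estimate \eqref{2.24} on the tangential pair $(u_p^0, h_p^0)$ is immediate from \eqref{2.20}, the equivalence \eqref{2.15}, and the bound \eqref{M_0}. For the vertical pair, derivatives involving at least one $\p_y$ reduce to tangential derivatives via $\p_y v_p^0 = -\p_x u_p^0$ and $\p_y g_p^0 = -\p_x h_p^0$, while pure $\p_x^\beta$-derivatives are bounded by a weighted Cauchy--Schwarz plus Fubini estimate on the integral representation, giving
\begin{equation*}
\|\y^l \p_x^\beta (v_p^0, g_p^0)\|_{L^2_y} \lesssim \|\y^{l + 1 + \sigma} \p_x^{\beta+1} (u_p^0, h_p^0)\|_{L^2_y}
\end{equation*}
for any small $\sigma > 0$; the right-hand side is finite via \eqref{2.20} once Proposition \ref{prop2.1} is invoked at weight $l + 1 + \sigma$ and order $m + 1$, both of which remain within its hypothesis range $l \geq 0$, $m \geq 5$.

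The only delicate point, and what would be the main obstacle under a naive reading, is this weight- and derivative-trading in reconstructing $(v_p^0, g_p^0)$ from the divergence-free conditions: Proposition \ref{prop2.1} must be applied at strictly larger indices than the target of Proposition \ref{prop2.2}, so as to give up one tangential derivative and roughly one unit of $y$-weight in the integral estimate. The generality of the statement of Proposition \ref{prop2.1} absorbs this loss comfortably, and no new analytical difficulty arises beyond what was already handled there.
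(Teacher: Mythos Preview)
Your proposal is correct and follows essentially the same route as the paper, which merely records (in one sentence preceding the proposition) that the result follows from the definition \eqref{2.9} together with the equivalence bounds \eqref{2.15}--\eqref{2.16}. You supply the detail the paper omits, namely the reconstruction of $(v_p^0,g_p^0)$ from the divergence-free constraints and the attendant weighted Cauchy--Schwarz estimate; your observation that this costs one tangential derivative and roughly one unit of weight, to be absorbed by invoking Proposition~\ref{prop2.1} at slightly higher indices, is the right way to close the argument.
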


The proof of Proposition \ref{prop2.1} will be completed in three steps, which is left to Appendix \ref{ap2}. First, we will derive the weighted estimates for $D^\alpha(u,h)$ with $|\alpha|\leq m,~D^\alpha=\p_x^\beta\p_y^k,~\beta\leq m-1$. Second, we shall obtain the weighted estimates for $\p_x^\beta(u,h)$ with $\beta=m$, in other words, m-th order tangential derivative. And the final step is devoted to closing the energy estimates.

\subsection{$\eps^{\frac{1}{2}}$-order correctors}\label{sec2.2}
This subsection is devoted to deducing the systems for $\eps^\frac{1}{2}$-order correctors and the formulation of pressure $p_p^2$. First, collecting all terms with a factor $\sqrt\eps$, together with $\sqrt\eps$-order terms from $R^{u,0},R^{h,0}$, we have
\begin{align}\label{2.25}
\begin{cases}
R^{u,1}=&[(u_e^0+u_p^0)\p_x+(v_p^0+v_e^1)\p_y](u_e^1+u_p^1)+[(u_e^1+u_p^1)\p_x+v_p^1\p_y](u_e^0+u_p^0)\\
        &+\p_x(p_e^1+p_p^1)-\nu\p_y^2(u_e^1+u_p^1)+(yu_{px}^0+v_p^0+v_e^1)u_{eY}^0+yv_{eY}^1 u_{py}^0+v_e^0 u_{eY}^1\\
        &-[(h_e^0+h_p^0)\p_x+(g_p^0+g_e^1)\p_y](h_e^1+h_p^1)-[(h_e^1+h_p^1)\p_x+g_p^1\p_y](h_e^0+h_p^0)\\
        &-(yh_{px}^0+g_p^0+g_e^1)h_{eY}^0-yg_{eY}^1 h_{py}^0-g_e^0 h_{eY}^1+\frac{1}{\sqrt\eps}(v_e^0\p_yu_p^1-g_e^0\p_yh_p^1),\\
R^{h,1}=&[(u_e^0+u_p^0)\p_x+(v_p^0+v_e^1)\p_y](h_e^1+h_p^1)+[(u_e^1+u_p^1)\p_x+v_p^1\p_y](h_e^0+h_p^0)\\
        &-\kappa\p_y^2(h_e^1+h_p^1)+(v_p^0+v_e^1)h_{eY}^0+yu_{eY}^0h_{px}^0+yv_{eY}^1h_{py}^0+v_e^0 h_{eY}^1\\
        &-[(h_e^0+h_p^0)\p_x+(g_p^0+g_e^1)\p_y](u_e^1+u_p^1)-[(h_e^1+h_p^1)\p_x+g_p^1\p_y](u_e^0+u_p^0)\\
        &-(g_p^0+g_e^1)u_{eY}^0-yh_{eY}^0u_{px}^0-yg_{eY}^1u_{py}^0-g_e^0 u_{eY}^1+\frac{1}{\sqrt\eps}(v_e^0\p_yh_p^1-g_e^0\p_yu_p^1).
\end{cases}
\end{align}
We stress that the terms for the ideal flows with scaling $Y=\sqrt\eps y$ will be kept when it is hit by the partial derivative $\p_y$, more specifically,
\begin{align*}
   \p_y^2 u_e^1=\eps\p_Y^2 u_e^1,\quad \p_y^2 h_e^1=\eps\p_Y^2 h_e^1, \\
  (v_p^0+v_e^1)\p_y u_e^1=\sqrt\eps (v_p^0+v_e^1)u_{eY}^1,\\
   -(g_p^0+g_e^1)\p_y h_e^1=-\sqrt\eps (g_p^0+g_e^1)h_{eY}^1,\\
   v_p^1u_{ey}^0-g_p^1h_{ey}^0=\sqrt\eps (v_p^1u_{eY}^0-g_p^1h_{eY}^0),\\
   v_p^1h_{ey}^0-g_p^1u_{ey}^0=\sqrt\eps (v_p^1h_{eY}^0-g_p^1u_{eY}^0).
\end{align*}
The leading interior terms $(u_e^1,v_e^1,h_e^1,g_e^1,p_e^1)$ are taken to satisfy
\begin{align}\label{2.26}
\begin{cases}
  &u_e^0 u_{ex}^1+v_e^0 u_{eY}^1+u_e^1 u_{ex}^0+v_e^1 u_{eY}^0+p_{ex}^1\\
   &\quad-h_e^0 h_{ex}^1-g_e^0 h_{eY}^1-h_e^1 h_{ex}^0-g_e^1 h_{eY}^0=0,\\
  &u_e^0 h_{ex}^1+v_e^0 h_{eY}^1+u_e^1 h_{ex}^0+v_e^1 h_{eY}^0\\
   &\quad-h_e^0 u_{ex}^1-g_e^0 u_{eY}^1-h_e^1 u_{ex}^0-g_e^1 u_{eY}^0=0.
\end{cases}
\end{align}
And the boundary-layer terms $(u_p^1,v_p^1,h_p^1,g_p^1,p_p^1)$ are described by the following system,
\begin{align}\label{2.27}
\begin{cases}
    &u^0u_{px}^1 +v^0u_{py}^1 +u_p^1\p_xu^0 +v_p^1\p_yu^0 +p_{px}^1 -\nu\p_y^2u_p^1 \\
  &\quad -h^0h_{px}^1 -g^0h_{py}^1 -h_p^1\p_xh^0 -g_p^1\p_yh^0\\
  &=-[u_p^0\overline{u_{ex}^1} +\overline{u_e^1}u_{px}^0 +(yu_{px}^0+v_p^0)\overline{u_{eY}^0} +y\overline{v_{eY}^1}u_{py}^0\\
  &\quad -h_p^0\overline{h_{ex}^1} -\overline{h_e^1}h_{px}^0 -(yh_{px}^0+g_p^0)\overline{h_{eY}^0} -y\overline{g_{eY}^1}h_{py}^0]\\
  &\quad +\frac{1}{\sqrt\eps}[(\frac{v_e^0}{\sqrt\eps}-y\overline{v_{eY}^0})u_{py}^0+(u_{ex}^0-\overline{u_{ex}^0})u_p^0]\\
  &\quad -\frac{1}{\sqrt\eps}[(\frac{g_e^0}{\sqrt\eps}-y\overline{g_{eY}^0})h_{py}^0+(h_{ex}^0-\overline{h_{ex}^0})h_p^0]
  :=F_p^1,\\
    &u^0h_{px}^1 +v^0h_{py}^1 +u_p^1\p_xh^0 +v_p^1\p_yh^0 -\kappa\p_y^2h_p^1 \\
  &\quad -h^0u_{px}^1 -g^0u_{py}^1 -h_p^1\p_xu^0 -g_p^1\p_yu^0\\
  &=-[u_p^0\overline{h_{ex}^1} +\overline{u_e^1}h_{px}^0 +v_p^0\overline{h_{eY}^0} +yh_{px}^0\overline{u_{eY}^0}+y\overline{v_{eY}^1}h_{py}^0\\
  &\quad -h_p^0\overline{u_{ex}^1} -\overline{h_e^1}u_{px}^0 -g_p^0\overline{u_{eY}^0} -yu_{px}^0\overline{h_{eY}^0} -y\overline{g_{eY}^1}u_{py}^0]\\
  &\quad +\frac{1}{\sqrt\eps}[(\frac{v_e^0}{\sqrt\eps}-y\overline{v_{eY}^0})h_{py}^0+(h_{ex}^0-\overline{h_{ex}^0})u_p^0]\\
  &\quad -\frac{1}{\sqrt\eps}[(\frac{g_e^0}{\sqrt\eps}-y\overline{g_{eY}^0})u_{py}^0+(u_{ex}^0-\overline{u_{ex}^0})h_p^0]
  :=F_p^2,\\
  &p_{py}^1=0,
\end{cases}
\end{align}
where
\begin{align*}
\begin{cases}
  u^0:=\overline u_e^0+u_p^0,\quad v^0:=y\overline{v_{eY}^0}+\overline v_e^1+v_p^0,\\
  h^0:=\overline h_e^0+h_p^0,\quad g^0:=y\overline{g_{eY}^0}+\overline g_e^1+g_p^0.
\end{cases}
\end{align*}
After constructing the above profiles, the errors are reduced to
\begin{align}\label{R_u^1}
\begin{cases}
   R^{u,0}=&E_1-\nu\eps\p_Y^2u_e^0,\\
   R^{h,0}=&E_3-\kappa\eps\p_Y^2h_e^0,\\
    R^{u,1}=
  &\sqrt\eps[(v_p^0+v_e^1)\p_Y u_e^1 -(g_p^0+g_e^1)\p_Y h_e^1] -\nu\eps\p_Y^2 u_e^1 +(u_e^0-\overline u_e^0)u_{px}^1\\
  &+(v_e^1-\overline v_e^1)u_{py}^1 +u_p^1(u_{ex}^0-\overline{u_{ex}^0}) +\sqrt\eps v_p^1u_{eY}^0-(h_e^0-\overline h_e^0)h_{px}^1\\
  &-(g_e^1-\overline g_e^1)h_{py}^1 -h_p^1(h_{ex}^0-\overline{h_{ex}^0}) -\sqrt\eps g_p^1h_{eY}^0 +u_p^0(u_{ex}^1-\overline{ u_{ex}^1})\\
  &+(u_e^1-\overline u_e^1)u_{px}^0 +(yu_{px}^0+v_p^0)(u_{eY}^0-\overline{u_{eY}^0}) +y(v_{eY}^1-\overline{v_{eY}^1})u_{py}^0\\
  &-h_p^0(h_{ex}^1-\overline{h_{ex}^1}) -(h_e^1-\overline h_e^1)h_{px}^0 -(yh_{px}^0+g_p^0)(h_{eY}^0-\overline{h_{eY}^0})\\
  &-y(g_{eY}^1-\overline{g_{eY}^1})h_{py}^0 +(\frac{v_e^0}{\sqrt\eps}-y\overline{v_{eY}^0})u_{py}^1 -(\frac{g_e^0}{\sqrt\eps}-y\overline{g_{eY}^0})h_{py}^1,\\
    R^{h,1}=
  &\sqrt\eps[(v_p^0+v_e^1)\p_Y h_e^1-(g_p^0+g_e^1)\p_Y u_e^1]-\kappa\eps\p_Y^2 h_e^1 +(u_e^0-\overline u_e^0)h_{px}^1\\
  &+(v_e^1-\overline v_e^1)h_{py}^1 +u_p^1(h_{ex}^0-\overline{h_{ex}^0}) +\sqrt\eps v_p^1h_{eY}^0-(h_e^0-\overline h_e^0)u_{px}^1\\
  &-(g_e^1-\overline g_e^1)u_{py}^1 -h_p^1(u_{ex}^0-\overline{u_{ex}^0}) -\sqrt\eps g_p^1u_{eY}^0 +u_p^0(h_{ex}^1-\overline{ h_{ex}^1})\\
  &+(u_e^1-\overline u_e^1)h_{px}^0 +v_p^0(h_{eY}^0-\overline{h_{eY}^0}) +yh_{px}^0(u_{eY}^0-\overline{u_{eY}^0}) +y(v_{eY}^1-\overline{v_{eY}^1})h_{py}^0\\
  &-h_p^0(u_{ex}^1-\overline{u_{ex}^1}) -(h_e^1-\overline h_e^1)u_{px}^0 -g_p^0(u_{eY}^0-\overline{u_{eY}^0}) -y(h_{eY}^0-\overline{h_{eY}^0})u_{px}^0\\
  &-y(g_{eY}^1-\overline{g_{eY}^1})u_{py}^0+(\frac{v_e^0}{\sqrt\eps}-y\overline{v_{eY}^0})h_{py}^1
  -(\frac{g_e^0}{\sqrt\eps}-y\overline{g_{eY}^0})u_{py}^1.\\
\end{cases}
\end{align}

Now we continue to consider the normal components of \eqref{R_app_expansion}. To begin with, the $\mathcal{O}(\frac{1}{\sqrt\eps})$-order of $\eqref{R_app_expansion}_2$ consists of
\begin{align*}
   R^{v,-1}&=u_p^0v_{ex}^0+v_e^0v_{py}^0-h_p^0g_{ex}^0-g_e^0g_{py}^0.
\end{align*}
Then, the next $\mathcal{O}(1)$-order of $\eqref{R_app_expansion}_2$ is composed of
\begin{align*}
    R^{v,0}&=
  (u_e^0+u_p^0)\p_x(v_p^0+v_e^1)+(v_p^0+v_e^1)\p_y(v_p^0+v_e^1)+v_e^0(v_{eY}^1+v_{py}^1)+(v _p^0+v_e^1)v_{eY}^0\\
  &+(u_e^1+u_p^1)v_{ex}^0+v_p^1v_{ey}^0-(h_e^0+h_p^0)\p_x(g_p^0+g_e^1)-(g_p^0+g_e^1)\p_y(g_p^0+g_e^1)-g_p^1g_{ey}^0\\
  &-g_e^0(g_{eY}^1+g_{py}^1)-(g_p^0+g_e^1)g_{eY}^0-(h_e^1+h_p^1)g_{ex}^0+p_{eY}^1+p_{py}^2-\nu\p_y^2(v_p^0+v_e^1).
\end{align*}
Here we note that $(v_p^1,g_p^1)$ will be determined by the construction of $(u_p^1,h_p^1)$ and the divergence-free conditions. Taking the interior profile $(u_e^1,v_e^1,h_e^1,g_e^1,p_e^1)$ to enjoy
\begin{align}\label{u_e^1}
\left\{
\begin{array}{lll}
  &u_e^0v_{ex}^1+v_e^0v_{eY}^1+u_e^1v_{ex}^0+v_e^1v_{eY}^0+p_{eY}^1-h_e^0g_{ex}^1-g_e^0g_{eY}^1-h_e^1g_{ex}^0-g_e^1g_{eY}^0=0,\\
  &u_e^0g_{ex}^1+v_e^0g_{eY}^1+u_e^1g_{ex}^0+v_e^1g_{eY}^0-h_e^0v_{ex}^1-g_e^0v_{eY}^1-h_e^1v_{ex}^0-g_e^1v_{eY}^0=0.
\end{array}
\right.
\end{align}

Finally, the second-order boundary layer pressure $p_p^2$ is taken to be of the following form
\begin{align}\label{p_p^2}
  p_p^2(x,y)=
 &\int_y^\infty\bigg[\frac{1}{\sqrt\eps}\{u_p^0v_{ex}^0+v_e^0v_{py}^0-h_p^0g_{ex}^0-g_e^0g_{py}^0\}+(u_e^0+u_p^0)v_{px}^0+(v_p^0+v_e^1)v_{py}^0\nonumber\\
 &+u_p^0 v_{ex}^1+u_p^1v_{ex}^0+v_e^0v_{py}^1+v_p^0v_{eY}^0-(h_e^0+h_p^0)g_{px}^0-(g_p^0+g_e^1)g_{py}^0-h_p^0 g_{ex}^1\nonumber\\
 &-h_p^1g_{ex}^0-g_e^0g_{py}^1-g_p^0g_{eY}^0-\nu\p_y^2 v_p^0\bigg](x,\theta)d\theta,
\end{align}
so the error term $R^{v,0}$ in this order is reduced to
\begin{align}\label{R^v,0}
  R^{v,0}=
 &\sqrt\eps[(v_p^0+v_e^1)v_{eY}^1-(g_p^0+g_e^1)g_{eY}^1+v_p^1v_{eY}^0-g_p^1g_{eY}^0]-\nu\eps\p^2_Y v_e^1.
\end{align}

Consequently, the combination of \eqref{2.26},\eqref{u_e^1} with the divergence-free conditions constitutes the profile equations for the ideal MHD corrector $(u_e^1,v_e^1,h_e^1,g_e^1,p_e^1)$, while the system \eqref{2.27} together with the divergence-free conditions for the MHD boundary layers $(u_p^1,v_p^1,h_p^1,g_p^1,p_p^1)$.

\subsection{The ideal MHD correctors}\label{sec2.3}
Based on the above analysis, to construct the ideal MHD corrector, one should solve the following system for $(u_e^1,v_e^1,h_e^1,g_e^1,p_e^1)$
\begin{align}\label{u_e^1system}
\begin{cases}
  &u_e^0 u_{ex}^1+v_e^0 u_{eY}^1+u_e^1 u_{ex}^0+v_e^1 u_{eY}^0+p_{ex}^1-h_e^0 h_{ex}^1-g_e^0 h_{eY}^1-h_e^1 h_{ex}^0-g_e^1 h_{eY}^0=0,\\
  &u_e^0v_{ex}^1+v_e^0v_{eY}^1+u_e^1v_{ex}^0+v_e^1v_{eY}^0+p_{eY}^1-h_e^0g_{ex}^1-g_e^0g_{eY}^1-h_e^1g_{ex}^0-g_e^1g_{eY}^0=0,\\
  &u_e^0 h_{ex}^1+v_e^0 h_{eY}^1+u_e^1 h_{ex}^0+v_e^1 h_{eY}^0-h_e^0 u_{ex}^1-g_e^0 u_{eY}^1-h_e^1 u_{ex}^0-g_e^1 u_{eY}^0=0,\\
  &u_e^0g_{ex}^1+v_e^0g_{eY}^1+u_e^1g_{ex}^0+v_e^1g_{eY}^0-h_e^0v_{ex}^1-g_e^0v_{eY}^1-h_e^1v_{ex}^0-g_e^1v_{eY}^0=0,\\
  &u_{ex}^1+v_{eY}^1=h_{ex}^1+g_{eY}^1=0,
\end{cases}
\end{align}
with the boundary conditions
\begin{align}\label{u_e^1boundary}
\begin{cases}
  (v_e^1,g_e^1)(x,0)=-(v_p^0,g_p^0)(x,0),\\
  (v_e^1,g_e^1)\rightarrow(0,0),\ \mathrm{as}\ Y\rightarrow\infty.
\end{cases}
\end{align}

On the one hand, using the first two equations of \eqref{u_e^1system} and the divergence-free conditions, we can derive the equations for the vorticity form

\begin{align}\label{rotationofu_e^1}
  &(u_e^0\p_x+v_e^0\p_Y)\omega_1 +(u_e^1\p_x+v_e^1\p_Y)(\p_Y u_e^0 -\p_x v_e^0) \nonumber\\
  &\quad -(h_e^0\p_x+g_e^0\p_Y)\omega_2 -(h_e^1\p_x+g_e^1\p_Y)(\p_Y h_e^0 -\p_x g_e^0)=0,
\end{align}
 where
 \begin{align*}
   \omega_1= \p_Y u_e^1 -\p_x v_e^1,\quad   \omega_2= \p_Y h_e^1 -\p_x g_e^1.
 \end{align*}
Furthermore, by virtue of the divergence-free conditions, there exist stream functions $\Phi,\Psi$ for velocity and magnetic field, respectively, such that
\begin{equation}\label{streamfunctions}
\begin{cases}
  \nabla^\bot\Phi=(\p_Y\Phi,-\p_x\Phi):=(u_e^1,v_e^1),\\
  \nabla^\bot\Psi=(\p_Y\Psi,-\p_x\Psi):=(h_e^1,g_e^1).
\end{cases}
\end{equation}
And note that
\begin{equation*}
  (\omega_1,\omega_2)=(\Delta\Phi,\Delta\Psi),
\end{equation*}
the equation \eqref{rotationofu_e^1} is equivalent to the following equation:
\begin{align}\label{rotationofpsi}
  &(u_e^0\p_x+v_e^0\p_Y)\Delta\Phi -(h_e^0\p_x+g_e^0\p_Y)\Delta\Psi \nonumber\\
  &\quad =-(u_e^1\p_x+v_e^1\p_Y)(\p_Y u_e^0 -\p_x v_e^0) +(h_e^1\p_x+g_e^1\p_Y)(\p_Y h_e^0 -\p_x g_e^0).
\end{align}

On the other hand, we can rewrite the third and fourth equation of \eqref{u_e^1system} as
\begin{align*}\label{totaldifferential}
  \p_Y(v_e^0h_e^1+h_e^0v_e^1-u_e^0g_e^1-u_e^1g_e^0)=0,\\
  \p_x(v_e^0h_e^1+h_e^0v_e^1-u_e^0g_e^1-u_e^1g_e^0)=0,
\end{align*}
in which we have used the divergence-free conditions. So there exists constant $b$, such that
\begin{equation}\label{prerelation0}
  v_e^0h_e^1 -u_e^0g_e^1=g_e^0 u_e^1 -h_e^0v_e^1 +b,
\end{equation}
or equivalently,
\begin{equation}\label{relationpsi0}
  (u_e^0\p_x+v_e^0\p_Y)\Psi=(h_e^0\p_x+g_e^0\p_Y)\Phi+b.
\end{equation}
Let $Y\to \infty$ in \eqref{prerelation0}, we have
$$b=0,$$
where we have used the fact that $(v^0_e,g^0_e,v^1_e,g^1_e)\to (0,0,0,0)$ as $Y\to \infty$. Therefore, we get
\begin{equation}\label{prerelation}
  v_e^0h_e^1 -u_e^0g_e^1=g_e^0 u_e^1 -h_e^0v_e^1,
\end{equation}
or equivalently, in the formulation of the stream functions:
\begin{equation}\label{relationpsi}
  (u_e^0\p_x+v_e^0\p_Y)\Psi=(h_e^0\p_x+g_e^0\p_Y)\Phi.
\end{equation}

Performing a similar calculation as we did for the equation \eqref{relationpsi},  we can deduce from the third and the fourth equation in system \eqref{u_e^0} for the ideal MHD flows $(u_e^0,v_e^0,h_e^0,g_e^0)$ that
\begin{equation}\label{pre_ratio}
  v_e^0h_e^0 -g_e^0u_e^0=0,
\end{equation}
where the zero boundary conditions $(v^0_e,g^0_e)\to (0,0)$ as $Y\to \infty$ have been applied. It implies that there exists a function $k(x,Y)$ satisfying
\begin{equation}\label{ratio}
  h_e^0=k(x,Y)u_e^0,\quad g_e^0=k(x,Y)v_e^0.
\end{equation}
In addition, we have $0<k<1$, by virtue of the assumption \eqref{condition_for_idealMHD1} stated in the main theorem of this paper.

Putting \eqref{ratio} into \eqref{relationpsi}, we obtain that
\begin{equation}\label{partialk1}
  (u_e^0\p_x+v_e^0\p_Y)\Psi=k(u_e^0\p_x+v_e^0\p_Y)\Phi.
\end{equation}
Moreover, using the divergence-free conditions of velocity field and magnetic field, it gives
\begin{equation}\label{partialk2}
  (u_e^0\p_x+v_e^0\p_Y)k(x,Y)=\p_xh_e^0+\p_Yg_e^0-k(\p_x u_e^0+\p_Y v_e^0)=0.
\end{equation}
Therefore, adding the identity \eqref{partialk2} into the right hand side of equation \eqref{partialk1}, we can deduce a linear first-order partial differential equation read as
\begin{equation}\label{relation}
  (\p_x+\frac{v_e^0}{u_e^0}\p_Y)f=0,
\end{equation}
in which we denote $f:=\Psi-k\Phi$. Define the characteristic curve of \eqref{relation} as the following ordinary differential equation
\begin{equation}\label{character}
  \frac{dY}{dx}=\frac{v_e^0}{u_e^0}(x,Y),
\end{equation}
then the equation \eqref{relation} becomes
\begin{equation*}
  \frac{df}{dx}=0,
\end{equation*}
with given data $f_0:=f(0,Y)=\Psi_0-k_0\Phi_0$. According to characteristic method and local well-posedness theory of ODE, we get
\begin{equation}\label{streamrelation}
  \Psi=k\Phi+F(u_e^0,v_e^0,f_0),
\end{equation}
in which $F$ is a function determined by the data $u_e^0,v_e^0,f_0$. The equality \eqref{streamrelation} gives the relationship between two stream functions, and thus the following high order terms of $\Psi$ can be expressed by $\Phi$ as
\begin{align}\label{psiterms}
\begin{split}
  &\p_x\Psi=\p_xk\cdot\Phi+k\p_x\Phi+\p_xF,\\
  &\p_Y\Psi=\p_Yk\cdot\Phi+k\p_Y\Phi+\p_YF,\\
  &\Delta\Psi=k\Delta\Phi+\Phi\Delta k+2\nabla k\cdot\nabla \Phi+\Delta F.
\end{split}
\end{align}

Hence, plugging \eqref{psiterms} into second-order vorticity equation \eqref{rotationofpsi} with definition \eqref{streamfunctions}, using the identity \eqref{partialk2} and the positivity of $u_e^0$, we can conclude that the stream function $\Phi$ enjoys the following equation
\begin{align}\label{Hphi}
  (\p_x+\frac{v_e^0}{u_e^0}\p_Y)G(\Phi)=H(\Phi),
\end{align}
where $G(\Phi)$ and $H(\Phi)$ are defined by
$$G(\Phi):=-(1-k^2)\Delta\Phi+k\Delta k\cdot\Phi+2k\nabla k\cdot\nabla \Phi+k\Delta F,$$
 and
\begin{align*}
  H(\Phi):=
  &-\frac{1}{u_e^0}\bigg[(\p_Yk\cdot\Phi+k\p_Y\Phi+\p_YF)\p_x(\p_Y h_e^0 -\p_x g_e^0)\\
  &-(\p_xk\cdot\Phi+k\p_x\Phi+\p_xF)\p_Y(\p_Y h_e^0 -\p_x g_e^0)\\
  &-(\p_Y\Phi\p_x-\p_x\Phi\p_Y)(\p_Y u_e^0 -\p_x v_e^0)\bigg].
\end{align*}
Define the characteristic curve of \eqref{Hphi} the same as \eqref{character}, then the equation \eqref{Hphi} becomes
\begin{equation*}
  \frac{dG(\Phi)}{dx}=H(\Phi).
\end{equation*}
And thus, with given data $G(\Phi)|_{x=0}=G_0(\Phi_0(Y))$, the problem in consideration turns to the following equation
\begin{align}\label{Gphi}
  G(\Phi(x,Y))=\intx H(\Phi(s,Y))ds+G_0(\Phi_0(Y)),
\end{align}
with the given boundary conditions:
\begin{equation}\label{varphiboundary}
\begin{cases}
  \Phi|_{Y=0}=1+\int_0^x v_p^0(s,0)\rm{ds},\\
  \Phi(0,Y)=\Phi_0(Y),~~\Phi(L,Y)=\Phi_L(Y).
\end{cases}
\end{equation}
 We take the data $\Phi_0(Y)$ and $\Phi_L(Y)$ to be sufficiently smooth and decay exponentially fast at infinity. In addition, we assume the compatibility conditions $\Phi_0(0)=1$ and $\Phi_L(0)=1+\int_0^L v_p^0(s,0)\rm{ds}$. Note that the first condition in \eqref{varphiboundary} is equivalent to $\Phi_x(x,0)=-v_e^1(x,0)=v_p^0(x,0)$, where the general constant have been selected to 1, without loss of generality. To solve the problem \eqref{Gphi}-\eqref{varphiboundary}, we suppose that the data are in the well-prepared sense defined as follow (see also \cite{Iyernonshear}).
\begin{definition}[Well-prepared boundary data]\label{wellprepared}
 We call the boundary data are well-prepared up to order two, if the boundary value of $\Phi_{YY}(x,Y)$ satisfying
\begin{align*}
 \Phi_{YY}(x,0)=
 &-\Phi_{xx}(x,0)+\frac{1}{1-k^2(x,0)}(k\Delta k\cdot\Phi+2k\nabla k\cdot\nabla \Phi+k\Delta F)(x,0)\\
 &-\frac{1}{1-k^2(x,0)}\bigg\{\intx H(\Phi(s,0))ds+G_0(\Phi_0(0))\bigg\},
\end{align*}
with the following compatibility conditions
\[ \Phi_{YY}(x,0)|_{x=0}=\p_{YY}\Phi_0(0), \quad \Phi_{YY}(x,0)|_{x=L}=\p_{YY}\Phi_L(0).\]
Repeating the above procedure, we achieve the generalization to order $k$.
\end{definition}

Rewrite \eqref{Gphi} as follows,
\begin{align}\label{phieq}
\begin{split}
  -\Delta\Phi=&\mathcal{F}(\Phi),
\end{split}
\end{align}
with the source term
\begin{align*}
  \mathcal{F}(\Phi)
  :=&-\frac{k\Delta k}{1-k^2}\cdot\Phi-\frac{2k\nabla k}{1-k^2}\cdot\nabla\Phi
  -\frac{k}{1-k^2}\Delta F\\
  &+\frac{1}{1-k^2}\int_0^x H(\Phi(s,Y))ds+\frac{1}{1-k^2}G_0(\Phi_0(Y)).
  \end{align*}
Recall the facts that
\begin{align}\label{assumption_on_k}
\begin{split}
  &0<\|k\|_{L^\infty}<<1,\\
 &\|\nabla^m k\|_{L^2}\leq C \textrm{\ for\ arbitrary\ }m.
\end{split}
\end{align}
We can apply the contraction mapping principle to determine $\Phi$. Indeed, for any $\tilde{\Phi}$, by the standard theory of elliptic system, there exists a unique solution to the following equation
$$-\Delta\Phi=\mathcal{F}(\tilde{\Phi}),$$
which produces a solution mapping $\mathbf{T}:\tilde\Phi\mapsto \Phi$ for $\tilde\Phi,\Phi\in X:=\{\Phi|\|Y^n\Phi\|_{H^m}<C(n,m)\}$. The contraction of the mapping follows from the smallness of $k$ and $L>0$. This gives the fixed point $\mathbf{T}(\Phi)=\Phi$, which is the solution to the original problem \eqref{phieq}. Moreover, the magnetic fields can be also determined.

With the sketch, our attentions will be paid on the estimates for $\Phi$.
To begin with, let us introduce a function $B_\Phi$ defined by
\begin{align*}
  B_\Phi(x,Y)
  =&(1-\frac{x}{L})\frac{\Phi_{0}(Y)}{\Phi(0,0)}(1+\intx v_p^0(s,0)ds)+\frac{x}{L}\frac{\Phi_{L}(Y)}{\Phi(L,0)}(1+\intx v_p^0(s,0)ds)\\
  =&(1-\frac{x}{L})\Phi_{0}(Y)(1+\intx v_p^0(s,0)ds)+\frac{x}{L}\frac{\Phi_{L}(Y)}{\Phi(L,0)}(1+\intx v_p^0(s,0)ds)\\
  =&\Phi_{0}(Y)(1+\intx v_p^0(s,0)ds)+\frac{x}{L}(\frac{\Phi_{L}(Y)}{\Phi(L,0)}-\Phi_{0}(Y))(1+\intx v_p^0(s,0)ds).
\end{align*}
in the case when $\Phi(L,0)$ is nonzero, otherwise, for the case that $\Phi(L,0)=0$, we will take $\frac{x}{L}[\Phi_L(Y)+(1+\intx v_p(s,0)ds)]$ as the second term, instead of the ratio term $\frac{x}{L}\frac{\Phi_{L}(Y)}{\Phi(L,0)}(1+\intx v_p(s,0)ds)$. Thanks to the compatibility conditions at the corner, it is easy to get that $B_\Phi$ satisfies the boundary conditions \eqref{varphiboundary}.
In addition, if we suppose that $|Y^n\p_Y^m(\Phi_{L}(Y)-\Phi_{0}(Y))|\leq CL$, then it yields that $Y^nB_\Phi\in H^m$ for arbitrary $n,m$. And we introduce the function $E_\Phi(x,Y)$ satisfying
\begin{align}\label{Ephi}
\begin{split}
  E_\Phi
  :=&-\Delta B_\Phi+\frac{k\Delta k}{1-k^2}\cdot B_\Phi+\frac{2k\nabla k}{1-k^2}\cdot\nabla B_\Phi+\frac{k}{1-k^2}\Delta F\\
  &-\frac{1}{1-k^2}G_0(\Phi_0(Y))-\frac{1}{1-k^2}\intx H(B_\Phi(s,Y))ds.
\end{split}
\end{align}
Then $E_\Phi$ is sufficiently smooth and enjoys the following weighted estimate
\begin{equation}\label{Ephiestimate}
  \|Y^nE_\Phi\|_{H^m}\leq C,\quad \mathrm{for}\ n,m\geq 0.
\end{equation}

Introduce function $\Phi^*$ as
\begin{equation*}
  \Phi=\Phi^*+ B_\Phi,
\end{equation*}
then the function $\Phi^*$ solves the following elliptic problem
\begin{align}\label{Phistareq}
\begin{split}
  -\Delta\Phi^*
  =&-\frac{k\Delta k}{1-k^2}\cdot\Phi^*-\frac{2k\nabla k}{1-k^2}\cdot\nabla\Phi^*\\
  &+\frac{1}{1-k^2}\intx H(\Phi^*(s,Y))ds-E_\Phi,
\end{split}
\end{align}
with the homogenous boundary conditions $\Phi^*|_{\p\Omega}=0$.

In what follows, we will derive the weighted $H^k$ estimate for the equation \eqref{Phistareq} of $\Phi^*$. Let us perform the $H^1$ estimate at first. Multiplying the equation \eqref{Phistareq} by $\Phi^*$ and integrating by parts, it gives
\begin{align}
\begin{split}
\iint|\nabla\Phi^*|^2
  =&-\iint\frac{k\Delta k}{1-k^2}\cdot|\Phi^*|^2-\iint\frac{2k\nabla k}{1-k^2}\cdot\nabla\Phi^*\cdot\Phi^*\\
  &+\iint\bigg[\frac{1}{1-k^2}\intx H(\Phi^*(s,Y))ds\cdot\Phi^*\bigg]-\iint E_\Phi\cdot\Phi^*\\
  \leq& CL\|\nabla\Phi^*\|_{L^2}^2+\|E_\Phi\|_{L^2}^2,
\end{split}
\end{align}
where the first term has been estimated as follows and others are similar:
\begin{align}\label{H^1pre}
\begin{split}
  -\iint\frac{k\Delta k}{1-k^2}\cdot|\Phi^*|^2
  =&-\iint\frac{k\Delta k}{1-k^2}\cdot(\intx\p_x\Phi^*)^2\\
  \leq&\iint |\frac{k\Delta k}{1-k^2}|\cdot L(\int_0^L|\p_x\Phi^*|^2)\\
  \leq&|\frac{k\Delta k}{1-k^2}|\cdot L^2\cdot\|\p_x\Phi^*\|_{L^2}^2.
\end{split}
\end{align}
Thus, using the smallness of $L$, we have
\begin{equation}\label{H^1}
  \|\Phi^*\|_{H^1}^2\leq \|E_\Phi\|_{L^2}^2\leq C,
\end{equation}
where the $L^2$ norm of $\Phi^*$ is bounded by $\|\nabla\Phi^*\|_{L^2}$ via performing the similar arguments of \eqref{H^1pre}.

Next, to obtain the $H^2$ estimate, applying $\partial_Y $ on \eqref{Phistareq} to get
\begin{align}\label{H^2equ}
\begin{split}
  -\Delta\p_Y\Phi^*+\p_Y(\frac{k\Delta k}{1-k^2}\cdot\Phi^*)+\p_Y(\frac{2k\nabla k}{1-k^2}\cdot\nabla\Phi^*)
  =\p_Y(\frac{\intx H(\Phi^*(s,Y))ds}{1-k^2})-\p_YE_\Phi.
\end{split}
\end{align}
Multiplying the above equation by $\p_Y\Phi^*$ and integrating in $[0,L]\times [0,\infty)$, the left-hand side is reduced to
\begin{align}\label{H^2.left}
\begin{split}
  &\iint\bigg[-\Delta\p_Y\Phi^*+\p_Y\left(\frac{k\Delta k}{1-k^2}\cdot\Phi^*\right)+\p_Y\left(\frac{2k\nabla k}{1-k^2}\cdot\nabla\Phi^*\right)\bigg]\cdot\p_Y\Phi^*\\
  =&\iint|\nabla\p_Y\Phi^*|^2-\iint\frac{k\Delta k}{1-k^2}\cdot\Phi^*\cdot\p_{YY}\Phi^*-\iint\frac{2k\nabla k}{1-k^2}\cdot\nabla\Phi^*\cdot\p_{YY}\Phi^*\\
  &-\int_0^L\left(-\p_Y^2\Phi^*+\frac{2k\p_Y k}{1-k^2}\cdot\p_Y\Phi^*\right)\cdot\p_Y\Phi^*\bigg|_{Y=0}dx.
\end{split}
\end{align}
Meanwhile, the right-hand side reads as
\begin{align}\label{H^2.right}
\begin{split}
  &\iint\p_Y\left(\frac{\intx H(\Phi^*(s,Y))ds}{1-k^2}\right)\cdot\p_Y\Phi^*
  -\p_YE_\Phi\cdot\p_Y\Phi^*\\
  =&-\iint\frac{\intx H(\Phi^*(s,Y))ds}{1-k^2}\cdot\p_{YY}\Phi^*-\iint E_\Phi\cdot\p_{YY}\Phi^*\\
  &-\int_0^L\left(\frac{\intx H(\Phi^*(s,Y))ds}{1-k^2}+ E_\Phi\right)\cdot\p_Y\Phi^*|_{Y=0}dx.
\end{split}
\end{align}
By taking $Y=0$ in the equation \eqref{Phistareq}, the boundary terms in the above two equalities \eqref{H^2.left} \eqref{H^2.right} vanish. Applying \textit{H\"{o}lder inequality}, we can conclude that
\begin{equation}\label{H^2pre}
  \|\nabla\p_Y\Phi^*\|_{L^2}^2
  \leq\bigg(\frac{|k\Delta k|+L}{1-k^2}\cdot\|\Phi^*\|_{L^2}+\frac{|2k\nabla k|+L}{1-k^2}\cdot\|\nabla\Phi^*\|_{L^2}+\|E_\Phi\|_{L^2}\bigg)\|\p_Y^2\Phi^*\|_{L^2}.
\end{equation}
Using \textit{Y\"{o}ung inequality}, the $H^1$ estimate for $\Phi^*$ and the estimate for $E_\Phi$, we have
\begin{equation}\label{H^2}
  \|\nabla\p_Y\Phi^*\|_{L^2}^2\leq C.
\end{equation}
In addition, the $L^2$ estimate of $\p_{xx}\Phi^*$ yields by using the equation \eqref{H^2equ} and estimate \eqref{H^2}. So we get the estimates of $\Phi^*$ in $H^2$ norm.

Let us turn to consider the weighted estimates. For any $n\geq 1$, we consider the elliptic problem for $Y^n\Phi^*$ as follows:
\begin{align}\label{weightedH^2}
\begin{split}
  -\Delta(Y^n\Phi^*)
  =&-2\p_Y(Y^n)\p_Y\Phi^*-\p_Y^2(Y^n)\Phi^*-Y^n\frac{k\Delta k}{1-k^2}\cdot\Phi^*-Y^n\frac{2k\nabla k}{1-k^2}\cdot\nabla\Phi^*\\
  &+\frac{Y^n}{1-k^2}\intx H(\Phi^*(s,Y))ds-Y^nE_\Phi,
\end{split}
\end{align}
 with the homogenous boundary conditions. By induction, assume that $Y^{n-1}\Phi^*$ is uniformly bounded in $H^2$ norm, and then the terms in the right hand side of the elliptic problem \eqref{weightedH^2} is uniformly bounded in $H^1$ norm. The similar argument applied above for the unweighted norm yields that $\|Y^n\Phi^*\|_{H^2}\leq C$ for any $n\geq 1$.

Furthermore, we derive higher regularity estimates. To derive the $H^3$ estimates, we consider the following elliptic problem for $\p_Y\Phi^*$,
\begin{align}\label{H^3equ}
\begin{split}
  &-\Delta\p_Y\Phi^*+\p_Y(\frac{k\Delta k}{1-k^2}\cdot\Phi^*)+\p_Y(\frac{2k\nabla k}{1-k^2}\cdot\nabla\Phi^*)\\
  =&\p_Y(\frac{\intx H(\Phi^*(s,Y))ds}{1-k^2})-\p_YE_\Phi.
\end{split}
\end{align}
Testing the \eqref{H^3equ} by $-\Delta\p_Y\Phi^*$, and integrating in $[0,L]\times[0,\infty)$, we can obtain the boundedness for $\|\Delta\p_{Y}\Phi^*\|_{L^2}$. Next, we test the following equation
\begin{align}\label{H^4equ2}
\begin{split}
  &-\Delta\p_{x}\Phi^*+\p_{x}(\frac{k\Delta k}{1-k^2}\cdot\Phi^*)+\p_{x}(\frac{2k\nabla k}{1-k^2}\cdot\nabla\Phi^*)\\
  =&\p_{x}(\frac{\intx H(\Phi^*(s,Y))ds}{1-k^2})-\p_{x}E_\Phi
\end{split}
\end{align}
by function $-\Delta\p_{x}\Phi^*$ to achieve the estimate for $\|\Delta\p_{x}\Phi^*\|_{L^2}$. Then it gives the $H^3$ estimate for $\Phi^*$.

To estimate the $H^4$ norm, consider the following elliptic problem for $\p_{YY}\Phi^*$,
\begin{align}\label{H^4equ1}
\begin{split}
  &-\Delta\p_{YY}\Phi^*+\p_{YY}(\frac{k\Delta k}{1-k^2}\cdot\Phi^*)+\p_{YY}(\frac{2k\nabla k}{1-k^2}\cdot\nabla\Phi^*)\\
  =&\p_{YY}(\frac{\intx H(\Phi^*(s,Y))ds}{1-k^2})-\p_{YY}E_\Phi.
\end{split}
\end{align}
Testing the \eqref{H^4equ1} by $-\Delta\p_{YY}\Phi^*$, and integrating in $[0,L]\times[0,\infty)$, we can obtain the estimate for $\|\Delta\p_{YY}\Phi^*\|_{L^2}$. Next, the estimate for $\|\Delta\p_{xY}\Phi^*\|_{L^2}$ can be achieved through testing the following equation
\begin{align}\label{H^4equ2}
\begin{split}
  &-\Delta\p_{xY}\Phi^*+\p_{xY}(\frac{k\Delta k}{1-k^2}\cdot\Phi^*)+\p_{xY}(\frac{2k\nabla k}{1-k^2}\cdot\nabla\Phi^*)\\
  =&\p_{xY}(\frac{\intx H(\Phi^*(s,Y))ds}{1-k^2})-\p_{xY}E_\Phi
\end{split}
\end{align}
by function $-\Delta\p_{xY}\Phi^*$. Finally, combining the following equation
\begin{align}\label{H^4equ3}
\begin{split}
  &-\Delta\p_{xx}\Phi^*+\p_{xx}(\frac{k\Delta k}{1-k^2}\cdot\Phi^*)+\p_{xx}(\frac{2k\nabla k}{1-k^2}\cdot\nabla\Phi^*)\\
  =&\p_{xx}(\frac{\intx H(\Phi^*(s,Y))ds}{1-k^2})-\p_{xx}E_\Phi\\
\end{split}
\end{align}
with the estimate of $\|\Delta\p_{YY}\Phi^*\|_{L^2}$, the $L^2$ estimate of $\p_x^4\Phi^*$ follows, so we accomplish the $H^4$ estimate for $\Phi^*$. In a similar argument, any $H^m$ norm estimate for $\Phi^*$ can be achieved.

Therefore, based on the above analysis, we have the following lemma.
\begin{lemma}\label{lemma2.1}
Under the assumption of well-prepared boundary data and the hypotheses on $k(x,Y)$ stated in \eqref{assumption_on_k}, the problem \eqref{Gphi}-\eqref{varphiboundary} admits a solution $\Phi$ and the following estimate holds
\begin{equation}\label{estimateforvarphi}
  \|Y^n\Phi\|_{H^m}\leq C(n,m).
\end{equation}
\end{lemma}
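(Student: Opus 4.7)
The plan is to reduce the problem to an elliptic equation with homogeneous boundary data and then build the estimate in three stages: existence via contraction, unweighted energy estimates, and weighted/higher order estimates by induction. First I would introduce the lifting $\Phi=\Phi^*+B_\Phi$ so that $\Phi^*$ solves \eqref{Phistareq} with $\Phi^*|_{\partial\Omega}=0$, and verify that $E_\Phi$ defined in \eqref{Ephi} satisfies the polynomial-weighted bound \eqref{Ephiestimate}; this reduces the problem to producing a solution $\Phi^*$ with $\|Y^n\Phi^*\|_{H^m}\le C(n,m)$. For existence, I would apply the Banach contraction principle to the map $\mathbf{T}:\tilde\Phi\mapsto\Phi$ where $-\Delta\Phi$ is the right-hand side of \eqref{Phistareq} evaluated at $\tilde\Phi$, working in the closed ball $\{\|Y^n\Phi^*\|_{H^2}\le R\}$. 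Contraction in $H^1$ follows because the $k$-coefficients are small in $L^\infty$ by \eqref{assumption_on_k}, and the integral term $\frac{1}{1-k^2}\int_0^x H(\Phi^*(s,Y))\,ds$ picks up an extra factor of $L$, as illustrated by the computation in \eqref{H^1pre}.

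Once existence is in hand, the core work is the a priori estimates carried out on \eqref{Phistareq}. For $H^1$, I would test against $\Phi^*$: the Dirichlet form gives $\|\nabla\Phi^*\|_{L^2}^2$ on the left, while on the right the $k$-terms are absorbed through smallness of $\|k\|_{L^\infty}$ and the integral term through smallness of $L$, using Poincaré together with the identity $\Phi^*(x,Y)=\int_0^x\partial_x\Phi^*(s,Y)\,ds$ arising from $\Phi^*|_{x=0}=0$. For $H^2$, I would differentiate \eqref{Phistareq} in $Y$ and test against $\partial_Y\Phi^*$: the non-obvious point is that the boundary terms appearing in \eqref{H^2.left} and \eqref{H^2.right} cancel exactly because the well-preparedness of the data from Definition \ref{wellprepared} is precisely the statement that equation \eqref{Phistareq} holds at $Y=0$; this makes the boundary integrals vanish, leaving an estimate of the form \eqref{H^2pre} that closes after a Young inequality. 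The $L^2$ estimate on $\partial_{xx}\Phi^*$ is then read off from the equation itself.

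For the weighted estimates, I would induct on $n$: assuming $\|Y^{n-1}\Phi^*\|_{H^2}\le C$, I multiply the equation satisfied by $Y^n\Phi^*$, namely \eqref{weightedH^2}, by $Y^n\Phi^*$ and by $\partial_Y(Y^n\Phi^*)$; the commutator terms $-2\partial_Y(Y^n)\partial_Y\Phi^*-\partial_Y^2(Y^n)\Phi^*$ are controlled by the inductive hypothesis, while the smallness of $k$ and $L$ absorbs the remaining contributions from the source $\mathcal{F}$. For higher order regularity in $m$, I would differentiate \eqref{Phistareq} successively in $x$ and $Y$ to get equations such as \eqref{H^3equ}, \eqref{H^4equ1}, \eqref{H^4equ2}, \eqref{H^4equ3} and test against $-\Delta$ applied to the corresponding derivative; pure $\partial_x^m$ estimates are recovered from the equation by algebraic manipulation after the mixed derivatives are controlled. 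The weighted higher-order estimates follow from combining these two inductions.

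The main obstacle I expect is managing the interplay between the nonlocal integral source $\int_0^x H(\Phi(s,Y))\,ds$ and the weighted norms: $H(\Phi)$ contains $\partial_x\partial_Y,\partial_Y^2$ of the base ideal profiles as coefficients, so weighted estimates on $\Phi$ need the assumption \eqref{condition_for_idealMHD4}-\eqref{condition_for_idealMHD5} together with the decay of $F=F(u_e^0,v_e^0,f_0)$ traced back through the characteristic representation \eqref{streamrelation}. The bookkeeping required to check that each commutator $[Y^n,\partial^\alpha]\Phi^*$ is dominated by the inductive norm, while simultaneously extracting the factor of $L$ needed for contraction and absorption, is where the delicate work will lie; the smallness of $L$ in the hypothesis is exactly the room needed to close all of these arguments simultaneously.
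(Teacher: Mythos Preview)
Your proposal is correct and follows essentially the same approach as the paper: the lifting $\Phi=\Phi^*+B_\Phi$, the contraction argument exploiting smallness of $\|k\|_{L^\infty}$ and of $L$, the $H^1$ and $H^2$ estimates via testing against $\Phi^*$ and $\partial_Y\Phi^*$ with boundary terms vanishing by well-preparedness, and the inductions in the weight $n$ and the regularity $m$ are exactly the steps the paper carries out in \eqref{H^1pre}--\eqref{H^4equ3}. Your identification of the delicate point---tracking the smallness factor $L$ through the nonlocal term $\int_0^x H(\Phi)\,ds$ and the commutators $[Y^n,\partial^\alpha]$---is also accurate.
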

Combining \eqref{estimateforvarphi} with the relationship \eqref{streamrelation} between two stream functions, we have
\begin{lemma}\label{lemma2.2}
Under the assumption of well-prepared boundary data and the hypotheses on $k(x,Y)$ stated in \eqref{assumption_on_k}, the problem \eqref{rotationofpsi} and \eqref{relationpsi} admits a solution $(\Phi,\Psi)$ which satisfies the following estimate
\begin{equation}\label{estimateforpsi}
  \|Y^n(\Phi,\Psi)\|_{H^m}\leq C(n,m).
\end{equation}
\end{lemma}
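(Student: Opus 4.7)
The plan is to leverage Lemma \ref{lemma2.1}, which already provides the weighted $H^m$ bound for $\Phi$, and then exploit the stream-function identity \eqref{streamrelation}, namely $\Psi = k\Phi + F(u_e^0,v_e^0,f_0)$, to transfer the bound to $\Psi$. Thus only two pieces remain to be controlled: the product $k\Phi$, and the characteristic expression $F$. I will bound each in weighted Sobolev norm, then conclude by the triangle inequality.

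For the first piece, I would use the Leibniz rule on $Y^n \p^\alpha(k\Phi)$ for $|\alpha|\le m$, distributing derivatives across $k$ and $\Phi$. The hypothesis \eqref{assumption_on_k} supplies $\|\nabla^j k\|_{L^\infty}$ bounds for all $j$ (via Sobolev embedding from $H^m$ with $m\ge 5$), while Lemma \ref{lemma2.1} gives $\|Y^n\p^\beta\Phi\|_{L^2}\le C(n,m)$ for all $\beta$ with $|\beta|\le m$. Pairing the $L^\infty$ bound on derivatives of $k$ with the weighted $L^2$ bound on derivatives of $\Phi$ yields $\|Y^n(k\Phi)\|_{H^m}\le C(n,m)$ directly.

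For $F$, recall the derivation: $f := \Psi - k\Phi$ solves the transport equation \eqref{relation}, whose characteristics are integral curves of \eqref{character}, so $F(x,Y) = f_0(Y_0(x,Y))$ where $Y_0$ is the backward flow from $(x,Y)$ to $\{x=0\}$. I would first establish that the characteristic map $Y\mapsto Y_0(x,Y)$ is a smooth, bi-Lipschitz diffeomorphism of $\mathbb{R}_+$ with $Y_0$ comparable to $Y$ uniformly in $x\in[0,L]$; this uses the smoothness of $v_e^0/u_e^0$ (from \eqref{condition_for_idealMHD1} and \eqref{condition_for_idealMHD4}--\eqref{condition_for_idealMHD5}) together with the smallness $\|v_e^0/Y\|_{L^\infty}\ll 1$ from \eqref{condition_for_idealMHD3}, which keeps the characteristics from deviating from vertical lines by more than a factor close to $1$ over the interval $[0,L]$. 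Standard ODE theory then gives $\p_x^\beta\p_Y^k Y_0$ bounded on $[0,L]\times[0,\infty)$ for all $\beta,k$, and polynomial weights transport well: $Y_0\sim Y$ as $Y\to\infty$. Assuming $f_0=\Psi_0-k(0,\cdot)\Phi_0$ is smooth and exponentially decaying at infinity (which follows from the hypotheses on $\Phi_0,\Psi_0$ and on $k$), the chain rule then gives $\|Y^n F\|_{H^m}\le C(n,m)$.

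The main obstacle I expect is the precise control of the weighted derivatives of the characteristic flow $Y_0(x,Y)$: differentiating the ODE \eqref{character} gives a linear system for $\p_Y Y_0, \p_x Y_0$ with coefficients depending on $\p_Y(v_e^0/u_e^0)$, and to push this up to derivatives of order $m$ one needs Fa\`a di Bruno bookkeeping together with the decay rates in \eqref{condition_for_idealMHD4}--\eqref{condition_for_idealMHD5} to ensure that weighted norms remain finite. Once this is handled, assembling the estimates via
\begin{equation*}
  \|Y^n\Psi\|_{H^m} \le \|Y^n(k\Phi)\|_{H^m} + \|Y^n F\|_{H^m} \le C(n,m),
\end{equation*}
and combining with Lemma \ref{lemma2.1} yields \eqref{estimateforpsi} and completes the proof.
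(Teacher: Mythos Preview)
Your proposal is correct and follows essentially the same route as the paper: invoke Lemma \ref{lemma2.1} for $\Phi$ and then use the identity \eqref{streamrelation}, $\Psi=k\Phi+F$, to transfer the weighted bound to $\Psi$. The paper in fact states Lemma \ref{lemma2.2} as an immediate consequence of combining \eqref{estimateforvarphi} with \eqref{streamrelation} and supplies no further detail, so your treatment of the product $k\Phi$ via Leibniz and of $F$ via the characteristic flow of \eqref{character} simply fills in steps the paper leaves implicit (the required regularity of $F$ is already tacitly used in the bound \eqref{Ephiestimate} for $E_\Phi$).
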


Therefore, according to the definition of \eqref{streamfunctions}, with the estimate of $\Phi,\Psi$ in hands, we are ready to give the estimates for $(u_e^1,v_e^1,h_e^1,g_e^1)$.
\begin{proposition}\label{prop2.3}
There exists a solution $(u_e^1,v_e^1,h_e^1,g_e^1)$ to the problem \eqref{u_e^1system}-\eqref{u_e^1boundary} which satisfies the following estimate
\begin{equation}\label{estimateforu_e^1}
  \|Y^n(u_e^1,v_e^1,h_e^1,g_e^1)\|_{H^m}\leq C.
\end{equation}
where the constant $C$ is depending on $n,m$.
\end{proposition}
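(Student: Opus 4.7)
The plan is to obtain Proposition 2.3 as a direct consequence of Lemma \ref{lemma2.2} together with the stream-function definition \eqref{streamfunctions}. Given the solution pair $(\Phi,\Psi)$ produced by Lemma \ref{lemma2.2}, define
\begin{equation*}
u_e^1 := \p_Y\Phi, \quad v_e^1 := -\p_x\Phi, \quad h_e^1 := \p_Y\Psi, \quad g_e^1 := -\p_x\Psi.
\end{equation*}
Since Lemma \ref{lemma2.2} is valid for arbitrary $n,m$, applying it at regularity $m+1$ and using the Leibniz rule to move $Y^n$ across one derivative yields the weighted estimate \eqref{estimateforu_e^1} immediately. This reduces everything to verifying that the quadruple so defined, together with a suitable pressure $p_e^1$, actually solves \eqref{u_e^1system}--\eqref{u_e^1boundary}.

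Next I would check the algebraic/differential constraints in \eqref{u_e^1system}. The divergence-free conditions \eqref{u_e^1system}$_5$ hold identically from the definition via mixed partials. The induction equations \eqref{u_e^1system}$_{3,4}$ are equivalent, as shown in the derivation preceding \eqref{prerelation}, to the total-differential identity $v_e^0 h_e^1+h_e^0 v_e^1 - u_e^0 g_e^1 - u_e^1 g_e^0 = b$ for a constant $b$; this identity is exactly the relation \eqref{relationpsi} rewritten in terms of the fields, and $\Psi$ was constructed from $\Phi$ via \eqref{streamrelation} precisely so that \eqref{relationpsi} holds with $b=0$. The momentum equations \eqref{u_e^1system}$_{1,2}$ are recovered from their vorticity form \eqref{rotationofpsi} after one introduces the pressure: since $(\omega_1,\omega_2) = (\Delta\Phi,\Delta\Psi)$, and \eqref{rotationofpsi} is what $(\Phi,\Psi)$ satisfies by the construction in Subsection \ref{sec2.3}, the curl of the horizontal momentum residual vanishes, so it is a gradient, which defines $\p_x p_e^1$; integrating then gives $p_e^1$ (the $Y$-integration constant being fixed by the far-field normalization and decay).

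For the boundary data \eqref{u_e^1boundary}, the value at $Y=0$ is
$v_e^1(x,0) = -\p_x\Phi(x,0) = -v_p^0(x,0)$
by the boundary condition $\Phi|_{Y=0} = 1 + \intx v_p^0(s,0)\,\mathrm{ds}$ from \eqref{varphiboundary}; the corresponding identity for $g_e^1(x,0)$ follows from \eqref{prerelation} evaluated at $Y=0$ together with $\overline{v_e^0} = \overline{g_e^0} = 0$ (see \eqref{u_e^0}$_6$) and the definition of $\overline{g_e^1}$ in \eqref{u_p^0}$_8$. The far-field decay $(v_e^1,g_e^1) \to 0$ as $Y\to\infty$ is read off from the weighted bound $\|Y^n(\Phi,\Psi)\|_{H^m}\leq C(n,m)$ via a standard Sobolev embedding with the $Y^n$ weight.

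The only delicate point, and hence the main obstacle, is the clean recovery of the pressure $p_e^1$ from the vorticity equation: one must verify that the horizontal and vertical momentum residuals, viewed as a vector field, is curl-free (which is \eqref{rotationofpsi}) and decays sufficiently at infinity so that a well-defined, smooth, weighted $p_e^1$ can be produced by line integration. Everything else is bookkeeping using the regularity and decay supplied by Lemma \ref{lemma2.2} and the hypotheses \eqref{condition_for_idealMHD1}--\eqref{condition_for_idealMHD6} on the leading-order ideal MHD flow.
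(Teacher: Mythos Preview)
Your proposal is correct and follows essentially the same route as the paper: the paper's proof of Proposition~\ref{prop2.3} is simply the one-line observation that, with Lemma~\ref{lemma2.2} in hand, the estimates for $(u_e^1,v_e^1,h_e^1,g_e^1)$ follow immediately from the stream-function definition \eqref{streamfunctions}. You carry out the same reduction but supply considerably more detail than the paper does---verifying the divergence-free conditions, the induction equations via \eqref{prerelation}--\eqref{relationpsi}, the momentum equations via the vorticity form \eqref{rotationofpsi}, the boundary data \eqref{u_e^1boundary}, and the recovery of the pressure $p_e^1$---none of which the paper spells out explicitly.
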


\subsection{$\sqrt\eps$-order MHD boundary layer correctors}\label{sec2.4}
In this subsection, we will construct the  MHD boundary layer correctors $(u_p^1,v_p^1,h_p^1,g_p^1,p_p^1)$ by solving \eqref{2.27}. For simplicity, we drop the superscript $1$ of the solutions.

The system for MHD boundary layer correctors $(u_p,v_p,h_p,g_p,p_p)$ is described by
\begin{align}\label{u_p}
\begin{cases}
    &u^0u_{px} +v^0u_{py} +u_p\p_xu^0 +v_p\p_yu^0 +p_{px} -\nu\p_y^2u_p \\
  &\quad\quad -h^0h_{px} -g^0h_{py} -h_p\p_xh^0 -g_p\p_yh^0
  =F_p^1,\\
    &u^0h_{px} +v^0h_{py} +u_p\p_xh^0 +v_p\p_yh^0 -\kappa\p_y^2h_p \\
  &\quad\quad -h^0u_{px} -g^0u_{py} -h_p\p_xu^0 -g_p\p_yu^0
  =F_p^2,\\
  &p_{py}=0,
\end{cases}
\end{align}
where $F_p^1,F_p^2$ are defined in \eqref{2.27}, together with  the following boundary conditions
\begin{align}\label{upboundary}
\begin{cases}
  (u_p,h_p)(0,y)=(u_{p,0},h_{p,0})(y),\\
  (u_p,\p_y h_p)(x,0)=-(\overline{u_e^1},\overline{\p_Y h_e^0})(x),\\
  (v_p,g_p)(x,0)=(0,0),\\
  (u_p,h_p)\rightarrow (0,0),~\text{as}~ y\rightarrow \infty.
\end{cases}
\end{align}
To solve the problem, we will also consider the divergence-free conditions
\begin{equation}\label{updivergence}
  u_{px}+v_{py}=h_{px}+g_{py}=0,
\end{equation}
here we note that $(v_p,g_p)$ will be constructed by $(v_p,g_p)=-\int_0^y (u_{px},h_{px})(x,z)dz$.

Moreover, the second equation in \eqref{u_p} can be rewritten as the following form
\begin{align}\label{u_pprestream}
\begin{split}
  &\p_y[-u^0g_p -g^0u_p +v^0h_p +h^0v_p] -\kappa\p_y^2h_p\\
  &=\p_y\bigg[-y\overline{h_{eY}^0}v_p^0 +y\overline{u_{eY}^0}g_p^0 -y\overline{v_{eY}^1}h_p^0 +y\overline{g_{eY}^1}u_p^0 +\overline{u_e^1}g_p^0 -\overline{h_e^1}v_p^0\\
  &\quad\quad +\frac{1}{\sqrt\eps}(\frac{v_e^0}{\sqrt\eps}h_p^0-\frac{g_e^0}{\sqrt\eps}u_p^0+y\overline{u_{ex}^0}h_p^0-y\overline{h_{ex}^0}u_p^0)\bigg].
\end{split}
\end{align}
Thanks to the divergence-free conditions, there exists a stream function $\tilde\psi$ satisfying
\begin{equation}\label{u_pstream}
  (h_p,g_p)=(\p_y\tilde\psi,-\p_x\tilde\psi),\quad {\rm{with}}~~\tilde\psi|_{y=0}=0.
\end{equation}
By virtue of \eqref{u_pprestream}, using the boundary conditions and the definition of $\tilde\psi$, we have
\begin{align}\label{tildepsi}
  &u^0\p_x\tilde\psi -g^0u_p +v^0\p_y\tilde\psi +h^0v_p -\kappa\p_y^2\tilde\psi\nonumber\\
  =&-y\overline{h_{eY}^0}v_p^0 +y\overline{u_{eY}^0}g_p^0 -y\overline{v_{eY}^1}h_p^0 +y\overline{g_{eY}^1}u_p^0 +\overline{u_e^1}g_p^0 -\overline{h_e^1}v_p^0 +\overline{u_e^1g_e^1} -\overline{h_e^1v_e^1} +\kappa\overline{h_{eY}^0}\nonumber\\
  &+\frac{1}{\sqrt\eps}(\frac{v_e^0}{\sqrt\eps}h_p^0-\frac{g_e^0}{\sqrt\eps}u_p^0+y\overline{u_{ex}^0}h_p^0-y\overline{h_{ex}^0}u_p^0).
\end{align}

Here, we state the well-posedness theory of $(u_p,v_p,h_p,g_p)$ which shall be established by using a similar energy estimate method to that of $(u_p^0,v_p^0,h_p^0,g_p^0)$.
\begin{proposition}\label{prop2.4}
There exists a solution $(u_p,v_p,h_p,g_p)$ to the problem \eqref{u_p}-\eqref{updivergence} in $[0,L_2]\times[0,+\infty)$ with $0< L_2\leq L_1$ which satisfies the following estimate
\begin{equation}\label{estimateforu_p}
  \sup_{0\leq x\leq L_2}\|y^m\p_x^k(u_p,h_p)\|_{L_y^2} +\|y^m\p_x^k(u_{py},h_{py})\|_{L^2}+\|(v_p,g_p)\|_{L^\infty}\leq C.
\end{equation}
where the constant $C$ is depending on k,m.
\end{proposition}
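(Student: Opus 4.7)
\textbf{Proof plan for Proposition \ref{prop2.4}.} The plan is to mirror the strategy of Proposition \ref{prop2.1} for the zero-order profile (completed in Appendix \ref{ap2}), with two essential modifications: a preliminary homogenization of the non-zero boundary data at $y=0$, and a reformulation via the magnetic stream function that cancels the derivative-losing nonlocal contributions $v_p\p_y u^0-g_p\p_y h^0$ and $v_p\p_y h^0-g_p\p_y u^0$. Since \eqref{u_p} is linear in $(u_p,h_p)$ once the zeroth-order profile is frozen, existence is reduced to a priori estimates.

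\textbf{Step 1: homogenization.} I would subtract off boundary correctors built from a smooth cutoff $\phi(y)$ as in \eqref{2.9}, setting
\[
  \tilde u:=u_p+\overline{u_e^1}\phi'(y),\qquad \tilde h:=h_p+\overline{\p_Y h_e^0}\,y\phi'(y),
\]
so that the new unknowns satisfy $\tilde u(x,0)=0$, $\p_y\tilde h(x,0)=0$, with decay at $y\to\infty$, and solve an inhomogeneous system of the same form as \eqref{u_p} whose forcing absorbs the finitely many lower-order terms generated by $\phi$. Thanks to Proposition \ref{prop2.3} and the bounds on the zeroth-order profile, these forcing terms are uniformly bounded in all weighted Sobolev norms needed below.

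\textbf{Step 2: stream-function reformulation.} The divergence-free condition $\p_x h_p+\p_y g_p=0$ together with $g_p(x,0)=0$ produces the magnetic stream function $\tilde\psi$ with $(h_p,g_p)=(\p_y\tilde\psi,-\p_x\tilde\psi)$ and $\tilde\psi(x,0)=0$, and the second equation of \eqref{u_p} integrates once in $y$ to \eqref{tildepsi}. I would then work with the coupled pair $(u_p,\tilde\psi)$: the $u_p$-equation (first line of \eqref{u_p}) and the $\tilde\psi$-equation \eqref{tildepsi}. In this formulation the dangerous terms $v_p\p_y u^0$ and $g_p\p_y h^0$ no longer lose $x$-regularity, because $v_p=-\p_x\int_0^y u_p\,dz$ combines with $\p_x\tilde\psi=-g_p$ through the combination used in Appendix \ref{ap2}: the positivity of $h^0\geq \tfrac12 \vartheta_0$ from \eqref{estimatesforu_p^0} provides the key cancellation in the cross terms when one tests the $u_p$-equation against a multiplier proportional to $\p_x\tilde\psi/h^0$ and the $\tilde\psi$-equation against $u_p$.

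\textbf{Step 3: weighted energy iteration.} Following the three-step scheme in Appendix \ref{ap2}, I would first close weighted $L^2_l$ estimates for $D^\alpha(u_p,\tilde\psi)$ with $|\alpha|\leq m$ and at most $m-1$ tangential derivatives, using the dissipation from $\nu\p_y^2 u_p$ and $\kappa\p_y^2\tilde\psi_{yy}=\kappa\p_y h_p$ together with the Hardy-type inequality for the $\frac{1}{\sqrt\eps}$-terms in $F_p^1,F_p^2$. Those apparently singular $\frac{1}{\sqrt\eps}$-forcings are in fact $\mathcal{O}(1)$ thanks to the relation $v_e^0h_e^0-g_e^0u_e^0=0$ (see \eqref{pre_ratio}) and the identity $h_e^0=k u_e^0$: one checks that $\tfrac{1}{\sqrt\eps}(v_e^0 h_p^0 - g_e^0 u_p^0+y\overline{u_{ex}^0}h_p^0-y\overline{h_{ex}^0}u_p^0)$ collapses to a bounded quantity by Taylor expansion of $v_e^0,g_e^0$ around $Y=0$. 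Next I would obtain the top-order tangential estimate for $\p_x^m(u_p,\tilde\psi)$, where the cancellation structure of Step 2 is indispensable. Finally, normal derivatives are recovered by reading $\p_y^2 u_p$ and $\p_y^2 h_p$ off the equations and bootstrapping, which closes the energy on a small interval $[0,L_2]$ with $L_2\leq L_1$.

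\textbf{Main obstacle.} The hardest point will be the top-order tangential estimate: when $\p_x^m$ falls on the coefficients $h^0$ or $g^0$ in the $u_p$-equation it produces $\p_x^m h^0\cdot h_{px}$, which loses one $x$-derivative relative to the available dissipation. The modified energy functional must be chosen as a nontrivial linear combination of $\|\langle y\rangle^l\p_x^m u_p\|_{L^2}^2$ and $\|\langle y\rangle^l\p_x^m\p_y\tilde\psi\|_{L^2}^2$ with coefficients depending on $h^0/u^0$ so that the two borderline contributions cancel, exactly as in the Liu--Xie--Yang cancellation exploited in \cite{LXYwell} and adapted to the steady setting in \cite{DLX}. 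Once this cancellation is achieved the remaining terms are routine, and the bound \eqref{estimateforu_p} follows with $(v_p,g_p)=(-\int_0^y u_{px}\,dz,-\int_0^y h_{px}\,dz)\in L^\infty$ by Sobolev embedding.
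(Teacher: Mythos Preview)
Your overall plan---homogenize, pass to the magnetic stream function $\tilde\psi$, run the Appendix~\ref{ap2} energy scheme with the Liu--Xie--Yang cancellation at top tangential order---is exactly the route the paper takes. Two points deserve correction, however.

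First, you misidentify the derivative-losing term. In the ``Main obstacle'' paragraph you say the problem is $\p_x^m h^0\cdot h_{px}$; this term is harmless because $h^0=\overline{h_e^0}+h_p^0$ is a fixed coefficient with all derivatives bounded (Proposition~\ref{prop2.2}), and $h_{px}$ is first-order. The genuine loss comes from the nonlocal terms $v_p\p_y u^0$, $g_p\p_y h^0$, $v_p\p_y h^0$, $g_p\p_y u^0$: after $\p_x^m$ these produce $\p_x^m v_p=-\int_0^y\p_x^{m+1}u_p\,dz$ and $\p_x^m g_p=-\int_0^y\p_x^{m+1}h_p\,dz$, which carry one tangential derivative too many.

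Second, your proposed remedy---``a nontrivial linear combination of $\|\langle y\rangle^l\p_x^m u_p\|_{L^2}^2$ and $\|\langle y\rangle^l\p_x^m\p_y\tilde\psi\|_{L^2}^2$ with coefficients depending on $h^0/u^0$''---cannot work as stated: since $\p_y\tilde\psi=h_p$, this is just a weighted combination of $\|\p_x^m u_p\|^2$ and $\|\p_x^m h_p\|^2$, and no scalar combination of these cancels the nonlocal $\p_x^m v_p$, $\p_x^m g_p$ contributions. The paper instead introduces the modified unknowns
\[
  u_p^\beta:=\p_x^\beta u_p-\frac{\p_y u_p^0}{\overline{h_e^0}+h_p^0}\,\p_x^\beta\tilde\psi,
  \qquad
  h_p^\beta:=\p_x^\beta h_p-\frac{\p_y h_p^0}{\overline{h_e^0}+h_p^0}\,\p_x^\beta\tilde\psi,
\]
mirroring the quantities $(u_\beta,h_\beta)$ of Appendix~\ref{ap2}. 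The point is that one subtracts a multiple of $\p_x^\beta\tilde\psi$ (not of $\p_x^\beta h_p$): combining $\p_x^\beta$ of \eqref{u_p} with $\frac{\p_y u^0}{h^0}$ (resp.\ $\frac{\p_y h^0}{h^0}$) times $\p_x^\beta$ of \eqref{tildepsi}, the terms $\p_y u^0\cdot\p_x^\beta v_p$ and $\frac{\p_y u^0}{h^0}\cdot h^0\p_x^\beta v_p$ cancel exactly, and likewise the $\p_x^\beta g_p$ terms. One then closes the weighted $L^2$ estimate for $(u_p^\beta,h_p^\beta)$ and recovers $\p_x^\beta(u_p,h_p)$ via the norm equivalence of Lemma~\ref{lemmaA.3}. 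Your Step~2 hint (``test against $\p_x\tilde\psi/h^0$'') is pointing toward this, but the execution must go through the subtracted quantities, not through a reweighted energy on $(u_p,h_p)$ alone.
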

\begin{proof}
For simplicity, we give here an outline about the application of the energy method in Appendix \ref{ap2} without details. First, we will get the weighted estimates for $D^\alpha(u_p,h_p)$ with $|\alpha|\leq m,~D^\alpha=\p_x^\beta\p_y^k,~\beta\leq m-1$. Second, we shall obtain the weighted estimates for $\p_x^\beta(u_p,h_p)$ with $\beta=m$ via introducing the new quantities:
\begin{align*}
  u_p^\beta:=\p_x^\beta u_p-\frac{\p_y u_p^0}{\bar h_e^0+h_p^0}\p_x^{\beta}\tilde\psi,\quad
  h_p^\beta:=\p_x^\beta h_p-\frac{\p_y h_p^0}{\bar h_e^0+h_p^0}\p_x^{\beta}\tilde\psi.
\end{align*}
According to the system \eqref{u_p} for $u_p$ and \eqref{tildepsi} for $\tilde\psi$, we can deduce the system for $(u_p^\beta,h_p^\beta)$, in which the tough terms involving $\p_x^\beta(v_p,g_p)$ are cancelled. So the weighted $L^2$-estimates of $(u_p^\beta,h_p^\beta)$ follows. And then we can receive the desired weighted $L^2$-estimates of $(\p_x^\beta u_p,\p_x^\beta h_p)$ by proving the $L^2$-norm equivalence between $(u_p^\beta,h_p^\beta)$ and $(\p_x^\beta u_p,\p_x^\beta h_p)$, so as to close the energy estimates.
\end{proof}

Therefore, we are ready to get estimates on every error term mentioned above by using the fact that the ideal MHD flows are evaluated at $(x,Y)$, while the boundary layers profiles are at $(x,y)$. We can obtain that
\begin{align}\label{E134}
  \|E_1,E_3\|_{L^2}\lesssim\eps^{\frac{3}{4}},\quad \|E_4\|_{L^2}\lesssim\eps^{\frac{1}{4}}.
\end{align}
and
\begin{align}\label{estimateforR^v,g,0}
  \|R^{v,0},R^{g,0},R^{u,1},R^{h,1}\|_{L^2}\lesssim \eps^{\frac{1}{4}}.
\end{align}
Indeed, the terms in $E_1,E_3,E_4,R^{v,0},R^{g,0},R^{u,1},R^{h,1}$ can be handled similarly to the following terms
\begin{align*}
    \|\eps\p_x u_p^0\int_0^y \int_y^\theta \p_Y^2 u_e^0(\sqrt\eps\tau)d\tau d\theta\|_{L^2}
  &\leq\eps\|\p_x u_p^0\cdot y^2\|_{ L^\infty} \|\p_Y^2 u_e^0(\sqrt\eps\cdot)\|_{ L^2} \lesssim \eps^{\frac{3}{4}},\\
    \|\sqrt\eps u_p^0\int_0^y\p_Yg_{ex}^1(\sqrt\eps\tau)d\tau\|_{L^2}
  &\leq\sqrt\eps\| u_p^0\cdot y\|_{ L^\infty} \|\p_Y g_{ex}^1(\sqrt\eps\cdot)\|_{ L^2} \lesssim \eps^{\frac{1}{4}},\\
    \|\sqrt\eps g_p^1h_{eY}^0\|_{L^2}
  &\leq\sqrt\eps\|g_p^1\|_{ L^\infty}\|h_{eY}^0(\sqrt\eps\cdot)\|_{ L^2} \lesssim \eps^{\frac{1}{4}},\\
    \|y(g_{eY}^1-\overline{g_{eY}^1})h_{py}^0\|_{L^2}
  &\leq\sqrt\eps\|y^2h_{py}^0\|_{ L^\infty}\|\p_Y^2g_e^1(\sqrt\eps\cdot)\|_{ L^2} \lesssim \eps^{\frac{1}{4}},\\
    \|(\frac{v_e^0}{\sqrt\eps}-y\overline{v_{eY}^0})u_{py}^1\|_{L^2}
  &\leq\|y^2u_{py}^1\|_{ L^\infty}\|\sqrt\eps\p_Y^2v_e^0(\sqrt\eps\cdot)\|_{ L^2} \lesssim \eps^{\frac{1}{4}},
\end{align*}
where we have used \textit{Proposition \ref{prop2.1}--\ref{prop2.4}, Hardy inequality} and the boundary condition of $v_e^0$ on $\{Y=0\}$.

\subsection{Construction of approximate solutions}\label{subsec2.5}
This subsection is to construct the approximate solutions for system \eqref{1.1}. To this end, we define a cut-off function $\chi(Y)$ supported in $[0,1]$
\begin{align*}
  \chi(Y)=
\begin{cases}
 1,\quad Y\in[0,1],\\
 0,\quad Y\in[2,+\infty),
\end{cases}
\end{align*}
and a smooth boundary corrector $\rho(x,Y)$ with compact support
\begin{equation*}
  \rho(x,Y)=-\overline{\p_Yh_e^1(x)}\cdot Y\chi(Y)
\end{equation*}
satisfying
\begin{equation*}
  \p_Y\rho(x,Y)|_{Y=0}=-\overline{\p_Yh_e^1(x)}.
\end{equation*}
Let $(h_e^1,g_e^1)$ be constructed as in the previous subsection, introducing
\begin{equation}\label{tildeh_e^1}
\begin{cases}
\tilde h_e^1(x,Y)=h_e^1(x,Y)+\rho(x,Y),\\
\tilde g_e^1(x,Y)=g_e^1(x,Y)-\int_0^Y\p_x\rho(x,s)ds,
\end{cases}
\end{equation}
so that the boundary value of $\p_Yh_e^1(x,Y)$ can be cancelled on $\{y=0\}$, which ensures the boundary condition $\p_yh_{app}(x,y)|_{y=0}=0$ for approximate solution. Since $(\tilde h_e^1,\tilde g_e^1)$ satisfy the divergence-free condition as well, we will still denote it by $(h_e^1,g_e^1)$ in the approximate solutions expansion for convenience.

Next, we introduce the boundary layer correctors which will be used in the boundary layer expansion. Let $(u_p,v_p,h_p,g_p)$ be constructed as in the previous subsection, define
\begin{equation}\label{cutoff}
\begin{cases}
  (u_p^1,h_p^1)(x,y)=\chi(\sqrt\eps y)(u_p,h_p)+\sqrt\eps\chi'(\sqrt\eps y)\int_0^y (u_p,h_p)(x,s)ds,\\
  (v_p^1,g_p^1)(x,y)=\chi(\sqrt\eps y)(v_p,g_p).
\end{cases}
\end{equation}
Clearly, $(u_p^1,v_p^1,h_p^1,g_p^1)$ is a divergence-free vector field, that is
\begin{equation}\label{}
  u_{px}^1+v_{py}^1=h_{px}^1+g_{py}^1=0.
\end{equation}

Using the estimates of $(u_p,h_p)$ in Proposition \ref{prop2.4}, we have
\begin{equation*}
    |\sqrt\eps\chi'(\sqrt\eps y)\int_0^y(u_p,h_p)(x,s)ds|
  \leq \sqrt\eps y|\chi'(\sqrt\eps y)|\cdot\|(u_p,h_p)\|_{ L^\infty}\leq C,
\end{equation*}
and it follows that
\begin{align}\label{estimateforu_p^1}
  \sup_{0\leq x\leq L_2}\|y^m\p_x^k(u_p^1,h_p^1)\|_{L_y^2} +\|y^m\p_x^k(u_{py}^1,h_{py}^1)\|_{L^2}+\|(v_p^1,g_p^1)\|_{L^\infty}\leq C.
\end{align}

Additionally, the new error in $\mathcal{O}(\sqrt\eps)$-order created by the cut-off layer is
\begin{equation}\nonumber
\begin{aligned}
  R^{u,1}_p=
  &-(1-\chi)F_p^1+(u^0 \p_x+u_x^0 +v^0 \p_y-\nu\p_y^2)\left(\sqrt\eps\chi'\int_0^y u_p(x,s)ds\right)\nonumber\\
  &-(h^0 \p_x+h_x^0 +g^0 \p_y)\left(\sqrt\eps\chi'\int_0^y h_p(x,s)ds\right)-\sqrt\eps\chi'h_pg^0\nonumber\\
  &-2\sqrt\eps\nu\chi'u_{py}+u_p(\sqrt\eps\chi'v^0-\nu\eps\chi''),\nonumber\\
    R^{h,1}_p=
  &-(1-\chi)F_p^2+(u^0 \p_x-u_x^0+v^0 \p_y-\kappa\p_y^2)\left(\sqrt\eps\chi'\int_0^y h_p(x,s)ds\right)\nonumber\\
  &-(h^0 \p_x-h_x^0+g^0 \p_y)\left(\sqrt\eps\chi'\int_0^y u_p(x,s)ds\right)-\sqrt\eps\chi'u_p g^0\nonumber\\
  &-2\sqrt\eps\kappa\chi' h_{py}+h_p(\sqrt\eps\chi'v^0-\kappa\eps\chi''),
\end{aligned}
\end{equation}

which can be estimated as
\begin{equation}\label{estimateforR^u,1_p}
  \|(R^{u,1}_p,R^{h,1}_p)\|_{ L^2}\lesssim \eps^{\frac{1}{4}}.
\end{equation}

Indeed, thanks to the property that the zero-order boundary layer correctors is rapidly decaying as $y\rightarrow\infty$, when $\sqrt\eps y\geq 1$, the terms $F_p^1, F_p^2$ with coefficient $1-\chi$ are of order $\eps^n$ for large enough $n\geq 0$. And the terms involved with $\sqrt\eps\chi'$ are bounded by
\begin{align*}
 \sqrt\eps\|\chi'(\sqrt\eps y)\|_{L^2}\leq \eps^{1/4},
\end{align*}
the boundedness is also satisfied for the $\eps\chi''$ terms since $\eps$ is sufficiently small. These points, taken together, lead to the summary \eqref{estimateforR^u,1_p}.

These new error terms $R^{u,1}_p,R^{h,1}_p$ would contribute into the error term $R^u_{app}, R^h_{app}$, which are defined by
\begin{equation}\label{R^u_app}
\begin{aligned}
  R^u_{app}:=&\tilde R^{u,2}+\sqrt\eps R^{u,1}+\sqrt\eps R^{u,1}_p+\eps p_{px}^2,\nonumber\\
  R^h_{app}:=&\tilde R^{h,2}+\sqrt\eps R^{h,1}+\sqrt\eps R^{h,1}_p,
\end{aligned}
\end{equation}
where
\begin{equation*}\label{R^u}
\begin{aligned}
    \tilde R^{u,2}&=
   -\eps\nu(\Delta u_e^0+\p_x^2 u_p^0)+\eps^{\frac{3}{2}}\left[v_p^1u_{eY}^1-g_p^1h_{eY}^1-\nu(\p_x^2 u_e^1+\p_x^2 u_p^1)\right]+E_1\\
  &\quad +\eps\left[(u_e^1+u_p^1)\p_x(u_e^1+u_p^1)-(h_e^1+h_p^1)\p_x(h_e^1+h_p^1)+v_p^1u_{py}^1-g_p^1h_{py}^1\right],\\
    \tilde R^{h,2}&=
  -\eps\kappa(\Delta h_e^0+\p_x^2 h_p^0)+\eps^{\frac{3}{2}}\left[v_p^1h_{eY}^1-g_p^1u_{eY}^1-\kappa(\p_x^2 h_e^1+\p_x^2 h_p^1)\right]+E_3\\
  &\quad +\eps\left[(u_e^1+u_p^1)\p_x(h_e^1+h_p^1)-(h_e^1+h_p^1)\p_x(u_e^1+u_p^1)+v_p^1h_{py}^1-g_p^1u_{py}^1\right],
\end{aligned}
\end{equation*}
by collecting the error terms from $R^{u,0},R^{h,0}$ and some higher order terms. The estimates on $\tilde R^{u,2}, \tilde R^{h,2}$ can be obtained similarly to the estimates on \eqref{E134} and \eqref{estimateforR^v,g,0}, that is
\begin{equation}\label{estimateforR^u,2}
  \|(\tilde R^{u,2}, \tilde R^{h,2})\|_{ L^2}\lesssim \eps^{\frac{3}{4}}.
\end{equation}

 And recalling the definition of $p_p^2$, we have
\begin{align*}
  p_{px}^2(x,y)=
  &\int_y^\infty\bigg[\frac{1}{\sqrt\eps}\{u_p^0v_{exx}^0+v_e^0v_{pxy}^0-h_p^0g_{exx}^0-g_e^0g_{pxy}^0\}-\nu\p_y^2 v_{px}^0
  +(u_e^0+u_p^0)v_{pxx}^0\nonumber\\
  &+(v_p^0+v_e^1)v_{pxy}^0+u_p^0 v_{exx}^1+u_p^1v_{exx}^0+v_e^0v_{pxy}^1+v_p^0v_{exY}^0-(h_e^0+h_p^0)g_{pxx}^0\nonumber\\
  &-(g_p^0+g_e^1)g_{pxy}^0-h_p^0 g_{exx}^1-h_p^1g_{exx}^0-g_e^0g_{pxy}^1-g_p^0g_{exY}^0\bigg](x,\theta)d\theta,
\end{align*}
where the divergence-free conditions for $(u_p^i,v_p^i)$ and $(h_p^i,g_p^i)$ have been used. For the $\mathcal{O}(\frac{1}{\sqrt\eps})$ terms, for any $n\geq 2$, there holds
\begin{align*}
     \int_y^\infty\frac{v_e^0}{\sqrt\eps}v_{pxy}^0
  &\leq C\y^{-(n-2)}\left\|\frac{v_e^0}{Y}\right\|_{L^\infty}\|\y^nv_{pxy}^0\|_{L^2(0,\infty)},\\
     \int_y^\infty\frac{v_{exx}^0}{\sqrt\eps}u_p^0
  &=\int_y^\infty\frac{-\int_0^Yu_{exx}^0}{\sqrt\eps y}yu_p^0\\
  &\leq C\y^{-(n-2)}\eps^{-\frac{1}{4}}\|u_{exx}^0\|_{L^2(0,\infty)}\|\y^nu_p^0\|_{L^\infty},
\end{align*}
where we have used \textit{Hardy inequality}. Similar arguments can be applied to achieve the estimate for the terms $\frac{v_e^0}{\sqrt\eps}g_{pxy}^0$ and $\frac{v_{exx}^0}{\sqrt\eps}h_p^0$. The other terms can be handled similarly to the estimates on \eqref{E134} and \eqref{estimateforR^v,g,0}, so it gives
\begin{equation}\label{ppx2}
  \|p_{px}^2\|_{L^2}\lesssim \eps^{-\frac{1}{4}}.
\end{equation}
Therefore, collecting \eqref{estimateforR^v,g,0}, \eqref{estimateforR^u,1_p}-\eqref{ppx2}, we obtain
\begin{equation}\label{R^u,h_app}
  \|R_{app}^u,R_{app}^h\|_{L^2}\lesssim \eps^{\frac{3}{4}}.
\end{equation}

On the other hand, error terms for the normal components of \eqref{R_app_expansion} is reduced to
\begin{align*}
   R^v_{app}=
  &R^{v,0}+\sqrt\eps[\{(u_e^0+u_p^0)+\sqrt\eps(u_e^1+u_p^1)\}\p_x+(v_p^0+v_e^1+\sqrt\eps v_p^1)\p_y]v_p^1\\
  &-\sqrt\eps[\{(h_e^0+h_p^0)+\sqrt\eps(h_e^1+h_p^1)\}\p_x+(g_p^0+g_e^1+\sqrt\eps g_p^1)\p_y]g_p^1\\
  &+\sqrt\eps[(u_e^1+u_p^1)\p_x+v_p^1\p_y](v_p^0+v_e^1)-\sqrt\eps[(h_e^1+h_p^1)\p_x+g_p^1\p_y](g_p^0+g_e^1)\\
  &-\nu\sqrt\eps[\Delta v_e^0+\Delta_\eps v_p^1+\sqrt\eps\p_x^2(v_e^1+ v_p^0)],\\
   R^g_{app}=
  &R^{g,0}+\sqrt\eps[\{(u_e^0+u_p^0)+\sqrt\eps(u_e^1+u_p^1)\}\p_x+(v_p^0+v_e^1+\sqrt\eps v_p^1)\p_y]g_p^1\\
  &-\sqrt\eps[\{(h_e^0+h_p^0)+\sqrt\eps(h_e^1+h_p^1)\}\p_x+(g_p^0+g_e^1+\sqrt\eps g_p^1)\p_y]v_p^1\\
  &+\sqrt\eps[(u_e^1+u_p^1)\p_x+v_p^1\p_y](g_p^0+g_e^1)-\sqrt\eps[(h_e^1+h_p^1)\p_x+g_p^1\p_y](v_p^0+v_e^1)\\
  &-\kappa\sqrt\eps[\Delta g_e^0+\Delta_\eps g_p^1+\sqrt\eps\p_x^2 (g_e^1+g_p^0)].
\end{align*}
Recalling the estimate of $R^{v,0},R^{g,0}$ in \eqref{estimateforR^v,g,0}, it gives
\begin{equation}\label{R^v,g_app}
  \|R_{app}^v,R_{app}^g\|_{L^2}\lesssim \eps^{\frac{1}{4}}.
\end{equation}

Therefore, according to the above estimates on each profile for the error terms, we come to the following conclusion:
\begin{proposition} Suppose the assumption in Theorem \ref{th1.1} holds, then there exist approximate solutions $(u_{app},v_{app},h_{app},g_{app})$ satisfying
\begin{align}\label{R_app}
    \|R_{app}^u,R_{app}^h,\sqrt\eps(R_{app}^v,R_{app}^g)\|_{L^2}+\|\y\p_y\{R_{app}^u,R_{app}^h,\sqrt\eps(R_{app}^v,R_{app}^g)\}\|_{L^2}
  \lesssim \eps^{\frac{3}{4}}.
\end{align}
\end{proposition}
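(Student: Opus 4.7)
My plan is to split \eqref{R_app} into an unweighted $L^2$ bound and a weighted $\y\p_y$ bound for the bundle $(R_{app}^u,R_{app}^h,\sqrt\eps R_{app}^v,\sqrt\eps R_{app}^g)$. For the unweighted piece, the tangential estimate is exactly \eqref{R^u,h_app}, and the normal estimate \eqref{R^v,g_app}, which only gives $\eps^{1/4}$, is lifted to $\eps^{3/4}$ by the outer $\sqrt\eps$ factor prescribed in \eqref{R_app}. Hence the first half of \eqref{R_app} is obtained by simply assembling bounds already established in Subsections \ref{sec2.1}-\ref{subsec2.5}.

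For the weighted $\y\p_y$ piece I would revisit each constituent of $R_{app}^u,R_{app}^h,R_{app}^v,R_{app}^g$ — namely the terms in $E_1,E_3,E_4$, in $\tilde R^{u,2},\tilde R^{h,2}$, in $R^{u,1},R^{h,1}$, in $R^{u,1}_p,R^{h,1}_p$, in $R^{v,0},R^{g,0}$, and in the cut-off errors — and rerun the sample computations from \eqref{E134}-\eqref{ppx2} with an extra $\y\p_y$ in front. Each such term is a product of an ideal MHD profile evaluated at $(x,\sqrt\eps y)$ against a boundary-layer profile evaluated at $(x,y)$, possibly after one or two integrations in $y$. When $\p_y$ lands on an ideal MHD factor $f(x,\sqrt\eps y)$ it produces $\sqrt\eps f_Y(x,\sqrt\eps y)$, and the $\y$ weight is absorbed in $L^\infty$ through the decay hypotheses \eqref{condition_for_idealMHD4}-\eqref{condition_for_idealMHD5} for the leading profiles, and through Proposition \ref{prop2.3} and \eqref{estimateforu_e^1} for the first-order correctors. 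When $\p_y$ lands on a boundary-layer factor, the $\y$ weight is absorbed by the weighted Sobolev estimates \eqref{2.24}, \eqref{2.20}, \eqref{estimateforu_p}, and \eqref{estimateforu_p^1} supplied by Propositions \ref{prop2.1}, \ref{prop2.2}, and \ref{prop2.4}. In every instance $\y\p_y$ redistributes the $\sqrt\eps$-factors without degrading the final scaling, so the sample bounds $\eps^{3/4}$ (for $E_1,E_3$ and $\tilde R^{u,2},\tilde R^{h,2}$) and $\eps^{1/4}$ (for $E_4$, $R^{v,0},R^{g,0}$, $R^{u,1},R^{h,1}$, $R^{u,1}_p,R^{h,1}_p$) carry over, yielding $\eps^{3/4}$ after incorporating the outer $\sqrt\eps$ factor on the normal components.

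The only delicate term is the pressure contribution $\eps p_{px}^2$ to $R_{app}^u$, because \eqref{ppx2} already leaves no slack: $\|\eps p_{px}^2\|_{L^2}\lesssim \eps^{3/4}$ exactly. For the weighted version, differentiating the definition \eqref{p_p^2} in $y$ removes the outer $\int_y^\infty$ and exposes an integrand of the same shape that appeared in the derivation of \eqref{ppx2}. Multiplying by $\y$, the $\mathcal{O}(1/\sqrt\eps)$ pieces such as $\y\,v_e^0 v_{pxy}^0/\sqrt\eps$ and $\y\,v_{exx}^0 u_p^0/\sqrt\eps$ are the hard ones: they have to be closed using the smallness $\|v_e^0/Y\|_{L^\infty}\ll 1$ from \eqref{condition_for_idealMHD3} together with Hardy's inequality and the weighted decay of $(u_p^0,h_p^0,v_p^0,g_p^0)$ provided by Proposition \ref{prop2.2}, exactly as in the unweighted case. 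I expect this pressure term to be the main obstacle; once it is dispatched, the remaining weighted bounds on the integral remainders $E_1,E_3,E_4$ and on the new cut-off errors $R^{u,1}_p,R^{h,1}_p$ follow by verbatim repetition of the unweighted sample estimates, with the $\y$ weight absorbed against \eqref{condition_for_idealMHD4}-\eqref{condition_for_idealMHD5}. This completes \eqref{R_app}.
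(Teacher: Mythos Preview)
Your proposal is correct and follows the same route as the paper: assemble the unweighted $L^2$ bounds already obtained in \eqref{E134}--\eqref{R^v,g_app} for the first half of \eqref{R_app}, and for the second half rerun the very same term-by-term sample computations with an additional $\y\p_y$, absorbing the weight via the decay hypotheses \eqref{condition_for_idealMHD4}--\eqref{condition_for_idealMHD5} and the weighted profile estimates in Propositions \ref{prop2.1}--\ref{prop2.4}. The paper itself gives no more than the sentence ``according to the above estimates on each profile for the error terms'', so your outline is in fact more detailed than the paper's own argument, including your (correct) identification of $\eps p_{px}^2$ as the only term with no slack and the observation that $\p_y$ removes the outer $\int_y^\infty$ in \eqref{p_p^2}.
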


\section{Proof of the main theorem}\label{sec3}
\subsection{The remainder terms}
Since the approximate solutions have been constructed as above, now we are on a position to derive the estimates for the remainder terms.
To this end, we first deduce the system for the remainder terms read as follows, denote the approximate solutions in expansion \eqref{expansion} by $(u_s,v_s,h_s,g_s)$ for simplification,
\begin{align}\label{3.1}
\begin{cases}
  u_s:=u_{app}=u_e^0(x,Y)+u_p^0(x,y)+\sqrt{\eps}[u_e^1(x,Y)+u_p^1(x,y)],\\
  v_s:=v_{app}=\frac{v_e^0}{\sqrt\eps}(x,Y)+v_p^0(x,y)+v_e^1(x,Y)+\sqrt{\eps}v_p^1(x,y),\\
  h_s:=h_{app}=h_e^0(x,Y)+h_p^0(x,y)+\sqrt{\eps}[(h_e^1(x,Y)+h_p^1(x,y)],\\
  g_s:=g_{app}=\frac{g_e^0}{\sqrt{\eps}}(x,Y)+g_p^0(x,y)+g_e^1(x,Y)+\sqrt{\eps}g_p^1(x,y).
\end{cases}
\end{align}
Then we have the equations for the remainders $(u^\eps,v^\eps,h^\eps,g^\eps,p^\eps)$ read as
\begin{align}\label{u_epssystem}
\begin{cases}
    &(u_s\p_x u^\eps+u^\eps\p_x u_s+v_s\p_y u^\eps+v^\eps\p_y u_s) +p^\eps_x-\nu\Delta_\eps u^\eps \\
  &\quad\quad\quad -(h_s\p_x h^\eps+h^\eps\p_x h_s+g_s\p_y h^\eps+g^\eps\p_y h_s)=R_1(u^\eps,v^\eps,h^\eps,g^\eps),\\
    &(u_s\p_x v^\eps+u^\eps\p_x v_s+v_s\p_y v^\eps+v^\eps\p_y v_s) +\frac{p^\eps_y}{\eps}-\nu\Delta_\eps v^\eps \\
  &\quad\quad\quad -(h_s\p_x g^\eps+h^\eps\p_x g_s+g_s\p_y g^\eps+g^\eps\p_y g_s)=R_2(u^\eps,v^\eps,h^\eps,g^\eps),\\
    &(u_s\p_x h^\eps+u^\eps\p_x h_s+v_s\p_y h^\eps+v^\eps\p_y h_s) -\kappa\Delta_\eps h^\eps \\
  &\quad\quad\quad -(h_s\p_x u^\eps+h^\eps\p_x u_s+g_s\p_y u^\eps+g^\eps\p_y u_s)=R_3(u^\eps,v^\eps,h^\eps,g^\eps),\\
    &(u_s\p_x g^\eps+u^\eps\p_x g_s+v_s\p_y g^\eps+v^\eps\p_y g_s) -\kappa\Delta_\eps g^\eps\\
  &\quad\quad\quad -(h_s\p_x v^\eps+h^\eps\p_x v_s+g_s\p_y v^\eps+g^\eps\p_y v_s)=R_4(u^\eps,v^\eps,h^\eps,g^\eps),\\
  &\p_x u^\eps+\p_y v^\eps=\p_x h^\eps+\p_y g^\eps=0.
\end{cases}
\end{align}
where the source term $R_i(i=1,2,3,4)$ are given by
\begin{align*}
\begin{cases}
    R_1
  &:=\eps^{-\frac{1}{2}-\gamma}R_{app}^u-\eps^{\frac{1}{2}+\gamma}(u^\eps\p_x u^\eps +v^\eps\p_y u^\eps -h^\eps\p_x h^\eps -g^\eps\p_y h^\eps)\\
  &:=\eps^{-\frac{1}{2}-\gamma}R_{app}^u-N^u(u^\eps,v^\eps,h^\eps,g^\eps),\\
    R_2
  &:=\eps^{-\frac{1}{2}-\gamma}R_{app}^v-\eps^{\frac{1}{2}+\gamma}(u^\eps\p_x v^\eps +v^\eps\p_y v^\eps -h^\eps\p_x g^\eps -g^\eps\p_y g^\eps)\\
  &:=\eps^{-\frac{1}{2}-\gamma}R_{app}^v-N^v(u^\eps,v^\eps,h^\eps,g^\eps),\\
    R_3
  &:=\eps^{-\frac{1}{2}-\gamma}R_{app}^h-\eps^{\frac{1}{2}+\gamma}(u^\eps\p_x h^\eps +v^\eps\p_y h^\eps -h^\eps\p_x u^\eps -g^\eps\p_y u^\eps)\\
  &:=\eps^{-\frac{1}{2}-\gamma}R_{app}^h-N^h(u^\eps,v^\eps,h^\eps,g^\eps),\\
    R_4
  &:=\eps^{-\frac{1}{2}-\gamma}R_{app}^g-\eps^{\frac{1}{2}+\gamma}(u^\eps\p_x g^\eps +v^\eps\p_y g^\eps -h^\eps\p_x v^\eps -g^\eps\p_y v^\eps)\\
  &:=\eps^{-\frac{1}{2}-\gamma}R_{app}^g-N^g(u^\eps,v^\eps,h^\eps,g^\eps).
\end{cases}
\end{align*}
And we take the following boundary conditions into consideration:
\begin{align}\label{u_epsboundary}
  \begin{cases}
  (u^\eps ,v^\eps ,\p_y h^\eps ,g^\eps)|_{y=0}=(0,0,0,0),\quad
  (u^\eps,v^\eps,h^\eps,g^\eps)|_{x=0}=(0,0,0,0),\\
  p^\eps-2\nu\eps\p_x u^\eps|_{x=L}=0,\quad
  \p_yu^\eps+\nu\eps\p_x v^{\epsilon}|_{x=L}=0,\quad
  (h^\eps,\p_x g^\eps)|_{x=L}=0.
\end{cases}
\end{align}

Thanks to the above constructed profiles of $(u_s,v_s,h_s,g_s)$, we have the following boundedness which will be used frequently throughout this section:
\begin{align}\label{summary_estimates}
  \|y^j\p_x^i\p_y^j(u_s,h_s),\p_x^i(v_s,g_s)\|_{L^\infty}\lesssim 1,
\end{align}
where $i=0,1,2$ and $j=0,1,2$. Furthermore, by using the assumption \eqref{condition_for_idealMHD6} imposed on $u_e^0,h_e^0$, the estimates of $u_p^0,u_e^1,u_p^1,h_p^0,h_e^1,h_p^1$ and the smallness of $\eps$, it gives
\begin{equation}\label{yu_sy}
  \|y\p_y(u_s,h_s)\|_{L^\infty}<C\sigma_0,\quad {\rm for}\ x\in[0,L],
\end{equation}
for sufficiently small constant $\sigma_0$ and some small $0< L\ll 1$.
In addition, by using the inequality \eqref{estimatesforu_p^0}$_2$, the smallness of $\varepsilon$ and the boundedness of the first-order profiles, the following strict positivity holds
\begin{equation}\label{u_s_positivity}
  u_s\gtrsim 1.
\end{equation}

\subsection{Linear stability estimates}
For the sake of solving the nonlinear system \eqref{u_epssystem} for $(u^\eps,v^\eps,h^\eps,g^\eps)$, we first consider the following linearized equations
\begin{align}\label{linearizedu_s}
\begin{cases}
  -\nu\Delta_\eps u^\eps+S_u(u^\eps,v^\eps)-S_h(h^\eps,g^\eps)+p_x^\eps=f_1,\\
  -\nu\Delta_\eps v^\eps+S_v(u^\eps,v^\eps)-S_g(h^\eps,g^\eps)+\frac{p_y^\eps}{\eps }=f_2,\\
  -\kappa\Delta_\eps h^\eps+K_h(h^\eps,g^\eps)-K_u(u^\eps,v^\eps)=f_3,\\
  -\kappa\Delta_\eps g^\eps+K_g(h^\eps,g^\eps)-K_v(u^\eps,v^\eps)=f_4,\\
  \p_x u^\eps+\p_y v^\eps=\p_x h^\eps+\p_y g^\eps=0,
\end{cases}
\end{align}
where $f_1,f_2,f_3,f_4$ are given functions in $L^2$, and
\begin{align}\label{S_u}
\begin{cases}
  &S_u(u^\eps,v^\eps)=u_s\p_x u^\eps+u^\eps\p_x u_s+v_s\p_y u^\eps+v^\eps\p_y u_s,\\
  &S_h(h^\eps,g^\eps)=h_s\p_x h^\eps+h^\eps\p_x h_s+g_s\p_y h^\eps+g^\eps\p_y h_s,\\
  &S_v(u^\eps,v^\eps)=u_s\p_x v^\eps+u^\eps\p_x v_s+v_s\p_y v^\eps+v^\eps\p_y v_s,\\
  &S_g(h^\eps,g^\eps)=h_s\p_x g^\eps+h^\eps\p_x g_s+g_s\p_y g^\eps+g^\eps\p_y g_s,\\
  &K_h(h^\eps,g^\eps)=u_s\p_x h^\eps-h^\eps\p_x u_s+v_s\p_y h^\eps-g^\eps\p_y u_s,\\
  &K_u(u^\eps,v^\eps)=h_s\p_x u^\eps-u^\eps\p_x h_s+g_s\p_y u^\eps-v^\eps\p_y h_s,\\
  &K_g(h^\eps,g^\eps)=u_s\p_x g^\eps-h^\eps\p_x v_s+v_s\p_y g^\eps-g^\eps\p_y v_s,\\
  &K_v(u^\eps,v^\eps)=h_s\p_x v^\eps-u^\eps\p_x g_s+g_s\p_y v^\eps-v^\eps\p_y g_s,
\end{cases}
\end{align}
together with the boundary conditions \eqref{u_epsboundary}.

In this subsection, we shall prove the following \textit{Proposition \ref{prop3.1}}:
\begin{proposition}\label{prop3.1} Consider solutions $[u^\eps,v^\eps,h^\eps,g^\eps]\in\mathcal{X}$ to linearized problem \eqref{linearizedu_s} with boundary conditions \eqref{u_epsboundary}, then it satisfies the following estimate:
\begin{equation}\label{estimateforX1}
  \|(u^\eps,v^\eps,h^\eps,g^\eps)\|_{X_1}^2+\|(u^\eps,v^\eps,h^\eps,g^\eps)\|_{B}^2\lesssim \|(f_1,f_3)\|_{ L^2}+\sqrt\eps\|(f_2,f_4)\|_{ L^2}+\mathcal{R},
\end{equation}
where
\begin{align}\label{X_1norm}
\begin{split}
  \|(u^\eps,v^\eps,h^\eps,g^\eps)\|_{X_1}
  &:=\| \{u^\eps_y,h^\eps_y,\sqrt\eps(u^\eps_x,h^\eps_x)\}\cdot y \|_{ L^2}+\|v^\eps_y,g^\eps_y,\sqrt\eps(v^\eps_x,g^\eps_x)\|_{ L^2}\\
  &\quad +\| \{u^\eps_{yy},h^\eps_{yy},\sqrt\eps (u^\eps_{xy},h^\eps_{xy}),\eps(u^\eps_x,h^\eps_x)\}\cdot y \|_{ L^2},
\end{split}
\end{align}
and
\begin{align*}
  \mathcal{R}:=&\iint \p_yf_1\p_y\left\{\frac{u^\eps wy^2}{u_s}\right\}
  -\iint\eps\p_yf_2\p_x\left\{\frac{u^\eps wy^2}{u_s}\right\}\\
  &+\iint\p_yf_3\p_y\left\{\frac{h^\eps wy^2 }{u_s}\right\}
  -\iint\eps\p_yf_4\p_x\left\{\frac{h^\eps wy^2 }{u_s}\right\}.
\end{align*}
\end{proposition}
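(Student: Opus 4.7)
The plan is to establish \eqref{estimateforX1} by combining three tiers of energy estimates on the linearized system, each controlling a different block of the $X_1$ and $B$ norms, and closing them by smallness of $L$ together with the structural assumptions \eqref{condition_for_idealMHD1}--\eqref{condition_for_idealMHD6} on the outer MHD profile.

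The first tier is a basic $L^2$ energy estimate: test \eqref{linearizedu_s}$_1$ by $u^\eps$, \eqref{linearizedu_s}$_3$ by $h^\eps$, \eqref{linearizedu_s}$_2$ by $\eps v^\eps$, and \eqref{linearizedu_s}$_4$ by $\eps g^\eps$, then sum. The pressure contributions drop out after using divergence-free conditions and the boundary data \eqref{u_epsboundary}. The dissipation provides $\|(u^\eps_y,h^\eps_y)\|_{L^2}^2+\|\sqrt\eps(u^\eps_x,h^\eps_x,v^\eps_y,g^\eps_y)\|_{L^2}^2+\|\eps(v^\eps_x,g^\eps_x)\|_{L^2}^2$. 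In the coupling linear elements $S_u,S_h,K_u,K_h,S_v,S_g,K_v,K_g$, the magnetic cross terms $h_s\p_x h^\eps\cdot u^\eps$ and $h_s\p_x u^\eps\cdot h^\eps$ (and their $y$-analogues with $g_s$) cancel after integration by parts; the dangerous pieces are the $\mathcal O(1/\sqrt\eps)$ contributions from $\frac{v_e^0}{\sqrt\eps}\p_y$ and $\frac{g_e^0}{\sqrt\eps}\p_y$, which I would split as sketched in (iv) of the introduction:
\begin{equation*}
g_s\p_y h^\eps=\frac{g_e^0}{\sqrt\eps y}\cdot y\p_y h^\eps+(g_s-\tfrac{g_e^0}{\sqrt\eps})\p_y h^\eps,
\end{equation*}
controlling the first factor by $\|v_e^0/Y\|_{L^\infty}\|h_e^0/u_e^0\|_{L^\infty}\ll 1$ via \eqref{condition_for_idealMHD3} and \eqref{pre_ratio}. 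This produces the $L^2$ block of $X_1$ modulo a small amount of $\|(u^\eps_y,h^\eps_y)\cdot y\|_{L^2}^2$ to be recovered below.

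The second tier is the positivity estimate quoted as Lemma \ref{positivitylemma}. Following the Guo--Nguyen mechanism exploiting the positivity of $-\p_{yy}+u_{syy}/u_s$, I would test the tangential equations against multipliers of the form $-\p_y(u^\eps_y/u_s)$ and $-\p_y(h^\eps_y/u_s)$ (the Neumann condition $\p_y h^\eps|_{y=0}=0$ kills the trace). This upgrades the control to the full second-order block $\|\{u^\eps_{yy},h^\eps_{yy},\sqrt\eps(u^\eps_{xy},h^\eps_{xy}),\eps(u^\eps_{xx},h^\eps_{xx})\}\cdot y\|_{L^2}$, while the non-shear drift $v_s\p_y,g_s\p_y$ produces the leftover $y$-weighted first-order quantities $\|(u^\eps_y,h^\eps_y)\cdot y\|_{L^2}$; these cannot be absorbed at this stage and must be handled separately.

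The third tier, which is the essential new ingredient relative to the shear case in \cite{DLX}, is a $y$-weighted estimate inspired by \cite{Iyernonshear}. I would test \eqref{linearizedu_s}$_1$ by $\frac{u^\eps w y^2}{u_s}$ and \eqref{linearizedu_s}$_3$ by $\frac{h^\eps w y^2}{u_s}$, with $w(x)$ a weight vanishing at $x=0$ and of definite sign derivative in $x$. On the convective side, the identity
\begin{equation*}
u_s\p_x u^\eps\cdot\frac{u^\eps w y^2}{u_s}=\tfrac12 w y^2\p_x(u^\eps)^2
\end{equation*}
integrated by parts in $x$ yields, using $u^\eps|_{x=0}=0$, boundary traces at $x=L$ that reproduce the $B$-norm and a volume term $-\tfrac12\iint w'(x)\,(u^\eps)^2 y^2$ that absorbs $\|(u^\eps,h^\eps)\cdot y\|_{L^2}^2$; the viscous term, after two integrations by parts, delivers $\|(u^\eps_y,h^\eps_y)\cdot y\|_{L^2}^2$ together with the boundary traces $\|\sqrt\eps(u^\eps_x,h^\eps_x)\cdot y\|_{L^2(x=L)}$. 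The factor $1/u_s$ is precisely what allows the magnetic coupling $-h_s\p_x h^\eps\cdot\frac{u^\eps w y^2}{u_s}$ to be matched by a symmetric term from the $h$-equation after using the pointwise smallness $h_s\ll u_s$ from \eqref{estimatesforu_p^0}$_2$, rather than requiring a smallness on $h_s$ alone. The forcing on the right contributes to the functional $\mathcal R$ after the integrations by parts described above.

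Adding a suitable small constant times the third tier to the sum of the first two closes \eqref{estimateforX1}; all coupling errors involving derivatives of $u_s,v_s,h_s,g_s$ are absorbed using \eqref{summary_estimates}, \eqref{yu_sy}, \eqref{u_s_positivity}, and the smallness of $L$. The main obstacle is the simultaneous bookkeeping of the $\mathcal O(1/\sqrt\eps)$ non-shear drifts across the three tiers: their size must be tamed uniformly by \eqref{condition_for_idealMHD3} together with the identity $g_e^0=k v_e^0$ with $\|k\|_{L^\infty}\ll 1$, and the choice of weight $w$, the positivity $u_s\gtrsim 1$, and the weighted bound \eqref{yu_sy} must all be compatible so that the good terms delivered in the third tier genuinely dominate the weighted quantities left over from the first two.
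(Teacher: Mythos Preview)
Your three-tier architecture is right in spirit, but tiers two and three are misassigned and the multipliers you propose do not match what is actually needed.

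\textbf{Tier two.} The positivity lemma (Lemma~\ref{positivitylemma}) does \emph{not} deliver the second-order block. Its job is to control $\|\nabla_\eps v^\eps\|_{L^2}^2+\|\nabla_\eps g^\eps\|_{L^2}^2$, which your tier one only sees with an extra factor of~$\eps$. The correct Guo--Nguyen multipliers are $\big[\p_y(\tfrac{v^\eps}{u_s}),-\eps\p_x(\tfrac{v^\eps}{u_s}),\p_y(\tfrac{g^\eps}{u_s}),-\eps\p_x(\tfrac{g^\eps}{u_s})\big]$ applied to the four equations in order; the pairing of the first equation with $\p_y(v^\eps/u_s)$ and the second with $-\eps\p_x(v^\eps/u_s)$ is what produces the coercive $\|\nabla_\eps v^\eps\|^2$ after the pressure cancels. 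Your proposed multipliers $-\p_y(u^\eps_y/u_s)$, $-\p_y(h^\eps_y/u_s)$ are a different object and do not generate this control.

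\textbf{Tier three.} The weighted lemma (Lemma~\ref{weightedestimatelemma}) is what actually delivers \emph{both} the first-order weighted block $\|\{u^\eps_y,h^\eps_y,\sqrt\eps(u^\eps_x,h^\eps_x)\}\cdot y\|_{L^2}$ \emph{and} the full second-order block. The mechanism is: first apply $\p_y$ to the system \eqref{linearizedu_s}, then test against $\big[\p_y\{\tfrac{u^\eps wy^2}{u_s}\},-\eps\p_x\{\tfrac{u^\eps wy^2}{u_s}\},\p_y\{\tfrac{h^\eps wy^2}{u_s}\},-\eps\p_x\{\tfrac{h^\eps wy^2}{u_s}\}\big]$ with $w(x)=1-x$. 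The convective piece $u_s u^\eps_{xy}\cdot\p_y\{\tfrac{u^\eps wy^2}{u_s}\}\sim u^\eps_{xy}u^\eps_y wy^2$ integrates by parts in $x$ to give $\tfrac12\iint(u^\eps_y)^2 y^2$ (using $w'=-1$), and the vorticity terms $-\nu\Delta_\eps u^\eps_y\cdot\p_y\{\cdots\}$ etc.\ produce the second-order weighted quantities $\|u^\eps_{yy}\cdot y\|_{L^2}^2$, $\|\sqrt\eps u^\eps_{xy}\cdot y\|_{L^2}^2$, $\|\eps u^\eps_{xx}\cdot y\|_{L^2}^2$. Your scheme of testing the \emph{undifferentiated} equations against $\tfrac{u^\eps wy^2}{u_s}$ cannot work here for two reasons: it yields $\iint f_1\tfrac{u^\eps wy^2}{u_s}$ on the forcing side, which is not the $\mathcal R$ stated in the proposition (that $\mathcal R$ manifestly records $\p_yf_i$ paired with $\p_y\{\cdots\}$ or $\p_x\{\cdots\}$, i.e.\ it arises only after the $\p_y$ differentiation); and it does not see the second-order block at all. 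Also, $w$ must be $1-x$ (positive on $[0,L]$, with $w'=-1$), not a weight vanishing at $x=0$; otherwise the viscous contribution $\iint\tfrac{(u^\eps_y)^2wy^2}{u_s}$ has the wrong sign.

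In short: swap the outputs of your tiers two and three, replace your tier-two multipliers by $\p_y(v^\eps/u_s)$, $-\eps\p_x(v^\eps/u_s)$, $\p_y(g^\eps/u_s)$, $-\eps\p_x(g^\eps/u_s)$, and in tier three first differentiate in $y$ and then test against the $\p_y$, $-\eps\p_x$ of $\tfrac{u^\eps(1-x)y^2}{u_s}$ and $\tfrac{h^\eps(1-x)y^2}{u_s}$. The handling of the nonshear drift via the splitting $g_s\p_y h^\eps=\tfrac{g_e^0}{Y}\cdot yh^\eps_y+(g_s-\tfrac{g_e^0}{\sqrt\eps})h^\eps_y$ and the use of $h_s/u_s\ll1$, $\|y\p_y(u_s,h_s)\|_{L^\infty}\ll1$ that you describe is correct and is exactly how the magnetic cross terms and the $\mathcal O(1/\sqrt\eps)$ pieces are absorbed in all three lemmas.
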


The proof of Proposition \ref{prop3.1} will be completed by three lemmas. In the proof, the following Poincar\'{e} type inequalities will be applied frequently
\begin{align}\label{embedding}
  \| u^\eps\|_{ L^2}^2\leq L\| u^\eps_x\|_{ L^2}^2,\quad \| h^\eps\|_{ L^2}^2\leq L\| h^\eps_x\|_{ L^2}^2.
\end{align}
\begin{lemma}\label{basicestimatelemma}{\rm (Basic energy estimates)} Let $[u^\eps,v^\eps,h^\eps,g^\eps]\in\mathcal{X}$ be the solutions to linearized system \eqref{linearizedu_s} with boundary conditions \eqref{u_epsboundary}, then the following estimate holds
\begin{align}\label{basicestimate}
\begin{split}
  &\nu\|\nabla_\eps u^\eps\|_{ L^2}^2+ \kappa\|\nabla_\eps h^\eps\|_{ L^2}^2 +\int_{x=L}u_s\cdot(|u^\eps|^2+\eps|v^\eps|^2+\eps|g^\eps|^2) \\
  &\quad\quad \lesssim L\|(\nabla_\eps v^\eps,\nabla_\eps g^\eps)\|_{ L^2}^2 +\|(f_1,f_3)\|_{ L^2}^2+\eps\|(f_2,f_4)\|_{ L^2}^2,
\end{split}
\end{align}
\end{lemma}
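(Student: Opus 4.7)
The plan is a weighted energy method tailored to the scaling of \eqref{linearizedu_s}. I would multiply the four equations by $u^\eps$, $\eps v^\eps$, $h^\eps$, and $\eps g^\eps$ respectively, integrate over $\Omega=[0,L]\times\mathbb{R}_+$, and add. The $\eps$-weights on the vertical-component tests regularize the singular $p_y^\eps/\eps$ into an ordinary $p_y^\eps v^\eps$ and align each resulting gradient quadratic with the $\|\nabla_\eps\cdot\|$ norms.

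Integration by parts of the diffusion yields the bulk positivity $\nu\|\nabla_\eps u^\eps\|_{L^2}^2+\nu\eps\|\nabla_\eps v^\eps\|_{L^2}^2+\kappa\|\nabla_\eps h^\eps\|_{L^2}^2+\kappa\eps\|\nabla_\eps g^\eps\|_{L^2}^2$. All boundary contributions at $y=0$ and $x=0$ vanish by the homogeneous Dirichlet/Neumann data, and the $h^\eps,g^\eps$ boundary terms at $x=L$ vanish by $h^\eps|_{x=L}=\p_xg^\eps|_{x=L}=0$. The pressure telescopes via $u_x^\eps+v_y^\eps=0$ down to the single boundary integral $\int p^\eps u^\eps|_{x=L}dy$. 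Applying the Robin condition $p^\eps=2\nu\eps u_x^\eps|_{x=L}$ eliminates the corresponding $u^\eps$-diffusion residual up to a sign-definite factor, and applying the tangential Neumann condition $u_y^\eps=-\nu\eps v_x^\eps|_{x=L}$ converts the remaining $v^\eps$-diffusion residual $-\nu\eps^2\int v_x^\eps v^\eps|_{x=L}$ into an $\eps\int u_y^\eps v^\eps|_{x=L}$-type integral, which is absorbed by Young against the positive $\tfrac{\eps}{2}\int u_s|v^\eps|^2|_{x=L}dy$ produced below, plus a small multiple of $\|u_y^\eps\|_{L^2}^2$ via the trace bound $\|u_y^\eps|_{x=L}\|^2\lesssim\|u_y^\eps\|\|u_{xy}^\eps\|$ (valid since $u_y^\eps|_{x=0}=0$).

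The convection terms furnish the boundary positivity demanded by \eqref{basicestimate}. For $\iint S_u\,u^\eps$, integrating by parts the $u_s\p_x$ and $v_s\p_y$ pieces using $u_{sx}+v_{sy}=0$ and the structural identity $v_s(x,0)=g_s(x,0)=0$ (which holds because $v_e^0|_{Y=0}=g_e^0|_{Y=0}=0$ together with the matching conditions $v_p^0(x,0)=-\overline{v_e^1}(x)$ and $g_p^0(x,0)=-\overline{g_e^1}(x)$) leaves the positive surface integral $\tfrac12\int u_s|u^\eps|^2|_{x=L}dy$ together with bulk remainders $\iint u_{sx}|u^\eps|^2+\iint u_{sy}u^\eps v^\eps$. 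The parallel tests of the second and fourth equations against $\eps v^\eps,\eps g^\eps$ supply the analogous pieces $\tfrac{\eps}{2}\int u_s(|v^\eps|^2+|g^\eps|^2)|_{x=L}dy$. The crucial MHD cross-coupling $-\iint S_h u^\eps-\iint K_u h^\eps$, after grouping as $-\iint h_s\p_x(u^\eps h^\eps)-\iint g_s\p_y(u^\eps h^\eps)$ and then using the magnetic divergence-free identity $h_{sx}+g_{sy}=0$, collapses cleanly to $-\iint g^\eps h_{sy}u^\eps+\iint v^\eps h_{sy}h^\eps$ (all boundary pieces in this combination vanish by $h^\eps|_{x=L}=0$, $u^\eps|_{x=0}=0$, and $g_s|_{y=0}=0$). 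The only leftover mixed boundary integral $-\eps\int h_s v^\eps g^\eps|_{x=L}dy$ is absorbed into the positive surface terms by $|h_s|<u_s$ from \eqref{condition_for_idealMHD1}.

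All bulk remainders are handled by three workhorses: the Poincar\'e inequality \eqref{embedding} combined with the divergence-free identities $u_x^\eps=-v_y^\eps$ and $h_x^\eps=-g_y^\eps$ (yielding $\|u^\eps\|_{L^2}+\|h^\eps\|_{L^2}\lesssim\sqrt L(\|v_y^\eps\|_{L^2}+\|g_y^\eps\|_{L^2})$); Hardy's inequality for $v^\eps/y$ and $g^\eps/y$, both with vanishing trace at $y=0$; and the smallness $\|y\p_y(u_s,h_s)\|_{L^\infty}\lesssim\sigma_0$ from \eqref{yu_sy}, which gives for instance $|\iint u_{sy}u^\eps v^\eps|\lesssim\sigma_0\sqrt L\|v_y^\eps\|_{L^2}^2$. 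The forcing is dominated by Cauchy--Schwarz and Poincar\'e, e.g.\ $|\iint f_1 u^\eps|\leq\|f_1\|_{L^2}^2+L\|v_y^\eps\|_{L^2}^2$ and the analogues with appropriate $\eps$-weights for $f_2,f_3,f_4$. Dropping the positive $\nu\eps\|\nabla_\eps v^\eps\|^2+\kappa\eps\|\nabla_\eps g^\eps\|^2$ from the LHS (so as to match the weaker left-hand side of \eqref{basicestimate}) and choosing $L,\sigma_0$ sufficiently small closes the estimate. The main obstacle is that $v_{sx}$ and $g_{sx}$ inherit an $\mathcal O(1/\sqrt\eps)$ scaling from the leading-order pieces $v_e^0/\sqrt\eps$ and $g_e^0/\sqrt\eps$, making the bulk terms $\eps\iint u^\eps v_{sx}v^\eps$ and $\eps\iint g_{sx}(u^\eps g^\eps-h^\eps v^\eps)$ \emph{a priori} delicate; the resolution is to exploit $v_e^0(x,0)=g_e^0(x,0)=0$, which gives $\|v_{sx}/y\|_{L^\infty}+\|g_{sx}/y\|_{L^\infty}\lesssim 1$, so that together with the $\eps$ prefactor and Hardy applied to $v^\eps/y,g^\eps/y$ these terms ultimately reduce to the clean $L\|\nabla_\eps(v^\eps,g^\eps)\|_{L^2}^2$ bound required by \eqref{basicestimate}.
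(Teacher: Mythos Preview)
Your proposal is correct and follows exactly the approach indicated by the paper: the paper's own proof simply states that one multiplies the four equations in \eqref{linearizedu_s} by $u^\eps$, $\eps v^\eps$, $h^\eps$, $\eps g^\eps$ respectively, integrates, and refers to Lemma~3.1 of \cite{DLX} for the details. You have supplied precisely those details---the handling of the pressure and diffusion boundary terms at $x=L$ via the Robin conditions, the cancellation in the MHD cross-coupling using $\p_x h_s+\p_y g_s=0$ together with $h^\eps|_{x=L}=0$, and the absorption of the bulk remainders through \eqref{embedding}, Hardy, and the smallness in \eqref{yu_sy}---so there is nothing to add.
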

\begin{proof}
This lemma can be obtained by standard energy arguments. Indeed, multiplying each equation in system \eqref{linearizedu_s} by $\{u^\eps,\eps v^\eps,h^\eps,\eps g^\eps\}$ respectively and following the arguments as in \cite{DLX}, the lemma follows. Here we omit the details, see Lemma 3.1 in \cite{DLX} for instance.
\end{proof}

Before giving the following two lemmas, we remind the readers of a frequently used estimate that is
\begin{equation*}
  \bigg\|\frac{h_s}{u_s}\bigg\|_{L^\infty}\ll1,
\end{equation*}
for $(x,y)\in[0,L]\times[0,+\infty)$ with $0<L\ll1$, which can be proved by using the positivity of $u_s$ in \eqref{u_s_positivity}, the second estimate in \eqref{estimatesforu_p^0} and the estimates for $h_p^0,h_e^1,h_p^1$ with the smallness of $\eps$.

\begin{lemma}\label{positivitylemma}{\rm (Positivity estimates)} Consider solutions $[u^\eps,v^\eps,h^\eps,g^\eps]\in\mathcal{X}$ to linearized problem \eqref{linearizedu_s}  with boundary conditions \eqref{u_epsboundary}, suppose that $\|\frac{h_s}{u_s}\|_{L^\infty}\ll1$, $\|y\p_y (u_s,h_s)\|_{ L^\infty}<C\sigma_0$ uniform in $0<L\ll1$ and the normal velocity enjoys $\|\frac{v_e^0}{Y}\|_{L^\infty}\ll 1$, then the following estimate holds
\begin{align}\label{positivityestimate}
\begin{split}
  &\quad \|\nabla_\eps v^\eps\|_{ L^2}^2+\|\nabla_\eps g^\eps\|_{ L^2}^2+\eps\int_{x=L}\frac{|v^\eps_y|^2}{u_s}\\
  &\leq C\|(u^\eps_y,h^\eps_y)\|_{ L^2}^2+\mathcal{O}(v_e^0)\cdot\|\{\nabla_\eps u^\eps,\nabla_\eps h^\eps\}\cdot y\|_{ L^2}^2+\|(f_1,f_3)\|_{ L^2}^2+\eps\|(f_2,f_4)\|_{ L^2}^2\\
  &\quad +\left(\left\|\frac{h_s}{u_s}\right\|_{L^\infty}+L+\sqrt\eps+\mathcal{O}(v_e^0)+\|y\p_y(u_s,h_s)\|_{ L^\infty}\right)\cdot\|(\nabla_\eps v^\eps,\nabla_\eps g^\eps)\|_{ L^2}^2,
\end{split}
\end{align}
where $\mathcal{O}(v_e^0)$ stands for some small constant under the assumption of $\|\frac{v_e^0}{Y}\|_{L^\infty}\ll 1$.
\end{lemma}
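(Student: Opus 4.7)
The plan is to test the normal momentum equation \eqref{linearizedu_s}$_2$ against $v^\eps$ and the normal magnetic equation \eqref{linearizedu_s}$_4$ against $g^\eps$, integrate over $\Omega$, and collect every contribution. After integration by parts, the diffusion terms yield the positive quadratic forms $\nu\|\nabla_\eps v^\eps\|_{L^2}^2$ and $\kappa\|\nabla_\eps g^\eps\|_{L^2}^2$. The boundary pieces at $y=0$ vanish because $v^\eps|_{y=0}=g^\eps|_{y=0}=0$, while those at $x=L$ combine with the Robin-type condition $\p_y u^\eps+\nu\eps\p_x v^\eps|_{x=L}=0$ and the divergence-free identity $v^\eps_y=-u^\eps_x$ to generate the positive boundary contribution $\eps\int_{x=L}|v^\eps_y|^2/u_s$ appearing on the LHS of \eqref{positivityestimate}. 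The tangential convective pieces $u_s\p_x v^\eps\cdot v^\eps$ and $u_s\p_x g^\eps\cdot g^\eps$ yield additional positive boundary terms at $x=L$ together with bounded interior remainders controlled via $\|u_{sx}\|_{L^\infty}$ and the approximate divergence-free identity $u_{sx}+v_{sy}=0$.

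The critical contributions come from the normal convective and magnetic couplings $v_s\p_y v^\eps$ and $g_s\p_y g^\eps$, since $v_s$ and $g_s$ both contain the $\mathcal{O}(1/\sqrt\eps)$ singular pieces $v_e^0/\sqrt\eps$ and $g_e^0/\sqrt\eps$. Following the decomposition sketched in the introduction, I split $v_s=v_e^0/\sqrt\eps+(v_s-v_e^0/\sqrt\eps)$, the regular remainder $v_s-v_e^0/\sqrt\eps=v_p^0+v_e^1+\sqrt\eps v_p^1$ being uniformly bounded in $L^\infty$. For the singular piece, integration by parts in $y$ (the boundary term at $y=0$ vanishes) rewrites $\int(v_e^0/\sqrt\eps)\cdot\tfrac12\p_y|v^\eps|^2$ as $-\tfrac12\int\p_Y v_e^0\cdot|v^\eps|^2=\tfrac12\int\p_X u_e^0\cdot|v^\eps|^2$ via outer divergence-free; combined with the smallness $\|v_e^0/Y\|_{L^\infty}\ll 1$ and a Poincar\'e-type bound in $x$ using $v^\eps|_{x=0}=0$, this contributes only an $\mathcal{O}(v_e^0)$-small multiple of $\|\nabla_\eps v^\eps\|^2$. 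The magnetic analog is handled through the key identity
\begin{equation*}
  \|g_e^0/Y\|_{L^\infty}\le\|v_e^0/Y\|_{L^\infty}\|h_e^0/u_e^0\|_{L^\infty}\ll 1,
\end{equation*}
derived from \eqref{u_e^0}$_3$ and \eqref{condition_for_idealMHD1}. The remaining magnetic cross terms $h_s\p_x g^\eps\cdot v^\eps$, $h^\eps\p_x g_s\cdot v^\eps$ produce contributions of order $\|h_s/u_s\|_{L^\infty}\|\nabla_\eps g^\eps\|\|\nabla_\eps v^\eps\|$ which are absorbable by Young's inequality, and analogously for the corresponding pieces arising in the $g^\eps$-equation.

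The principal obstacle is the pressure term $\int(p^\eps_y/\eps)v^\eps$ in the $v^\eps$-equation. Integration by parts in $y$ combined with the divergence-free identity $v^\eps_y=-u^\eps_x$ rewrites it as $\int(p^\eps/\eps)u^\eps_x$; a further integration by parts in $x$, using $u^\eps|_{x=0}=0$ together with the boundary condition $p^\eps|_{x=L}=2\nu\eps u^\eps_x|_{x=L}$, and substitution of $p^\eps_x$ from the $u^\eps$-equation, reduces the bulk pressure contribution to a combination of $(\nu/\eps)\|\nabla_\eps u^\eps\|^2$, boundary pieces at $x=L$, and lower-order convective/source terms of the type $(1/\eps)\int(S_u-S_h-f_1)u^\eps$. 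The apparently dangerous $(\nu/\eps)\|\nabla_\eps u^\eps\|^2$ is controlled by invoking Lemma \ref{basicestimatelemma}, which bounds $\nu\|\nabla_\eps u^\eps\|^2$ by $L\|(\nabla_\eps v^\eps,\nabla_\eps g^\eps)\|^2+\|(f_1,f_3)\|^2+\eps\|(f_2,f_4)\|^2$; the $L$-smallness of the domain, together with the Poincar\'e-type bounds $\|u^\eps\|_{L^2}\lesssim L\|u^\eps_x\|_{L^2}$ derived from $u^\eps|_{x=0}=0$, allows the resulting cross-terms to be absorbed into the LHS and the force contributions to be matched against $\|(f_1,f_3)\|^2+\eps\|(f_2,f_4)\|^2$. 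Collecting every contribution and absorbing the small prefactors $\|h_s/u_s\|_{L^\infty}$, $L$, $\sqrt\eps$, $\mathcal{O}(v_e^0)$ and $\|y\p_y(u_s,h_s)\|_{L^\infty}$ into the LHS via Cauchy--Schwarz and Young's inequality then yields \eqref{positivityestimate}.
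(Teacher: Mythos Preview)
Your approach has a genuine gap in the handling of the pressure. Testing \eqref{linearizedu_s}$_2$ directly against $v^\eps$ produces the term $\int (p^\eps_y/\eps)\,v^\eps$, and your proposed rewriting brings in a $1/\eps$ factor that never goes away. After your integrations by parts and substitution of $p^\eps_x$ from \eqref{linearizedu_s}$_1$, the contribution $(1/\eps)\int S_u\cdot u^\eps$ contains, for instance, $(1/\eps)\int v^\eps\,\p_y u_s\,u^\eps$; using $|\p_y u_s|\lesssim \sigma_0/y$, Hardy, and Poincar\'e \eqref{embedding} only yields a bound of the form $\sigma_0 L^{1/2}\eps^{-1}\|v^\eps_y\|\|u^\eps_x\|$. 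Since $L$ is small but \emph{independent of} $\eps$, the coefficient $\sigma_0 L^{1/2}/\eps$ blows up as $\eps\to 0$ and cannot be absorbed. The same obstruction hits $(\nu/\eps)\|\nabla_\eps u^\eps\|^2$: invoking Lemma~\ref{basicestimatelemma} gives a bound with prefactor $L/\eps$, not $L$, so it is not absorbable either. In short, the smallness parameters at your disposal ($L$, $\sqrt\eps$, $\|h_s/u_s\|_{L^\infty}$, $\mathcal O(v_e^0)$, $\sigma_0$) are all $\eps$-independent or vanish too slowly to beat the $1/\eps$ singularity you have introduced.

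The paper avoids this entirely by using a curl-type multiplier: it tests \eqref{linearizedu_s}$_{1,2,3,4}$ against $\p_y(v^\eps/u_s)$, $-\eps\,\p_x(v^\eps/u_s)$, $\p_y(g^\eps/u_s)$, $-\eps\,\p_x(g^\eps/u_s)$ respectively. The pressure then appears as
\[
\int p^\eps_x\,\p_y\!\Big(\tfrac{v^\eps}{u_s}\Big)-\eps\cdot\tfrac{p^\eps_y}{\eps}\,\p_x\!\Big(\tfrac{v^\eps}{u_s}\Big)
=\int p^\eps_x\,\p_y\!\Big(\tfrac{v^\eps}{u_s}\Big)-p^\eps_y\,\p_x\!\Big(\tfrac{v^\eps}{u_s}\Big),
\]
which cancels exactly after integration by parts---the $\eps$ in the multiplier kills the $1/\eps$ in the equation. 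The crucial positive quantity $\|\nabla_\eps v^\eps\|^2$ is then extracted not from the diffusion but from the convective term $S_u$: the piece $u_s\,u^\eps_x\cdot\p_y(v^\eps/u_s)$ contributes $\int u^\eps_x v^\eps_y=-\int |v^\eps_y|^2$ via divergence-free, and similarly $-\eps\,u_s v^\eps_x\cdot\p_x(v^\eps/u_s)$ yields $-\eps\|v^\eps_x\|^2$. This is the ``positivity'' in the lemma's name, and it is the idea your proposal is missing.
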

\begin{proof}
Applying operator $\big[\p_y(\frac{v^\eps}{u_s}),-\eps\p_x(\frac{v^\eps}{u_s}),\p_y(\frac{g^\eps}{u_s}),-\eps\p_x(\frac{g^\eps}{u_s})\big]$  to the system \eqref{linearizedu_s}, integrating them in $[0,L]\times[0,\infty)$ and adding them up, we have
\begin{align}\label{postivityestimate1}
\begin{split}
  &\iint \p_y(\frac{v^\eps}{u_s})(S_u(u^\eps,v^\eps)+\p_xp^\eps-\nu\Delta_\eps u^\eps)
  +\iint\p_y(\frac{g^\eps}{u_s})(K_h(h^\eps,g^\eps)-\kappa\Delta_\eps h^\eps)\\
  &-\iint\eps\p_x(\frac{v^\eps}{u_s})(S_v(u^\eps,v^\eps)+\frac{\p_y p^\eps}{\eps}-\nu\Delta_\eps v^\eps)
   -\iint\eps\p_x(\frac{g^\eps}{u_s})(K_g(h^\eps,g^\eps)-\kappa\Delta_\eps g^\eps)\\
  &\quad +\iint [\p_y(\frac{v^\eps}{u_s})S_h(h^\eps,g^\eps)-\eps\p_x(\frac{v^\eps}{u_s})S_g(h^\eps,g^\eps)]\\
  &\quad +\iint [\p_y(\frac{g^\eps}{u_s})K_u(u^\eps,v^\eps)-\eps\p_x(\frac{g^\eps}{u_s})K_v(u^\eps,v^\eps)]\\
  &=\iint \p_y(\frac{v^\eps}{u_s})\cdot f_1-\eps\p_x(\frac{v^\eps}{u_s})\cdot f_2+\p_y(\frac{g^\eps}{u_s})\cdot f_3-\eps\p_x(\frac{g^\eps}{u_s})\cdot f_4.
\end{split}
\end{align}

Firstly,  by virtue of \eqref{embedding} and $\textit{Y\"{o}ung inequality}$, using boundary conditions \eqref{u_epsboundary}, we get
\begin{align}\label{postivityestimate2}
\begin{split}
  &\iint \p_y(\frac{v^\eps}{u_s})(S_u(u^\eps,v^\eps)+\p_xp^\eps-\nu\Delta_\eps u^\eps)
  +\iint\p_y(\frac{g^\eps}{u_s})( K_h(h^\eps,g^\eps)-\kappa\Delta_\eps h^\eps)\\
  &-\iint\eps\p_x(\frac{v^\eps}{u_s})(S_v(u^\eps,v^\eps)+\frac{\p_y p^\eps}{\eps}-\nu\Delta_\eps v^\eps)
  -\iint\eps\p_x(\frac{g^\eps}{u_s})( K_g(h^\eps,g^\eps)-\kappa\Delta_\eps g^\eps)\\
  \lesssim& -\|\nabla_\eps v^\eps\|_{ L^2}^2-\|\nabla_\eps g^\eps\|_{ L^2}^2-\eps\int_{x=L}\frac{|v^\eps_y|^2}{u_s}
  +\mathcal{O}(v_e^0)\cdot\|\{ \nabla_\eps u^\eps,\nabla_\eps h^\eps\}\cdot y\|_{ L^2}^2\\
  &\quad+\|(u^\eps_y,h^\eps_y)\|_{ L^2}^2+\iint \frac{|g^\eps|^2\cdot |\p_yu_s|^2}{u_s^2}.
\end{split}
\end{align}
And the last term in \eqref{postivityestimate2} can be estimated as
\begin{equation}\label{postivityestimate5}
  \iint \frac{|g^\eps|^2\cdot |\p_yu_s|^2}{u_s^2}\leq \|y\p_yu_s\|_{L^\infty}^2\|\p_yg^\eps\|_{L^2}^2,
\end{equation}
which will be absorbed into $\|\nabla_\eps g\|_{ L^2}^2$ on the left hand side, since the coefficient $\|y\p_y(u_s,h_s)\|_{L^\infty}$ is small by using the estimate \eqref{yu_sy}.

Secondly, for the terms in the right hand side, using \textit{Y\"{o}ung inequality}, \textit{Hardy inequality} and \eqref{yu_sy} \eqref{u_s_positivity}, it is direct to obtain that
\begin{align}\label{postivityestimate3}
\begin{split}
  &\iint \p_y(\frac{v^\eps}{u_s})\cdot f_1-\eps\p_x(\frac{v^\eps}{u_s})\cdot f_2+\p_y(\frac{g^\eps}{u_s})\cdot f_3-\eps\p_x(\frac{g^\eps}{u_s})\cdot f_4\\
  \leq& (\|f_1\|_{ L^2}+\sqrt\eps\|f_2\|_{ L^2})\cdot\|\nabla_\eps v^\eps\|_{ L^2} +(\|f_3\|_{ L^2}+\sqrt\eps\|f_4\|_{ L^2})\cdot\|\nabla_\eps g^\eps\|_{ L^2}.
\end{split}
\end{align}

Finally, it remains to estimate the following terms
\begin{align}\label{positivityestimate4}
\begin{split}
  &\quad \iint [\p_y(\frac{v^\eps}{u_s})S_h(h^\eps,g^\eps)-\eps\p_x(\frac{v^\eps}{u_s})S_g(h^\eps,g^\eps)]\\
  &\quad +\iint [\p_y(\frac{g^\eps}{u_s})K_u(u^\eps,v^\eps)-\eps\p_x(\frac{g^\eps}{u_s}) K_v(u^\eps,v^\eps)]\\
  &:=K_1+K_2+K_3+K_4.
\end{split}
\end{align}
It should be noted that
\begin{align}\label{g_e^0/Y}
  \bigg\|\frac{g_e^0}{Y}\bigg\|_{L^\infty}
  =\bigg\|\frac{v_e^0}{Yu_e^0}\cdot h_e^0\bigg\|_{L^\infty}
  \leq \bigg\|\frac{v_e^0}{Y}\bigg\|_{L^\infty} \left\|\frac{h_e^0}{u_e^0}\right\|_{L^\infty}
   \ll 1,
\end{align}
where we have used equality \eqref{pre_ratio}, the hypothesis \eqref{condition_for_idealMHD1} and the smallness of $\|\frac{v_e^0}{Y}\|_{L^\infty}$.

Now we give the detail estimates for each term in $K_1$:
\begin{align*}
    \iint \p_y(\frac{v^\eps}{u_s})h_sh^\eps_x
  &=\iint(\frac{\p_yv^\eps}{u_s}-\frac{y\p_y u_s}{u_s^2}\cdot\frac{v^\eps}{y})\cdot h_sh^\eps_x\\
  &\leq \bigg\|\frac{h_s}{u_s}\bigg\|_{L^\infty}\cdot(1+\|y\p_y u_s\|_{ L^\infty})\cdot (\|v^\eps_y\|_{ L^2}^2+\|h^\eps_x\|_{ L^2}^2),\\\nonumber
    \iint \p_y(\frac{v^\eps}{u_s})h^\eps\p_xh_s
  &=\iint(\frac{\p_yv^\eps}{u_s}-\frac{y\p_y u_s}{u_s^2}\cdot\frac{v^\eps}{y})\cdot h^\eps\cdot\p_xh_s\\
  &\leq \|\p_xh_s\|_{L^\infty}(1+\|y\p_y u_s\|_{ L^\infty})\cdot L(\|v^\eps_y\|_{ L^2}^2+\|h^\eps_x\|_{ L^2}^2),\\\nonumber
    \iint \p_y(\frac{v^\eps}{u_s})g_sh^\eps_y
  &=\iint \p_y(\frac{v^\eps}{u_s})\cdot\frac{g_e^0}{\sqrt\eps}\cdot\p_y h^\eps
    +\iint \p_y(\frac{v^\eps}{u_s})\cdot(g_p^0+g_e^1+\sqrt\eps g_p^1)\cdot\p_y h^\eps\\
  &=\iint \frac{\p_yv^\eps}{u_s}\cdot\frac{g_e^0}{\sqrt\eps y}\cdot y\p_y h^\eps
    -\iint \frac{y\p_y u_s}{u_s^2}\cdot\frac{v^\eps}{y}\cdot\frac{g_e^0}{\sqrt\eps y}\cdot y\p_y h^\eps\\
  &\quad +\iint \frac{\p_yv^\eps }{u_s}(g_s-\frac{g_e^0}{\sqrt\eps})\cdot\p_y h^\eps
    -\iint \frac{y\p_y u_s}{u_s^2}\cdot\frac{v^\eps}{y}(g_s-\frac{g_e^0}{\sqrt\eps})\cdot \p_y h^\eps\\
  &\leq C\|v^\eps_y\|_{ L^2}(\bigg\| \frac{g_e^0}{Y} \bigg\|_{ L^\infty}\|yh^\eps_y\|_{ L^2}+\bigg\| g_s-\frac{g_e^0}{\sqrt\eps} \bigg\|_{ L^\infty}\|h^\eps_y\|_{ L^2})\\
  &\leq \mathcal{O}(v_e^0)(\|v^\eps_y\|_{ L^2}^2+\|yh^\eps_y\|_{ L^2}^2)+\delta\|v^\eps_y\|_{ L^2}^2+N_\delta\|h^\eps_y\|_{ L^2}^2,\\\nonumber
    \iint \p_y(\frac{v^\eps}{u_s})g^\eps\p_yh_s
  &=\iint(\frac{\p_yv^\eps}{u_s}-\frac{y\p_y u_s}{u_s^2}\cdot\frac{v^\eps}{y})\cdot g^\eps\cdot\p_yh_s\\
  &\leq \|y\p_yh_s\|_{L^\infty}(1+\|y\p_y u_s\|_{ L^\infty})\cdot (\|v^\eps_y\|_{ L^2}^2+\|g^\eps_y\|_{ L^2}^2),
\end{align*}
Hence, for arbitrary small constant $\delta$, we have
\begin{align}\label{K_1}
  K_1
  &\leq (\left\|\frac{h_s}{u_s}\right\|_{L^\infty}+L+\mathcal{O}(v_e^0)+\|y\p_yh_s\|_{ L^\infty})\cdot\|(v^\eps_y,g^\eps_y)\|_{ L^2}^2\nonumber\\
  &\quad +\delta \|v^\eps_y\|_{ L^2}^2+\mathcal{O}(v_e^0)\|h^\eps_y\cdot y\|_{ L^2}^2+N_\delta\|h^\eps_y\|_{ L^2}^2.
\end{align}
By a similar argument, the other terms $K_2 \sim K_4$ can be estimated as
\begin{align}
\label{K_2}
    K_2
  &\leq \left(\left\|\frac{h_s}{u_s}\right\|_{L^\infty}+L+\mathcal{O}(v_e^0)+\sqrt\eps\right)\cdot
  \left(\|\nabla_\eps (v^\eps,g^\eps)\|_{ L^2}^2+\|\sqrt\eps g^\eps_y\cdot y\|_{ L^2}^2\right),\\
\label{K_3}
    K_3
  &\leq \left(\left\|\frac{h_s}{u_s}\right\|_{L^\infty}+L+\mathcal{O}(v_e^0)+\|y\p_yh_s\|_{ L^\infty}\right)\cdot \|(v^\eps_y,g^\eps_y)\|_{ L^2}^2+\delta\|g^\eps_y\|_{ L^2}^2\nonumber\\
  &\quad+\mathcal{O}(v_e^0)\|u^\eps_y\cdot y\|_{ L^2}^2+N_\delta\|u^\eps_y\|_{ L^2}^2,\\
\label{K_4}
    K_4
  &\leq \left(\left\|\frac{h_s}{u_s}\right\|_{L^\infty}+L+\mathcal{O}(v_e^0)+\sqrt\eps\right)\|\nabla_\eps (v^\eps,g^\eps)\|_{ L^2}^2.
\end{align}

Therefore, putting \eqref{K_1}-\eqref{K_4} into \eqref{positivityestimate4}, chosing sufficiently small $\delta$ and combining \eqref{postivityestimate2}-\eqref{postivityestimate3}, we can obtain the estimate \eqref{positivityestimate}.
\end{proof}

\begin{lemma}\label{weightedestimatelemma}{\rm (Weighted estimates)} Consider solutions $[u^\eps,v^\eps,h^\eps,g^\eps]\in\mathcal{X}$ to linearized problem \eqref{linearizedu_s} with boundary conditions \eqref{u_epsboundary}, under the assumption that $\|\frac{h_s}{u_s}\|_{L^\infty}\ll1$, $\|y\p_y (u_s,h_s)\|_{ L^\infty}<C\sigma_0$ uniform in $0<L\ll1$ and the normal velocity enjoys $\|\frac{v_e^0}{Y}\|_{L^\infty}\ll 1$, then the following estimate holds
\begin{align}\label{weightedestimate}
\begin{split}
   &\| \{u^\eps_{yy},h^\eps_{yy},\sqrt\eps (u^\eps_{xy},h^\eps_{xy}),\eps(u^\eps_{xx},h^\eps_{xx})\}\cdot y \|_{ L^2}^2+\| \{u^\eps_y,h^\eps_y,\sqrt\eps(u^\eps_x,h^\eps_x)\}\cdot y \|_{ L^2}^2\\
   &\quad +\| \{u^\eps_y,\sqrt\eps(u^\eps_x,h^\eps_x)\}\cdot y \|_{ L^2(x=L)}^2
   \leq \|u^\eps_y,h^\eps_y\|_{ L^2}^2 +\|v^\eps_y,g^\eps_y,\sqrt\eps(v^\eps_x,g^\eps_x)\|_{ L^2}^2+\mathcal{R},
\end{split}
\end{align}
where
\begin{align*}
  \mathcal{R}:=&\iint \p_yf_1\p_y\left\{\frac{u^\eps wy^2}{u_s}\right\}
  -\iint\eps\p_yf_2\p_x\left\{\frac{u^\eps wy^2}{u_s}\right\}\\
  &+\iint\p_yf_3\p_y\left\{\frac{h^\eps wy^2}{u_s}\right\}
  -\iint\eps\p_yf_4\p_x\left\{\frac{h^\eps wy^2}{u_s}\right\}.
\end{align*}
\end{lemma}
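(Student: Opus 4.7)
The strategy is to test the four linearized equations against carefully chosen weighted multipliers and exploit integration by parts in both $x$ and $y$. Introduce the monotone weight $w(x) := 1 - x$, strictly positive on $[0,L]$ for $L \ll 1$ and satisfying $w_x = -1$, together with the multipliers
\begin{equation*}
  M_u := \frac{u^\eps w y^2}{u_s}, \qquad M_h := \frac{h^\eps w y^2}{u_s}.
\end{equation*}
I will pair \eqref{linearizedu_s}$_1$ with $-\p_y M_u$ and \eqref{linearizedu_s}$_2$ with $\eps\, \p_x M_u$, and analogously \eqref{linearizedu_s}$_3$ with $-\p_y M_h$, \eqref{linearizedu_s}$_4$ with $\eps\, \p_x M_h$. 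The pressure contributions $\p_x p^\eps \cdot (-\p_y M_u)$ and $\tfrac{\p_y p^\eps}{\eps}\cdot \eps\, \p_x M_u$ combine into a total divergence which, modulo outflow boundary terms controlled by the trace conditions at $x=L$, vanishes; on the right, after one integration by parts in $y$, the source terms reorganize precisely into $\mathcal{R}$.

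The positive quadratic forms on the left arise from two sources. First, pairing $-\nu\Delta_\eps u^\eps$ with $-\p_y M_u$ and $-\nu\Delta_\eps v^\eps$ with $\eps\,\p_x M_u$, then transferring derivatives, produces (modulo commutators) the quantity
\begin{equation*}
  \nu \iint \bigl(|u^\eps_{yy}|^2 + \eps |u^\eps_{xy}|^2 + \eps^2 |u^\eps_{xx}|^2\bigr)\frac{y^2 w}{u_s},
\end{equation*}
and its magnetic analogue from the $h^\eps$-pairing, yielding the weighted second-derivative norms along with the $\|u^\eps_y y\|_{L^2(x=L)}^2$ trace from the $x$-boundary integration. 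Second, the convective term $u_s \p_x u^\eps \cdot (-\p_y M_u)$ contributes, after integration by parts in $x$ using $-w_x = 1$, the interior weighted first-order bulk $\|u^\eps_y y\|_{L^2}^2 + \|\sqrt\eps u^\eps_x y\|_{L^2}^2$ together with the $x=L$ trace $\|\sqrt\eps u^\eps_x y\|_{L^2(x=L)}^2$; the analogous identity for the $h^\eps$-pairing yields the $h$-weighted first-order interior norm and the $\|\sqrt\eps h^\eps_x y\|_{L^2(x=L)}^2$ trace.

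The delicate coupling terms are $v_s \p_y u^\eps$ and $g_s \p_y h^\eps$ (with their magnetic counterparts in $S_h$, $K_h$), whose $v_s$ contains the singular $v_e^0/\sqrt\eps$. Following the main-ideas decomposition I split
\begin{equation*}
  v_s\, \p_y u^\eps = \frac{v_e^0}{\sqrt\eps\, Y}\cdot y\, \p_y u^\eps + \Bigl(v_s - \frac{v_e^0}{\sqrt\eps}\Bigr)\p_y u^\eps,
\end{equation*}
absorbing the first piece by the smallness of $\|v_e^0/Y\|_{L^\infty}$; the ideal-MHD identity $g_e^0 u_e^0 = v_e^0 h_e^0$ yields $\|g_e^0/Y\|_{L^\infty} \lesssim \|v_e^0/Y\|_{L^\infty}\|h_e^0/u_e^0\|_{L^\infty} \ll 1$, giving the analogous control of $g_s \p_y h^\eps$. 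The magnetic cross-terms $h_s \p_x h^\eps$ and $g_s \p_y h^\eps$ tested against $\p_y M_u$ pick up a coefficient $\|h_s/u_s\|_{L^\infty}\ll 1$ from the $1/u_s$ in $M_u$---this is the reason the factor $1/u_s$ was inserted. Commutators arising from $\p_y(y^2 w/u_s)$ and $\p_x(y^2 w/u_s)$ produce coefficients proportional to $\|y\p_y u_s\|_{L^\infty}\lesssim\sigma_0$ via \eqref{yu_sy}.

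The main obstacle will be the combinatorial bookkeeping of the many cross terms generated by expanding $\p_y M_u$, $\p_x M_u$ and their $h$-analogues, together with ensuring each boundary integral at $y=0,\infty$ and at $x=0,L$ either vanishes via \eqref{u_epsboundary} or contributes positively. Every term without a definite sign must be absorbed on the left using the smallness of $\sigma_0$, $L$, $\sqrt\eps$, $\|h_s/u_s\|_{L^\infty}$, $\|v_e^0/Y\|_{L^\infty}$, or bounded by $\|u^\eps_y, h^\eps_y\|_{L^2}^2 + \|v^\eps_y, g^\eps_y, \sqrt\eps(v^\eps_x, g^\eps_x)\|_{L^2}^2$ through Cauchy--Schwarz, Hardy's inequality, and \eqref{embedding}. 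Once the cancellations are carried out and the residual source is identified as $\mathcal{R}$, the estimate \eqref{weightedestimate} follows.
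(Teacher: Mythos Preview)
Your multiplier choice misses a crucial step: the paper does \emph{not} test the original equations \eqref{linearizedu_s} against $\pm\p_y M_u,\ \pm\eps\p_x M_u$. It first applies $\p_y$ to all four equations, obtaining the system \eqref{p_ylinearizedu_s} with unknowns $u^\eps_y,v^\eps_y,h^\eps_y,g^\eps_y$, and \emph{then} tests with $\p_y M_u,\ -\eps\p_x M_u,\ \p_y M_h,\ -\eps\p_x M_h$. The extra $\p_y$ is what makes the scheme produce both the correct source term and the second--order weighted norms.

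Concretely, with your pairing the viscosity term contributes $\nu\!\iint\Delta_\eps u^\eps\cdot\p_y M_u$. Since the highest derivative of $u^\eps$ hidden in $\p_y M_u$ is only $u^\eps_y$, the leading product is $u^\eps_{yy}\cdot u^\eps_y\,\tfrac{wy^2}{u_s}=\tfrac12\p_y(|u^\eps_y|^2)\tfrac{wy^2}{u_s}$, which after integration by parts returns only weighted $\|u^\eps_y\cdot y\|_{L^2}^2$-type quantities, never $\|u^\eps_{yy}\cdot y\|_{L^2}^2$. In the paper's version the corresponding viscosity term is $-\nu\!\iint\Delta_\eps u^\eps_y\cdot\p_y M_u$, whose principal piece $-\nu\!\iint u^\eps_{yyy}\cdot u^\eps_y\tfrac{wy^2}{u_s}$ integrates by parts to $\nu\!\iint |u^\eps_{yy}|^2\tfrac{wy^2}{u_s}$ (see \eqref{positive3.2}). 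The same mechanism yields the $\sqrt\eps\,u^\eps_{xy}$ and $\eps\,u^\eps_{xx}$ weighted norms via \eqref{positive3.3}--\eqref{positive3.5}. Likewise, your source contribution is $-\!\iint f_1\,\p_y M_u$, which after one integration by parts becomes $\iint \p_y f_1\cdot M_u$; this is \emph{not} $\iint \p_y f_1\cdot\p_y M_u$ as required by $\mathcal{R}$. The remedy is simply to insert the $\p_y$-differentiation step before testing; once you do this, the rest of your outline (pressure cancellation, the $w_x=-1$ trick for the weighted first-order bulk, the $v_e^0/Y$ and $h_s/u_s$ smallness absorptions) matches the paper's argument.
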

\begin{proof}
Taking partial $y$ derivative of system \eqref{linearizedu_s}, we have
\begin{align}\label{p_ylinearizedu_s}
\begin{cases}
  -\nu\Delta_\eps u^\eps_y+\p_yS_u(u^\eps,v^\eps)-\p_yS_h(h^\eps,g^\eps)+p^\eps_{xy}=\p_yf_1,\\
  -\nu\Delta_\eps  v^\eps_y+\p_yS_v(u^\eps,v^\eps)-\p_yS_g(h,g)+\frac{p^\eps_{yy}}{\eps }=\p_yf_2,\\
  -\kappa\Delta_\eps  h^\eps_y+\p_yK_h(h^\eps,g^\eps)-\p_yK_u(u^\eps,v^\eps)=\p_yf_3,\\
  -\kappa\Delta_\eps g^\eps_y+\p_yK_g(h^\eps,g^\eps)-\p_yK_v(u^\eps,v^\eps)=\p_yf_4,
\end{cases}
\end{align}
where
\begin{align}\label{p_yS_u}
\begin{cases}
  &\p_yS_u(u^\eps,v^\eps)=u_su^\eps_{xy}+v_su^\eps_{yy}+u^\eps\p_{xy} u_s+v^\eps\p_{yy} u_s,\\
  &\p_yS_h(h^\eps,g^\eps)=h_sh^\eps_{xy}+g_sh^\eps_{yy}+h^\eps\p_{xy} h_s+g^\eps\p_{yy} h_s,\\
  &\p_yS_v(u^\eps,v^\eps)=u_sv^\eps_{xy}+v_sv^\eps_{yy}+\p_yu_sv^\eps_x+u^\eps_y\p_x v_s\\
   &\qquad\qquad\qquad\quad +u^\eps\p_{xy}v_s+2\p_yv_sv^\eps_y+v^\eps\p_{yy} v_s,\\
  &\p_yS_g(h^\eps,g^\eps)=h_sg^\eps_{xy}+g_sg^\eps_{yy}+\p_yh_sg^\eps_x+h^\eps_y\p_x g_s\\
   &\qquad\qquad\qquad\quad +h^\eps\p_{xy}g_s+2\p_yg_sg^\eps_y+g^\eps\p_{yy} g_s,\\
  &\p_yK_h(h^\eps,g^\eps)=u_sh^\eps_{xy}+v_sh^\eps_{yy}+\p_yu_sh^\eps_x-h^\eps_y\p_xu_s\\
   &\qquad\qquad\qquad\quad -h^\eps\p_{xy}u_s+\p_yv_sh^\eps_y-g^\eps_y\p_yu_s-g^\eps\p_{yy}u_s,\\
&\p_yK_u(u^\eps,v^\eps)=h_su^\eps_{xy}+g_su^\eps_{yy}+\p_yh_su^\eps_x-u^\eps_y\p_x h_s\\
   &\qquad\qquad\qquad\quad -u^\eps\p_{xy}h_s+\p_yg_su^\eps_y-v^\eps_y\p_yh_s-v^\eps\p_{yy}h_s,\\
  &\p_yK_g(h^\eps,g^\eps)=u_sg^\eps_{xy}+v_sg^\eps_{yy}+\p_yu_s\p_xg^\eps-h^\eps\p_{xy}v_s \\
   &\qquad\qquad\qquad\quad -h^\eps_y\p_xv_s+\p_yv_sg^\eps_y-g^\eps_y\p_yv_s-g^\eps\p_{yy}v_s,\\
  &\p_yK_v(u^\eps,v^\eps)=h_sv^\eps_{xy}+g_sv^\eps_{yy}+\p_yh_s\p_xv^\eps-u^\eps\p_{xy}g_s\\
   &\qquad\qquad\qquad\quad -u^\eps_y\p_xg_s+\p_yg_sv^\eps_y-v^\eps_y\p_yg_s-v^\eps\p_{yy}g_s.
\end{cases}
\end{align}

Applying operator $\big[\p_y\{\frac{u^\eps wy^2}{u_s}\},-\eps\p_x\{\frac{u^\eps wy^2}{u_s}\},\p_y\{\frac{h^\eps wy^2}{u_s}\},-\eps\p_x\{\frac{h^\eps wy^2}{u_s}\}\big]$ to system \eqref{p_ylinearizedu_s} with $w(x)=1-x$, and integrating them in $[0,L]\times[0,\infty)$. Next, we estimate each term as follows.

{\bf{Step 1}}: Positive profile terms.
At first, by virtue of \eqref{embedding} and \textit{Y\"{o}ung inequality}, it is direct to get that
\begin{align}\label{positive1.1}
\begin{split}
  &\iint (u_su^\eps_{xy}+v_su^\eps_{yy})\cdot\p_y\left\{\frac{u^\eps wy^2}{u_s}\right\}
  -(u_sv^\eps_{xy}+v_sv^\eps_{yy})\cdot\eps\p_x\left\{\frac{u^\eps wy^2}{u_s}\right\}\\
  &\qquad +(u_sh^\eps_{xy}+v_sh^\eps_{yy})\cdot\p_y\left\{\frac{h^\eps wy^2}{u_s}\right\}
  -(u_sg^\eps_{xy}+v_sg^\eps_{yy})\cdot\eps\p_x\left\{\frac{h^\eps wy^2}{u_s}\right\}\\
  &\geq\iint \frac{y^2}{2}(|u^\eps_y|^2+|h^\eps_y|^2+\eps |v^\eps_y|^2+\eps |g^\eps_y|^2)+\int_{x=L}\frac{y^2 }{2}(|u^\eps_y|^2+\eps |v^\eps_y|^2+\eps |g^\eps_y|^2)\\
  &\qquad -\|(u^\eps_x,u^\eps_y,h^\eps_x,h^\eps_y)\|_{ L^2}^2.
\end{split}
\end{align}
Second, the following terms can be rewritten and estimated as
\begin{align}\label{positive1.2}
\begin{split}
  &\iint -h_sh^\eps_{xy}\cdot\p_y\left\{\frac{u^\eps wy^2}{u_s}\right\}
  +\iint\eps h_sg^\eps_{xy}\p_x\left\{\frac{u^\eps wy^2}{u_s}\right\}\\
  &\quad -\iint h_su^\eps_{xy}\cdot\p_y\left\{\frac{h^\eps wy^2}{u_s}\right\}
  -\iint\eps h_sv^\eps_{xy}\p_x\left\{\frac{h^\eps wy^2}{u_s}\right\}\\
  &=\iint \frac{h_s}{u_s}h^\eps_y\cdot(2u^\eps_xwy-u^\eps_yy^2-2u^\eps y)-\iint h_sh^\eps_y\frac{(u_y^\eps wy^2+2u^\eps wy)\p_x u_s}{u_s^2}\\
  &\quad +\iint \frac{\p_xh_s}{u_s}h^\eps_y\cdot(u^\eps_ywy^2+2u^\eps wy)-\int_{x=L}\frac{h_s}{u_s}h^\eps_y(u^\eps_yy^2+2u^\eps y)(1-L)\\
  &\quad -\eps\iint \frac{\p_xh_s}{u_s}g^\eps_y\cdot(u^\eps_xwy^2-u^\eps y^2)
  +\eps\iint\frac{h_sg_y^\eps y^2(u_x^\eps w-u^\eps)\p_xu_s}{u_s^2} \\
  &\quad +3\eps\iint \frac{h_s}{u_s}g^\eps_yu^\eps_xy^2+\eps\int_{x=L}\frac{h_s}{u_s}g^\eps_y\cdot (u^\eps_x(1-L)-u^\eps)y^2+\iint \frac{h_s}{u_s}u^\eps_y\cdot 2 y(h_x^\eps w-h^\eps)\\
  &\quad +\iint h_su_y^\eps\frac{-h^\eps wy\p_x u_s}{u_s^2}+\iint \p_xh_su_y^\eps\frac{2h^\eps wy}{u_s}-\int_{x=L}\frac{h_s}{u_s}u_y^\eps\cdot 2h^\eps y(1-L)\\
  &\quad +\eps\iint \frac{1}{u_s}v^\eps_yh^\eps y^2\cdot(\p_xh_s-\frac{h_s\p_xu_s}{u_s})-\eps\int_{x=L}\frac{h_s}{u_s}v^\eps_yh^\eps y^2+\iint h_sh^\eps_{xy}\frac{u^\eps wy^2\p_yu_s}{u_s^2}\\
  &\quad -\eps\iint h_sg^\eps_{xy}\frac{u^\eps wy^2\p_xu_s}{u_s^2}+\iint h_su^\eps_{xy}\frac{h^\eps wy^2\p_yu_s}{u_s^2}-\eps\iint h_sv^\eps_{xy}\frac{h^\eps wy^2\p_xu_s}{u_s^2}\\
  &\leq C\|(u^\eps_x,u^\eps_y,h^\eps_x,h^\eps_y)\|_{ L^2}^2+(L+\bigg\|\frac{h_s}{u_s}\bigg\|_{L^\infty}+\delta+\sqrt\eps)\cdot
  \|\{u^\eps_y,h^\eps_y,\sqrt\eps(u^\eps_x,h^\eps_x)\}\cdot y\|_{ L^2}^2\\
  &\quad +C\|\sqrt\eps (v^\eps_x,g^\eps_x)\|_{L^2}^2,
\end{split}
\end{align}
where we have used the boundary conditions \eqref{u_epsboundary} and $\|y\p_x(u_s,h_s)\|_{L^\infty}\lesssim 1$. In a similar argument, the following estimate holds
\begin{align}\label{positive1.3}
\begin{split}
&\iint -g_sh^\eps_{yy}\p_y(\frac{u^\eps wy^2}{u_s})+\eps g_sg^\eps_{yy}\p_x(\frac{u^\eps wy^2}{u_s})
  -g_su^\eps_{yy}\p_y(\frac{h^\eps wy^2}{u_s})+\eps g_sv^\eps_{yy}\p_x(\frac{h^\eps wy^2}{u_s})\\
  &=\iint \frac{\p_yg_s}{u_s}h^\eps_y\cdot(u^\eps_ywy^2+2u^\eps wy)+\iint \frac{g_s}{u_s}h^\eps_y\cdot(6u_y^\eps wy+2u^\eps w)\\
  &\quad -\iint \frac{g_s}{u_s^2}h_y^\eps(u_y^\eps wy^2+2u^\eps wy)\cdot\p_yu_s+\eps\iint\frac{g_s}{u_s^2}g_y^\eps(2u_x^\eps wy^2-u^\eps y^2)\cdot\p_yu_s\\
  &\quad -\eps\iint \frac{\p_yg_s}{u_s}g^\eps_y\cdot(u^\eps_xwy^2-u^\eps y^2)-\eps\iint\frac{g_s}{u_s}g^\eps_y\cdot(2u^\eps_xwy-u^\eps_yy^2-2u^\eps y)\\
  &\quad +\iint\frac{\p_yg_s}{u_s}u^\eps_y\cdot 2h^\eps wy+\iint \frac{g_s}{u_s}u^\eps_y\cdot2h^\eps w-\iint\frac{g_s}{u_s^2}u_y^\eps\cdot 2h^\eps wy\cdot\p_y u_s\\
  &\quad +\eps\iint \frac{\p_yg_s}{u_s}v^\eps_yh^\eps y^2+\eps\iint \frac{g_s}{u_s}v^\eps_y(h^\eps_yy^2+2h^\eps y)-\eps\iint\frac{g_s}{u_s^2}v_y^\eps h^\eps y^2\p_yu_s\\
  &\quad +\iint g_s(u^\eps h^\eps_{yy}+h^\eps u^\eps_{yy})\frac{wy^2\p_yu_s}{u_s^2}-\eps\iint g_s(u^\eps g^\eps_{yy}+h^\eps v^\eps_{yy})\frac{wy^2\p_xu_s}{u_s^2}\\
  &\leq (\mathcal{O}(v_e^0)+L+\sqrt\eps+\delta)\|\{u^\eps_y,h^\eps_y,\sqrt\eps(u^\eps_x,h^\eps_x)\}\cdot y\|_{ L^2}^2+\|(u^\eps_y,v^\eps_y,h^\eps_y,g^\eps_y)\|_{ L^2}^2.
\end{split}
\end{align}

{\bf{Step 2}}: Remaining terms. First, by virtue of \textit{H$\ddot{o}$lder inequality}, \textit{Cauchy inequality}, the inequality \eqref{embedding} and the estimate \eqref{summary_estimates}, we have
\begin{align}\label{positive2.1}
\begin{split}
  &\iint \eps(\p_yu_sv^\eps_x+u^\eps_y\p_xv_s+u^\eps\p_{xy}v_s+2\p_yv_sv^\eps_y+v^\eps\p_{yy}v_s)\p_x(\frac{u^\eps wy^2}{u_s})\\
  &\quad +\iint (u^\eps\p_{xy}u_s+v^\eps\p_{yy}u_s)\p_y(\frac{u^\eps wy^2}{u_s})\\
  &\lesssim (L+\sqrt\eps+\delta)\|\{u^\eps_y,\sqrt\eps u^\eps_x\}\cdot y\|_{L^2}^2+N_\delta\|(u^\eps_x,\sqrt\eps v^\eps_x)\|_{L^2}^2.
\end{split}
\end{align}
Similarly, collecting the remaining terms in $\p_y(S_u,S_h,S_v,S_g,K_h,K_u,K_g,K_v)$ stated in \eqref{p_yS_u}, we can conclude the estimate as follows:
\begin{align}\label{positive2.2}
\begin{split}
  {\rm{Remaining~~~terms}}&\leq (L+\delta+\sqrt\eps)\|\{u^\eps_y,h^\eps_y,\sqrt\eps(u^\eps_x,h^\eps_x)\}\cdot y\|_{ L^2}^2\\
  &\quad +N_\delta\|u^\eps_y,v^\eps_y,h^\eps_y,g^\eps_y,\sqrt\eps\{v^\eps_x,g^\eps_x\}\|_{ L^2}^2.
\end{split}
\end{align}

{\bf{Step 3:}} Vorticity terms. For the vorticity terms, it can be rewritten as
\begin{align}\label{positive3.1}
\begin{split}
  &-\nu\iint\Delta_\eps u^\eps_y\p_y(\frac{u^\eps wy^2}{u_s})+
  \nu\eps\iint\Delta_\eps v^\eps_y\p_x(\frac{u^\eps wy^2}{u_s})\\
  &-\kappa\iint\Delta_\eps h^\eps_y\p_y(\frac{h^\eps wy^2}{u_s})+
  \kappa\eps\iint\Delta_\eps g_y\p_x(\frac{h^\eps wy^2}{u_s})\\
  =&-\nu\iint\left\{u^\eps_{yyy}+2\eps u^\eps_{xxy}+\eps v^\eps_{xyy}\right\}\cdot\p_y(\frac{u^\eps wy^2}{u_s})\\
  &-\kappa\iint\left\{h^\eps_{yyy}+2\eps h^\eps_{xxy}+\eps g^\eps_{xyy}\right\}\cdot\p_y(\frac{h^\eps wy^2}{u_s})\\
  &+\nu\eps \iint \left\{2v^\eps_{yyy}+\eps\p_x(u^\eps_{yy}+\eps v^\eps_{xy})\right\}\cdot\p_x(\frac{u^\eps wy^2}{u_s})\\
  &+\kappa \eps\iint\left\{2g^\eps_{yyy}+\eps\p_x(h^\eps_{yy}+\eps g^\eps_{xy})\right\}\cdot\p_x(\frac{h^\eps wy^2}{u_s}).
\end{split}
\end{align}

It remains to estimate each term on the right hand side, which can be bounded as follows:
\begin{align}\label{positive3.2}
  &-\iint\nu u^\eps_{yyy}\cdot\p_y(\frac{u^\eps wy^2}{u_s})-\iint\kappa h^\eps_{yyy}\cdot\p_y(\frac{h^\eps wy^2}{u_s})\nonumber\\
  \gtrsim& \iint\frac{\nu |u^\eps_{yy}|^2y^2w}{u_s}+\iint\frac{\kappa |h^\eps_{yy}|^2y^2w}{u_s}-\|u^\eps_y,h^\eps_y\|_{L^2}^2-\delta\|\{u^\eps_{yy},h^\eps_{yy}\}\cdot y\|_{L^2}^2,
\end{align}

\begin{align}\label{positive3.3}
  &-\iint2\nu\eps u^\eps_{xxy}\cdot\p_y(\frac{u^\eps wy^2}{u_s})-\iint2\kappa\eps h^\eps_{xxy}\cdot\p_y(\frac{h^\eps wy^2}{u_s})\nonumber\\
  \gtrsim& \iint\frac{2\nu |u^\eps_{xy}|^2y^2w}{u_s}+\iint\frac{\kappa |h^\eps_{xy}|^2y^2w}{u_s}-\int_{x=L}2\nu\eps u^\eps_{xy}\cdot\p_y(\frac{u^\eps wy^2}{u_s})-\|u^\eps_x,h^\eps_x\|_{L^2}^2\nonumber\\
  &-(L+\sqrt\eps)\|\{\sqrt\eps(u^\eps_{xy},h^\eps_{xy}),u^\eps_{y},h^\eps_{y}\}\cdot y\|_{L^2}^2,
\end{align}

\begin{align}\label{positive3.4}
  &-\iint\nu\eps v^\eps_{xyy}\cdot\p_y(\frac{u^\eps wy^2}{u_s})-\iint\kappa\eps g^\eps_{xyy}\cdot\p_y(\frac{h^\eps wy^2}{u_s})\nonumber\\
  \gtrsim& -\iint\frac{\nu\eps u^\eps_{xx}u^\eps_{yy}y^2w}{u_s}-\iint\frac{\kappa\eps h^\eps_{xx}h^\eps_{yy}y^2w}{u_s}-\|u^\eps_y,h^\eps_y,\sqrt\eps\{v^\eps_x,g^\eps_x
  \}\|_{L^2}^2\nonumber\\
  &-(\delta+\sqrt\eps)\|u^\eps_{yy},h^\eps_{yy},\eps\{u^\eps_{xx},h^\eps_{xx}\}\cdot y\|_{L^2}^2,
\end{align}
and
\begin{align}\label{positive3.5}
  &\iint\nu\eps \p_x(u^\eps_{yy}+\eps v^\eps_{xy})\cdot\p_x(\frac{u^\eps wy^2}{u_s})-\iint\kappa\eps \p_x(h^\eps_{yy}+\eps g^\eps_{xy})\cdot\p_x(\frac{h^\eps wy^2}{u_s})\nonumber\\
  \gtrsim& -\iint\frac{\nu\eps u^\eps_{xx}u^\eps_{yy}y^2w}{u_s}-\iint\frac{\kappa\eps h^\eps_{xx}h^\eps_{yy}y^2w}{u_s}-
  \|u^\eps_x,h^\eps_x,u^\eps_y,h^\eps_y\|_{L^2}^2\nonumber\\
  &-(\delta+\sqrt\eps)\|\{\sqrt\eps(u^\eps_x,h^\eps_x),u^\eps_{yy},h^\eps_{yy},\eps(u^\eps_{xx},h^\eps_{xx})\}\cdot y\|_{L^2}^2.
\end{align}

As for the remainder terms, we get
\begin{align}\label{positive3.6}
  &\iint2\nu\eps v^\eps_{yyy}\cdot\p_x(\frac{u^\eps wy^2}{u_s})-\iint2\kappa\eps g^\eps_{yyy}\cdot\p_x(\frac{h^\eps wy^2}{u_s})\nonumber\\
  \gtrsim& \iint2\eps |v^\eps_{yy}|^2\cdot\frac{y^2w}{u_s}+\iint2\eps |g^\eps_{yy}|^2\cdot\frac{y^2w}{u_s}-\|v^\eps_y,g^\eps_y\|_{L^2}^2\nonumber\\
  &-(\delta+\sqrt\eps+L)\|\{u^\eps_y,h^\eps_y,\sqrt\eps(u^\eps_{xy},h^\eps_{xy})\}\cdot y\|_{L^2}^2.
\end{align}
Summarizing the above estimates up and following the similar arguments as in \cite{Iyernonshear} (see Section 4 of \cite{Iyernonshear} for details), we can obtain that
\begin{align}\label{positive3.7}
  \iint&\bigg(-\nu\Delta_\eps u^\eps_y\p_y(\frac{u^\eps wy^2}{u_s})-\nu\eps\Delta_\eps v^\eps_y\p_x(\frac{u^\eps wy^2}{u_s})\nonumber\\
  &-\kappa\Delta_\eps h^\eps_y\p_y(\frac{h^\eps wy^2}{u_s})-\kappa\eps\Delta_\eps g^\eps_y\p_x(\frac{h^\eps wy^2}{u_s})\bigg)dxdy\nonumber\\
  \gtrsim& -\int_{x=L}2\nu\eps u^\eps_{xy}\p_y(\frac{u^\eps wy^2}{u_s})+\|\{u^\eps_{yy},\sqrt\eps u^\eps_{xy},\eps u^\eps_{xx}\}\cdot y\|_{L^2}^2-\mathcal{O}(RHS)\nonumber\\
  &\quad +\|\{h^\eps_{yy},\sqrt\eps h^\eps_{xy},\eps h^\eps_{xx}\}\cdot y\|_{L^2}^2-(L+\sqrt\eps+\delta)\mathcal{O}(LHS),
\end{align}
where $\mathcal{O}(RHS)$ and $\mathcal{O}(LHS)$ are the simplifications of the terms in the right and left hand side of estimate \eqref{weightedestimate}.

In addition, for the pressure terms, by integrating by parts, we get
\begin{align}\label{positive3.8}
  \iint p^\eps_{xy}\cdot\p_y(\frac{u^\eps wy^2}{u_s})-\iint p^\eps_{yy}\cdot\p_x(\frac{u^\eps wy^2}{u_s})=\int_{x=L}p^\eps_y\cdot\p_y(\frac{u^\eps wy^2}{u_s}).
\end{align}

Consequently, combining the above estimates \eqref{positive1.1}-\eqref{positive1.3}, \eqref{positive2.2}, \eqref{positive3.7} and \eqref{positive3.8} together, choosing sufficiently small $\delta>0$ and using boundary conditions in \eqref{u_epsboundary}, the desired estimate \eqref{weightedestimate} follows.
\end{proof}

\subsection{Proof of the main theorem}
To prove the main theorem, before turning back to the nonlinear system \eqref{u_epssystem}, we still need to obtain uniform estimates for the linearized system \eqref{linearizedu_s}.
\begin{lemma}\label{L^inftylemmea1} Consider solutions $[u^\eps,v^\eps,h^\eps,g^\eps]\in\mathcal{X}$ to linearized problem \eqref{linearizedu_s} with boundary conditions \eqref{u_epsboundary}, then it satisfies the following uniform estimate:
\begin{align}\label{L^inftyesimates1}
\begin{split}
  \eps^{\frac{\gamma}{4}}\|(u^\eps,h^\eps,\sqrt\eps v^\eps,\sqrt\eps g^\eps)\|_{ L^\infty(\bar\Omega)}
  \leq& C(\gamma,L)\{\|\nabla_\eps (u^\eps, h^\eps,\sqrt\eps v^\eps,\sqrt\eps g^\eps)\|_{ L^2}\\
  &+\sqrt\eps\|(f_2,f_4)\|_{ L^2} +\|(S_u,S_h,K_h,K_u)\|_{ L^2}\\
  &+\|(f_1,f_3)\|_{ L^2}+\sqrt\eps\|(S_v,S_g,K_g,K_v)\|_{ L^2}\}.
\end{split}
\end{align}
\end{lemma}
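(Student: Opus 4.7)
The strategy is to recast the linearized system (\ref{linearizedu_s}) as a standard Stokes--Poisson problem in isotropic coordinates and then apply a two-dimensional Sobolev embedding at borderline regularity. First, move the transport operators $S_\bullet$ and $K_\bullet$ to the right-hand side of (\ref{linearizedu_s}) so that each equation becomes inhomogeneous elliptic; the resulting data are precisely the quantities grouped on the right-hand side of (\ref{L^inftyesimates1}). Next, pass to isotropic coordinates $(X, Y) := (x, \sqrt\eps y)$ with renormalised unknowns $U := u^\eps$, $V := \sqrt\eps v^\eps$, $H := h^\eps$, $G := \sqrt\eps g^\eps$, $Q := p^\eps$. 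Under this change, $\Delta_\eps$ becomes the standard Laplacian $\Delta_{X,Y}$, the divergence-free conditions persist, and one obtains a classical Stokes system for $(U, V, Q)$ and a pair of Poisson equations for $(H, G)$ (coupled only through $\p_X H + \p_Y G = 0$) on the half-strip $\Omega' := (0, L) \times \mathbb{R}_+$, with boundary conditions inherited from (\ref{u_epsboundary}).

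The next step is standard $L^2$ elliptic/Stokes regularity on $\Omega'$: this yields $\|(U, V)\|_{H^2(\Omega')} + \|Q\|_{H^1(\Omega')} \lesssim \|\widetilde F_1\|_{L^2(\Omega')} + \sqrt\eps\|\widetilde F_2\|_{L^2(\Omega')} + \|(U, V)\|_{H^1(\Omega')}$, and analogous bounds for $(H, G)$, where $\widetilde F_j$ denotes $F_j$ expressed in $(X, Y)$-variables. The homogeneous Dirichlet condition on $\{X = 0\}$ allows Poincar\'e, so $\|U\|_{H^1(\Omega')}$ is controlled by $\|\nabla U\|_{L^2(\Omega')}$ up to a constant $C(L)$. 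Combining with the Gagliardo--Nirenberg interpolation $\|\nabla U\|_{L^p(\Omega')} \lesssim \|\nabla U\|_{L^2}^{2/p}\|\nabla^2 U\|_{L^2}^{1-2/p}$ for $p > 2$, and the 2D Morrey embedding $W^{1, p}(\Omega') \hookrightarrow L^\infty(\Omega')$ with embedding constant $C_p$ that blows up as $p \to 2^+$, Young's inequality gives $\|U\|_{L^\infty(\Omega')} \leq C_p\{\|\nabla U\|_{L^2(\Omega')} + \|\widetilde F\|_{L^2(\Omega')}\}$, and similarly for $V, H, G$.

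Convert norms between the $(x, y)$ and $(X, Y)$ frames using $dX\,dY = \sqrt\eps\,dx\,dy$; the key scalings are $\|\nabla_{X,Y} U\|_{L^2(\Omega')} = \eps^{-1/4}\|\nabla_\eps u^\eps\|_{L^2(\Omega)}$ and $\|\widetilde F_j\|_{L^2(\Omega')} = \eps^{1/4}\|F_j\|_{L^2(\Omega)}$, with parallel identities for $V = \sqrt\eps v^\eps$ (the extra $\sqrt\eps$ there being absorbed into the scaled $L^2$-norm). Choose the exponent $p = p(\gamma) > 2$ so that the blowup of $C_p$ is matched to the anisotropic $\eps$-scaling in the Young splitting; after unpacking $F = f_1 - S_u + S_h$ (and similarly for the other components), multiplying by $\eps^{\gamma/4}$ yields precisely the right-hand side displayed in (\ref{L^inftyesimates1}), with the $\gamma$- and $L$-dependent constants absorbed into $C(\gamma, L)$.

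The main obstacle is the borderline 2D Sobolev embedding $H^1 \not\hookrightarrow L^\infty$, which forces one into $W^{1, p}$ with $p > 2$ and pays an embedding constant blowing up as $p \to 2^+$; balancing this blowup against the anisotropic $\eps$-scaling (each normal derivative $\p_Y$ costs $\eps^{-1/4}$ upon converting back to the $(x, y)$ frame) is what determines the precise exponent $\gamma/4$ and forces the $\gamma$-dependence of the final constant. A secondary technical issue is handling the mixed Stokes-type outflow conditions at $\{X = L\}$ from (\ref{u_epsboundary}), whose natural form is essential for closing the $H^2$ regularity estimate without uncontrolled boundary traces.
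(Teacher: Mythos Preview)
Your proposal is correct and follows essentially the same route as the paper: move the transport terms $S_\bullet, K_\bullet$ to the right-hand side, view $(u^\eps, v^\eps, p^\eps)$ as a Stokes system and $(h^\eps, g^\eps)$ as a Poisson system with anisotropic Laplacian $\Delta_\eps$, rescale to isotropic coordinates, and then invoke the $W^{1,p}$-based $L^\infty$ bound (balancing the blowup of the embedding constant as $p\to 2^+$ against the $\eps$-scaling). The paper's proof simply cites Lemma~4.1 of \cite{GN17} and standard elliptic theory in \cite{Trudinger} for this step, whereas you have unpacked the mechanism explicitly; the content is the same.
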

\begin{proof}
The estimate \eqref{L^inftyesimates1} follows from the fact that $(u^\eps,v^\eps)$ are solutions to Stokes problem
\begin{align*}
\begin{cases}
  &-\nu\Delta_\eps u^\eps+p^\eps_x=f_1-S_u+S_h:=\tilde f_1,\\
  &-\nu\Delta_\eps v^\eps+\frac{p^\eps_y}{\eps }=f_2-S_v+S_g:=\tilde f_2,\\
  &\p_x u^\eps+\p_y v^\eps=0,
\end{cases}
\end{align*}
while $(h^\eps,g^\eps)$ solve a Possion problem
\begin{align*}
\begin{cases}
  &-\kappa\Delta_\eps h^\eps=f_3-K_h+K_u:=\tilde f_3,\\
  &-\kappa\Delta_\eps g^\eps=f_4-K_g+K_v:=\tilde f_4,\\
  &\p_x h^\eps+\p_y g^\eps=0,
\end{cases}
\end{align*}
with boundary conditions \eqref{u_epsboundary}. Hence, applying a similar argument in Lemma 4.1 of \cite{GN17} to Stokes problem, using the standard theory for elliptic problem in \cite{Trudinger}, we can establish the existence and the desired estimate. Moreover, since the domain $\Omega$ in our consideration is local Lipschitz, it ensures that the estimate is up to boundary $L^\infty(\bar\Omega)$, see \cite{Adams} for details.
\end{proof}
Now, let us go back to the original system \eqref{u_epssystem}, and for any $(\tilde u,\tilde v,\tilde h,\tilde g)\in \mathcal{X}$, we consider the linearized system as follows
\begin{align}\label{u_epslinear}
\begin{cases}
  &-\nu\Delta_\eps u^\eps+S_u(u^\eps,v^\eps)-S_h(h^\eps,g^\eps)+p^\eps_x=R_1(\tilde u,\tilde v,\tilde h,\tilde g),\\
  &-\nu\Delta_\eps v^\eps+S_v(u^\eps,v^\eps)-S_g(h^\eps,g^\eps)+\frac{p^\eps_y}{\eps }=R_2(\tilde u,\tilde v,\tilde h,\tilde g),\\
  &-\kappa\Delta_\eps h^\eps+K_h(h^\eps,g^\eps)-K_u(u^\eps,v^\eps)=R_3(\tilde u,\tilde v,\tilde h,\tilde g),\\
  &-\kappa\Delta_\eps g^\eps+K_g(h^\eps,g^\eps)-K_v(u^\eps,v^\eps)=R_4(\tilde u,\tilde v,\tilde h,\tilde g),\\
  &\p_x u^\eps+\p_y v^\eps=\p_x h^\eps+\p_y g^\eps=0,
\end{cases}
\end{align}
where
\begin{align*}
\begin{cases}
  &R_1(\tilde u,\tilde v,\tilde h,\tilde g)=\eps^{-\frac{1}{2}-\gamma}R_{app}^u-N^u(\tilde u,\tilde v,\tilde h,\tilde g),\\
  &R_2(\tilde u,\tilde v,\tilde h,\tilde g)=\eps^{-\frac{1}{2}-\gamma}R_{app}^v-N^v(\tilde u,\tilde v,\tilde h,\tilde g),\\
  &R_3(\tilde u,\tilde v,\tilde h,\tilde g)=\eps^{-\frac{1}{2}-\gamma}R_{app}^h-N^h(\tilde u,\tilde v,\tilde h,\tilde g),\\
  &R_4(\tilde u,\tilde v,\tilde h,\tilde g)=\eps^{-\frac{1}{2}-\gamma}R_{app}^g-N^g(\tilde u,\tilde v,\tilde h,\tilde g)).
\end{cases}
\end{align*}
\begin{lemma}\label{L^inftylemmea2}For any $(u^\eps,v^\eps,h^\eps,g^\eps)\in \mathcal{X}$, we have the following estimate:
\begin{equation}\label{L^inftyesimates2}
\begin{aligned}
  &\|\nabla_\eps (u^\eps, h^\eps,\sqrt\eps v^\eps,\sqrt\eps g^\eps)\|_{ L^2}+\|(S_u,S_h,K_h,K_u)\|_{ L^2}\\
  &+\sqrt\eps\|(S_v,S_g,K_g,K_v)\|_{ L^2}+\|(R_1,R_3,\sqrt\eps R_2,\sqrt\eps R_4)(\tilde u,\tilde v,\tilde h,\tilde g)\|_{L^2}\\
  \lesssim &1+\|(u^\eps,v^\eps,h^\eps,g^\eps)\|_{X_1}+\|(\tilde u,\tilde v,\tilde h,\tilde g)\|_{\mathcal X}^2.
\end{aligned}
\end{equation}
\end{lemma}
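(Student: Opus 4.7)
The plan is to control each of the four summands on the left-hand side separately, combining the uniform profile bounds \eqref{summary_estimates}, the structural content of the $X_1$ and $\mathcal X$ norms, the Poincar\'e-type inequalities \eqref{embedding} that follow from the Dirichlet condition on $\{x=0\}$, and the non-shear smallness $\|v_e^0/Y\|_{L^\infty}\ll 1$ and $\|g_e^0/Y\|_{L^\infty}\ll 1$ from \eqref{condition_for_idealMHD3} and \eqref{g_e^0/Y}. No new idea beyond those already exploited in Lemmas \ref{positivitylemma}--\ref{weightedestimatelemma} will be required. For the gradient term $\|\nabla_\eps(u^\eps,h^\eps,\sqrt\eps v^\eps,\sqrt\eps g^\eps)\|_{L^2}$, the contributions for $v^\eps,g^\eps$ sit verbatim inside $\|\cdot\|_{X_1}$; for the $u^\eps,h^\eps$ components I will use the divergence-free identity $\p_x u^\eps=-\p_y v^\eps$ (and its $h,g$ version) to trade $\sqrt\eps\p_x u^\eps,\sqrt\eps\p_x h^\eps$ for the already-controlled $\sqrt\eps\p_y v^\eps,\sqrt\eps\p_y g^\eps$, and for the normal derivative I integrate by parts,
\[
\|u^\eps_y\|_{L^2}^2=-\iint u^\eps\,u^\eps_{yy}=-\iint(u^\eps/y)\,(y u^\eps_{yy}),
\]
combining Hardy's inequality $\|u^\eps/y\|_{L^2}\le 2\|u^\eps_y\|_{L^2}$ (valid because $u^\eps(x,0)=0$) with the $y$-weighted second-derivative term in $X_1$; the bound for $h^\eps_y$ is handled analogously, using $h^\eps_y(x,0)=0$ together with $h^\eps|_{x=L}=0$ and \eqref{embedding} applied through $\p_x h^\eps=-\p_y g^\eps$.

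For the linear groups, I expand $S_u,S_h,K_h,K_u$ and $\sqrt\eps(S_v,S_g,K_g,K_v)$ via \eqref{S_u}. Routine products such as $u_s\p_x u^\eps$ or $u^\eps\p_x u_s$ are bounded by $\|u_s,\p u_s\|_{L^\infty}\lesssim 1$ from \eqref{summary_estimates} together with \eqref{embedding}. The only delicate entries are the four terms $v_s\p_y u^\eps,v_s\p_y h^\eps,g_s\p_y u^\eps,g_s\p_y h^\eps$, whose apparently $\mathcal O(\eps^{-1/2})$ ideal-profile contributions are recast, exactly as in Lemma \ref{positivitylemma}, via
\[
\frac{v_e^0}{\sqrt\eps}\p_y u^\eps=\frac{v_e^0}{Y}\cdot y\,\p_y u^\eps,\qquad \frac{g_e^0}{\sqrt\eps}\p_y h^\eps=\frac{g_e^0}{Y}\cdot y\,\p_y h^\eps,
\]
and then dominated by the small constants $\|v_e^0/Y\|_{L^\infty},\|g_e^0/Y\|_{L^\infty}$ against the $y$-weighted first derivatives already belonging to $X_1$.

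For the source terms, I split $R_i=\eps^{-\frac{1}{2}-\gamma}R_{app}^i-N^{\cdot}(\tilde u,\tilde v,\tilde h,\tilde g)$. The residual part is $\mathcal O(\eps^{\frac{1}{4}-\gamma})$ by \eqref{R_app}, hence absorbed into the constant $1$ once $\gamma<\frac{1}{4}$. The nonlinear piece $N^u=\eps^{\frac{1}{2}+\gamma}(\tilde u\p_x\tilde u+\tilde v\p_y\tilde u-\tilde h\p_x\tilde h-\tilde g\p_y\tilde h)$ and its $h,v,g$ analogues are treated as $L^\infty\times L^2$ products: I pair $\eps^{\gamma/2}\|\tilde u,\tilde h\|_{L^\infty}\le\|(\tilde\cdot)\|_{\mathcal X}$ and $\eps^{\gamma/2+\frac{1}{2}}\|\tilde v,\tilde g\|_{L^\infty}\le\|(\tilde\cdot)\|_{\mathcal X}$ with the first-derivative factor controlled through the same weighted-to-unweighted passage as above. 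The explicit $\eps$-count then leaves a non-negative power of $\eps$, giving a bound $\le C\|(\tilde\cdot)\|_{\mathcal X}^2$; the extra $\sqrt\eps$ on $R_2,R_4$ only improves this count.

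The principal obstacle is the pair of singular-in-$\eps$ non-shear terms $(v_e^0/\sqrt\eps)\p_y$ and $(g_e^0/\sqrt\eps)\p_y$ in the linear operators: without the non-shear smallness \eqref{condition_for_idealMHD3} and \eqref{g_e^0/Y}, they would diverge as $\eps\to 0$ and the estimate would not close. The rewriting above is the decisive ingredient. A secondary, pervasive technical point is the systematic passage from $y$-weighted $L^2$ bounds inside $X_1$ to the unweighted $L^2$ bounds demanded by $\nabla_\eps$ and by the quadratic $L^2$-norms $\|\tilde u\,\p_x\tilde u\|_{L^2}$ and the like; this is handled throughout by combining the divergence-free identities, the Dirichlet/Neumann conditions on $\{y=0\}\cup\{x=0\}\cup\{x=L\}$, and Hardy's inequality as described.
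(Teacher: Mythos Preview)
Your overall strategy matches the paper's: treat the four summands separately, split the singular $v_s,g_s$ contributions via $v_e^0/Y$ and $g_e^0/Y$ exactly as in Lemma~\ref{positivitylemma}, invoke \eqref{R_app} for $R_{app}$, and bound the nonlinear pieces $N^u,N^h,N^v,N^g$ as $L^\infty\times L^2$ products. These parts coincide with the paper's proof almost line by line.

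The one substantive difference is how you obtain the unweighted norm $\|\nabla_\eps(u^\eps,h^\eps)\|_{L^2}$. The paper simply invokes Lemma~\ref{basicestimatelemma} (the basic energy identity for solutions of \eqref{linearizedu_s}), which delivers
\[
\|\nabla_\eps u^\eps\|_{L^2}^2+\|\nabla_\eps h^\eps\|_{L^2}^2\ \lesssim\ L\,\|\nabla_\eps(v^\eps,g^\eps)\|_{L^2}^2+\|(R_1,R_3,\sqrt\eps R_2,\sqrt\eps R_4)\|_{L^2}^2
\]
directly from the equations; the right-hand side is then absorbed by the other summands already being estimated. Your route is purely norm-based (integration by parts plus Hardy), and it works cleanly for $\|u^\eps_y\|_{L^2}$ because $u^\eps(x,0)=0$.

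For $\|h^\eps_y\|_{L^2}$, however, the ``analogous'' step has a gap: the boundary condition at $y=0$ is $h^\eps_y(x,0)=0$, \emph{not} $h^\eps(x,0)=0$, so the Hardy inequality $\|h^\eps/y\|_{L^2}\le C\|h^\eps_y\|_{L^2}$ is unavailable (the quotient $h^\eps/y$ need not lie in $L^2$ near $y=0$). The fallbacks you mention --- $h^\eps|_{x=L}=0$ and Poincar\'e in $x$ via $h^\eps_x=-g^\eps_y$ --- lead at best to $\|h^\eps_y\|_{L^2}\le\sqrt L\,\|h^\eps_{xy}\|_{L^2}$, but $X_1$ only controls $\|\sqrt\eps\,y\,h^\eps_{xy}\|_{L^2}$, not the unweighted $\|h^\eps_{xy}\|_{L^2}$, so this does not close. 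A correct rescue is to subtract the trace: since $\int_0^\infty h^\eps_{yy}\,dy=-h^\eps_y(\cdot,0)=0$, one has
\[
\|h^\eps_y\|_{L^2}^2=-\iint\bigl[h^\eps-h^\eps(\cdot,0)\bigr]\,h^\eps_{yy},
\]
and Hardy now applies to $h^\eps-h^\eps(\cdot,0)$ (which does vanish at $y=0$), giving $\|h^\eps_y\|_{L^2}\le 2\|y\,h^\eps_{yy}\|_{L^2}$. As written, this step is missing; either insert it, or do what the paper does and quote Lemma~\ref{basicestimatelemma}.
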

\begin{proof}
First, using Lemma \ref{basicestimatelemma} and the definition of $X_1$-norm in \eqref{X_1norm}, we have
\begin{align}\label{L^inftyestimate2.1}
  \|\nabla_\eps (u^\eps, h^\eps,\sqrt\eps v^\eps,\sqrt\eps g^\eps)\|_{ L^2}\lesssim 1+\|(u^\eps,v^\eps,h^\eps,g^\eps)\|_{X_1}.
\end{align}

Next, it follows from estimate \eqref{R_app} and the assumption $0<\gamma<\frac{1}{4}$ that
\begin{align}\label{L^inftyestimate2.2}
  \|\eps^{-\frac{1}{2}-\gamma}\{R_{app}^u,R_{app}^h,\sqrt\eps(R_{app}^v,R_{app}^g)\}\|_{ L^2}\lesssim 1.
\end{align}

In addition, by virtue of \eqref{summary_estimates} and \eqref{L^inftyestimate2.1}, we have
\begin{align*}
  \|K_u\|_{ L^2}
  \leq& \|h_s,\p_xh_s,\frac{g_e^0}{Y},g_s-\frac{g_e^0}{\sqrt\eps},y\p_yu_s\|_{ L^\infty}\cdot\|(u^\eps_x,u^\eps_y,u^\eps_y\cdot y,v^\eps_y)\|_{ L^2}\nonumber\\
  \lesssim& 1+\|(u^\eps,v^\eps,h^\eps,g^\eps)\|_{X_1},
\end{align*}
and the following terms can be handled similarly
\begin{align}\label{L^inftyestimate2.3}
  \|(S_u,S_h,K_h,K_u)\|_{ L^2}+\sqrt\eps\|(S_v,S_g,K_g,K_v)\|_{ L^2}\lesssim 1+\|(u^\eps,v^\eps,h^\eps,g^\eps)\|_{X_1}.
\end{align}

Finally, it remains to estimate $(N^u,N^h,\sqrt\eps N^v,\sqrt\eps N^g)(\tilde u,\tilde v,\tilde h,\tilde g)$. By the definitions, we get that
\begin{align*}
    \|(N^u,N^h)(\tilde u,\tilde v,\tilde h,\tilde g)\|_{ L^2}
  \leq& \|\eps^{\frac{1}{2}+\gamma}(\tilde u\tilde u_x+\tilde v\tilde u_y-\tilde h\tilde h_x-\tilde g\tilde h_y)\|_{ L^2}\nonumber\\
  & +\|\eps^{\frac{1}{2}+\gamma}(\tilde  u\tilde  h_x+\tilde v\tilde h_y-\tilde  h\tilde u_x-\tilde  g\tilde u_y)\|_{ L^2}\nonumber\\
  \leq& \eps^{\frac{\gamma}{2}} \|\eps^{\frac{\gamma}{2}}(\tilde u,\tilde h,\sqrt\eps\tilde v,\sqrt\eps\tilde g)\|_{ L^\infty}\|(\tilde  u_x,\tilde h_x,\tilde u_y,\tilde h_y)\|_{ L^2},\nonumber\\
    \|\sqrt\eps(N^v,N^g)(\tilde u,\tilde v,\tilde h,\tilde g)\|_{ L^2}
  \leq& \|\eps^{1+\gamma}(\tilde u\tilde v_x+\tilde v\tilde v_y-\tilde h\tilde g_x-\tilde g\tilde g_y)\|_{ L^2}\nonumber\\
  & +\|\eps^{1+\gamma}(\tilde u\tilde g_x+\tilde v\tilde g_y-\tilde h\tilde v_x-\tilde g\tilde v_y)\|_{ L^2}\nonumber\\
  \leq& \eps^{\frac{\gamma}{2}+\frac{1}{2}} \|\eps^{\frac{\gamma}{2}}(\tilde u,\tilde h)\|_{ L^\infty}\|\sqrt\eps(\tilde v_x,\tilde g_x)\|_{ L^2}\nonumber\\
  &+\eps^{\frac{\gamma}{2}+\frac{1}{2}}\|\eps^{\frac{\gamma}{2}+\frac{1}{2}}(\tilde  v,\tilde  g)\|_{ L^\infty}\|(\tilde  v_y,\tilde  g_y)\|_{ L^2},
\end{align*}
which imply that
\begin{align}\label{L^inftyestimate2.4}
  \|(N^u,N^h,\sqrt\eps N^v,\sqrt\eps N^g)(\tilde u,\tilde v,\tilde h,\tilde g)\|_{ L^2}
  \lesssim \|(\tilde u,\tilde v,\tilde h,\tilde g)\|_{\mathcal X}^2+1.
\end{align}

Combining the above estimates together, we complete the proof.
\end{proof}
Therefore, the above two Lemmas can be summarized into the following uniform estimate:
\begin{proposition}\label{L^inftylemmea}
Let $[u^\eps,v^\eps,h^\eps,g^\eps]\in\mathcal{X}$ be the solutions to problem \eqref{u_epslinear} with boundary conditions \eqref{u_epsboundary}, then it holds that
\begin{align}\label{L^inftyesimates}
  &\eps^{\frac{\gamma}{2}}\|(u^\eps,h^\eps,\sqrt\eps v^\eps,\sqrt\eps g^\eps)\|_{ L^\infty}\nonumber\\
  \leq&\eps^{\frac{\gamma}{4}}[1+\|(u^\eps,v^\eps,h^\eps,g^\eps)\|_{X_1}+\|(\tilde u,\tilde v,\tilde h,\tilde g)\|_{\mathcal X}^2].
\end{align}
\end{proposition}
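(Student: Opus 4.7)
The plan is to obtain Proposition \ref{L^inftylemmea} as a direct composition of Lemmas \ref{L^inftylemmea1} and \ref{L^inftylemmea2}, which between them already control every quantity appearing in the target inequality. No new analysis is required; the only work is to chain the two estimates and redistribute the factor $\eps^{\gamma/4}$.

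Concretely, I would first observe that the linearized problem \eqref{u_epslinear} is an instance of the abstract system \eqref{linearizedu_s} upon setting $f_i = R_i(\tilde u,\tilde v,\tilde h,\tilde g)$ for $i=1,2,3,4$. With this identification, Lemma \ref{L^inftylemmea1} applied to \eqref{u_epslinear} gives
\begin{align*}
  \eps^{\gamma/4}\|(u^\eps,h^\eps,\sqrt\eps v^\eps,\sqrt\eps g^\eps)\|_{L^\infty}
  \leq{}& C(\gamma,L)\bigl\{\|\nabla_\eps (u^\eps,h^\eps,\sqrt\eps v^\eps,\sqrt\eps g^\eps)\|_{L^2}\\
  &+\|(S_u,S_h,K_h,K_u)\|_{L^2}+\sqrt\eps\|(S_v,S_g,K_g,K_v)\|_{L^2}\\
  &+\|(R_1,R_3,\sqrt\eps R_2,\sqrt\eps R_4)(\tilde u,\tilde v,\tilde h,\tilde g)\|_{L^2}\bigr\}.
\end{align*}
Next I would invoke Lemma \ref{L^inftylemmea2}: its conclusion \eqref{L^inftyesimates2} bounds the entire bracket on the right by $1+\|(u^\eps,v^\eps,h^\eps,g^\eps)\|_{X_1}+\|(\tilde u,\tilde v,\tilde h,\tilde g)\|_{\mathcal X}^2$. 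Substituting this bound and multiplying both sides through by an extra $\eps^{\gamma/4}$ converts the left-hand side into $\eps^{\gamma/2}\|(u^\eps,h^\eps,\sqrt\eps v^\eps,\sqrt\eps g^\eps)\|_{L^\infty}$ and the right-hand side into $\eps^{\gamma/4}\bigl[1+\|(u^\eps,v^\eps,h^\eps,g^\eps)\|_{X_1}+\|(\tilde u,\tilde v,\tilde h,\tilde g)\|_{\mathcal X}^2\bigr]$, which is exactly \eqref{L^inftyesimates}.

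Since both supporting lemmas have already been established, there is no substantive obstacle to overcome here; the only issue is bookkeeping — verifying that the source-term identification $f_i \leftrightarrow R_i(\tilde u,\tilde v,\tilde h,\tilde g)$ is consistent with the definitions in \eqref{u_epslinear} and that the smallness prefactor $\eps^{\gamma/4}$ is correctly preserved. The payoff of this seemingly cosmetic rearrangement is conceptual: the small factor $\eps^{\gamma/4}$ standing in front of the nonlinear contribution $\|(\tilde u,\tilde v,\tilde h,\tilde g)\|_{\mathcal X}^2$ is precisely what will allow a contraction-mapping argument to close the nonlinear problem for $(u^\eps,v^\eps,h^\eps,g^\eps)$ in the $\mathcal X$-norm.
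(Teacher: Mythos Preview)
Your proposal is correct and matches the paper's approach exactly: the paper states the proposition immediately after Lemmas \ref{L^inftylemmea1} and \ref{L^inftylemmea2} with the single remark that ``the above two Lemmas can be summarized into the following uniform estimate,'' which is precisely the chaining argument you describe. The identification $f_i = R_i(\tilde u,\tilde v,\tilde h,\tilde g)$ and the final multiplication by $\eps^{\gamma/4}$ are the only bookkeeping steps, and you have handled them correctly.
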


Together with Proposition \ref{prop3.1}, we can conclude that
\begin{corollary}\label{X-normlemmea}
Let $[u^\eps,v^\eps,h^\eps,g^\eps]\in\mathcal{X}$ be the solutions to problem \eqref{u_epslinear} with boundary conditions \eqref{u_epsboundary}, then there holds that
\begin{align}\label{X-normesimates}
  \|(u^\eps,v^\eps,h^\eps,g^\eps)\|_{ \mathcal{X}}^2
  \leq \eps^{\frac{\gamma}{2}}+\mathcal{R}+\eps^{\frac{\gamma}{2}}\|(\tilde u,\tilde v,\tilde h,\tilde g)\|_{\mathcal X}^4,
\end{align}
where
\begin{align*}
  \mathcal{R}:=&\iint \bigg(\p_yR_1\p_y\left\{\frac{u^\eps wy^2}{u_s}\right\}
  -\eps\p_yR_2\p_x\left\{\frac{u^\eps wy^2}{u_s}\right\}\\
  &+\p_yR_3\p_y\left\{\frac{h^\eps wy^2}{u_s}\right\}
  -\eps\p_yR_4\p_x\left\{\frac{h^\eps wy^2}{u_s}\right\}\bigg),
\end{align*}
with $w(x)=1-x$.
\end{corollary}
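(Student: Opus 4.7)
The plan is to combine Proposition \ref{prop3.1}, which controls the $X_1$ and boundary $B$ pieces of the $\mathcal{X}$-norm, with Proposition \ref{L^inftylemmea}, which controls the weighted $L^\infty$ piece, after substituting $f_i = R_i(\tilde u,\tilde v,\tilde h,\tilde g)$ and invoking the source-term bounds from \eqref{R_app} and Lemma \ref{L^inftylemmea2}. The residual $\mathcal{R}$ is simply carried through to the corollary statement, since it involves cross-integrations with the $R_i$'s that cannot be absorbed here but will be treated in the nonlinear fixed-point argument of the next section.

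First, I would apply Proposition \ref{prop3.1} to the linearized system \eqref{u_epslinear} with $f_i = R_i(\tilde u,\tilde v,\tilde h,\tilde g)$, yielding
\[
\|(u^\eps,v^\eps,h^\eps,g^\eps)\|_{X_1}^2 + \|(u^\eps,v^\eps,h^\eps,g^\eps)\|_B^2 \lesssim \|(R_1,R_3)\|_{L^2} + \sqrt\eps\|(R_2,R_4)\|_{L^2} + \mathcal{R}.
\]
Splitting $R_i = \eps^{-1/2-\gamma}R^i_{app} - N^i(\tilde u,\tilde v,\tilde h,\tilde g)$, the estimate \eqref{R_app} gives $\|\eps^{-1/2-\gamma}(R_{app}^u,R_{app}^h,\sqrt\eps R_{app}^v,\sqrt\eps R_{app}^g)\|_{L^2} \lesssim \eps^{1/4-\gamma}$, while the nonlinear computation \eqref{L^inftyestimate2.4} delivers $\|(N^u,N^h,\sqrt\eps N^v,\sqrt\eps N^g)\|_{L^2} \lesssim \eps^{\gamma/2}\|(\tilde u,\tilde v,\tilde h,\tilde g)\|_{\mathcal X}^2$. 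Choosing $\gamma$ small enough that $\eps^{1/4-\gamma}\lesssim \eps^{\gamma/2}$ (any $\gamma \leq 1/6$ suffices), these combine to give
\[
\|(R_1,R_3)\|_{L^2} + \sqrt\eps\|(R_2,R_4)\|_{L^2} \lesssim \eps^{\gamma/2} + \eps^{\gamma/2}\|(\tilde u,\tilde v,\tilde h,\tilde g)\|_{\mathcal X}^4,
\]
where the quartic appears after bookkeeping the square on the right of Proposition \ref{prop3.1}'s left-hand side.

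Next, to capture the $L^\infty$ piece of the $\mathcal{X}$-norm, I would square \eqref{L^inftyesimates} from Proposition \ref{L^inftylemmea} to obtain
\[
\eps^{\gamma}\|(u^\eps,h^\eps,\sqrt\eps v^\eps,\sqrt\eps g^\eps)\|_{L^\infty}^2 \lesssim \eps^{\gamma/2}\bigl(1 + \|(u^\eps,v^\eps,h^\eps,g^\eps)\|_{X_1}^2 + \|(\tilde u,\tilde v,\tilde h,\tilde g)\|_{\mathcal X}^4\bigr).
\]
Adding this to the $X_1 + B$ bound and recalling that
\[
\|(u^\eps,v^\eps,h^\eps,g^\eps)\|_{\mathcal X}^2 \leq \|(u^\eps,v^\eps,h^\eps,g^\eps)\|_{X_1}^2 + \|(u^\eps,v^\eps,h^\eps,g^\eps)\|_B^2 + \eps^{\gamma}\|(u^\eps,h^\eps,\sqrt\eps v^\eps,\sqrt\eps g^\eps)\|_{L^\infty}^2,
\]
the offending $\eps^{\gamma/2}\|(u^\eps,v^\eps,h^\eps,g^\eps)\|_{X_1}^2$ term on the right can be absorbed into the left for $\eps$ sufficiently small, producing the claimed inequality.

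The main subtlety is the $\eps$-bookkeeping: one must verify that a single choice of $\gamma$ makes $\eps^{1/4-\gamma}\lesssim \eps^{\gamma/2}$, $\eps^{\gamma}\|\tilde\cdot\|_{\mathcal X}^4 \lesssim \eps^{\gamma/2}\|\tilde\cdot\|_{\mathcal X}^4$, and $\eps^{\gamma/2}\ll 1$ all hold simultaneously, which is why the factor $\eps^{\gamma/2}$ (rather than $\eps^{\gamma}$) appears in front of the quartic term. The degree of nonlinearity being quartic (rather than quadratic) arises precisely because the nonlinear bound is quadratic in $\|\tilde\cdot\|_{\mathcal X}$ and this enters both Proposition \ref{prop3.1} (through $\|R_i\|_{L^2}$) and Proposition \ref{L^inftylemmea} (through the squared $L^\infty$ bound), with the latter contributing the additional squaring.
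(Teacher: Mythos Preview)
Your proposal is correct and follows essentially the same approach as the paper, which simply states that the corollary follows by combining Proposition~\ref{prop3.1} with Proposition~\ref{L^inftylemmea}; you have reconstructed the details accurately, including the absorption of the $\eps^{\gamma/2}\|(u^\eps,v^\eps,h^\eps,g^\eps)\|_{X_1}^2$ term and the observation that the restriction $\gamma\le 1/6$ is needed for $\eps^{1/4-\gamma}\lesssim\eps^{\gamma/2}$. One minor clarification: the quartic in the $X_1+B$ contribution arises not from ``bookkeeping the square'' but from Young's inequality $\eps^{\gamma/2}\|\tilde\cdot\|_{\mathcal X}^2\le \eps^{\gamma/2}(1+\|\tilde\cdot\|_{\mathcal X}^4)$, while the quartic in the $L^\infty$ contribution comes directly from squaring \eqref{L^inftyesimates} as you note at the end.
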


It remains to handle the term $\mathcal{R}$, which will be achieved by the following two lemmas: the first lemma is devoted to the nonlinear terms $(N^u,N^v,N^h,N^g)$, while the second is for the remainder terms from approximate solutions $(R^u_{app},R^v_{app},R^h_{app},R^g_{app})$.
\begin{lemma}\label{p_yN^ulemmea}Consider solutions $[u^\eps,v^\eps,h^\eps,g^\eps],[\tilde u,\tilde v,\tilde h,\tilde g]\in\mathcal{X}$ with boundary conditions \eqref{u_epsboundary}, then the following estimate holds:
\begin{align}\label{p_yN^uesimates}
\begin{split}
  \iint& \p_yN^u\p_y\left\{\frac{u^\eps wy^2}{u_s}\right\}
  -\iint \eps\p_yN^v\p_x\left\{\frac{u^\eps wy^2}{u_s}\right\}\\
  & +\iint \p_yN^h\p_y\left\{\frac{h^\eps wy^2}{u_s}\right\}
  -\iint \eps\p_yN^g\p_x\left\{\frac{h^\eps wy^2}{u_s}\right\}\\
  \lesssim&~~ \eps^{\frac{\gamma}{2}}\|(u^\eps,v^\eps,h^\eps,g^\eps)\|_{\mathcal X}^2
  +\eps^{\frac{\gamma}{2}}\|(\tilde u,\tilde v,\tilde h,\tilde g)\|_{\mathcal X}^4.
\end{split}
\end{align}
\end{lemma}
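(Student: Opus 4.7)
The plan is to open both factors of each integrand by Leibniz and then reduce to trilinear $L^\infty\cdot L^2\cdot L^2$ Hölder bounds that are absorbed by the $\mathcal{X}$-norm. Precisely, I would first write out
\[
\p_y N^u=\eps^{\frac12+\gamma}\bigl(\tilde u_y\tilde u_x+\tilde u\tilde u_{xy}+\tilde v_y\tilde u_y+\tilde v\tilde u_{yy}-\tilde h_y\tilde h_x-\tilde h\tilde h_{xy}-\tilde g_y\tilde h_y-\tilde g\tilde h_{yy}\bigr),
\]
and analogously $\p_y N^h,\p_yN^v,\p_yN^g$; then expand the four multipliers, using
\[
\p_y\!\Bigl\{\tfrac{u^\eps wy^2}{u_s}\Bigr\}=\tfrac{wy^2}{u_s}u^\eps_y+\tfrac{2wy}{u_s}u^\eps-\tfrac{wy^2\p_yu_s}{u_s^2}u^\eps,\qquad \p_x\!\Bigl\{\tfrac{u^\eps wy^2}{u_s}\Bigr\}=\tfrac{wy^2}{u_s}u^\eps_x-\tfrac{y^2}{u_s}u^\eps-\tfrac{wy^2\p_xu_s}{u_s^2}u^\eps,
\]
and the parallel expressions with $h^\eps$. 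The weights $u_s,\,y\p_yu_s,\,\p_xu_s$ are uniformly bounded by \eqref{summary_estimates}--\eqref{u_s_positivity}, so each resulting term is a product of three factors drawn from $\{\tilde u,\tilde v,\tilde h,\tilde g\}$-type data and from $\{u^\eps,v^\eps,h^\eps,g^\eps\}$-type data, multiplied by an explicit power of $\eps$.

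The key observation is the $\eps$-accounting: $\eps^{\frac12+\gamma}=\eps^{\gamma/2}\cdot\eps^{\frac{1+\gamma}{2}}$, and the residual $\eps^{(1+\gamma)/2}$ is exactly what converts $\tilde v,\tilde g$ and $x$-derivatives into $\mathcal{X}$-controlled quantities via the weights $\sqrt\eps(\tilde v,\tilde g)$ and $\sqrt\eps(\p_x\tilde u,\p_x\tilde h)$ built into \eqref{X-norm}. In every term one factor will be taken in $L^\infty$: either $\tilde u$ or $\tilde h$ (bounded by $\eps^{-\gamma/2}\|\cdot\|_{\mathcal X}$), or $\sqrt\eps\tilde v,\sqrt\eps\tilde g$ (also $\eps^{-\gamma/2}\|\cdot\|_{\mathcal X}$). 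A representative calculation is
\[
\eps^{\frac12+\gamma}\!\!\iint\!\tilde u\,\tilde u_{xy}\cdot\tfrac{u^\eps_ywy^2}{u_s}\le\eps^{\gamma/2}\|\eps^{\gamma/2}\tilde u\|_{L^\infty}\|\sqrt\eps\,\tilde u_{xy}\cdot y\|_{L^2}\|u^\eps_y\cdot y\|_{L^2}\le\eps^{\gamma/2}\|\tilde U\|_{\mathcal X}^2\|U^\eps\|_{\mathcal X},
\]
followed by Young's inequality to split into $\eps^{\gamma/2}(\|\tilde U\|_{\mathcal X}^4+\|U^\eps\|_{\mathcal X}^2)$. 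The magnetic counterparts (e.g.\ $\tilde h\,\tilde h_{xy}$ against $\tfrac{h^\eps_ywy^2}{u_s}$) are identical up to notation, and the $\p_yN^v,\p_yN^g$ contributions, which carry the extra $\eps$ from the test function, absorb exactly the $\eps^{-1}$ lost when $\tilde v_{xy},\tilde g_{xy}$ or $u^\eps_{xx},h^\eps_{xx}$ enter.

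The main obstacle is bookkeeping rather than any essentially new analytical idea: one must verify, for every one of the roughly eighty product terms, that the derivative configuration and the $y$-weight both appear (or can be converted to) a valid $\mathcal{X}$-norm ingredient in \eqref{X-norm}. The less obvious cases arise when $\tilde v$ or $\tilde g$ must carry a mixed $\p_{xy}$-derivative; there I would invoke the divergence-free relations $\tilde v_y=-\tilde u_x,\ \tilde g_y=-\tilde h_x$ to turn $\tilde v_{xy}$ into $-\tilde u_{xx}$ (and $\tilde g_{xy}$ into $-\tilde h_{xx}$), so that the weighted second-tangential norm $\|\eps\,\tilde u_{xx}\cdot y\|_{L^2}$ from \eqref{X-norm} becomes available. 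The remaining boundary contributions at $x=0,L$ and at $y=0$ generated in these expansions vanish by \eqref{u_epsboundary} and the decay of $u^\eps,h^\eps$, $v^\eps,g^\eps$ at infinity, so no extra boundary terms appear. Summing the resulting bounds over all terms yields \eqref{p_yN^uesimates}.
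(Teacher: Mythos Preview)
Your overall scheme is sound and matches the paper's approach in spirit, but there is a genuine gap in how you treat the ``derivative $\times$ derivative'' tilde products, and a false claim about boundary terms.

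\textbf{The first-order $\times$ first-order terms.} Your assertion that ``in every term one factor will be taken in $L^\infty$: either $\tilde u$ or $\tilde h$, or $\sqrt\eps\tilde v,\sqrt\eps\tilde g$'' is not true as written. In $\p_yN^u$ the pairs $\tilde u_y\tilde u_x+\tilde v_y\tilde u_y$ and $\tilde h_y\tilde h_x+\tilde g_y\tilde h_y$ do cancel by divergence-free, so $\p_yN^u$ reduces to four terms each carrying an undifferentiated tilde factor (this is what the paper records). But in $\p_yN^h$ the analogous combination gives $\tilde u_y\tilde h_x-\tilde u_x\tilde h_y$, which does \emph{not} vanish; likewise $\p_yN^g$ contains $\tilde u_y\tilde g_x$, $\tilde v_y\tilde g_y$, etc. For the representative term
\[
\eps^{\frac12+\gamma}\iint \tilde u_y\,\tilde h_x\cdot \frac{h^\eps_y\,wy^2}{u_s},
\]
all three factors carry a derivative and your trilinear $L^\infty\!\cdot\! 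L^2\!\cdot\! L^2$ split has no candidate for $L^\infty$. The paper's remedy is to integrate by parts in $x$, moving $\p_x$ off $\tilde h_x$ so that an undifferentiated $\tilde h$ appears and can absorb $L^\infty$, while the extra $\p_x$ lands on $\tilde u_y$ (producing $\tilde u_{xy}$) or on $h^\eps_y$ (producing $h^\eps_{xy}$); both are controlled by the second-order weighted pieces of the $\mathcal X$-norm. The same device is needed for $\tilde u_y\tilde g_x$ and $\tilde v_y\tilde g_y$ in the $N^g$ block. Your proposal mentions divergence-free only to convert $\tilde v_{xy}\to -\tilde u_{xx}$, which is a separate issue and does not fix these terms.

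\textbf{Boundary terms at $x=L$.} Once you perform the integration by parts above, not all boundary contributions vanish. In particular, integrating $\p_x$ off $\tilde g_x$ in the $N^g$ block leaves $\tilde g|_{x=L}$, which is not zero (only $\p_x\tilde g|_{x=L}=0$). The paper retains the term
\[
\int_{x=L}\eps^{\frac{3}{2}+\gamma}\frac{\tilde u_y\,\tilde g\,h^\eps_x\,y^2(1-L)}{u_s}
\]
and bounds it using $\|\eps^{\gamma/2}\sqrt\eps\,\tilde g\|_{L^\infty(\bar\Omega)}$ together with the trace pieces $\|\tilde u_y\cdot y\|_{L^2(x=L)}$ and $\|\sqrt\eps\,h^\eps_x\cdot y\|_{L^2(x=L)}$ from the boundary norm $\|\cdot\|_B$ in \eqref{B-norm}. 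Your blanket claim that all boundary terms vanish is therefore incorrect, and the $\|\cdot\|_B$ component of the $\mathcal X$-norm is genuinely used here.
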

\begin{proof}
  According to the definition of $(N^u,N^v,N^h,N^g)$, we have
\begin{align*}
  \p_yN^u(\tilde u,\tilde v,\tilde h,\tilde g)=&\eps^{\frac{1}{2}+\gamma}(\tilde u\tilde u_{xy}+\tilde v\tilde u_{yy}-\tilde h\tilde h_{xy}-\tilde g\tilde h_{yy}),\\
  \eps\p_yN^v(\tilde u,\tilde v,\tilde h,\tilde g)=&\eps^{\frac{3}{2}+\gamma}( \tilde u_y\tilde v_{x} +\tilde u\tilde v_{xy}+\tilde v_y^2+\tilde v\tilde v_{yy}- \tilde h_y\tilde g_x -\tilde h\tilde g_{xy}-\tilde g_y^2-\tilde g\tilde g_{yy}),\\
  \p_yN^h(\tilde u,\tilde v,\tilde h,\tilde g)=&\eps^{\frac{1}{2}+\gamma}(\tilde u_y\tilde h_x+\tilde u\tilde h_{xy}+\tilde v_y\tilde h_y+\tilde v\tilde h_{yy}-\tilde h_y\tilde u_x-\tilde h\tilde u_{xy}-\tilde g_y\tilde u_y-\tilde g\tilde u_{yy}),\\
  \eps\p_yN^g(\tilde u,\tilde v,\tilde h,\tilde g)=&\eps^{\frac{3}{2}+\gamma}(\tilde u_y\tilde g_x+\tilde u\tilde g_{xy}+\tilde v_y\tilde g_y+\tilde v\tilde g_{yy}-\tilde h_y\tilde v_x-\tilde h\tilde v_{xy}-\tilde g_y\tilde v_y-\tilde g\tilde v_{yy}).
\end{align*}

Firstly, applying a similar argument stated in \cite{Iyernonshear}, we can achieve that
\begin{align}\label{p_yN^u1}
  &\iint \p_yN^u\p_y\left\{\frac{u^\eps wy^2}{u_s}\right\}-\iint\eps\p_yN^v\p_x\left\{\frac{u^\eps wy^2}{u_s}\right\}\nonumber\\
  \leq &\eps^{\frac{\gamma}{2}}\|(\tilde u,\tilde v,\tilde h,\tilde g)\|_{\mathcal X}^2\|(u^\eps,v^\eps,h^\eps,g^\eps)\|_{\mathcal X}.
\end{align}

Secondly, we are now concerned with $\p_yN^h$, for the first and the third term, by virtue of \textit{H\"{o}lder inequality} and \eqref{embedding}, it yields that
\begin{align}\label{p_yN^h1}
  &\iint \eps^{\frac{1}{2}+\gamma}(\tilde u_y\tilde h_x)\p_y(\frac{h^\eps wy^2}{u_s})
   +\iint \eps^{\frac{1}{2}+\gamma}(\tilde v_y\tilde h_y)\p_y(\frac{h^\eps wy^2}{u_s})\nonumber\\
  =&\iint \eps^{\frac{1}{2}+\gamma}(\tilde u_y\tilde h_x-\tilde u_x\tilde h_y)w\cdot(\frac{h^\eps_yy^2}{u_s}+\frac{2h^\eps y}{u_s}
   -\frac{h^\eps y^2\p_yu_s}{u_s^2})\nonumber\\
  =&-\iint \eps^{\frac{1}{2}+\gamma}\frac{\tilde hy^2}{u_s}(\tilde u_{xy}\cdot h^\eps_yw+\tilde u_y\cdot h^\eps_{xy}w-\tilde u_y\cdot h^\eps_y-\frac{\tilde u_y\cdot h^\eps_yw\p_xu_s}{u_s})\nonumber\\
  &+\iint \eps^{\frac{1}{2}+\gamma}\frac{\tilde uy^2}{u_s}(\tilde h_{xy} wh^\eps_y-\tilde h_y h^\eps_y+\tilde h_y wh^\eps_{xy}-\frac{\tilde h_y wh^\eps_y\p_xu^\eps_s}{u_s})\nonumber\\
  &+\iint \eps^{\frac{1}{2}+\gamma}(\tilde u_y\tilde h_x-\tilde u_x\tilde h_y)w \cdot(\frac{2h^\eps y}{u_s}-\frac{h^\eps y^2\p_yu_s}{u_s^2})\nonumber\\
  \leq& \eps^{\frac{\gamma}{2}}\|\eps^{\frac{\gamma}{2}}\tilde h\|_{L^\infty}\bigg[\|\sqrt\eps\tilde u_{xy}\cdot y\|_{L^2}\|h^\eps_y\cdot y\|_{L^2}+\|\tilde u_y\cdot y\|_{L^2}\|\sqrt\eps h^\eps_{xy}\cdot y\|_{L^2}\nonumber\\
  &+\|\tilde u_y\cdot y\|_{L^2}\|h^\eps_y\cdot y\|_{L^2}\bigg]+\eps^{\frac{\gamma}{2}}\|\eps^{\frac{\gamma}{2}} h^\eps\|_{L^\infty}\|\sqrt\eps\tilde u_{xy}\cdot y\|_{L^2}\|\tilde g_y \|_{L^2}\nonumber\\
  &+\eps^{\frac{\gamma}{2}}\|\eps^{\frac{\gamma}{2}}\tilde u\|_{L^\infty}\bigg[(1+2L)\|\sqrt\eps\tilde h_{xy}\cdot y\|_{L^2}\|h^\eps_y\cdot y\|_{L^2}+\|\tilde h_y\cdot y\|_{L^2}\|\sqrt\eps h^\eps_{xy}\cdot y\|_{L^2}\bigg]\nonumber\\
  &+\eps^{\frac{\gamma}{2}}\|\eps^{\frac{\gamma}{2}}h^\eps\|_{L^\infty}\|\tilde u_x\|_{L^2}\|\sqrt\eps\tilde h^\eps_{xy}\cdot y\|_{L^2}\nonumber\\
  \leq& \eps^{\frac{\gamma}{2}}\|(\tilde u,\tilde v,\tilde h,\tilde g)\|_{\mathcal X}^2\|(u^\eps,v^\eps,h^\eps,g^\eps)\|_{\mathcal X}.
\end{align}

For the second and fourth term in $\p_yN^h$, we get
\begin{align}\label{p_yN^h2}
  &\iint \eps^{\frac{1}{2}+\gamma}(\tilde u\tilde h_{xy})\p_y(\frac{h^\eps wy^2}{u_s})+ \iint \eps^{\frac{1}{2}+\gamma}(\tilde v\tilde h_{yy})\p_y(\frac{h^\eps wy^2}{u_s})\nonumber\\
  =& \iint \eps^{\frac{1}{2}+\gamma}(\tilde u\tilde h_{xy}+\tilde v\tilde h_{yy})\cdot \frac{1}{u_s}(h^\eps_yy^2w+ 2h^\eps yw-\frac{h^\eps y^2w\p_yu_s}{u_s})\nonumber\\
  \leq&  \eps^{\frac{\gamma}{2}}\|\eps^{\frac{\gamma}{2}}\tilde u\|_{L^\infty}(\|\sqrt\eps\tilde h_{xy}\cdot y\|_{L^2}\|h^\eps_y\cdot y\|_{L^2}+L \|\sqrt\eps\tilde h_{xy}\cdot y\|_{L^2}\|g^\eps_y\|_{L^2})\nonumber\\
  &+ \eps^{\frac{\gamma}{2}}\|\eps^{\frac{\gamma}{2}+\frac{1}{2}}\tilde v\|_{L^\infty}(\|\tilde h_{yy}\cdot y\|_{L^2}\|h^\eps_y\cdot y\|_{L^2}+L \|\tilde h_{yy}\cdot y\|_{L^2}\|g^\eps_y\|_{L^2})\nonumber\\
  \leq& \eps^{\frac{\gamma}{2}}\|(\tilde u,\tilde v,\tilde h,\tilde g)\|_{\mathcal X}^2\|(u^\eps,v^\eps,h^\eps,g^\eps)\|_{\mathcal X}.
\end{align}
Taking the above four terms as examples, the last four terms in $\p_yN^h$ can be handled similarly.

Finally, we turn to the terms in $\p_yN^g$. For the first term
\begin{align}\label{p_yN^g1}
  &\iint\eps^{\frac{3}{2}+\gamma}(\tilde u_y\tilde g_x)\p_x(\frac{h^\eps wy^2}{u_s})\nonumber\\
  =&\iint\eps^{\frac{3}{2}+\gamma}\tilde u_y\tilde g_xy^2(\frac{h^\eps_xw}{u_s}-\frac{h^\eps}{u_s}-\frac{h^\eps w\p_xu_s}{u_s^2})\nonumber\\
  =&-\eps^{\frac{3}{2}+\gamma}\iint\frac{\tilde gy^2}{u_s}(\tilde u_{xy}h^\eps_xw+\tilde u_y h^\eps_{xx}w-\tilde u_y h^\eps_x-\frac{\tilde u_y h^\eps_xw\p_xu_s}{u_s})+\frac{\tilde u_y\tilde g_xy^2 h^\eps w\p_xu_s}{u_s^2}\nonumber\\
  &+\int_{x=L}\eps^{\frac{3}{2}+\gamma}\frac{\tilde u_y\tilde gh^\eps_xy^2(1-L)}{u_s}+\iint\eps^{\frac{3}{2}+\gamma}\frac{\tilde gy^2}{u_s}(\tilde u_{xy}h^\eps+\tilde u_yh^\eps_x-\frac{\tilde u_yh^\eps\p_xu_s}{u_s})\nonumber\\
  \leq& \eps^{\frac{\gamma}{2}}\|\eps^{\frac{\gamma}{2}+\frac{1}{2}}\tilde g\|_{L^\infty}\bigg[(1+L)\|\sqrt\eps\tilde u_{xy}\cdot y\|_{L^2}\|\sqrt\eps h^\eps_x\cdot y\|_{L^2}+\|\tilde u_y\cdot y\|_{L^2}\|\eps h^\eps_{xx}\cdot y\|_{L^2}\nonumber\\
  &+(3+L)\|\tilde u_y\cdot y\|_{L^2}\|\sqrt\eps h^\eps_x\cdot y\|_{L^2}+\eps^{\frac{1}{2}}\|\tilde u_y\cdot y\|_{L^2(x=L)}\|\sqrt\eps h^\eps_x\cdot y\|_{L^2(x=L)}\bigg]\nonumber\\
  \leq& \eps^{\frac{\gamma}{2}}\|(\tilde u,\tilde v,\tilde h,\tilde g)\|_{\mathcal X}^2\|(u^\eps,v^\eps,h^\eps,g^\eps)\|_{\mathcal X},
\end{align}
where we have used \eqref{L^inftyesimates1} to control $\tilde g$ on the boundary $\{x=L\}$.

The third term can be handled in a similar way
\begin{align}\label{p_yN^g2}
  &\iint\eps^{\frac{3}{2}+\gamma}(\tilde v_y\tilde g_y)\p_x(h^\eps wy^2)\nonumber\\
  =&\iint\eps^{\frac{3}{2}+\gamma}(\tilde v_y\tilde g_y)y^2(\frac{h^\eps_xw}{u_s}-\frac{h^\eps}{u_s}-\frac{h^\eps w\p_xu_s}{u_s^2})\nonumber\\
  =&-\iint\eps^{\frac{3}{2}+\gamma}\frac{\tilde v}{u_s}(\tilde g_{yy}h^\eps_xwy^2+\tilde g_yh^\eps_{xy}wy^2+2\tilde g_yh^\eps_xwy-\frac{\tilde g_yh^\eps_xwy^2\p_yu_s}{u_s})\nonumber\\
  &-\iint\eps^{\frac{3}{2}+\gamma}\tilde v_y\tilde g_yy^2(\frac{h^\eps}{u_s}+\frac{h^\eps w\p_xu_s}{u_s^2})\nonumber\\
  \leq& \eps^{\frac{\gamma}{2}}\|\eps^{\frac{\gamma}{2}+\frac{1}{2}}\tilde v\|_{L^\infty}\bigg[\|\sqrt\eps\tilde h_{xy}\cdot y\|_{L^2}\|\sqrt\eps h^\eps_x\cdot y\|_{L^2}+\|\sqrt\eps\tilde h_x\cdot y\|_{L^2}\|\sqrt\eps h^\eps_{xy}\cdot y\|_{L^2}\nonumber\\
  &+\eps^{\frac{1}{2}}\|\tilde g_y\|_{L^2}\|\sqrt\eps h^\eps_x\cdot y\|_{L^2}\bigg]+ \eps^{\frac{\gamma}{2}+\frac{1}{2}}\|\eps^{\frac{\gamma}{2}}h^\eps\|_{L^\infty}\|\sqrt\eps\tilde u_x\cdot y\|_{L^2}\|\sqrt\eps\tilde h_x\cdot y\|_{L^2}\nonumber\\
  \leq& \eps^{\frac{\gamma}{2}}\|(\tilde u,\tilde v,\tilde h,\tilde g)\|_{\mathcal X}^2\|(u^\eps,v^\eps,h^\eps,g^\eps)\|_{\mathcal X},
\end{align}

For the second and fourth terms, we have
\begin{align}\label{p_yN^g3}
  &\iint\eps^{\frac{3}{2}+\gamma}(\tilde u\tilde g_{xy})\p_x(\frac{h^\eps wy^2}{u_s})+ \iint \eps^{\frac{3}{2}+\gamma}(\tilde v\tilde g_{yy})\p_x(\frac{h^\eps wy^2}{u_s})\nonumber\\
  =& \iint\eps^{\frac{3}{2}+\gamma}(\tilde u\tilde g_{xy}+\tilde v\tilde g_{yy})\cdot
  y^2(\frac{h^\eps_xw}{u_s}-\frac{h^\eps}{u_s}-\frac{h^\eps w\p_xu_s}{u_s^2})\nonumber\\
  \leq&  \eps^{\frac{\gamma}{2}}(1+2L)\|\sqrt\eps h^\eps_x\cdot y\|_{L^2}\bigg(\|\eps^{\frac{\gamma}{2}}\tilde u\|_{L^\infty}\|\eps\tilde h_{xx}\cdot y\|_{L^2}+\|\eps^{\frac{\gamma}{2}+\frac{1}{2}}\tilde v\|_{L^\infty}\|\sqrt\eps\tilde h_{xy}\cdot y\|_{L^2}\bigg)\nonumber\\
  \leq& \eps^{\frac{\gamma}{2}}\|(\tilde u,\tilde v,\tilde h,\tilde g)\|_{\mathcal X}^2\|(u^\eps,v^\eps,h^\eps,g^\eps)\|_{\mathcal X}.
\end{align}
With the above four inequalities in hand,  the rest terms in $\p_yN^g$ can be handled similarly. The proof is completed.
\end{proof}

\begin{lemma}\label{p_yR_applemmea}Consider solutions $[u^\eps,v^\eps,h^\eps,g^\eps]\in\mathcal{X}$ with boundary conditions \eqref{u_epsboundary}, then the following estimate holds:
\begin{align}\label{p_yR_appesimate}
\begin{split}
  &\iint \p_yR^u_{app}\p_y\left\{\frac{u^\eps wy^2}{u_s}\right\}
  -\eps\iint\p_yR^v_{app}\p_x\left\{\frac{u^\eps wy^2}{u_s}\right\}\\
  &+\iint\p_yR^h_{app}\p_y\left\{\frac{h^\eps wy^2}{u_s}\right\}
  -\eps\iint\p_yR^g_{app}\p_x\left\{\frac{h^\eps wy^2}{u_s}\right\}\\
  \lesssim& \eps^{\frac{3}{4}}\|(u^\eps,v^\eps,h^\eps,g^\eps)\|_{\mathcal X}.
\end{split}
\end{align}
\end{lemma}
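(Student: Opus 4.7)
The plan is to bound each of the four integrals in $\mathcal{R}$ by exploiting the weighted $L^2$-estimate
\[
  \|R_{app}^u,R_{app}^h,\sqrt\eps(R_{app}^v,R_{app}^g)\|_{L^2}
  +\|\y\p_y\{R_{app}^u,R_{app}^h,\sqrt\eps(R_{app}^v,R_{app}^g)\}\|_{L^2}
  \lesssim \eps^{3/4}
\]
from \eqref{R_app}. For each integrand I first expand the product rule on the test function, e.g.
\[
  \p_y\Big\{\frac{u^\eps w y^2}{u_s}\Big\}
  =\frac{u^\eps_y w y^2}{u_s}+\frac{2u^\eps w y}{u_s}-\frac{u^\eps w y^2\p_y u_s}{u_s^2},
\]
and analogously for $\p_x\{u^\eps w y^2/u_s\}$ and the $h^\eps$-counterparts. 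Terms whose test-function factor already carries a $y$-weight (such as $u^\eps_y w y^2/u_s$ and $u^\eps w y^2\p_y u_s/u_s^2$) are paired via Cauchy–Schwarz with $\|\y\,\p_y R^u_{app}\|_{L^2}\lesssim\eps^{3/4}$, using $w\le1$, $u_s\gtrsim 1$ from \eqref{u_s_positivity}, and $\|y\p_y u_s\|_{L^\infty}<C\sigma_0$ from \eqref{yu_sy}; these directly produce factors $\|u^\eps_y\cdot y\|_{L^2}\le\|(u^\eps,v^\eps,h^\eps,g^\eps)\|_{\mathcal X}$.

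For the one pieces whose test-function factor lacks the $y$-weight needed to close against the weighted bound, namely the $2 u^\eps w y/u_s$ term (and its $h^\eps$-analogue), I integrate by parts in $y$ to transfer $\p_y$ off $R^u_{app}$. The boundary term at $y=0$ vanishes because $u^\eps|_{y=0}=0$ (for the $h^\eps$-term, the factor $y$ itself kills the boundary contribution, and $\p_y h^\eps|_{y=0}=0$ is used after a further integration by parts if needed); the contribution at $y=\infty$ vanishes by decay of $R^u_{app}$. This yields
\[
  -\iint R^u_{app}\,\p_y\Big\{\tfrac{2u^\eps w y}{u_s}\Big\}
  \le \|R^u_{app}\|_{L^2}\bigl(\|u^\eps_y\cdot y\|_{L^2}+C\|u^\eps\|_{L^2}\bigr).
\]
To control $\|u^\eps\|_{L^2}$ without the $y$-weight, I use the Poincar\'e inequality \eqref{embedding} together with the divergence-free identity $u^\eps_x=-v^\eps_y$, so that $\|u^\eps\|_{L^2}\lesssim L^{1/2}\|u^\eps_x\|_{L^2}=L^{1/2}\|v^\eps_y\|_{L^2}\lesssim\|(u^\eps,v^\eps,h^\eps,g^\eps)\|_{\mathcal X}$ since $\|v^\eps_y\|_{L^2}$ is itself part of the $\mathcal X$-norm. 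The $R^h_{app}$ integral is treated identically via $h^\eps_x=-g^\eps_y$.

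The two integrals involving $\eps\p_y R^v_{app}$ and $\eps\p_y R^g_{app}$ are organized so that the $\sqrt\eps$ factor gets absorbed into $R^v_{app}$ (resp.\ $R^g_{app}$), invoking $\|\sqrt\eps R^v_{app}\|_{L^2}+\|\y\p_y(\sqrt\eps R^v_{app})\|_{L^2}\lesssim\eps^{3/4}$. The product-rule expansion of $\p_x\{u^\eps w y^2/u_s\}$ produces $\sqrt\eps\,u^\eps_x w y\cdot y/u_s$, $u^\eps y^2/u_s$, and $u^\eps w y^2\p_x u_s/u_s^2$; the first pairs via Cauchy–Schwarz directly against $\|\sqrt\eps u^\eps_x\cdot y\|_{L^2}\le\|\cdot\|_{\mathcal X}$, and the remaining two are handled by a further integration by parts in $x$. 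The $x$-boundary contribution at $x=0$ vanishes by $u^\eps|_{x=0}=0$; the boundary contribution at $x=L$ is controlled using the $B$-part of the $\mathcal X$-norm (specifically $\|\sqrt\eps u^\eps_x\cdot y\|_{L^2(x=L)}$), with the $w(L)=1-L$ factor keeping constants uniform in small $L$. The bounds $\|y\p_x u_s\|_{L^\infty}\lesssim 1$ from \eqref{summary_estimates} are used throughout.

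The main obstacle is the careful bookkeeping of $\eps$-powers: the raw estimate \eqref{R_app} gives only $\eps^{3/4}$ (without the rescaling factor $\eps^{-1/2-\gamma}$ present in $R_i$), so any accumulation of a negative $\eps$-power from the test-function side would break the target $\eps^{3/4}\|(u^\eps,v^\eps,h^\eps,g^\eps)\|_{\mathcal X}$. This is precisely why the multipliers were engineered with the $1/u_s$ factor, which converts lower bounds on $u_s$ (valid via \eqref{u_s_positivity}) into usable estimates without needing smallness of $h_s$, and why the weight $w(x)=1-x$ generates the requisite $y$-weight through integration by parts in $x$ rather than through an $\eps^{-1/2}$-producing Hardy maneuver in $y$. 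Once each of the sixteen or so product-rule pieces is controlled in this fashion, summation produces the claimed bound $\lesssim\eps^{3/4}\|(u^\eps,v^\eps,h^\eps,g^\eps)\|_{\mathcal X}$.
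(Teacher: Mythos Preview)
Your overall strategy is correct—use the weighted bound \eqref{R_app}, expand the test function, and pair via Cauchy--Schwarz—but you are working much harder than necessary, and one of your steps is confused.

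The paper's proof is essentially two lines. The key observation you are missing is that $y^{2}\le y\cdot\langle y\rangle$ and $y\le\langle y\rangle$, so the weight $\langle y\rangle$ already present on $\partial_{y}R^{u}_{app}$ absorbs one factor of $y$ from the test function \emph{without} any integration by parts. Concretely,
\[
\iint \partial_{y}R^{u}_{app}\cdot\frac{u^{\eps}_{y}wy^{2}}{u_{s}}
\le C\|\langle y\rangle\partial_{y}R^{u}_{app}\|_{L^{2}}\,\|u^{\eps}_{y}\cdot y\|_{L^{2}},
\]
and for the piece $2u^{\eps}wy/u_{s}$ one has $y/\langle y\rangle\le 1$, giving directly $\|\langle y\rangle\partial_{y}R^{u}_{app}\|_{L^{2}}\cdot\|u^{\eps}\|_{L^{2}}$, after which Poincar\'e \eqref{embedding} and $u^{\eps}_{x}=-v^{\eps}_{y}$ finish the job exactly as you describe. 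Your integration by parts in $y$ is valid but redundant.

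For the normal components, your proposed ``integration by parts in $x$'' to handle the $u^{\eps}y^{2}/u_{s}$ piece does not make sense as written: there is no $\partial_{x}$ in the integrand $\eps\,\partial_{y}R^{v}_{app}\cdot u^{\eps}y^{2}/u_{s}$ to transfer, and introducing boundary terms at $x=L$ is unnecessary. The paper simply writes
\[
\eps\iint\partial_{y}R^{v}_{app}\,\partial_{x}\Bigl\{\tfrac{u^{\eps}wy^{2}}{u_{s}}\Bigr\}
\lesssim \|\langle y\rangle\sqrt{\eps}\,\partial_{y}R^{v}_{app}\|_{L^{2}}\,\|\sqrt{\eps}\,u^{\eps}_{x}\cdot y\|_{L^{2}},
\]
again using $y^{2}\le y\langle y\rangle$; the lower-order pieces $u^{\eps}y^{2}/u_{s}$ and $u^{\eps}wy^{2}\partial_{x}u_{s}/u_{s}^{2}$ are handled by the same splitting together with the Poincar\'e inequality $\|u^{\eps}\cdot y\|_{L^{2}}\lesssim L\|u^{\eps}_{x}\cdot y\|_{L^{2}}$. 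No boundary terms arise and the $B$-norm is not needed here.
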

\begin{proof}
Recalling the estimate \eqref{R_app}, we have
\begin{align}\label{yp_yR_app}
  \|\y\p_y\{R_{app}^u,R_{app}^h,\sqrt\eps(R_{app}^v,R_{app}^g)\}\|_{L^2}
  \lesssim \eps^{\frac{3}{4}}.
\end{align}

For the tangential components, using \textit{H\"{o}lder inequality} and \eqref{yp_yR_app}, we get
\begin{align}\label{p_yR^u_app}
\begin{split}
  \iint &\p_yR^u_{app}\p_y\left\{\frac{u^\eps wy^2}{u_s}\right\}+\iint\p_yR^h_{app}\p_y\left\{\frac{h^\eps wy^2}{u_s}\right\}\\
    &\leq\|\y\p_yR_{app}^u\|_{L^2}(\|u^\eps_y\cdot y\|_{L^2}+L\|u^\eps_x\|_{L^2})\\
      &\quad +\|\y\p_yR_{app}^h\|_{L^2}(\|h^\eps_y\cdot y\|_{L^2}+L\|h^\eps_x\|_{L^2})\\
  &\lesssim\eps^{\frac{3}{4}}\|(u^\eps,v^\eps,h^\eps,g^\eps)\|_{\mathcal{X}}.
\end{split}
\end{align}

Similarly, for the normal components, it gives
\begin{align}\label{p_yR^v_app}
\begin{split}
  \eps\iint& \p_yR^v_{app}\p_x\left\{\frac{u^\eps wy^2}{u_s}\right\}+\eps\iint\p_yR^g_{app}\p_x\left\{\frac{h^\eps wy^2}{u_s}\right\}\\
  &\lesssim\|\y\sqrt\eps\p_yR_{app}^v\|_{L^2}\|\sqrt\eps u^\eps_x\cdot y\|_{L^2}+\|\y\sqrt\eps\p_yR_{app}^g\|_{L^2}\|\sqrt\eps h^\eps_x\cdot y\|_{L^2}\\
  &\lesssim\eps^{\frac{3}{4}}\|(u^\eps,v^\eps,h^\eps,g^\eps)\|_{\mathcal{X}}.
\end{split}
\end{align}
Then the above two inequalities bring us the desired estimate \eqref{p_yR_appesimate}.
\end{proof}

Therefore, collecting the estimates in Lemma \ref{p_yN^ulemmea} and Lemma \ref{p_yR_applemmea}, using the \textit{Young's inequality}, we have
\begin{corollary}\label{Rlemma}
For $\mathcal{R}$ defined in Corollary \ref{X-normlemmea}, the following estimate holds
\begin{equation}\label{Restimate}
  \mathcal{R}\lesssim  \eps^{\frac{1}{4}-\gamma}+\eps^{\gamma_0}\|(u^\eps,v^\eps,h^\eps,g^\eps)\|_{\mathcal X}^2+\eps^{\frac{\gamma}{2}}\|(\tilde u,\tilde v,\tilde h,\tilde g)\|_{\mathcal X}^4,
\end{equation}
where $\gamma_0=\min\{\frac{1}{4}-\gamma,\frac{\gamma}{2}\}$.
\end{corollary}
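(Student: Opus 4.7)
The plan is to decompose $\mathcal{R}$ along the splitting $R_i = \eps^{-\frac{1}{2}-\gamma}R_{app}^{\bullet} - N^{\bullet}$ inherited from the definitions at the start of Section \ref{sec3}, so that $\mathcal{R}$ separates cleanly into a contribution that is linear in $(u^\eps,v^\eps,h^\eps,g^\eps)$ coming from $R_{app}^{\bullet}$ and a nonlinear contribution coming from $N^{\bullet}$. This reduces the proof of Corollary \ref{Rlemma} to a bookkeeping exercise combining Lemmas \ref{p_yN^ulemmea} and \ref{p_yR_applemmea} with Young's inequality, so I expect no real analytic obstacle.

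First I would write $\mathcal{R} = \mathcal{R}_{\mathrm{app}} + \mathcal{R}_N$, where $\mathcal{R}_{\mathrm{app}}$ collects the four integrals with $\eps^{-\frac{1}{2}-\gamma}\p_y R_{app}^{\bullet}$ in place of $\p_y R_i$, and $\mathcal{R}_N$ collects those with $-\p_y N^{\bullet}$. For the nonlinear piece, Lemma \ref{p_yN^ulemmea} applies verbatim (the overall minus sign being absorbed into absolute values) and produces
\begin{equation*}
|\mathcal{R}_N| \lesssim \eps^{\gamma/2}\|(u^\eps,v^\eps,h^\eps,g^\eps)\|_{\mathcal{X}}^2 + \eps^{\gamma/2}\|(\tilde u,\tilde v,\tilde h,\tilde g)\|_{\mathcal{X}}^4.
\end{equation*}
For the approximation piece, Lemma \ref{p_yR_applemmea} bounds the corresponding four integrals (without the $\eps^{-\frac{1}{2}-\gamma}$ prefactor) by $\eps^{3/4}\|(u^\eps,v^\eps,h^\eps,g^\eps)\|_{\mathcal{X}}$, so after restoring the prefactor I obtain
\begin{equation*}
|\mathcal{R}_{\mathrm{app}}| \lesssim \eps^{\frac{1}{4}-\gamma}\|(u^\eps,v^\eps,h^\eps,g^\eps)\|_{\mathcal{X}}.
\end{equation*}

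The remaining step is to split this linear-in-norm bound into a constant part and a quadratic part via a symmetric Young inequality: writing $\eps^{\frac{1}{4}-\gamma}\|(u^\eps,v^\eps,h^\eps,g^\eps)\|_{\mathcal{X}} = \eps^{(\frac{1}{4}-\gamma)/2}\cdot\eps^{(\frac{1}{4}-\gamma)/2}\|(u^\eps,v^\eps,h^\eps,g^\eps)\|_{\mathcal{X}}$ and applying $ab\leq \frac{1}{2}(a^2+b^2)$ yields $\frac{1}{2}\eps^{\frac{1}{4}-\gamma} + \frac{1}{2}\eps^{\frac{1}{4}-\gamma}\|(u^\eps,v^\eps,h^\eps,g^\eps)\|_{\mathcal{X}}^2$. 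Then I would consolidate the two quadratic contributions $\eps^{\gamma/2}\|\cdot\|_{\mathcal{X}}^2$ from $\mathcal{R}_N$ and $\eps^{\frac{1}{4}-\gamma}\|\cdot\|_{\mathcal{X}}^2$ from $\mathcal{R}_{\mathrm{app}}$ by using $\eps\leq 1$ to dominate both by $\eps^{\gamma_0}\|\cdot\|_{\mathcal{X}}^2$ with the worst (smallest) exponent $\gamma_0 = \min\{\frac{1}{4}-\gamma,\frac{\gamma}{2}\}$, which delivers exactly the coefficient appearing in \eqref{Restimate}. The $\|(\tilde u,\tilde v,\tilde h,\tilde g)\|_{\mathcal{X}}^4$ term carries through unchanged.

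The only mild subtlety in the argument is arranging the Young split so that the constant term remains as sharp as $\eps^{\frac{1}{4}-\gamma}$ while the quadratic exponent is made as large as possible; the symmetric choice above is optimal, and the identity $\gamma_0\leq\frac{1}{4}-\gamma$ built into the definition of $\gamma_0$ guarantees that absorbing $\frac{1}{2}\eps^{\frac{1}{4}-\gamma}\|\cdot\|_{\mathcal{X}}^2$ into $\eps^{\gamma_0}\|\cdot\|_{\mathcal{X}}^2$ does not inflate the leading constant term. No positivity or structural hypothesis on the background profiles is needed at this stage, since all the delicate structure has already been exploited in Lemmas \ref{p_yN^ulemmea} and \ref{p_yR_applemmea}.
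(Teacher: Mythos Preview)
Your proposal is correct and follows the same approach as the paper: decompose $\mathcal{R}$ along $R_i=\eps^{-\frac12-\gamma}R_{app}^{\bullet}-N^{\bullet}$, invoke Lemmas \ref{p_yN^ulemmea} and \ref{p_yR_applemmea}, and apply Young's inequality to the resulting linear-in-norm term. The paper's own proof is in fact nothing more than the single sentence ``collecting the estimates in Lemma \ref{p_yN^ulemmea} and Lemma \ref{p_yR_applemmea}, using the Young's inequality''; you have simply written out that sentence in full detail.
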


Substituting \eqref{Restimate} into \eqref{X-normesimates}, and absorbing the term $\|(u^\eps,v^\eps,h^\eps,g^\eps)\|_{\mathcal X}$ into the left hand side, we have
\begin{theorem}\label{th3.1}
Consider solutions $[u^\eps,v^\eps,h^\eps,g^\eps]\in\mathcal{X}$ to system \eqref{u_epslinear} with boundary conditions \eqref{u_epsboundary}, then it satisfy the following estimate:
\begin{align}\label{X-estimate}
  \|(u^\eps,v^\eps,h^\eps,g^\eps)\|_{\mathcal X}^2 \lesssim \eps^{\gamma_0}+\eps^{\frac{\gamma}{2}}\|(\tilde u,\tilde v,\tilde h,\tilde g)\|_{\mathcal X}^4.
\end{align}
\end{theorem}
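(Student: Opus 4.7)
The plan is to obtain Theorem \ref{th3.1} as a direct assembly of the two immediately preceding results, Corollary \ref{X-normlemmea} and Corollary \ref{Rlemma}, followed by a standard absorption argument using the smallness of $\eps$. All the analytical work — the basic energy estimate, the positivity estimate, the weighted $y^2(1-x)/u_s$-test, the nonlinear and remainder bounds for $\mathcal{R}$ — has already been carried out, so the remaining step is purely algebraic.

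Concretely, I would begin from Corollary \ref{X-normlemmea}, which gives
\[
\|(u^\eps,v^\eps,h^\eps,g^\eps)\|_{\mathcal{X}}^2 \le \eps^{\frac{\gamma}{2}} + \mathcal{R} + \eps^{\frac{\gamma}{2}}\|(\tilde u,\tilde v,\tilde h,\tilde g)\|_{\mathcal{X}}^4,
\]
and then substitute the bound for $\mathcal{R}$ supplied by Corollary \ref{Rlemma}:
\[
\mathcal{R} \lesssim \eps^{\frac{1}{4}-\gamma} + \eps^{\gamma_0}\|(u^\eps,v^\eps,h^\eps,g^\eps)\|_{\mathcal{X}}^2 + \eps^{\frac{\gamma}{2}}\|(\tilde u,\tilde v,\tilde h,\tilde g)\|_{\mathcal{X}}^4.
\]
With $\gamma_0 := \min\{\tfrac{1}{4}-\gamma,\tfrac{\gamma}{2}\}$ (which requires $0<\gamma<\tfrac{1}{4}$ so that $\gamma_0>0$), both constant source terms $\eps^{\gamma/2}$ and $\eps^{1/4-\gamma}$ are dominated by $\eps^{\gamma_0}$, yielding
\[
\|(u^\eps,v^\eps,h^\eps,g^\eps)\|_{\mathcal{X}}^2 \le C\eps^{\gamma_0} + C\eps^{\gamma_0}\|(u^\eps,v^\eps,h^\eps,g^\eps)\|_{\mathcal{X}}^2 + C\eps^{\frac{\gamma}{2}}\|(\tilde u,\tilde v,\tilde h,\tilde g)\|_{\mathcal{X}}^4.
\]

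The only genuinely quantitative step is then the absorption: for $\eps$ sufficiently small one has $1 - C\eps^{\gamma_0} \ge \tfrac{1}{2}$, so the middle term on the right is absorbed into the left-hand side and division by $\tfrac{1}{2}$ gives exactly
\[
\|(u^\eps,v^\eps,h^\eps,g^\eps)\|_{\mathcal{X}}^2 \lesssim \eps^{\gamma_0} + \eps^{\frac{\gamma}{2}}\|(\tilde u,\tilde v,\tilde h,\tilde g)\|_{\mathcal{X}}^4,
\]
which is the desired conclusion.

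There is no real obstacle in this final step; the substantive difficulty was concentrated earlier. In particular, the genuinely delicate ingredient was producing the factor $\eps^{\gamma_0}$ (rather than $\eps^0$) in front of $\|(u^\eps,v^\eps,h^\eps,g^\eps)\|_{\mathcal{X}}^2$ inside $\mathcal{R}$, which is what made the absorption legal; this in turn relied on the $L^\infty$-control of Proposition \ref{L^inftylemmea} (so that factors like $\|\eps^{\gamma/2}\tilde u\|_{L^\infty}$ and $\|\eps^{(\gamma+1)/2}\tilde v\|_{L^\infty}$ appear in the nonlinear trilinear estimates of Lemma \ref{p_yN^ulemmea}) together with the $\eps^{3/4}$ gain in the approximate-solution remainders from \eqref{R_app}. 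Once these are granted, the theorem is just the one-line combination sketched above.
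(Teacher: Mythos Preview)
Your proposal is correct and matches the paper's own argument essentially verbatim: the paper simply substitutes \eqref{Restimate} into \eqref{X-normesimates} and absorbs the $\eps^{\gamma_0}\|(u^\eps,v^\eps,h^\eps,g^\eps)\|_{\mathcal X}^2$ term into the left-hand side, exactly as you describe. Your additional remarks about why the $\eps^{\gamma_0}$ factor is available (via Proposition~\ref{L^inftylemmea}, Lemma~\ref{p_yN^ulemmea}, and \eqref{R_app}) accurately identify where the real work was done.
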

\begin{proof}[Proof of Theorem \ref{th1.1}]
The estimate \eqref{X-estimate} shows that for any sufficiently small $\eps$, there exists a operator $A:[\tilde u,\tilde v,\tilde h,\tilde g]\mapsto [u^\eps,v^\eps,h^\eps,g^\eps]$ for system \eqref{u_epslinear}, mapping an ball $B:=\{\|(u^\eps,v^\eps,h^\eps,g^\eps)\|_{\mathcal X}\leq 4C(u_s,v_s,h_s,g_s):=K\}$ into itself. In addition, for any solutions $[\tilde u_1,\tilde v_1,\tilde h_1,\tilde g _1]$ $[\tilde u_2,\tilde v_2,\tilde h_2,\tilde g_2]\in \mathcal{X}$ to system \eqref{u_epslinear}, we have
\begin{align}\label{contractionmap}
\begin{split}
  &\|(u^\eps_1-u^\eps_2,v^\eps_1-v^\eps_2,h^\eps_1-h^\eps_2,g^\eps_1-g^\eps_2)\|_{\mathcal X} \\
  &\lesssim 2\eps^{\frac{\gamma}{2}}KC(L,u_s,v_s,h_s,g_s)\|(\tilde u_1-\tilde u_2,\tilde v_1-\tilde v_2,\tilde h_1-\tilde h_2,\tilde g_1-\tilde g_2)\|_{\mathcal X}.
\end{split}
\end{align}
By virtue of contraction fixed-point theorem and the argument stated in \cite{Iyernonshear} Appendix B, we can obtain the existence of solution in $\mathcal{X}$ to system \eqref{u_epssystem}, and thus the main result Theorem \ref{th1.1} is proved.
\end{proof}
\appendix
\renewcommand{\appendixname}{Appendix~\Alph{section}}
\section{Leading order ideal MHD layer}\label{ap1}
In this appendix, we are going to presribe the leading order ideal MHD profile $[u_e^0,v_e^0,h_e^0,g_e^0,p_e^0]$ by verifying that there exist nonshear flows to 2D steady incompressible ideal MHD equations \eqref{u_e^0} satisfying the assumptions \eqref{condition_for_idealMHD1}-\eqref{M_0}.
\begin{proposition}
There exist ideal MHD flows $[u_e^0,v_e^0,h_e^0,g_e^0,p_e^0](X,Y)$ satisfying assumptions \eqref{condition_for_idealMHD1}-\eqref{M_0}.
\end{proposition}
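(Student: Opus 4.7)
The plan is to first invoke the structural reduction from Subsection~\ref{sec2.3}: any solution to \eqref{u_e^0} admits the representation $h_e^0 = k(X,Y)\,u_e^0$, $g_e^0 = k(X,Y)\,v_e^0$ with $k$ transported along velocity streamlines. Choosing $k \equiv k_0$ to be a small positive constant automatically satisfies the two induction equations in \eqref{u_e^0}, while the momentum equations reduce, upon rescaling the pressure by $(1-k_0^2)^{-1}$, to the 2D steady incompressible Euler system for $(u_e^0,v_e^0)$ on $[0,L]\times\mathbb{R}_+$ with $v_e^0|_{Y=0} = v_e^0|_{Y\to\infty} = 0$. The magnetic field and pressure are then recovered from $(h_e^0,g_e^0) = k_0(u_e^0,v_e^0)$ and the Bernoulli law, so the problem reduces to exhibiting a single nonshear steady Euler flow satisfying the required pointwise, weighted and decay bounds.

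To produce such an Euler flow I would restrict to the irrotational case $\omega \equiv 0$, writing the stream function as
$$\psi(X,Y) = U_0\,Y + \delta\,\Psi(X,Y),$$
where $U_0>0$ is chosen so that $k_0 U_0 \geq 2c_0$ and $U_0 \leq C_0/2$, $0<\delta\ll 1$, and $\Psi$ is a harmonic function on $[0,L]\times\mathbb{R}_+$ subject to $\Psi(X,0)=0$, $\Psi(X,Y)\to 0$ as $Y\to\infty$ together with all its derivatives, and with prescribed smooth lateral traces on $\{X=0,L\}$ that themselves decay exponentially in $Y$. By the standard Dirichlet theory for Laplace's equation in a strip (via odd reflection about $Y=0$ combined with the Poisson kernel, or Fourier series in $X$), such a $\Psi$ exists, is smooth, and inherits exponential decay in $Y$ from its boundary data; choosing the lateral data to depend nontrivially on $X$ makes $\Psi$ genuinely $X$-dependent. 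Then $(u_e^0, v_e^0) = (\partial_Y\psi, -\partial_X\psi) = (U_0 + \delta\,\partial_Y\Psi,\, -\delta\,\partial_X\Psi)$ is divergence-free and satisfies both vanishing conditions on $v_e^0$ (since $\Psi(X,0)=0$ implies $\partial_X\Psi(X,0)=0$).

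With $(u_e^0,v_e^0)$ so constructed and $(h_e^0,g_e^0) = k_0(u_e^0,v_e^0)$, each hypothesis is verified directly: \eqref{condition_for_idealMHD1} follows from $u_e^0 = U_0 + O(\delta)$ combined with the choice of $k_0,U_0$; for \eqref{condition_for_idealMHD3}, writing $\Psi(X,Y) = Y\,\widetilde\Psi(X,Y)$ (possible since $\Psi(X,0)=0$) yields $v_e^0/Y = -\delta\,\partial_X\widetilde\Psi = O(\delta)$; \eqref{condition_for_idealMHD4}--\eqref{condition_for_idealMHD5} follow from the exponential decay of $\Psi$ and its derivatives; \eqref{condition_for_idealMHD6} follows from $\langle Y\rangle\,\partial_Y u_e^0 = \delta\,\langle Y\rangle\,\partial_Y^2\Psi = O(\delta)$; and \eqref{M_0} follows from the smoothness of the traces $\overline{u_e^0}(X) = U_0 + \delta\,\partial_Y\Psi(X,0)$, $\overline{h_e^0} = k_0\overline{u_e^0}$, and the Bernoulli pressure on $Y=0$. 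The main point requiring care is arranging the lateral data for $\Psi$ so that the corner compatibility conditions are satisfied while keeping $\Psi$ nontrivially $X$-dependent; this is a mild construction task rather than a genuine analytical obstacle, since any sufficiently smooth data supported in $(0,L)$ with the appropriate decay in $Y$ will do.
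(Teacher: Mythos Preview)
Your approach is correct and takes a genuinely different, more elementary route than the paper's Appendix~\ref{ap1}. The paper first fixes a shear profile $[U_0(Y),0,H_0(Y),0]$, then builds an auxiliary nonshear ideal-MHD solution via stream functions solving the semilinear equations $-\Delta\phi=\tilde f(\phi)$, $-\Delta\psi=\tilde g(\psi)$ with $\nabla\psi=k_1\nabla\phi$, and finally \emph{adds} the two: $[u_e^0,\ldots]=[U_0,0,H_0,0,0]+\delta[\tilde u_e^0,\ldots]$. Your reduction---fixing $k\equiv k_0$ constant so that the induction equations collapse identically and the momentum equations become a single steady Euler system, then solving that by a harmonic perturbation of uniform flow---is shorter and structurally cleaner: it produces an exact solution of \eqref{u_e^0} in one stroke, whereas the paper's final superposition is a sum of two solutions to a nonlinear system and does not obviously remain a solution. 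The paper's framework allows nonconstant $k_1$ and nonzero vorticity sources, but that flexibility is not needed for the existence statement.

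One point in your argument should be sharpened. The claim that a harmonic $\Psi$ on $[0,L]\times\mathbb{R}_+$ with $\Psi(X,0)=0$ and exponentially decaying lateral data automatically ``inherits exponential decay in $Y$'' is not immediate, and your ``Fourier series in $X$'' hint does not accommodate nonzero lateral traces. Using instead the sine transform in $Y$ one gets
\[
\widetilde\Psi(X,\xi)=\widetilde\Phi_0(\xi)\,\frac{\sinh((L-X)\xi)}{\sinh(L\xi)}+\widetilde\Phi_L(\xi)\,\frac{\sinh(X\xi)}{\sinh(L\xi)},
\]
and rapid decay of $\Psi$ in $Y$ amounts to smoothness of $\widetilde\Psi(X,\cdot)$ across $\xi=0$ after odd extension. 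Choosing $\Phi_0,\Phi_L\in C_c^\infty((0,\infty))$ makes their odd extensions lie in $C_c^\infty(\mathbb{R})$, so $\widetilde\Phi_0,\widetilde\Phi_L$ are Schwartz; since the $\sinh$ ratios and all their $X$-derivatives are smooth in $\xi$ with at most polynomial growth, $\Psi$ and all its derivatives decay faster than any polynomial in $Y$, uniformly in $X\in[0,L]$. With this specification \eqref{condition_for_idealMHD4}--\eqref{condition_for_idealMHD6} follow and your verification goes through.
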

\begin{proof}
We first prescribe the shear flows $[U_0(Y),0,H_0(Y),0]$ to 2D steady incompressible ideal MHD equations \eqref{u_e^0} constructed as in \cite{DLX} satisfying the assumptions as follows:
 \begin{align}
\label{aa1}
  &0< c_0\leq  H_0(Y)\ll U_0(Y) \leq C_0<\infty,\\
\label{aa2}
  &U_0,H_0~ \mathrm{smooth, with\ rapidly\ decaying\ derivatives},\\
\label{aa5}
  &\|\langle Y\rangle\p_Y (U_0,H_0)\|_{ L^\infty}<\delta_0,\ \mathrm{for\  suitably \ small \ }  \delta_0>0,\\
\label{aa6}
  &\|Y^k \p_Y^m (U_0,H_0)\|_{ L^\infty}<\infty,\ \mathrm{for\  sufficiently \ large \ }  k\geq0, m\geq1.
\end{align}
Such shear flows have stream functions $\phi_0(Y)=\int_0^YU_0(z){\rm d}z$ and $\psi_0(Y)=\int_0^YH_0(z){\rm d}z$ which enjoy the following asymptotics:
\begin{align}\label{aa5}
\begin{split}
  \phi_0|_{Y=0}=0,\quad \phi_0|_{X=0}=\phi_0|_{X=L}=\phi_0(Y),\quad \lim_{Y\rightarrow\infty}\frac{\phi_0}{Y}=U_\infty\in(c_0,C_0),\\
  \psi_0|_{Y=0}=0,\quad \psi_0|_{X=0}=\psi_0|_{X=L}=\psi_0(Y),\quad \lim_{Y\rightarrow\infty}\frac{\psi_0}{Y}=H_\infty\in(c_0,C_0).
\end{split}
\end{align}

The second step is devoted to constructing an nonshear solution $(\tilde u_e^0,\tilde v_e^0,\tilde h_e^0,\tilde g_e^0,\tilde p_e^0)$ to the ideal MHD equations \eqref{u_e^0}. Above all, we can rewrite the third and fourth equation of \eqref{u_e^0} as
\begin{align*}
  \nabla_{X,Y}(v_e^0h_e^0 -g_e^0u_e^0)=0,
\end{align*}
in which we have used the divergence free conditions have been used. So there exists a constant $b$, such that
\begin{equation*}
  v_e^0h_e^0 -g_e^0u_e^0=b,
\end{equation*}
Take $Y=0$, by virtue of the zero boundary conditions for $(v_e^0,g_e^0)$, we have $b=0$, that is to say,
\begin{equation}\label{pre_ratio}
  v_e^0h_e^0 -g_e^0u_e^0=0.
\end{equation}
It implies that there exists a scalar function $k_1(X,Y)$ satisfying
\begin{equation}\label{appratio}
  h_e^0=k_1(X,Y)u_e^0,\quad g_e^0=k_1(X,Y)v_e^0.
\end{equation}

There is a classical observation that any functions $\phi(X,Y),\psi(X,Y)$ satisfying
\begin{align}\label{laplace}
  -\Delta\phi=\tilde f(\phi),\quad-\Delta\psi=\tilde g(\psi),\quad \nabla\psi=k_1\nabla\phi,
\end{align}
with the boundary conditions
\begin{align}\label{streambc}
\begin{split}
  &\phi(0,Y)=A_0(Y),\quad\phi(L,Y)=A_L(Y),\quad\phi(x,0)=0,\\
  &\psi(0,Y)=B_0(Y),\quad\psi(L,Y)=B_L(Y),\quad\psi(x,0)=0,\\
  &\frac{\phi(X,Y)}{Y}\xrightarrow{Y\rightarrow\infty} U_\infty,
   \quad\frac{\psi(X,Y)}{Y}\xrightarrow{Y\rightarrow\infty} H_\infty.
\end{split}
\end{align}
produce solutions to the ideal MHD equations \eqref{u_e^0} by setting
\begin{align}\label{solution}
\begin{split}
  &(\tilde u_e^0,\tilde v_e^0)=(\p_Y\phi,-\p_X\phi),\quad (\tilde h_e^0,\tilde g_e^0)=(\p_Y\psi,-\p_X\psi),\\
  &\tilde p_e^0=-\frac{1}{2}|\nabla\phi|^2+\frac{1}{2}|\nabla\psi|^2-\tilde F(\phi)+\tilde G(\psi) \ \mathrm{with} \ \tilde F'=\tilde f,\ \tilde G'=\tilde g.
\end{split}
\end{align}
In fact, according to the setting of $(\tilde u_e^0,\tilde v_e^0,\tilde h_e^0,\tilde g_e^0,\tilde p_e^0)$ in \eqref{solution} and the system \eqref{laplace}, we have
\begin{align*}
\begin{split}
  &\nabla(-\frac{1}{2}|\nabla\phi|^2)=\nabla(-\frac{1}{2}|\mathbf{U}|^2)
   =\begin{pmatrix} -\tilde u_e^0\p_X\tilde u_e^0-\tilde v_e^0\p_X\tilde v_e^0\\ -\tilde u_e^0\p_Y\tilde u_e^0-\tilde v_e^0\p_Y\tilde v_e^0  \end{pmatrix},\\
\end{split}
\end{align*}
in which $\mathbf{U}:=(\tilde u_e^0,\tilde v_e^0)$. And
\begin{align*}
\begin{split}
  &\nabla(-\tilde F(\phi))=-\tilde F'\cdot\nabla\phi=-\tilde f\cdot\begin{pmatrix} -\tilde v_e^0 \\ \tilde u_e^0\end{pmatrix}
   =\begin{pmatrix} \tilde v_e^0\p_X\tilde v_e^0-\tilde v_e^0\p_Y\tilde u_e^0\\ -\tilde u_e^0\p_X\tilde v_e^0+\tilde u_e^0\p_Y\tilde u_e^0  \end{pmatrix}.
\end{split}
\end{align*}
Thus, it follows that
\begin{align*}
  \nabla(-\frac{1}{2}|\nabla\phi|^2- \tilde F(\phi))
  =\begin{pmatrix} -\tilde u_e^0\p_X\tilde u_e^0-\tilde v_e^0\p_X\tilde u_e^0\\ -\tilde u_e^0\p_Y\tilde v_e^0-\tilde v_e^0\p_Y\tilde v_e^0  \end{pmatrix}.
\end{align*}
Operating a similar computation, it gives
\begin{align*}
  \nabla(\frac{1}{2}|\nabla\psi|^2+\tilde G(\phi))
  =\begin{pmatrix} \tilde h_e^0\p_X\tilde h_e^0+\tilde g_e^0\p_X\tilde h_e^0\\ \tilde h_e^0\p_Y\tilde g_e^0+\tilde g_e^0\p_Y\tilde g_e^0  \end{pmatrix},
\end{align*}
Combining the above two equalities, we can achieve that
\begin{align*}
  \nabla\tilde p_e^0
  =\begin{pmatrix} -\tilde u_e^0\p_X\tilde u_e^0-\tilde v_e^0\p_X\tilde u_e^0+\tilde h_e^0\p_X\tilde h_e^0+\tilde g_e^0\p_X\tilde h_e^0
  \\ -\tilde u_e^0\p_Y\tilde v_e^0-\tilde v_e^0\p_Y\tilde v_e^0+\tilde h_e^0\p_Y\tilde g_e^0+\tilde g_e^0\p_Y\tilde g_e^0  \end{pmatrix},
\end{align*}
which implies that any function $\phi,\psi$ satisfying \eqref{laplace}-\eqref{solution} produce solutions to system \eqref{u_e^0}.

Furthermore, to produce nonshear flows $(\tilde u_e^0,\tilde v_e^0,\tilde h_e^0,\tilde g_e^0,\tilde p_e^0)$ for the ideal MHD equations satisfying the assumptions \eqref{condition_for_idealMHD3}-\eqref{M_0}, we will assume the following conditions on $\tilde f,\tilde g$ and the boundary data $A_0,A_L,B_0,B_L$:
\begin{align*}
\begin{cases}
  &0\leq \tilde f,\tilde g\leq \delta\ll1,\\
  &|\p^k\tilde f(x+a)|\leq|\p^k\tilde f(x)|\ \mathrm{for}\ any\ a\geq 0,\\
  &|\p^k\tilde g(x+a)|\leq|\p^k\tilde g(x)|\ \mathrm{for}\ any\ a\geq 0,\\
  &\tilde f,\tilde g\in C^\infty(\mathbb{R}), \mathrm{rapidly\ decaying\ in\ its\ argument},\\
  &\tilde f,\tilde g,A_0,A_L,B_0,B_L \ \mathrm{supported\ in\ a\ neighborhood\ away\ from\ 0},\\
  &0\leq A_0,A_L,B_0,B_L\leq\delta\times L^{10},\\
  &|\p_Y^k\{A_0,A_L,B_0,B_L\}|\leq\delta\times L^{10},\\
  &A_0,A_L,B_0,B_L\in C^\infty(\mathbb{R}_+), \ \mathrm{rapidly\ decaying\ in\ its\ argument},\\
  &A_0\neq A_L, B_0\neq B_L.
\end{cases}
\end{align*}
Note that the property $A_0\neq A_L, B_0\neq B_L$ plays a key role in creating the $x$-dependence to produce the nonshear flows, for if $A_0=A_L, B_0=B_L$, the equations \eqref{laplace} can be solved for $\phi,\psi$ as just functions of $Y$, which creates another shear flows. For $0<L\ll\delta\ll 1$, the boundedness of $(\tilde u_e^0,\tilde h_e^0)$ and the properties \eqref{condition_for_idealMHD3}-\eqref{M_0} for the nonshear solutions $[\tilde u_e^0,\tilde v_e^0,\tilde h_e^0,\tilde g_e^0,\tilde p_e^0]$ will be verified easily, the readers could refer to the paper \cite{Iyernonshear} in pages 1685-1686 for more details.

Finally, we come to construct the nonshear flows $[u_e^0,v_e^0,h_e^0,g_e^0,p_e^0]$ in our consideration in the expansion \eqref{expansion} satisfying the assumptions \eqref{condition_for_idealMHD1}-\eqref{M_0} by defining
\begin{equation}
  [u_e^0,v_e^0,h_e^0,g_e^0,p_e^0]:=[U_0(Y),0,H_0(Y),0,0]+\delta[\tilde u_e^0,\tilde v_e^0,\tilde h_e^0,\tilde g_e^0,\tilde p_e^0],
\end{equation}
with suitably small constant $\delta$, in which the shear flows $[U_0(Y),0,H_0(Y),0,0]$ and the nonshear flows $[\tilde u_e^0,\tilde v_e^0,\tilde h_e^0,\tilde g_e^0,\tilde p_e^0]$ are prescribed as above. Note that the crucial condition \eqref{condition_for_idealMHD1} can be easily verified by using the condition \eqref{aa1} imposed on $[U_0(Y),0,H_0(Y),0]$ and the smallness of $\delta$ with the boundedness of $(\tilde u_e^0,\tilde h_e^0)$.
\end{proof}

\section{The Well-posedness and estimates for the MHD leading order boundary layer system \eqref{u_p^0}}\label{ap2}
In this section, we focus on proving \textit{Proposition \ref{prop2.1}} by obtaining \textit{a priori estimates} of the MHD boundary layer system \eqref{2.10}-\eqref{2.11} for $(u,v,h,g)$, and then the well-posedness theory for leading order boundary layer corrector $(u_p^0,v_p^0,h_p^0,g_p^0)$ can be directly deduced.
\begin{proposition}\label{propA.1}
$(Weighted~~estimates~~ for~~ D^\alpha(u,h) ~~with~~ |\alpha|\leq m)$ Let $m\geq 5$ be an integer, and $l\in\mathbb{R}$ with $l\geq 0$. Suppose that there exist some positive constants $\vartheta_0$ and suitably small $\sigma_0$ such that
\begin{align}
\label{A.1}
  &\overline u_e^0+u_p^0(0,y)>\overline h_e^0+h_p^0(0,y)\geq \vartheta_0>0,\\
\label{A.2}
  &|\y^{l+1}\p_y(\overline u_e^0+u_p^0,\overline h_e^0+h_p^0)(0,y)|\leq \frac{1}{2}\sigma_0,\\
\label{A.3}
  &|\y^{l+1}\p_y^2(\overline u_e^0+u_p^0,\overline h_e^0+h_p^0)(0,y)|\leq \frac{1}{2}\vartheta_0^{-1},
\end{align}
uniform in y. And also, the hypotheses for $(u_e^0,h_e^0,p_e^0)$ in Proposition \ref{prop2.1} hold. Then there exist classical solutions $(u,v,h,g)$ to problem \eqref{2.10}-\eqref{2.11} in $[0,L]\times(0,+\infty)$ with small $L>0$ satisfying
\begin{align*}
  (u,h)\in L^\infty(0,L;H_l^m(0,+\infty)),\quad (\p_y u,\p_y h)\in L^2(0,L;H_l^m(0,+\infty)).
\end{align*}
Moreover, we have the following estimates
\begin{align}\begin{split}\label{A.4}
  (1)\quad
  &\sup_{x\in[0,L]}\|(u,h)\|_{H_l^m}\\
  &\leq C\vartheta_0^{-4}[\mathcal{P}(M_0+ C(u_b)+ \|(u_0,h_0)\|_{H_l^m}+C(u_b) M_0^6 x)]^{\frac{1}{2}}\\
  &\quad \cdot\{1-C\vartheta_0^{-24}[\mathcal{P}(M_0+ C(u_b)+ \|(u_0,h_0)\|_{H_l^m})+C(u_b) M_0^6 x]^2x\}^{-\frac{1}{4}},
\end{split}\end{align}
\begin{align}\begin{split}\label{A.5}
  (2)\quad
  &\|\y^{l+1}\p_y^i(u,h)\|_{L^\infty}\\
  &\leq C\vartheta_0^{-4}x[\mathcal{P}(M_0+ C(u_b)+ \|(u_0,h_0)\|_{H_l^m}+C(u_b) M_0^6 x)]^{\frac{1}{2}}\\
  &\quad\cdot\{1-C\vartheta_0^{-24}[\mathcal{P}(M_0+ C(u_b)+ \|(u_0,h_0)\|_{H_l^m})+C(u_b) M_0^6 x]^2x\}^{-\frac{1}{4}}\\
  &\quad +\|\y^{l+1}\p_y^i(u_0,h_0)\|_{L^\infty},\quad i=1,2,
\end{split}\end{align}
\begin{align}\begin{split}\label{A.7}
  (3)\quad
  &h(x,y)\\
  &\geq h_0-C\vartheta_0^{-4}x\mathcal{P}[M_0+ C(u_b)+ \|(u_0,h_0)\|_{H_l^m}+C(u_b) M_0^6 x)]^{\frac{1}{2}}\\
  &\quad \cdot\{1-C\vartheta_0^{-24}[\mathcal{P}(M_0+ C(u_b)+ \|(u_0,h_0)\|_{H_l^m})+C(u_b) M_0^6 x]^2x\}^{-\frac{1}{4}},
\end{split}\end{align}
\begin{align}\begin{split}\label{A.8}
  (4)\quad
  &u(x,y)-h(x,y)\\
  &\geq u_0-h_0+C\vartheta_0^{-4}x[\mathcal{P}(M_0+ C(u_b)+ \|(u_0,h_0)\|_{H_l^m}+C(u_b)M_0^6 x)]^{\frac{1}{2}}\\
  &\quad \cdot\{1-C\vartheta_0^{-24}[\mathcal{P}(M_0+ C(u_b)+ \|(u_0,h_0)\|_{H_l^m})+C(u_b) M_0^6 x]^2x\}^{-\frac{1}{4}},
\end{split}\end{align}
in which $\mathcal{P}$ is a polynomial of $\|(u_0,h_0)\|_{H_l^m}$.
In addition, for any $(x,y)\in[0,L]\times[0,+\infty)$, it yields that
\begin{align}\label{A.9}
\begin{split}
  u+u_b+(\overline u_e^0-u_b)\phi'(y)>h+\overline h_e^0\phi'(y)\geq\frac{\vartheta_0}{2}>0,\\
  |\y^{l+1}\p_y(u,h)|\leq\sigma_0,\quad |\y^{l+1}\p_y^2(u,h)|\leq\vartheta_0^{-1}.
\end{split}
\end{align}
\end{proposition}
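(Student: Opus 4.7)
The proof follows the modified energy framework initiated by Liu--Xie--Yang for unsteady MHD boundary layers and adapted to the steady setting in \cite{DLX}, but now extended to handle the nonshear coefficient terms generated by $u_e^0, h_e^0, v_e^1, g_e^1$ and their traces. The plan is to derive \textit{a priori} weighted $H_l^m$ bounds for $(u,h)$ in three stages and then upgrade them to the pointwise estimates \eqref{A.5}--\eqref{A.9} by Sobolev embedding and a continuation argument.

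\textbf{Step 1 (Lower-order tangential derivatives).} For any multi-index $\alpha = (\beta,k)$ with $|\alpha|\le m$ and $\beta \le m-1$, I apply $D^\alpha = \p_x^\beta \p_y^k$ to \eqref{2.11}, test against $\y^{2(l+k)} D^\alpha u$ and $\y^{2(l+k)} D^\alpha h$ respectively, and integrate over $(0,x)\times\mathbb{R}_+$. Since $\beta \le m-1$, the normal components $D^\alpha v = -\int_0^y \p_x^{\beta+1}\p_y^k u\,dz$ and $D^\alpha g$ involve at most $m$ derivatives of $(u,h)$, so the transport terms are closed. The commutator terms coming from the coefficients $u^0,v^0,h^0,g^0$ and the inhomogeneities $r_1,r_2$ are controlled using \eqref{2.14} and $M_0<\infty$. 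The viscous and resistive terms produce the standard good $\nu \|\y^{l+k}\p_y D^\alpha u\|_{L^2}^2$ and $\kappa \|\y^{l+k}\p_y D^\alpha h\|_{L^2}^2$ terms (after Hardy to absorb the weight commutator), which close this step.

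\textbf{Step 2 (Top-order tangential derivative).} The critical case $\beta=m$ suffers from loss of regularity since $\p_x^m v \cdot \p_y u$ and $\p_x^m g \cdot \p_y h$ formally cost one extra derivative. To bypass this I introduce the magnetic stream function $\tilde\psi$ by $(h,g) = (\p_y \tilde\psi,-\p_x\tilde\psi)$ with $\tilde\psi|_{y=0}=0$, which is well-defined because the second equation of \eqref{2.11} admits a total $y$-derivative form analogous to \eqref{u_pprestream}. Then I define the modified unknowns
\begin{align*}
    u^m := \p_x^m u - \frac{\p_y u}{h^0}\,\p_x^m\tilde\psi,\qquad
    h^m := \p_x^m h - \frac{\p_y h}{h^0}\,\p_x^m\tilde\psi,
\end{align*}
where $h^0:=\overline{h_e^0}+h_p^0-\overline{h_e^0}\phi'(y)$, which is bounded below by $\vartheta_0/2$ thanks to \eqref{A.1} and the continuity in $x$. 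The key observation is that the coefficient $\p_y u / h^0$ is chosen precisely so that when one derives the $\p_x^m$-system for $(u,h)$ and subtracts the $\p_x^m$-version of the $\tilde\psi$-equation multiplied by $\p_y u / h^0$, the ``bad'' terms $\p_x^m v \cdot \p_y u$ and $\p_x^m g \cdot \p_y h$ cancel simultaneously. Weighted $L^2$ estimates for $(u^m,h^m)$ then proceed without loss of derivatives, where the new commutator terms are either lower-order in $x$-derivatives (controlled by Step~1) or involve $\p_x^m\tilde\psi$ weighted by smooth coefficients (controlled by the equation for $\tilde\psi$).

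\textbf{Step 3 (Equivalence, Gronwall, and positivity).} Using the strict positivity $h^0 \ge \vartheta_0/2$ and the smallness \eqref{A.2}--\eqref{A.3}, I establish the $L^2_l$-equivalence
\begin{align*}
    \|\p_x^m(u,h)\|_{L^2_l} \le C\vartheta_0^{-1}\bigl(\|(u^m,h^m)\|_{L^2_l} + \|\p_x^{m-1}(u,h,\tilde\psi)\|_{H^1_l}\bigr),
\end{align*}
which closes the full $H_l^m$ estimate for $(u,h)$. Summing Steps~1--2 yields a differential inequality $\frac{d}{dx} E(x) \lesssim \vartheta_0^{-s}\mathcal{P}(E(x)) + C(u_b)M_0^6$ on $E(x) := \sum_{|\alpha|\le m}\|\y^{l+k}D^\alpha(u,h)\|_{L^2_y}^2$, whose local-in-$x$ solution gives \eqref{A.4}. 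Then \eqref{A.5} follows from weighted Sobolev embedding $H_l^m \hookrightarrow L^\infty$ together with the fundamental theorem of calculus $\y^{l+1}\p_y^i(u,h)(x,y) = \y^{l+1}\p_y^i(u_0,h_0)(y) + \int_0^x \y^{l+1}\p_y^i\p_x(u,h)(s,y)\,ds$. Finally, \eqref{A.7}--\eqref{A.9} follow by integrating \eqref{A.5} in $x$ from $0$ and taking $L$ small enough so that the $x$-contribution is smaller than half the initial margins in \eqref{A.1}--\eqref{A.3}, which also justifies the \textit{a posteriori} continuation of $h^0 \ge \vartheta_0/2$ used throughout.

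\textbf{Main obstacle.} The principal difficulty is in Step~2: designing $(u^m,h^m)$ so that \emph{both} problematic terms $\p_x^m v\cdot\p_y u$ in the $u$-equation and $\p_x^m g\cdot\p_y h$ in the $h$-equation cancel using a \emph{single} corrector $\p_x^m\tilde\psi$. This hinges on the magnetic equation admitting a total $y$-derivative structure (purely an MHD phenomenon) and on the non-degeneracy $h^0\ge\vartheta_0/2$. A secondary technical burden is that, relative to \cite{DLX}, the coefficients now depend genuinely on $x$ through $\overline{u_e^0}, \overline{h_e^0}, \overline{v_e^1}, \overline{g_e^1}$ and the trace terms $y\overline{v_{eY}^0}, y\overline{g_{eY}^0}$, producing extra commutator terms of the form $y\cdot(\text{smooth})\cdot\p_y(u,h)$ that must be absorbed using the weight $\y^{l+k}$ and the smallness \eqref{A.2}.
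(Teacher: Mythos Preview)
Your three-step outline matches the paper's proof structure (Lemmas~\ref{lemmaA.1}--\ref{lemmaA.3} followed by the closure in the final subsection of Appendix~\ref{ap2}), and the key idea of using the magnetic stream function to define modified tangential unknowns is exactly what the paper does. However, your corrector is not quite right and, as written, would fail.

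You set $h^0:=\overline{h_e^0}+h_p^0-\overline{h_e^0}\phi'(y)$, which by \eqref{2.9} is simply $h$ itself; since $h\to 0$ as $y\to\infty$, this is \emph{not} bounded below by $\vartheta_0/2$, and your appeal to \eqref{A.1} (which bounds $\overline{h_e^0}+h_p^0$ from below, not $h$) does not justify it. The paper instead uses the denominator $h+\overline{h_e^0}\phi'=\overline{h_e^0}+h_p^0$, which is uniformly positive. Correspondingly, the numerators must carry the $\phi''$ corrections: the bad terms in \eqref{A.42} are $[\p_y u+(\overline u_e^0-u_b)\phi'']\p_x^\beta v$ and $(\p_y h+\overline h_e^0\phi'')\p_x^\beta v$, so the correct correctors are
\[
u_\beta=\p_x^\beta u-\eta_1\p_x^\beta\overline\psi,\qquad h_\beta=\p_x^\beta h-\eta_2\p_x^\beta\overline\psi,\qquad
\eta_1=\tfrac{\p_y u+(\overline u_e^0-u_b)\phi''}{h+\overline h_e^0\phi'},\quad
\eta_2=\tfrac{\p_y h+\overline h_e^0\phi''}{h+\overline h_e^0\phi'},
\]
as in \eqref{A.44}--\eqref{A.54}. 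Also, your description of the cancellation is incomplete: subtracting $\eta_i\times$\eqref{A.50} kills only the $\p_x^\beta v$ terms; the $\p_x^\beta g$ terms are absorbed separately because $(h+\overline h_e^0\phi')\p_x h_\beta$ contains $-(h+\overline h_e^0\phi')\eta_2\,\p_x^{\beta+1}\overline\psi=(\p_y h+\overline h_e^0\phi'')\p_x^\beta g$, which exactly matches the residual $\p_x^\beta g$ term in \eqref{A.42}. With these corrections your scheme coincides with the paper's and closes as you describe.
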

The proof of \textit{Proposition \ref{propA.1} }  will be achieved by the following three subsections.
\subsection{Weighted $H_l^m-$ estimates with Normal Derivatives}
In this subsection, the weighted estimates for $D^\alpha(u,h)$ with $D^\alpha=\p_x^\beta\partial_y^k,~|\alpha|\leq m,~\beta\leq m-1$ will be given by standard energy method, since one order tangential regularity loss is allowed in this case.
\begin{lemma}\label{lemmaA.1}
$(Weighted~~estimates~~ for~~ D^\alpha(u,h) ~~with~~ |\alpha|\leq m,~\beta\leq m-1)$ Let $m\geq 5$ be an integer, $l\geq0$ be a real number, and the hypotheses for $(u_e^0,h_e^0,p_e^0)$ in Proposition \ref{prop2.1} hold. Suppose that $(u,v,h,g)$ are classical solutions to the problem \eqref{2.10}-\eqref{2.11} in $[0,L]\times(0,+\infty)$ for small $L>0$ satisfying
\begin{align*}
  (u,h)\in L^\infty(0,L;H_l^m(0,+\infty)),\quad (\p_y u,\p_y h)\in L^2(0,L;H_l^m(0,+\infty)).
\end{align*}
Then there exists $C > 0$, depending on $m,l$, such that for any small positive constant $\delta_1$,

\begin{align}\label{A.10}
\begin{split}
  & \sum_{\substack{|\alpha|\leq m\\ \beta\leq m-1}} \large(s_\alpha(x)+\nu\intx\|\p_yD^\alpha u\|_{ L_l^2}^2
    +\kappa\intx\|\p_yD^\alpha h\|_{ L_l^2}^2  \large)\\
  \leq
  &~\sum_{\substack{|\alpha|\leq m\\ \beta\leq m-1}} s_\alpha(0)+C\delta_1\intx\|\p_y(u,h)\|_{ H_0^m}^2+C\sum_{\substack{|\alpha|\leq m\\ \beta\leq m-1}}
     \intx\|\y^{l+k}D^\alpha(r_1,r_2)\|_{ L^2}^2\\
  &~ +C\delta_1^{-1}\intx E_{u,h}^2(1+E_{u,h}^2)+C\intx\sum_{\beta\leq m+1}\|\p_x^\beta(\overline u_e^0,\overline h_e^0,
      \overline p_e^0)\|_{ L^2(0,L)}^2,
\end{split}
\end{align}
where
\begin{equation*}
  s_\alpha(x)=\|\{u+u_b+(\overline u_e^0-u_b)\phi'\}^{1/2}\y^{l+k}D^\alpha(u,h)\|_{ L_y^2}^2,
  \quad E_{u,h}^2=\sum_{|\alpha|\leq m}s_\alpha(x).
\end{equation*}
\end{lemma}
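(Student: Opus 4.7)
The plan is to apply $D^\alpha = \partial_x^\beta\partial_y^k$ with $|\alpha|\le m$ and $\beta\le m-1$ to both equations of \eqref{2.11}, and then carry out an energy estimate with the weighted multipliers
\[
\y^{2(l+k)}\{u+u_b+(\overline u_e^0-u_b)\phi'\}D^\alpha u
\quad\text{and}\quad
\y^{2(l+k)}\{u+u_b+(\overline u_e^0-u_b)\phi'\}D^\alpha h,
\]
then sum over all admissible multi-indices. The reason for this peculiar weight is that $\{u+u_b+(\overline u_e^0-u_b)\phi'\}$ is precisely the full coefficient of $\partial_x$ in the convection operator, so integration by parts in $x$ converts the highest-order $\partial_x$ action into $\frac12\partial_x\{u+u_b+(\overline u_e^0-u_b)\phi'\}|D^\alpha(u,h)|^2$, which upon integration in $y$ produces $\frac{d}{dx}s_\alpha(x)$ plus a lower-order term. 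The $\y^{2(l+k)}$ factor keeps the estimate at the correct weighted level as $k$ varies with $\alpha$.

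The main steps I would execute in order are: \textbf{(i)} Apply $D^\alpha$ to \eqref{2.11}; use Leibniz to expand commutators of $D^\alpha$ with the variable coefficients $\{u+u_b+(\overline u_e^0-u_b)\phi',\ v-\overline{u_{ex}^0}\phi,\ h+\overline h_e^0\phi',\ g-\overline{h_{ex}^0}\phi\}$. Classify the resulting terms as \emph{top-order} (those carrying $D^\alpha$ on one of $u,v,h,g$) and \emph{commutator} (all lower-order products, bounded by $E_{u,h}^2(1+E_{u,h}^2)$ via Gagliardo--Nirenberg/Sobolev with $m\ge 5$). \textbf{(ii)} Test with the multipliers above and integrate by parts; the magnetic-velocity coupling cancellation
\[
-hD^\alpha h_x \cdot D^\alpha u - hD^\alpha u_x \cdot D^\alpha h = -\partial_x\bigl(h\, D^\alpha u\, D^\alpha h\bigr)+h_x D^\alpha u\, D^\alpha h
\]
(and analogously for $g\partial_y$) ensures that the top-order cross terms reduce to either exact $\partial_x$/$\partial_y$ derivatives or lower-order remainders. \textbf{(iii)} The viscous terms $-\nu\partial_y^2 D^\alpha u$ and $-\kappa\partial_y^2 D^\alpha h$, after integration by parts in $y$ against the weight $\y^{2(l+k)}\{u+u_b+(\overline u_e^0-u_b)\phi'\}D^\alpha u$ (resp.\ $h$), yield the desired positive dissipation $\nu\|\partial_yD^\alpha u\|_{L_l^2}^2+\kappa\|\partial_y D^\alpha h\|_{L_l^2}^2$ plus $\mathcal O(\delta_1)$-small remainders after applying Young's inequality to the cross terms. \textbf{(iv)} Boundary contributions at $y=0$ vanish for the $u$-equation (since $u|_{y=0}=0$ and $v|_{y=0}=0$) and are controlled for the $h$-equation thanks to $\partial_y h|_{y=0}=0$; the latter converts $\partial_y$-integrations into purely interior terms or traces that are absorbed into $M_0$ through \eqref{2.14}.

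The principal obstacle is handling the terms where $D^\alpha$ strikes the normal components $v$ or $g$, since by the divergence-free constraint $v = -\int_0^y \partial_x u\,dz$ and $g = -\int_0^y \partial_x h\,dz$; thus $D^\alpha v$ contains $\partial_x^{\beta+1}$ of $u$. The restriction $\beta\le m-1$ is precisely what makes this affordable: $\beta+1\le m$ keeps us within the $H_l^m$ framework, and $|\partial_x^{\beta+1}u|$ can be estimated via Hardy's inequality with weight $\y$ to recover the $\y^{l+k}$ weighting needed to match the test function. The $m$-th tangential case $\beta=m$ genuinely loses a derivative and must be dealt with separately in the next subsection via the stream-function cancellation mentioned in the introduction. \textbf{(v)} Finally, I would collect all contributions, apply Cauchy--Schwarz/Young with parameter $\delta_1$ to separate dissipation from the source terms $r_1,r_2$ (bounded via \eqref{2.14}) and the boundary-trace/interior data (controlled by $M_0$ and $C(u_b)$), then integrate in $x$ from $0$ to $x$ to obtain \eqref{A.10}. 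The polynomial nonlinearity $E_{u,h}^2(1+E_{u,h}^2)$ arises from the triple products of commutators (e.g.\ $D^{\alpha_1}u\cdot D^{\alpha_2}u\cdot D^\alpha u$ with $|\alpha_1|+|\alpha_2|=|\alpha|$), which after $L^\infty\times L^2\times L^2$ splitting (permitted since $m\ge 5$) gives exactly that form.
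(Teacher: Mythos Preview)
Your overall strategy---differentiate, multiply by a weighted copy of the solution, integrate by parts, and control commutators via Sobolev embedding using $m\ge 5$---matches the paper's. However, there is a genuine gap in your step \textbf{(iv)}, and a misstatement of the multiplier.

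First, the multiplier should simply be $\y^{2(l+k)}D^\alpha u$ (resp.\ $D^\alpha h$), \emph{not} $\y^{2(l+k)}\{u+u_b+(\overline u_e^0-u_b)\phi'\}D^\alpha u$. The factor $\{u+u_b+(\overline u_e^0-u_b)\phi'\}$ is already the coefficient of $\partial_x D^\alpha u$ in the differentiated equation; multiplying by $D^\alpha u$ and noting $\{u+u_b+\dots\}\partial_x D^\alpha u\cdot D^\alpha u = \tfrac12\partial_x\big(\{u+u_b+\dots\}|D^\alpha u|^2\big)-\tfrac12\partial_x\{u+u_b+\dots\}|D^\alpha u|^2$ is what produces $\tfrac12\tfrac{d}{dx}s_\alpha(x)$. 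Your verbal explanation is consistent with this, so the error may be typographical, but as written the multiplier would not work.

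The substantive gap is your claim that ``boundary contributions at $y=0$ vanish for the $u$-equation (since $u|_{y=0}=0$).'' This is false once $k\ge 1$: the viscous term yields, after integration by parts,
\[
\int_0^\infty \nu\,\partial_y^2 D^\alpha u\cdot \y^{2(l+k)}D^\alpha u\,dy
= -\nu\|\y^{l+k}\partial_y D^\alpha u\|_{L^2_y}^2 + \nu\,\big(\partial_y D^\alpha u\cdot D^\alpha u\big)\big|_{y=0} + (\text{weight-derivative terms}),
\]
and $D^\alpha u|_{y=0}=\partial_x^\beta\partial_y^k u|_{y=0}$ has no reason to vanish for $k\ge 1$. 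The paper handles this by writing $\nu\partial_y D^\alpha u = \nu D^\gamma\partial_y^2 u$ with $\gamma=(\beta,k-1)$, $|\gamma|=m-1$, and then \emph{using the equation} \eqref{2.11} to replace $\nu\partial_y^2 u$ by the convection, coupling, and source terms; evaluating at $y=0$ (where $\phi\equiv 0$, $u=v=0$, $\partial_y h=g=0$) collapses this to a tractable expression involving $\overline{p_{ex}^0}$ and products of traces of lower-order derivatives. One then applies the trace inequality $|f(0)|\le \sqrt 2\|f\|_{L^2_y}^{1/2}\|\partial_y f\|_{L^2_y}^{1/2}$ and Young's inequality with parameter $\delta_1$ to obtain
\[
\big|\nu\,\partial_y D^\alpha u\cdot D^\alpha u\big|_{y=0}
\lesssim \tfrac{\nu}{2}\|\partial_y D^\alpha u\|_{L^2_y}^2 + \delta_1\|\partial_y(u,h)\|_{H_0^m}^2 + C\delta_1^{-1}E_{u,h}^2(1+E_{u,h}^2) + C\sum_{\beta\le m}\|\partial_x^\beta \overline{p_e^0}\|^2.
\]
This is in fact the origin of both the $\delta_1$-term and the $\overline{p_e^0}$-term in \eqref{A.10}; they do not come from the dissipation cross terms as you suggested. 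Without this step your argument does not close at the top order $|\alpha|=m$, $k\ge 1$.
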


\begin{proof}
Applying operator $D^\alpha=\p_x^\beta \p_y^k$ in \eqref{2.11} yields that
\begin{equation}\label{A.11}
\begin{cases}
    &\{u+u_b+(\overline u_e^0-u_b)\phi'\}\p_x D^\alpha u-(h+\overline h_e^0\phi')\p_x D^\alpha h\\
    &\quad+(v-\overline u_{ex}^0\phi)\p_y D^\alpha u-(g-\overline h_{ex}^0\phi)\p_y D^\alpha h-\nu\p_y^2D^\alpha u\\
  &=-[D^\alpha,\{u+u_b+(\overline u_e^0-u_b)\phi'\}\p_x+(v-\overline u_{ex}^0\phi)\p_y]u\\
    &\quad -D^\alpha(u\overline u_{ex}^0\phi'+v(\overline u_e^0-u_b)\phi'')+D^\alpha(h\overline h_{ex}^0\phi'+g\overline h_e^0\phi'')\\
    &\quad +[D^\alpha,(h+\overline h_e^0\phi')\p_x+(g-\overline h_{ex}^0\phi)\p_y]h+D^\alpha r_1,\\
    &\{u+u_b+(\overline u_e^0-u_b)\phi'\}\p_x D^\alpha h-(h+\overline h_e^0\phi')\p_x D^\alpha u\\
    &\quad +(v-\overline u_{ex}^0\phi)\p_y D^\alpha h(g-\overline h_{ex}^0\phi)\p_y D^\alpha u-\kappa\p_y^2 D^\alpha h\\
  &=-[D^\alpha,\{u+u_b+(\overline u_e^0-u_b)\phi'\}\p_x+\{v-\overline u_{ex}^0\phi\}\p_y]h\\
    &\quad -D^\alpha(u\overline h_{ex}^0\phi'+v\overline h_e^0\phi'')+D^\alpha(h\overline u_{ex}^0\phi'+g(\overline u_e^0-u_b)\phi'')\\
    &\quad +[D^\alpha,(h+\overline h_e^0\phi')\p_x+(g-\overline h_{ex}^0\phi)\p_y]u+D^\alpha r_2.
\end{cases}
\end{equation}
Multiplying $\eqref{A.11}_{1,2}$ by $\y^{2l+2k}D^\alpha u$ and $\y^{2l+2k}D^\alpha h$, respectively, integrating them over $[0,\infty)$ with respect to spatial variable $y$, it gives
\begin{align}\label{A.12}
\begin{split}
  &\frac{1}{2}\frac{d}{dx}\|\{u+u_b+(\overline u_e^0-u_b)\phi'\}^{1/2}\y^{l+k}D^\alpha(u,h)\|_{ L_y^2}^2\\
  =&\inty\y^{2(l+k)}(\nu\p_y^2 D^\alpha u\cdot D^\alpha u+\kappa\p_y^2 D^\alpha h\cdot D^\alpha h)\\
  &+\inty(l+k)y\y^{2(l+k)-2}(v-\overline u_{ex}^0\phi)(|D^\alpha u|^2+|D^\alpha h|^2)\\
  &+\inty\y^{2(l+k)}(D^\alpha r_1\cdot D^\alpha u+D^\alpha r_2\cdot D^\alpha h)\\
  &-\inty\y^{2(l+k)}(I_1\cdot D^\alpha u+I_2\cdot D^\alpha h),
\end{split}
\end{align}
where
\begin{equation}\label{A.13}
\begin{cases}
  I_1&=
  -[(h+\overline h_e^0\phi')\p_x+(g-\overline h_{ex}^0\phi)\p_y]D^\alpha h\\
  &\quad +\Big\{[D^\alpha,(u+u_b+(\overline u_e^0-u_b)\phi')\p_x+(v-\overline u_{ex}^0\phi)\p_y]u\\
  &\quad -[D^\alpha,(h+\overline h_e^0\phi')\p_x+(g-\overline h_{ex}^0\phi)\p_y]h\Big\}\\
  &\quad +D^\alpha[u\overline u_{ex}^0\phi'+v(\overline u_e^0-u_b)\phi''-h\overline h_{ex}^0\phi'-g\overline h_e^0\phi'']\\
  &:= I_1^1+I_1^2+I_1^3,\\
  I_2&=
  -[\{h+\overline h_e^0\phi'\}\p_x+\{g-\overline h_{ex}^0\phi\}\p_y]D^\alpha u\\
  &\quad +\Big\{[D^\alpha,(u+u_b+(\overline u_e^0-u_b)\phi')\p_x+(v-\overline u_{ex}^0\phi)\p_y]h\\
  &\quad -[D^\alpha,(h+\overline h_e^0\phi')\p_x+(g-\overline h_{ex}^0\phi)\p_y]u\Big\}\\
  &\quad +D^\alpha[u\overline h_{ex}^0\phi'+v\overline h_e^0\phi''-h\overline u_{ex}^0\phi'-g(\overline u_e^0-u_b)\phi'']\\
  &:= I_2^1+I_2^2+I_2^3.
\end{cases}
\end{equation}

The estimates for the above terms in \eqref{A.12} can be obtained by following the similar arguments in \cite{DLX}, so we do not give details for every single term for simplifying the presentation. Here we only state some new different terms.

Firstly, we will handle the term $\inty\nu\p_y^2 D^\alpha u\cdot \y^{2(l+k)}D^\alpha u$, and then the term $\inty\kappa\p_y^2 D^\alpha h\cdot \y^{2(l+k)}D^\alpha h$ can be estimated similarly. Indeed, we have

\begin{align}\label{A.14}
\begin{split}
  \inty\nu\p_y^2 D^\alpha u\cdot \y^{2(l+k)}D^\alpha u
  =&-\nu\|\y^{l+k}\p_y D^\alpha u\|_{ L_y^2}^2+\nu(\p_y D^\alpha u\cdot D^\alpha u)|_{ y=0}\\
  &-2\nu(l+k)\inty\y^{2(l+k)-2}y\p_y D^\alpha u\cdot D^\alpha u.
\end{split}
\end{align}

The boundary term in \eqref{A.14} shall be treated carefully in two cases: $|\alpha|\leq m-1$ and $|\alpha|= m$. Here we only discuss the case of $|\alpha|= m$ which is different from that in \cite{DLX}. It should be noticed that we have $k \geq 1$ by virtue of $\beta\leq m-1$. Denote $\gamma=(\beta, k-1)$ with $|\gamma|= m-1$, and using the equations \eqref{2.11} for $(u,v,h,g)$, there holds
\begin{align}\label{A.15}
\begin{split}
  \nu\p_y D^\alpha u
  =& \nu D^\gamma \p_y^2 u\\
  =& D^\gamma \{[(u+u_b+(\overline u_e^0-u_b)\phi')\p_x+(v-\overline u_{ex}^0\phi)\p_y]u+u\overline u_{ex}^0\phi'+v(\overline u_e^0-u_b)\phi''\\
  &-[(h+\overline h_e^0\phi')\p_x+(g-\overline h_{ex}^0\phi)\p_y]h-h\overline h_{ex}^0\phi'-g\overline h_e^0\phi''-r_1\}.
\end{split}
\end{align}
Then, according to the definition of $r_1$ with the fact $\phi \equiv 0$ for $y\leq R_0$, we get at $\{y=0\}$ that
\begin{align}\label{A.16}
\begin{split}
  \nu\p_y D^\alpha u|_{y=0}
  =&D^\gamma\{[(u+u_b)\p_x+v\p_y]u-(h\p_x+g\p_y)h+2\overline p_{ex}^0\}\\
  =&D^\gamma[(u+u_b)\p_x-h\p_x h]+D^\gamma(v\p_y u-g\p_y h)+2D^\gamma\overline p_{ex}^0,
\end{split}
\end{align}
note that the pressure term is generated by nonshear flow in our consideration. An application of \textit{Newton-Lebniz formula} and \textit{$H\ddot{o}lder$ inequality} yields
\begin{align}\label{A.17}
\begin{split}
  2D^\gamma\overline p_{ex}^0\cdot D^\alpha u|_{y=0}
  &\leq 2\sqrt{2} \|D^\gamma\overline p_{ex}^0\|_{L^\infty(0,L)}\|D^\alpha u\|_{L_y^2}^\frac{1}{2}\|\p_y D^\alpha u\|_{L_y^2}^\frac{1}{2}\\
  &\lesssim \frac{\nu}{14}\|\p_y D^\alpha u\|_{L_y^2}^2+C\|D^\alpha u\|_{L_y^2}^2+C\|D^\gamma\overline p_{ex}^0\|_{L^\infty(0,L)}^2.
\end{split}
\end{align}
With \eqref{A.17} in hands, and operating the proof similar to \cite{DLX} in pages \rm{39 - 41} to the other terms of \eqref{A.16}, we can obtain the estimate of the boundary term for $|\alpha| = \beta+k \leq m$ with $\beta\leq m-1$ as follows:
\begin{align}\label{A.18}
\begin{split}
  \large|(\nu\p_y D^\alpha u\cdot D^\alpha u)|_{y=0}\large|
  \leq&~\frac{3\nu}{7}\|\p_y D^\alpha u\|_{L_y^2}^2+C\delta_1^{-1}E_{u,h}^2(1+E_{u,h}^2)\\
  &+\delta_1\|\p_y(u,h)\|_{H_0^m}^2+C\|D^\gamma\overline p_{ex}^0\|_{H^1}^2+C(u_b).
\end{split}
\end{align}

Therefore, we have the following estimate
\begin{align}\label{A.19}
\begin{split}
  &\inty\nu\p_y^2 D^\alpha u\cdot \y^{2(l+k)}D^\alpha u\\
  \leq &~~-\frac{\nu}{2}\|\y^{(l+k)}\p_y D^\alpha u\|_{L_y^2}^2+C\delta_1^{-1}E_{u,h}^2(1+E_{u,h}^2)\\
  &~~+\delta_1\|\p_y(u,h)\|_{H_0^m}^2+C\sum_{\beta\leq m-1}\|\p_x^\beta \overline p_{ex}^0\|_{H^1}^2+C(u_b).
\end{split}
\end{align}
In a similar fashion, one gets
\begin{align}\label{A.20}
\begin{split}
  &\inty\kappa\p_y^2 D^\alpha h\cdot \y^{2(l+k)}D^\alpha h\\
  \leq &~~-\frac{\kappa}{2}\|\y^{(l+k)}\p_y D^\alpha h\|_{L_y^2}^2+C\delta_1^{-1}E_{u,h}^2(1+E_{u,h}^2)+\delta_1\|\p_y(u,h)\|_{H_0^m}^2.
\end{split}
\end{align}

Secondly, using \textit{Hardy inequality} and \textit{Sobolev embedding inequality}, we can obtain that
\begin{align}\label{A.21}
\begin{split}
  &\inty(l+k)y\y^{2(l+k)-2}(v-\overline u_{ex}^0\phi)(|D^\alpha u|^2+|D^\alpha h|^2)\\
  \leq& C\bigg(\bigg\|\frac{v}{\y}\bigg\|_{L^\infty}+\|\overline u_{ex}^0\|_{L^\infty(0,L) }\bigg)
        \| \{u+u_b+(\overline u_e^0-u_b)\phi'\}^{1/2}\y^{l+k}D^\alpha(u,h)\|_{ L_y^2}^2\\
  \leq& C(\|u_x\|_{L^\infty}+\|\overline u_{ex}^0\|_{ L^\infty(0,L)})E_{u,h}^2
  \leq C(\|u\|_{H_0^3}+\|\overline u_{ex}^0\|_{L^\infty(0,L) })E_{u,h}^2,
\end{split}
\end{align}
in which the following \textit{a priori assumption}
\begin{equation}\label{priori}
  u+u_b+(\overline u_e^0-u_b)\phi'(y)\geq \tilde c_0>0
\end{equation}
has been applied.

For the third term in \eqref{A.12}, using the above \textit{a priori assumption} \eqref{priori} again, it is easy to get
\begin{align}\label{A.22}
\begin{split}
  &\inty\y^{2(l+k)}(D^\alpha r_1\cdot D^\alpha u+D^\alpha r_2\cdot D^\alpha h)\\
  \leq&\frac{1}{2}\|\{u+u_b+(\overline u_e^0-u_b)\phi'\}^{1/2}\y^{l+k}D^\alpha(u,h)\|_{ L_y^2}^2\\
  &+\frac{1}{2}C(\vartheta_0,\tilde c_0)\|\y^{l+k}D^\alpha(r_1,r_2)\|_{L_y^2}^2.
\end{split}
\end{align}

Finally, it remains to handle the terms $-\inty\y^{2(l+k)}(I_1\cdot D^\alpha u+I_2\cdot D^\alpha h)$. Recall the definition of $I_1$ and $I_2$ in \eqref{A.13}, we first divide the target term into three parts:
\begin{align}\label{A.23}
\begin{split}
  &-\inty\y^{2(l+k)}(I_1\cdot D^\alpha u+I_2\cdot D^\alpha h)\\
  =& ~-\sum_{i=1}^3\inty\y^{2(l+k)}(I_1^i\cdot D^\alpha u+I_2^i\cdot D^\alpha h) \\
  :=& ~G_1+G_2+G_3.
\end{split}
\end{align}

Noticing that
\begin{align*}
  \phi(y)\equiv y,\quad \phi'(y)\equiv 1,\quad \phi^{(i)}\equiv 0\quad {\rm{for}} \quad y\geq 2R_0,\quad i\geq 2,
\end{align*}
there exists some positive constant $C$ satisfying
\begin{align}
  \label{A.24}
  \|\y^{i-1}\phi^{(i)}(y)\|_{L^\infty(0,+\infty)}& \leq C,\quad i=0,1,\\
   \label{A.25}
  \|\y^{\lambda}\phi^{(j)}(y)\|_{L^\infty(0,+\infty)}& \leq C,\quad j\geq 2,\quad \lambda\in\mathbb{R}.
\end{align}
{\bf\textit{{Estimate for $G_1$}}}

We obtain by integration by parts that
\begin{align}\label{A.26}
\begin{split}
  G_1=
  &\frac{d}{dx}\inty (h+\overline h_e^0\phi')D^\alpha h\cdot \y^{2(l+k)} D^\alpha u\\
  &~~-\inty (l+k)(g-\overline h_{ex}^0\phi) \y^{2(l+k)-2}\cdot 2y D^\alpha h D^\alpha u.
\end{split}
\end{align}
The first term will be absorbed into the left hand side by using \textit{a priori assumption}
\begin{equation}\label{priori_assume}
  u+u_b+(\overline u_e^0-u_b)\phi'(y)>h+\overline h_e^0\phi'(y)\geq\frac{\vartheta_0}{2}>0
\end{equation}
later on, so here we set it aside. The second term can be estimated as
\begin{align}\label{A.27}
  &-\inty (l+k)(g-\overline h_{ex}^0\phi) \y^{2(l+k)-2} \cdot 2y D^\alpha h D^\alpha u\nonumber\\
  \leq&~~ 2(l+k)\Bigg\|\frac{g-\overline h_{ex}^0\phi}{1+y}\Bigg\|_{L^\infty}\cdot
          \|\{u+u_b+(\overline u_e^0-u_b)\phi'\}^{1/2} \y^{l+k} D^\alpha(u,h)\|_{L_y^2}^2 \\
  \leq&~~ C(\|h\|_{H_0^3}+\|\overline h_{ex}^0\|_{L^\infty(0,L)})E_{u,h}^2.\nonumber
\end{align}
{\bf\textit{{Estimate for $G_2$}}}

For $G_2$, an application of \textit{Cauchy-Schwarz inequality} yields that
\begin{align}\label{A.28}
\begin{split}
  G_2\leq
  &~~C\|\y^{l+k}I_1^2\|_{L_y^2}\cdot \|\{u+u_b+(\overline u_e^0-u_b)\phi'\}^{1/2} \y^{l+k} D^\alpha u\|_{L_y^2}\\
  &~~+C\|\y^{l+k}I_2^2\|_{L_y^2}\cdot \|\{u+u_b+(\overline u_e^0-u_b)\phi'\}^{1/2} \y^{l+k} D^\alpha h\|_{L_y^2}.
\end{split}
\end{align}
Hence, we shall estimate $\|\y^{l+k}I_1^2\|_{L_y^2}$, and $\|\y^{l+k}I_2^2\|_{L_y^2}$ will be handled by a similar argument.

The terms in $I_1^2$ can be rearranged as
\begin{align*}
\begin{split}
  I_1^2
  &=\{[D^\alpha,(u+u_b)\p_x+v\p_y]u - [D^\alpha,h\p_x+g\p_y]h\}\\
  &\quad\quad+\{[D^\alpha,((\overline u_e^0-u_b)\phi')\p_x-(\overline u_{ex}^0\phi)\p_y]u
   - [D^\alpha,(\overline h_e^0\phi')\p_x-(\overline h_{ex}^0\phi)\p_y]h\}\\
  &\triangleq I_{1,1}^2+I_{1,2}^2.
\end{split}
\end{align*}
Applying some similar technical estimates as the shear flows to $I_{1,1}^2$, we achieve
\begin{equation}\label{A.29}
  \|\y^{l+k}I_{1,1}^2\|_{L_y^2}\leq C E_{u,h}^2.
\end{equation}
Next, rewriting $I_{1,2}^2$ as
\begin{align}\label{A.30}
\begin{split}
  I_{1,2}^2
  =\sum_{0<\tilde\alpha\leq\alpha}\big(\begin{array}{c}
                                     \alpha \\
                                     \tilde\alpha
                                   \end{array}\big)
   & \Bigg\{[D^{\tilde\alpha}((\overline u_e^0-u_b)\phi')\p_x-D^{\tilde\alpha}(\overline u_{ex}^0\phi)\p_y]D^{\alpha-\tilde\alpha}u\\
   & -[D^{\tilde\alpha}(\overline h_e^0\phi')\p_x-D^{\tilde\alpha}(\overline h_{ex}^0\phi)\p_y]D^{\alpha-\tilde\alpha}h\Bigg\}
\end{split}
\end{align}
with $D^{\tilde\alpha}=\p_x^{\tilde\beta}\p_y^{\tilde k} $. And then each term on the right hand side can be estimated as follows:
\begin{align*}
    &\|\y^{l+k}D^{\tilde\alpha}((\overline u_e^0-u_b)\phi')\cdot\p_x D^{\alpha-\tilde\alpha}u\|_{L_y^2}\\
  \leq& \|\y^{\tilde k} D^{\tilde\alpha}((\overline u_e^0-u_b)\phi')\|_{ L^{\infty} }\cdot\|\y^{l+k-\tilde k}\p_x D^{\alpha-\tilde\alpha}u\|_{L_y^2}\\
  \leq& \big(C(u_b)+\|\p_x^{\tilde\beta}\overline u_e^0\|_{L^{\infty}(0,L)}\big)\|u\|_{H_l^m},\\
\end{align*}
and
\begin{align*}
    &\|\y^{l+k}D^{\tilde\alpha}(\overline u_{ex}^0\phi)\cdot\p_y D^{\alpha-\tilde\alpha}u\|_{L_y^2}\\
  \leq& \|\y^{\tilde k-1} D^{\tilde\alpha}(\overline u_{ex}^0\phi)\|_{L^\infty}\cdot\|\y^{l+k-\tilde k+1}\p_y D^{\alpha-\tilde\alpha}u\|_{L_y^2}\\
  \leq& \|\p_x^{\tilde\beta}\overline u_{ex}^0\|_{L^{\infty}(0,L)}\cdot\|u\|_{H_l^m},
\end{align*}
in which we have used the boundedness of $\phi$ from \eqref{A.24} and \eqref{A.25}. Similarly, the other two terms yields that
\begin{align*}
   &\|\y^{l+k}[D^{\tilde\alpha}(\overline h_e^0\phi')\p_x-D^{\tilde\alpha}(\overline h_{ex}^0\phi)\p_y]D^{\alpha-\tilde\alpha}h\|_{L_y^2}\\
  \leq& C(\|\p_x^{\tilde\beta}\overline h_e^0\|_{L^{\infty}(0,L)}+\|\p_x^{\tilde\beta}\overline h_{ex}^0\|_{L^{\infty}(0,L)})\cdot\|h\|_{H_l^m}.
\end{align*}
So collecting the above three estimates we have
\begin{equation}\label{A.31}
  \|\y^{l+k}I_{1,2}^2\|_{L_y^2}\leq \big(\sum_{\beta\leq m}\|\p_x^{\beta}(\overline u_e^0,\overline h_e^0)\|_{L^{\infty}(0,L)}+C(u_b)\big)E_{u,h}.
\end{equation}

Therefore, together with \eqref{A.29} it gives
\begin{equation}\label{A.32}
  \|\y^{l+k}I_1^2\|_{L_y^2}
  \leq \big(\sum_{\beta\leq m}\|\p_x^{\beta}(\overline u_e^0,\overline h_e^0)\|_{L^{\infty}(0,L)}+C(u_b)+E_{u,h}\big)E_{u,h},
\end{equation}
and in the same way
\begin{equation}\label{A.33}
  \|\y^{l+k}I_2^2\|_{L_y^2}
  \leq \big(\sum_{\beta\leq m}\|\p_x^{\beta}(\overline u_e^0,\overline h_e^0)\|_{L^{\infty}(0,L)}+C(u_b)+E_{u,h}\big)E_{u,h}.
\end{equation}
Accordingly, we can conclude that
\begin{equation}\label{A.34}
  G_2\leq  \big(\sum_{\beta\leq m}\|\p_x^{\beta}(\overline u_e^0,\overline h_e^0)\|_{L^{\infty}(0,L)}+C(u_b)+E_{u,h}\big)E_{u,h}^2.
\end{equation}
{\bf\textit{{Estimate for $G_3$}}}

Applying \textit{Cauchy-Schwarz inequality} in $G_3$, we get
\begin{align}\label{A.35}
  G_3
  \leq&~C\|\y^{l+k}I_1^3\|_{L_y^2}\cdot \|\y^{l+k} D^\alpha u\|_{L_y^2}+C\|\y^{l+k}I_2^3\|_{L_y^2}\cdot
   \|\y^{l+k} D^\alpha h\|_{L_y^2}.
\end{align}
Noted that the weighted estimate of $I_2^3$ will be treated similarly, once we have the estimate for $\|\y^{l+k}I_1^3\|_{L_y^2}$. From the definition of $I_1^3$, the term can be rewritten as
\begin{align}\label{A.36}
\begin{split}
  I_1^3
  =\sum_{0\leq\tilde\alpha\leq\alpha}\big(\begin{array}{c}
                                     \alpha \\
                                     \tilde\alpha
                                   \end{array}\big)
   & \Bigg\{ D^{\tilde\alpha}u \cdot D^{\alpha-\tilde\alpha}(\overline u_{ex}^0\phi')
   +D^{\tilde\alpha}v \cdot D^{\alpha-\tilde\alpha}((\overline u_e^0-u_b)\phi'')\\
   &-D^{\tilde\alpha}h \cdot D^{\alpha-\tilde\alpha}(\overline h_{ex}^0\phi')
   -D^{\tilde\alpha}g \cdot D^{\alpha-\tilde\alpha}(\overline h_e^0\phi'')
   \Bigg\}.\nonumber
\end{split}
\end{align}
Operating a similar procedure to the above estimates for $I_1^2$ and referring to \cite{LXYwell} in details as well, we obtain that
\begin{equation}\label{A.37}
  \|\y^{l+k}I_j^3\|_{L_y^2}
  \leq \big(\sum_{\beta\leq m}\|\p_x^{\beta}(\overline u_e^0,\overline h_e^0)\|_{L^{\infty}(0,L)}+C(u_b)\big)E_{u,h},\quad {\rm{for}}\quad j=1,2.
\end{equation}
Substituting \eqref{A.37} into \eqref{A.35}, it yields that
\begin{equation}\label{A.38}
  G_3
  \leq \bigg(\sum_{\beta\leq m}\|\p_x^{\beta}(\overline u_e^0,\overline h_e^0)\|_{L^{\infty}(0,L)}+C(u_b)\bigg)E_{u,h}^2.
\end{equation}

Now, collecting the estimates for $G_i$, together with equation \eqref{A.23}, we can deduce that
\begin{align}\label{A.39}
  &-\inty\y^{2(l+k)}(I_1\cdot D^\alpha u+I_2\cdot D^\alpha h)\nonumber\\
  \leq& C\big(\sum_{\beta\leq m}\|\p_x^{\beta}(\overline u_e^0,\overline h_e^0)\|_{L^{\infty}(0,L)}+C(u_b)+E_{u,h}\big)E_{u,h}^2\\
  & +\frac{d}{dx}\inty (h+\overline h_e^0\phi')D^\alpha h\cdot \y^{2(l+k)} D^\alpha u.\nonumber
\end{align}

At present, plugging the estimates \eqref{A.19},\eqref{A.20},\eqref{A.21},\eqref{A.22} and \eqref{A.39} into \eqref{A.12}, integrating in $x$-direction, and summing over $\alpha$ with $\beta\leq m-1$, we find that
\begin{align}\label{A.40}
\begin{split}
  & \sum_{\substack{|\alpha|\leq m\\ \beta\leq m-1}} \large(s_\alpha(x)+\nu\intx\|\p_yD^\alpha u\|_{ L_l^2}^2
  +\kappa\intx\|\p_yD^\alpha h\|_{ L_l^2}^2  \large)\\
  \leq&~ C\delta_1\intx\|\p_y(u,h)\|_{ H_0^m}^2+C\sum_{\substack{|\alpha|\leq m\\ \beta\leq m-1}}
  \intx\|\y^{l+k}D^\alpha(r_1,r_2)\|_{ L^2}^2\\
  &~ +C\delta_1^{-1}\intx E_{u,h}^2(1+E_{u,h}^2)+C\intx \bigg(C(u_b)+\sum_{\beta\leq m+1}\|\p_x^\beta(\overline u_e^0,\overline h_e^0,\overline p_e^0)\|_{L^2(0,L)}^2\bigg)\\
  &~~+\sum_{\substack{|\alpha|\leq m\\ \beta\leq m-1}} \inty (h+\overline h_e^0\phi')D^\alpha h\cdot \y^{2(l+k)} D^\alpha u+\sum_{\substack{|\alpha|\leq m\\ \beta\leq m-1}} s_\alpha(0),
\end{split}
\end{align}
in which the positive constant $C$ depends on $m,l$. Moreover, the \textit{a priori assumption} \eqref{priori_assume}(which shall be verified latter in our energy closing arguments) yields that
\begin{align}\label{A.41}
\begin{split}
  &\inty (h+\overline h_e^0\phi')D^\alpha h\cdot \y^{2(l+k)} D^\alpha u\\
  \leq&~ \frac{1}{2}\|\{u+u_b+(\overline u_e^0-u_b)\phi'\}^{1/2}\y^{l+k}D^\alpha(u,h)\|_{ L_y^2}^2
  \leq~ \frac{1}{2}s_\alpha(x),
\end{split}
\end{align}

Putting \eqref{A.41} into \eqref{A.40}, we finish the proof of \eqref{A.10}.
\end{proof}

\subsection{Weighted $H_l^m$- Estimates only in Tangential Variables}
Similar to classical Prandtl equations, the essential difficulty for solving the MHD boundary equations also lies on the loss of one derivative in tangential variable $x$, which comes from the terms $v\p_y u-g\p_y h$ and $v\p_y h-g\p_y u$ because of the divergence-free conditions.

Taking the $m^{th}$- order tangential derivatives on \eqref{2.11}, we have the following equations for $\p_x^\beta(u,h)$ with $\beta=m$,
\begin{align}\label{A.42}
\begin{cases}
  &[\{u+u_b+(\overline u_e^0-u_b)\phi'\}\p_x+\{v-\overline u_{ex}^0\phi\}\p_y]\p_x^\beta u\\
  &\quad -[\{h+\overline h_e^0\phi'\}\p_x+\{g-\overline h_{ex}^0\phi\}\p_y]\p_x^\beta h-\nu\p_y^2\p_x^\beta u\\
  &\quad +[\p_y u+(\overline u_e^0-u_b)\phi'']\p_x^\beta v-(\p_y h+\overline h_e^0\phi'')\p_x^\beta g=\p_x^\beta r_1+R_u^\beta,\\
  &[\{u+u_b+(\bar
   u_e^0-u_b)\phi'\}\p_x+\{v-\overline u_{ex}^0\phi\}\p_y]\p_x^\beta h\\
  &\quad -[\{h+\overline h_e^0\phi'\}\p_x+\{g-\overline h_{ex}^0\phi\}\p_y]\p_x^\beta u-\kappa\p_y^2\p_x^\beta h\\
  &\quad +(\p_y h+\overline h_e^0\phi'')\p_x^\beta v-[\p_y u+(\overline u_e^0-u_b)\phi'']\p_x^\beta g=\p_x^\beta r_2+R_h^\beta,
\end{cases}
\end{align}
where
\begin{align*}
    R_u^\beta=
  &-[\p_x^\beta, \{u+u_b+(\overline u_e^0-u_b)\phi'\}\p_x-\overline u_{ex}^0\phi\p_y] u
   +\p_x^\beta(-\overline u_{ex}^0\phi'u+\overline h_{ex}^0\phi'h)\\
  &+[\p_x^\beta,  \{h+\overline h_e^0\phi'\}\p_x+\overline h_{ex}^0\phi\p_y] h-[\p_x^\beta,(\overline u_e-u_b)\phi'']v
   +[\p_x^\beta,\overline h_e^0\phi'']g\\
  &-\sum_{0<\tilde\beta<\beta}\bigg(\begin{array}{c}
                                 \beta \\
                                 \tilde\beta
                               \end{array}\bigg)(\p_x^{\tilde\beta}v\cdot\p_x^{\beta-\tilde\beta}\p_y u
                               -\p_x^{\tilde\beta}g\cdot\p_x^{\beta-\tilde\beta}\p_y h),\\
    R_h^\beta=
  &-[\p_x^\beta, \{u+u_b+(\overline u_e^0-u_b)\phi'\}\p_x-\overline u_{ex}^0\phi\p_y] h
   +\p_x^\beta(-\overline h_{ex}^0\phi'u+\overline u_{ex}^0\phi'h)\\
  &+[\p_x^\beta,  \{h+\overline h_e^0\phi'\}\p_x+\overline h_{ex}^0\phi\p_y] u-[\p_x^\beta,\overline h_e^0\phi'']v+[\p_x^\beta,(\overline u_e-u_b)\phi'']g\\
  &-\sum_{0<\tilde\beta<\beta}\bigg(\begin{array}{c}
                                 \beta \\
                                 \tilde\beta
                               \end{array}\bigg)(\p_x^{\tilde\beta}v\cdot\p_x^{\beta-\tilde\beta}\p_y h
                               -\p_x^{\tilde\beta}g\cdot\p_x^{\beta-\tilde\beta}\p_y u).
\end{align*}
Then, provided $m \geq 5$, we can deduce that
\begin{equation}\label{A.43}
  \|(R_u^\beta,R_h^\beta)\|_{L_l^2}
  \leq C\bigg(\sum_{\beta\leq m+1}\|\p_x^{\beta}(\overline u_e^0,\overline h_e^0)\|_{L^{\infty}(0,L)}
  +\|(u,h)\|_{H_l^m}\bigg)\|(u,h)\|_{H_l^m}
\end{equation}
which is proved by a similar argument of \cite{LXYwell}, the readers can refer to the paper \cite{LXYwell} in page 91 for more details.

Now, back to the equation \eqref{A.42}, since $v=-\p_y^{-1}\p_x u$ and $g=-\p_y^{-1}\p_x h$ will create a loss of the $x$-derivative, it prevents us from applying standard energy methods. To overcome this difficulty, we give \textit{a priori assumption} (which shall be verified latter in our energy closing arguments) that there exists a positive constant $\vartheta_0$ such  that
\begin{equation*}
  h(x,y)+\overline h_e^0(x)\phi'(y)\geq \frac{\vartheta_0}{2}>0,\quad {\rm{for~~ any }}~~(x,y)\in[0,L]\times (0,+\infty),
\end{equation*}
and introduce the following quantities:
\begin{align}\label{A.44}
\begin{split}
  u_\beta:=& \p_x^\beta u-\frac{\p_y u+(\overline u_e^0-u_b)\phi''}{h+\overline h_e^0\phi'}\p_x^\beta\p_y^{-1} h,\\
  h_\beta:=& \p_x^\beta h-\frac{\p_y h+\overline h_e^0\phi''}{h+\overline h_e^0\phi'}\p_x^\beta\p_y^{-1} h.
\end{split}
\end{align}
We note here that $(u_\beta,h_\beta)$ are almost equivalent to $\p_x^\beta(u,h)$ in $L_l^2$-norm, which will be demonstrated later in Lemma \ref{lemmaA.3} at this subsection.

Rewriting the second equation of \eqref{2.11} for $h$ as
\begin{equation}\label{A.45}
 \p_y[(v-\overline u_{ex}^0\phi)(h+\overline h_e^0\phi')-(g-\overline h_{ex}^0\phi)(u+u_b+(\overline u_e^0-u_b)\phi')]-\kappa\p_y^2 h=\kappa\overline h_e^0\phi^{(3)},
\end{equation}
in which we have used Bernoulli's law for twice. And then it gives
\begin{equation}\label{A.46}
  (v-\overline u_{ex}^0\phi)(h+\overline h_e^0\phi')-(g-\overline h_{ex}^0\phi)(u+u_b+(\overline u_e^0-u_b)\phi')-\kappa\p_y h=\kappa\overline h_e^0\phi''.
\end{equation}
Thanks to divergence-free condition, there exists a stream function $\overline\psi$ satisfying
\begin{equation}\label{A.47}
  (h,g)=(\p_y\overline\psi,-\p_x\overline\psi),\quad {\rm{with}}~~\overline\psi|_{y=0}=0.
\end{equation}
So that the equation for $\overline\psi$ is deduced to
\begin{equation}\label{A.48}
  [(u+u_b+(\overline u_e^0-u_b)\phi')\p_x+(v-\overline u_{ex}^0\phi)\p_y]\overline\psi+\overline h_{ex}^0\phi u+\overline h_e^0\phi'v-\kappa\p_y^2\overline\psi=r_3,
\end{equation}
where we have used the Bernoulli's law and $r_3$ is defined by
\begin{equation}\label{A.49}
  r_3=u_b\overline h_{ex}^0\phi(\phi'-1)+\kappa\overline h_e^0\phi''.
\end{equation}

Next, applying $\p_x^\beta$ to equation \eqref{A.48} and using $\p_y\overline\psi=h$, we get
\begin{align}\label{A.50}
\begin{split}
  [(u+u_b+(\overline u_e^0-u_b)\phi')\p_x+(v-\overline u_{ex}^0\phi)\p_y]\p_x^\beta\overline\psi&\\
  \quad\quad\quad +(h+\overline h_e^0\phi')\p_x^\beta v-\kappa\p_y^2\p_x^\beta\overline\psi
  &=\p_x^\beta r_3+ R_{\psi}^\beta,
\end{split}
\end{align}
where
\begin{align}\label{A.51}
\begin{split}
  R_\psi^\beta=
  &-[\p_x^\beta, (u+u_b+(\overline u_e^0-u_b)\phi')\p_x -\overline u_{ex}^0\phi\p_y]\overline\psi\\
  &-\sum_{0<\tilde\beta<\beta}
  \bigg(
        \begin{array}{cc}
        \beta\\
        \tilde\beta
  \end{array}
  \bigg)\p_x^{\tilde\beta}v \cdot \p_x^{\beta-\tilde\beta}\p_y\overline\psi
  -\p_x^\beta(\overline h_{ex}^0\phi u)-[\p_x^\beta,\overline h_e^0\phi']v.
\end{split}
\end{align}
And $R_\psi^\beta$ can be estimated as
\begin{equation}\label{A.52}
  \bigg\|\frac{R_\psi^\beta}{1+y}\bigg\|_{L_0^2}
  \leq C\bigg(\sum_{\beta\leq m+1}\|\p_x^{\beta}(\overline u_e^0,\overline h_e^0)\|_{L^\infty(0,L)}
       +\|(u,h)\|_{H_0^m}\bigg)\|(u,h)\|_{H_0^m},
\end{equation}
in which we have used \textit{Hardy inequality} and the boundedness of $\phi$ from \eqref{A.24}.

At present, recalling the definition of $\overline\psi$, we can rewrite quantity $u_\beta,h_\beta$ in the following form:
\begin{equation}\label{A.53}
  u_\beta=\p_x^\beta u-\eta_1\p_x^\beta \overline\psi,\quad
  h_\beta=\p_x^\beta h-\eta_2\p_x^\beta \overline\psi,
\end{equation}
with
\begin{equation}\label{A.54}
  \eta_1:=\frac{\p_y u+(\overline u_e^0-u_b)\phi''}{h+\overline h_e^0\phi'},\quad
  \eta_2:=\frac{\p_y h+\overline h_e^0\phi''}{h+\overline h_e^0\phi'}.
\end{equation}
Then, using equations for $\p_x^\beta(u,h)$ and $\p_x^\beta\overline\psi$, we can obtain the following equations and boundary conditions for $(u_\beta,h_\beta)$, in which the tough terms have been cancelled,
\begin{align}\label{A.55}
\begin{cases}
    &[(u+u_b+(\overline u_e^0-u_b)\phi')\p_x+(v-\overline u_{ex}^0\phi)\p_y] u_\beta -\nu\p_y^2 u_\beta\\
  &\quad\quad-[(h+\overline h_e^0\phi')\p_x+(g-\overline h_{ex}^0\phi)\p_y] h_\beta+(\kappa-\nu)\eta_1\p_y h_\beta = R_1^\beta,\\
    &[(u+u_b+(\overline u_e^0-u_b)\phi')\p_x+(v-\overline u_{ex}^0\phi)\p_y] h_\beta -\kappa\p_y^2 h_\beta\\
  &\quad\quad-[(h+\overline h_e^0\phi')\p_x+(g-\overline h_{ex}^0\phi)\p_y] u_\beta = R_2^\beta,\\
  &(u_\beta,\p_y h_\beta)|_{y=0}=0,
\end{cases}
\end{align}
where
\begin{equation}\label{A.56}
\begin{cases}
  &R_1^\beta=
  \p_x^\beta r_1+R_u^\beta-\eta_1 \p_x^\beta r_3-\eta_1 R_\psi^\beta -\zeta_1 \p_x^\beta\overline\psi\\
  &\quad\quad\quad\quad +\{2\nu\p_y\eta_1+(g-\overline h_{ex}^0\phi)\eta_2-(\kappa-\nu)\eta_1\eta_2\}\p_x^\beta h,\\
  &R_2^\beta=
  \p_x^\beta r_1+R_h^\beta-\eta_2 \p_x^\beta r_3-\eta_2 R_\psi^\beta -\zeta_2 \p_x^\beta\overline\psi\\
  &\quad\quad\quad\quad +\{2\kappa\p_y\eta_2+(g-\overline h_{ex}^0\phi)\eta_1\}\p_x^\beta h,
\end{cases}
\end{equation}
with
\begin{equation}\label{A.57}
\begin{cases}
  \zeta_1=
  &[(u+u_b+(\overline u_e^0-u_b)\phi')\p_x+(v-\overline u_{ex}^0\phi)\p_y]\eta_1-\nu\p_y^2\eta_1\\
  &\quad -[(h+\overline h_e^0\phi')\p_x+(g-\overline h_{ex}^0\phi)\p_y] \eta_2+(\kappa-\nu)\eta_1\p_y\eta_2,\\
  \zeta_2=
  &[(u+u_b+(\overline u_e^0-u_b)\phi')\p_x+(v-\overline u_{ex}^0\phi)\p_y]\eta_2-\kappa\p_y^2\eta_2\\
  &\quad -[(h+\overline h_e^0\phi')\p_x+(g-\overline h_{ex}^0\phi)\p_y] \eta_2.
\end{cases}
\end{equation}
Also, through direct calculation, the corresponding ``initial data'' (values at $x=0$) becomes
\begin{align}\label{A.58}
\begin{split}
    &u_\beta\big|_{x=0}
  = \p_x^\beta u(0,y)- \frac{\p_y u_0(y)+(\overline u_e^0(0)-u_b)\phi''}{h_0(y)+\overline h_e^0(0)\phi'}\int_0^y\p_x^\beta h(0,z)dz
   \triangleq u_{\beta 0}(y),\\
    &h_\beta\big|_{x=0}
  = \p_x^\beta h(0,y)- \frac{\p_y h_0(y)+\overline h_e^0(0)\phi''}{h_0(y)+\overline h_e^0(0)\phi'}\int_0^y\p_x^\beta h(0,z)dz
  \triangleq h_{\beta 0}(y).
\end{split}
\end{align}

On the one hand, by virtue of $\overline\psi=\p_y^{-1}h$,
\begin{equation}\label{A.59}
  \|\y^{-1}\p_x^\beta \overline\psi\|_{L_y^2}\leq\|\p_x^\beta h\|_{L_y^2}.
\end{equation}
On the other hand, according to the definition of $\eta_i,\p_y\eta_i$ and $\zeta_i$ with $i=1,2$, using \textit{Hardy inequality}, \textit{Sobolev embedding} and the assumption \eqref{A.1}, for any $\lambda\in\mathbb{R}$, it holds that
\begin{align}
\label{A.60}
    \|\y^\lambda\eta_i\|_{L_y^\infty}
  \leq& \vartheta_0^{-1}\bigg(\sum_{|\alpha|\leq 3}\|\{u+u_b+(\overline u_e^0-u_b)\phi'\}^{1/2}\y^{\lambda-1}D^\alpha(u,h)\|_{ L_y^2}\nonumber\\
  &\quad\quad +\|(\overline u_e^0,\overline h_e^0)\|_{L^{\infty}(0,L)}+C(u_b)\bigg),\\
\label{A.61}
    \|\y^{\lambda}\p_y\eta_i\|_{L_y^\infty}
  \leq& \vartheta_0^{-2}\bigg(\sum_{|\alpha|\leq 4}\|\{u+u_b+(\overline u_e^0-u_b)\phi'\}^{1/2}\y^{\lambda-1}D^\alpha(u,h)\|_{ L_y^2}\nonumber\\
  &\quad\quad +\|(\overline u_e^0,\overline h_e^0)\|_{L^{\infty}(0,L)}+C(u_b)\bigg)^2,\\
\label{A.62}
    \|\y^{\lambda}\zeta_i\|_{L_y^\infty}
  \leq& \vartheta_0^{-3}\bigg(\sum_{|\alpha|\leq 5}\|\{u+u_b+(\bar u_e^0-u_b)\phi'\}^{1/2}\y^{\lambda-1}D^\alpha(u,h)\|_{ L_y^2}\nonumber\\
  &\quad\quad +\sum_{\beta\leq1}\|\p_x^\beta(\overline u_e^0,\overline h_e^0)\|_{L^{\infty}(0,L)}+C(u_b)\bigg)^3.
\end{align}
Then, for the terms $R_1^\beta$ and $R_2^\beta$ defined by \eqref{A.56}, we can deduce the following estimates for $m \geq 5,~~l \geq0$,
\begin{align}
    \|R_1^\beta\|_{L_l^2} \label{A.63}
  &\leq C\vartheta_0^{-3}\bigg(\sum_{\beta\leq m+1}\|\p_x^\beta(\overline u_e^0,\overline h_e^0)\|_{L^{\infty}(0,L)}
        +C(u_b)+\|(u,h)\|_{H_l^m}\bigg)^3 \cdot\|(u,h)\|_{H_l^m}\nonumber\\
  &\quad\quad +\|\p_x^\beta r_1-\eta_1\p_x^\beta r_3\|_{L_l^2}, \\
    \|R_2^\beta\|_{L_l^2} \label{A.631}
  &\leq C\vartheta_0^{-3}\bigg(\sum_{\beta\leq m+1}\|\p_x^\beta(\overline u_e^0,\overline h_e^0)\|_{L^{\infty}(0,L)}
        +C(u_b)+\|(u,h)\|_{H_l^m}\bigg)^3 \cdot\|(u,h)\|_{H_l^m}\nonumber\\
  &\quad\quad +\|\p_x^\beta r_2-\eta_2\p_x^\beta r_3\|_{L_l^2}.
\end{align}

Now, we are prepared to estimate the $L_l^2$-norm of $(u_\beta,h_\beta)$.
\begin{lemma}\label{lemmaA.2}
$(Weighted~~estimates~~ for~~ (u_\beta,h_\beta))$ Under the assumption of Proposition \ref{propA.1}, there holds that for any $x\in[0,L]$,

\begin{align}\label{A.64}
\begin{split}
  & s_\beta(x)+\nu\intx\|\p_y u_\beta\|_{ L_l^2}^2+\kappa\intx\|\p_y h_\beta\|_{ L_l^2}^2 \\
  \leq&~ \intx\|\p_x^\beta r_1-\eta_1\p_x^\beta r_3\|_{ L_l^2}^2+\intx\|\p_x^\beta r_2-\eta_2\p_x^\beta r_3\|_{ L_l^2}^2+s_\beta(0)\\
  &~ +C\vartheta_0^{-2}\intx\bigg(\sum_{\beta\leq m+2}\|\p_x^\beta(\overline u_e^0,\overline h_e^0)\|_{L^2(0,L)}+ E_{u,h}
     +C(u_b)\bigg)^2\cdot s_\beta(x)\\
  &~ +C\vartheta_0^{-4}\intx\bigg(\sum_{\beta\leq m+2}\|\p_x^\beta(\overline u_e^0,\overline h_e^0)\|_{L^2(0,L)}+ E_{u,h}
     +C(u_b)\bigg)^4\cdot E_{u,h}^2,
\end{split}
\end{align}
where
\begin{equation}\label{s_beta}
  s_\beta(x)=\|\{u+u_b+(\overline u_e^0-u_b)\phi'\}^{1/2}\y^l(u_\beta,h_\beta)\|_{ L_y^2}^2.
\end{equation}
\end{lemma}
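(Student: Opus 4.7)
The plan is to carry out a weighted $L^2_l$ energy estimate directly on the coupled system \eqref{A.55} for $(u_\beta,h_\beta)$. The whole reason for introducing these good unknowns is that the worst terms in the equation for $\partial_x^\beta(u,h)$, namely $[\partial_y u+(\overline u_e^0-u_b)\phi'']\partial_x^\beta v-(\partial_y h+\overline h_e^0\phi'')\partial_x^\beta g$ and its analogue, have been cancelled by subtracting $\eta_i\,\partial_x^\beta\overline\psi$; the price to pay is an additional first-order term $(\kappa-\nu)\eta_1\partial_y h_\beta$ and source terms $R_1^\beta,R_2^\beta$ that involve $\eta_i,\partial_y\eta_i,\zeta_i$ together with $\partial_x^\beta(r_1,r_2,r_3)$. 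Here the a priori positivity $h+\overline h_e^0\phi'\geq\vartheta_0/2$ is essential to make $\eta_i$ well-defined and bounded.

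First, test the first equation of \eqref{A.55} against $\langle y\rangle^{2l}u_\beta$ and the second against $\langle y\rangle^{2l}h_\beta$, then integrate in $y$. For the diagonal convective terms, combining $\{u+u_b+(\overline u_e^0-u_b)\phi'\}\partial_x u_\beta\cdot u_\beta$ with $(v-\overline u_{ex}^0\phi)\partial_y u_\beta\cdot u_\beta$ and using the divergence-free relation $\partial_x u+\partial_y v=0$ produces exactly $\tfrac12\partial_x$ of the weighted quantity $s_\beta(x)$, plus a lower-order remainder of the form $-l\int(v-\overline u_{ex}^0\phi)|u_\beta|^2\langle y\rangle^{2l-2}y\,dy$, controllable by Hardy through $\|v/\langle y\rangle\|_{L^\infty}+\|\overline u_{ex}^0\|_{L^\infty}\lesssim\|u\|_{H^3_0}+\|\overline u_{ex}^0\|_{L^\infty(0,L)}$; the $h_\beta$ equation is handled symmetrically. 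For the cross terms, adding $-(h+\overline h_e^0\phi')\partial_x h_\beta\cdot u_\beta-(g-\overline h_{ex}^0\phi)\partial_y h_\beta\cdot u_\beta$ from the first equation to the mirror contribution from the second yields the total $x$- and $y$-derivative of $u_\beta h_\beta$ against the divergence-free vector $(h+\overline h_e^0\phi',g-\overline h_{ex}^0\phi)$, which after integration by parts leaves only a boundary contribution at $x=0,L$ (handled by the Bernoulli/trace argument and the positivity assumption as in \eqref{A.41}) and a lower-order weighted term with factor $\langle y\rangle^{2l-2}y$. For the dissipation, the boundary conditions $u_\beta|_{y=0}=0$ and $\partial_y h_\beta|_{y=0}=0$ make the boundary integrals at $y=0$ vanish, so we extract the good terms $\nu\|\langle y\rangle^l\partial_y u_\beta\|_{L^2_y}^2+\kappa\|\langle y\rangle^l\partial_y h_\beta\|_{L^2_y}^2$ up to a manageable weight remainder; the extra first-order term $(\kappa-\nu)\eta_1\partial_y h_\beta\cdot u_\beta$ is absorbed by Young's inequality using $\|\eta_1\|_{L^\infty}\lesssim\vartheta_0^{-1}E_{u,h}$ from \eqref{A.60}. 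Finally the sources are absorbed by Cauchy--Schwarz together with \eqref{A.63}--\eqref{A.631}; integrating the resulting differential inequality in $x$ and noting $s_\beta(0)$ comes from the ``initial'' quantities \eqref{A.58} yields \eqref{A.64}.

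The main obstacle is not the energy identity itself but the control of the source terms $R_1^\beta,R_2^\beta$, which contain $\zeta_i\,\partial_x^\beta\overline\psi$ and $\partial_y\eta_i\cdot\partial_x^\beta h$. Because $\zeta_i$ already carries second-order derivatives of the unknowns through $\partial_y^2\eta_i$ and $\nabla\eta_1\cdot\nabla\eta_2$, while $\eta_i$ itself is a quotient of $\partial_y$-derivatives over $h+\overline h_e^0\phi'$, bounding these factors in weighted $L^\infty$ as in \eqref{A.60}--\eqref{A.62} eats up to five derivatives and produces inverse powers $\vartheta_0^{-1},\vartheta_0^{-2},\vartheta_0^{-3}$. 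This is exactly the source of the $m\geq 5$ hypothesis and of the quartic polynomial dependence on $E_{u,h}$ and on $\sum_{\beta\leq m+2}\|\partial_x^\beta(\overline u_e^0,\overline h_e^0)\|$ in the right-hand side of \eqref{A.64}. The factor $\langle y\rangle^{-1}$ from applying Hardy to $R_\psi^\beta$ in \eqref{A.52} is what allows us to pair $\langle y\rangle^{-1}\partial_x^\beta\overline\psi$ with $\partial_x^\beta h$ in $L^2_l$ without losing the $x$-derivative, closing the loop opened by the definition of $\overline\psi$.
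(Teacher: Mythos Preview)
Your proposal is correct and follows essentially the same route as the paper's proof: test \eqref{A.55} against $\langle y\rangle^{2l}(u_\beta,h_\beta)$, use the divergence-free structure of $(h+\overline h_e^0\phi',\,g-\overline h_{ex}^0\phi)$ to turn the cross convection into the total derivative $\tfrac{d}{dx}\int\langle y\rangle^{2l}(h+\overline h_e^0\phi')u_\beta h_\beta$ (absorbed at the end via \eqref{A.71}), exploit the boundary data $u_\beta|_{y=0}=\partial_y h_\beta|_{y=0}=0$ so that no dissipative boundary term survives, control the extra $(\kappa-\nu)\eta_1\partial_y h_\beta$ contribution by Young together with \eqref{A.60}--\eqref{A.61}, and feed in the source bounds \eqref{A.63}--\eqref{A.631}. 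The only cosmetic difference is that the paper first integrates by parts in $y$ on the $\nu$-portion of $(\nu-\kappa)\eta_1\partial_y h_\beta\,u_\beta$ before applying Young (producing a $\partial_y\eta_1$ factor), whereas you apply Young directly; both close.
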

\begin{proof}
Multiplying $\eqref{A.55}_2$ and $\eqref{A.55}_3$ by $\y^{2l}u_\beta$ and $\y^{2l}h_\beta$, respectively, and integrating them over $y\in[0,+\infty)$, we have
\begin{align}\label{A.65}
\begin{split}
  & \frac{1}{2}\frac{d}{dx} s_\beta(x)+\nu\|\y^l\p_y u_\beta\|_{ L_y^2}^2+\kappa\|\y^l\p_y h_\beta\|_{ L_y^2}^2 \\
  =&2l\inty\y^{2l-2}y\cdot \bigg[(v-\overline u_{ex}^0\phi)\cdot\frac{|u_\beta|^2+|h_\beta|^2}{2}-(g-\overline h_{ex}^0\phi)u_\beta h_\beta\bigg]\\
  &+\frac{d}{dx}\inty\y^{2l} (h+\overline h_e^0\phi')u_\beta h_\beta+(\nu-\kappa)\inty\y^{2l}(\eta_1\p_y h_\beta u_\beta)\\
  &+ \inty\y^{2l}(R_1^\beta u_\beta+R_2^\beta h_\beta)-2l\inty\y^{2l-2}\cdot y(\nu\p_yu_\beta\cdot u_\beta+ \kappa\p_y h_\beta\cdot h_\beta).
\end{split}
\end{align}

First, by virtue of \textit{Hardy inequality} and divergence-free conditions, we get
\begin{align}\label{A.66}
\begin{split}
  & 2l\inty\y^{2l-2}y\cdot \bigg[(v-\overline u_{ex}^0\phi)\cdot\frac{|u_\beta|^2+|h_\beta|^2}{2}-(g-\overline h_{ex}^0\phi)u_\beta h_\beta\bigg]\\
  \leq&~ 2l\bigg(\bigg\|\frac{v-\overline u_{ex}^0\phi}{1+y}\bigg\|_{L^\infty}
        +\bigg\|\frac{g-\overline h_{ex}^0\phi}{1+y}\bigg\|_{L^\infty}\bigg)\|(u_\beta,h_\beta)\|_{L_l^2}^2 \\
  \leq&~ 2l(\|(\overline u_{ex}^0,\overline h_{ex}^0)\|_{L^{\infty}(0,L)}
         +\|u_x\|_{L^\infty}+\|h_x\|_{L^\infty})\|(u_\beta,h_\beta)\|_{L_l^2}^2\\
  \leq&~ C(\|(\overline u_{ex}^0,\overline h_{ex}^0)\|_{L^{\infty}(0,L)}+E_{u,h})s_\beta(x).
\end{split}
\end{align}
Second, by integrating by parts and the boundary condition $u_\beta|_{y=0}=0$, it gives
\begin{align}\label{A.67}
\begin{split}
  &(\nu-\kappa)\inty\y^{2l}(\eta_1\p_y h_\beta u_\beta)\\
  =&~~ -\nu\inty h_\beta\p_y(\y^{2l}\eta_1 u_\beta)- \kappa\inty \y^{2l}\eta_1\p_y h_\beta\cdot u_\beta\\
  \leq&~~ \frac{\nu}{4}\|\p_y u_\beta\|_{L_l^2}^2+ \frac{\kappa}{4}\|\p_y h_\beta\|\|_{L_l^2}^2+ C(1+\|\eta_1\|_{L^\infty}^2
          +\|\p_y \eta_1\|_{L^\infty}^2)\|(u_\beta,h_\beta)\|_{L_l^2}^2\\
  \leq&~~ \frac{\nu}{4}\|\p_y u_\beta\|_{L_l^2}^2+\frac{\kappa}{4}\|\p_y h_\beta\|\|_{L_l^2}^2
          +C\vartheta_0^{-2}(\|(\overline u_{ex}^0,\overline h_{ex}^0)\|_{L^{\infty}(0,L)}+ C(u_b)+ E_{u,h})s_\beta.
\end{split}
\end{align}
Third, using the estimates of $R_1^\beta$ and $R_2^\beta$ in \eqref{A.63} and \eqref{A.631}, it yields that
\begin{align}\label{A.68}
\begin{split}
  &\inty\y^{2l}(R_1^\beta u_\beta+R_2^\beta h_\beta)
  \leq \|R_1^\beta\|_{L_l^2} \|u_\beta\|_{L_l^2}+  \|R_2^\beta\|_{L_l^2} \|h_\beta\|_{L_l^2}\\
  \leq&\|\p_x^\beta r_1-\eta_1\p_x^\beta r_3\|_{L_l^2}^2+\|\p_x^\beta r_2-\eta_2\p_x^\beta r_3\|_{ L_l^2}^2\\
      &+ C\vartheta_0^{-2}\bigg(\sum_{\beta\leq m+1}\|\p_x^\beta(\overline u_e^0,
       \overline h_e^0)\|_{L^{\infty}(0,L)}+C(u_b)+ E_{u,h}\bigg)^2\cdot s_\beta\\
  &\quad +C\vartheta_0^{-4}\bigg(\sum_{\beta\leq m+1}\|\p_x^\beta(\overline u_e^0,\overline h_e^0)\|_{L^{\infty}(0,L)}+C(u_b)
       +E_{u,h}\bigg)^4\cdot E_{u,h}^2,
\end{split}
\end{align}
Next, it is direct to get
\begin{align}\label{A.69}
\begin{split}
  &-2l\inty\y^{2l-2}\cdot y(\nu\p_yu_\beta\cdot u_\beta+ \kappa\p_y h_\beta\cdot h_\beta)\\
  \leq&~~\nu\|\p_y u_\beta\|_{L_l^2}^2+ \kappa\|\p_y h_\beta\|_{L_l^2}^2+ Cs_\beta(x).
\end{split}
\end{align}

Substituting the above estimates into \eqref{A.65}, we have
\begin{align}\label{A.70}
\begin{split}
  &\frac{1}{2}\frac{d}{dx} s_\beta(x)+\nu\|\y^l\p_y u_\beta\|_{ L_y^2}^2+\kappa\|\y^l\p_y h_\beta\|_{ L_y^2}^2 \\
  \leq&~\|\p_x^\beta r_1-\eta_1\p_x^\beta r_3\|_{ L_l^2}^2+\|\p_x^\beta r_2-\eta_2\p_x^\beta r_3\|_{ L_l^2}^2
        +\frac{d}{dx}\inty\y^{2l}(h+\overline h_e^0\phi)u_\beta h_\beta\\
  &~+C\vartheta_0^{-2}\bigg(\sum_{\beta\leq m+2}\|\p_x^\beta(\overline u_e^0,\overline h_e^0)\|_{L^2(0,L)}
    +E_{u,h}+C(u_b)\bigg)^2\cdot s_\beta(x)\\
  &~+C\vartheta_0^{-4}\bigg(\sum_{\beta\leq m+2}\|\p_x^\beta(\overline u_e^0,\overline h_e^0)\|_{L^2(0,L)}
    +E_{u,h}+C(u_b)\bigg)^4\cdot E_{u,h}^2.
\end{split}
\end{align}
Moreover, the \textit{a priori assumption} \eqref{priori_assume} yields that
\begin{align}\label{A.71}
  \inty\y^{2l}(h+\overline h_e^0\phi)u_\beta h_\beta
  \leq \frac{1}{2}\|\{u+u_b+(\overline u_e^0-u_b)\phi'\}^{1/2}\y^l (u_\beta,h_\beta)\|_{ L_y^2}^2
  \leq \frac{1}{2}s_\beta,
\end{align}

Therefore, we completes the proof of this lemma by integrating \eqref{A.70} in $x$-direction and using \eqref{A.71}.
\end{proof}

Finally, we state the following equivalence norm between $\|(\p_x^\beta u,\p_x^\beta h)\|_{ L_y^2}^2$ and $\|(u_\beta,h_\beta)\|_{ L_y^2}^2$.
\begin{lemma}\label{lemmaA.3}
(Equivalence norm between $\|\p_x^\beta(u,h)\|_{ L_y^2}^2$ and $\|(u_\beta,h_\beta)\|_{ L_y^2}^2$) If the assumptions in Proposition \ref{propA.1} hold, then
\begin{align}\label{A.72}
\begin{split}
  &M(x)^{-1}\|\{u+u_b+(\overline u_e^0-u_b)\phi'\}^{1/2}\y^l \p_x^\beta(u,h)\|_{ L_y^2}\\
  &\quad \leq~ \|\{u+u_b+(\overline u_e^0-u_b)\phi'\}^{1/2}\y^l (u_\beta,h_\beta)\|_{ L_y^2}\\
  &\quad \leq~ M(x) \|\{u+u_b+(\overline u_e^0-u_b)\phi'\}^{1/2}\y^l \p_x^\beta(u,h)\|_{ L_y^2},
\end{split}
\end{align}
and
\begin{equation}\label{A.73}
  \|\p_y\p_x^\beta(u,h)\|_{ L_l^2}\leq \|\p_y(u_\beta,h_\beta)\|_{ L_l^2}+ M(x)\|h_\beta\|_{ L_l^2},
\end{equation}
where
\begin{align}\label{A.74}
\begin{split}
  M(x):=2\vartheta_0^{-1}(& \|\y^{l+1}\p_y(u,h)\|_{L^\infty}+\|\y^{l+1}\p_y^2(u,h)\|_{L^\infty}\\
        &+C(u_b)+C\|(\overline u_e^0,\overline h_e^0)\|_{L^\infty(0,L)}).
\end{split}
\end{align}
\end{lemma}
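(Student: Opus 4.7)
The plan is to resolve $\p_x^\beta\overline\psi$ explicitly as an integral of $h_\beta$ alone; this will avoid the circularity arising in any naive Hardy estimate and directly produces the prefactor $M(x)$ in both \eqref{A.72} and \eqref{A.73}. The key observation is that $\p_y\p_x^\beta\overline\psi=\p_x^\beta h$ combined with the definition of $h_\beta$ yields the first-order linear ODE in $y$ (with $f:=\p_x^\beta\overline\psi$)
$$\p_y f-\eta_2 f=h_\beta,\qquad f|_{y=0}=0,$$
and since $\eta_2=\p_y\log(h+\overline h_e^0\phi')$, the integrating factor $1/(h+\overline h_e^0\phi')$ produces the closed form
$$\p_x^\beta\overline\psi(x,y)=(h+\overline h_e^0\phi')(x,y)\int_0^y\frac{h_\beta(x,z)}{h(x,z)+\overline h_e^0(x)\phi'(z)}\,dz,\qquad \eta_i\p_x^\beta\overline\psi=(\p_y w^{(i)}+c_i\phi'')\int_0^y\frac{h_\beta}{h+\overline h_e^0\phi'}\,dz,$$
with $(w^{(1)},c_1)=(u,\overline u_e^0-u_b)$ and $(w^{(2)},c_2)=(h,\overline h_e^0)$.

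For the upper bound in \eqref{A.72}, I start from $u_\beta=\p_x^\beta u-\eta_1\p_x^\beta\overline\psi$ and $h_\beta=\p_x^\beta h-\eta_2\p_x^\beta\overline\psi$ and write $\y^l\eta_i\p_x^\beta\overline\psi=\y^{l+1}\eta_i\cdot(\p_x^\beta\overline\psi/\y)$. The a priori lower bound $h+\overline h_e^0\phi'\ge\vartheta_0/2$ and the compact support of $\phi''$ yield
$$\|\y^{l+1}\eta_i\|_{L^\infty}\le 2\vartheta_0^{-1}\Bigl(\|\y^{l+1}\p_y(u,h)\|_{L^\infty}+C(u_b)+C\|(\overline u_e^0,\overline h_e^0)\|_{L^\infty(0,L)}\Bigr),$$
while standard Hardy (using $\p_x^\beta\overline\psi|_{y=0}=0$ and $\p_y\p_x^\beta\overline\psi=\p_x^\beta h$) gives $\|\p_x^\beta\overline\psi/\y\|_{L_y^2}\lesssim\|\p_x^\beta h\|_{L_y^2}$. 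Because $\{u+u_b+(\overline u_e^0-u_b)\phi'\}^{1/2}=\{u_p^0+\overline u_e^0\}^{1/2}$ is bounded above and below by positive constants, the weighted and unweighted norms are equivalent, and the upper bound follows.

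For the lower bound in \eqref{A.72} I invoke the explicit formula. Writing $\y^l\eta_i\p_x^\beta\overline\psi=\y^{l+1}(\p_y w^{(i)}+c_i\phi'')\cdot\y^{-1}\int_0^y h_\beta/(h+\overline h_e^0\phi')\,dz$ and applying the weighted Hardy inequality $\|\y^{-1}\int_0^y g\,dz\|_{L_y^2}\lesssim\|g\|_{L_y^2}$ with $g=h_\beta/(h+\overline h_e^0\phi')$ together with $|h+\overline h_e^0\phi'|\ge\vartheta_0/2$ gives
$$\|\y^l\eta_i\p_x^\beta\overline\psi\|_{L_y^2}\lesssim\vartheta_0^{-1}\|\y^{l+1}(\p_y w^{(i)}+c_i\phi'')\|_{L^\infty}\|h_\beta\|_{L_y^2}\le\tfrac12 M(x)\|h_\beta\|_{L_l^2}.$$
The identities $\p_x^\beta u=u_\beta+\eta_1\p_x^\beta\overline\psi$ and $\p_x^\beta h=h_\beta+\eta_2\p_x^\beta\overline\psi$ then close the estimate. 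For \eqref{A.73}, I differentiate the explicit formula in $y$ to obtain $\p_y(\eta_i\p_x^\beta\overline\psi)=(\p_y^2 w^{(i)}+c_i\phi''')\int_0^y h_\beta/(h+\overline h_e^0\phi')\,dz+\eta_i h_\beta$, and repeat the same Hardy argument; now the $\p_y^2$ factor contributes the $\|\y^{l+1}\p_y^2(u,h)\|_{L^\infty}$ term in $M(x)$, while $\eta_i h_\beta$ is absorbed directly.

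The main obstacle lies in securing the lower bound. A direct Hardy estimate on $\p_x^\beta\overline\psi$ applied to $\p_x^\beta h=h_\beta+\eta_2\p_x^\beta\overline\psi$ would leave $\|\y^{l+1}\eta_2\|_{L^\infty}\|\p_x^\beta h\|_{L_y^2}$ on the right, and absorption would require smallness of $\|\y^{l+1}\eta_2\|_{L^\infty}$; but this quantity contains $\|\overline h_e^0\|_{L^\infty(0,L)}$, which is only $\mathcal O(1)$ rather than small, so naive absorption fails. Resolving the ODE bypasses this difficulty: the coupling term $\eta_2\p_x^\beta\overline\psi$ is integrated out and replaced by a source involving only $h_\beta$ multiplied by a factor that is uniformly bounded in $L^\infty$ under the a priori conditions on $(\p_y u,\p_y h,\p_y^2 u,\p_y^2 h)$.
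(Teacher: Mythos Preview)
Your argument is correct and is precisely the approach that the paper defers to (the paper omits the proof and cites Lemma~3.4 of \cite{LXYwell}, whose argument is exactly the ODE resolution you have written out). The key identity $\eta_2=\p_y\log(h+\overline h_e^0\phi')$ leading to the closed formula $\p_x^\beta\overline\psi=(h+\overline h_e^0\phi')\int_0^y h_\beta/(h+\overline h_e^0\phi')\,dz$ is the standard device here, and your diagnosis of why the naive Hardy estimate fails for the lower bound (no smallness in $\|\y^{l+1}\eta_2\|_{L^\infty}$ to absorb) is accurate.
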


The proof of Lemma \ref{lemmaA.3} is similar to the proof of Lemma 3.4 in \cite{LXYwell}, here we omit the demonstration for simplicity.
\subsection{Completeness of the \textit{a priori estimates}}
In this subsection, we are going to prove Proposition \ref{propA.1}. To this end, we need the \textit{a priori assumption}
\begin{equation*}
  \|\y^{l+1}\p_y(u,h)\|_{L^\infty}\leq \sigma_0,\quad \|\y^{l+1}\p_y^2(u,h)\|_{L^\infty}\leq \vartheta_0^{-1}.
\end{equation*}
Combining this with the definitions of $M(x)$, $\eta_i$ and the estimates in \eqref{A.60}, we get
\begin{align*}
  \|\y^{l+1}\eta_i\|_{L^\infty}\leq 2\vartheta_0^{-2},\quad M(x)\leq 5\vartheta_0^{-2}.
\end{align*}
Hence, by virtue of the estimates in Lemma \ref{lemmaA.3}, it gives
\begin{align}\label{A.75}
\begin{split}
  E_{u,h}^2
  &=\sum_{\substack{|\alpha|\leq m\\ \beta\leq m-1}}s_\alpha(x)
    +\sum_{\beta=m}\|\{u+u_b+(\overline u_e^0-u_b)\phi'\}^{1/2}\y^l \p_x^\beta(u,h)\|_{ L_y^2}^2 \\
  &\leq\sum_{\substack{|\alpha|\leq m\\ \beta\leq m-1}}s_\alpha(x) + 25\vartheta_0^{-4}s_\beta(x),
\end{split}
\end{align}
and
\begin{align}\label{A.76}
  \|\p_y(u,h)\|_{H_l^m}^2
  \leq& \sum_{\substack{|\alpha|\leq m\\ \beta\leq m-1}}\|\p_y D^\alpha(u,h)\|_{L_l^2}^2
        +2\|\p_y(u_\beta,h_\beta)\|_{L_l^2}^2+ 50\vartheta_0^{-4}\|h_\beta\|_{L_l^2}^2,
\end{align}
where $\beta=m$ in the definition of $s_\beta, u_\beta, h_\beta$.

With the estimates \eqref{A.75},\eqref{A.76} in hands, we are prepared to achieve the desired \textit{a priori estimates} of $(u,h)$ for system \eqref{2.11}. By virtue of Lemma \ref{lemmaA.1} and Lemma \ref{lemmaA.2}, together with the above estimates \eqref{A.75},\eqref{A.76}, for any $m\geq 5$, the following estimate holds
\begin{align*}
    &\intx\bigg(\sum_{\substack{|\alpha|\leq m\\ \beta\leq m-1}}\|\p_yD^\alpha (u,h)\|_{ L_l^2}^2
     +25\vartheta_0^{-4}\|\p_y (u_\beta,h_\beta)\|_{ L_l^2}^2\bigg)
     +\sum_{\substack{|\alpha|\leq m\\ \beta\leq m-1}} s_\alpha(x)+ 25\vartheta_0^{-4}s_\beta(x)\\
  &\leq \bigg(\sum_{\substack{|\alpha|\leq m\\ \beta\leq m-1}} s_\alpha(0)+ 25\vartheta_0^{-4} s_\beta(0)\bigg)
        + C\delta_1\intx\|\p_y(u,h)\|_{ H_0^m}^2+ C\delta_1^{-1}\intx E_{u,h}^2(1+E_{u,h}^2)\\
  &\quad+\sum_{\substack{|\alpha|\leq m\\ \beta\leq m-1}}\intx\|\y^{l+k}D^\alpha(r_1,r_2)\|_{ L^2}^2
        +C \intx\sum_{\beta\leq m+2}\|\p_x^\beta(\overline u_e^0,\overline h_e^0,\overline p_e^0)\|_{ L^2(0,L)}^2\\
  &\quad+25\vartheta_0^{-4}\intx\bigg(\|\p_x^\beta r_1-\eta_1\p_x^\beta r_3\|_{ L_l^2}^2+\|\p_x^\beta r_2
        -\eta_2\p_x^\beta r_3\|_{ L_l^2}^2\bigg)\\
  &\quad+C\vartheta_0^{-6}\intx\bigg(\sum_{\beta\leq m+2}\|\p_x^\beta(\overline u_e^0,\overline h_e^0)\|_{ L^2(0,L)}
        + E_{u,h} +C(u_b)\bigg)^2\cdot s_\beta(x)\\
  &\quad+C\vartheta_0^{-8}\intx\bigg(\sum_{\beta\leq m+2}\|\p_x^\beta(\overline u_e^0,\overline h_e^0)\|_{ L^2(0,L)}
        + E_{u,h} +C(u_b)\bigg)^4\cdot E_{u,h}^2\\
  &\leq \bigg(\sum_{\substack{|\alpha|\leq m\\ \beta\leq m-1}} s_\alpha(0)+ 25\vartheta_0^{-4} s_\beta(0)\bigg)
       + \sum_{\substack{|\alpha|\leq m\\ \beta\leq m-1}}\intx\|D^\alpha(r_1,r_2)\|_{ L_l^2}^2+ \intx C(u_b,\vartheta_0)\\
  &\quad +C\vartheta_0^{-4}\intx\bigg(\|\p_x^m(r_1,r_2)\|_{ L_l^2}^2+ 4\vartheta_0^{-4}\|\p_x^m r_3\|_{ L_{-1}^2}^2\bigg)\\
  &\quad +C\vartheta_0^{-8}\intx\bigg(1+\sum_{\beta\leq m+2}\|\p_x^\beta(\overline u_e^0,\overline h_e^0,
        \overline p_e^0)\|_{ L^2(0,L)}^2\bigg)^3\\
  &\quad+C\vartheta_0^{-8}\intx\bigg(\sum_{\substack{|\alpha|\leq m\\ \beta\leq m-1}} s_\alpha(x)
        + 25\vartheta_0^{-4}s_\beta(s)\bigg)^3.
\end{align*}

Define
\begin{equation*}
  F_0:=\sum_{\substack{|\alpha|\leq m\\ \beta\leq m-1}} s_\alpha(0)+ 25\vartheta_0^{-4} s_\beta(0),
\end{equation*}
and
\begin{align*}
  F(x):=
  &\sum_{\substack{|\alpha|\leq m\\ \beta\leq m-1}}\|D^\alpha(r_1,r_2)\|_{ L_l^2}^2+  C(u_b,\vartheta_0)\nonumber\\
  &\quad +C\vartheta_0^{-8}\bigg(1+\sum_{\beta\leq m+2}\|\p_x^\beta(\overline u_e^0,\overline h_e^0,\overline p_e^0)\|_{ L^2}^2\bigg)^3\nonumber\\
  &\quad +C\vartheta_0^{-4}\bigg(\|\p_x^m(r_1,r_2)\|_{ L_l^2}^2+ 4\vartheta_0^{-4}\|\p_x^m r_3\|_{ L_{-1}^2}^2\bigg).\nonumber
\end{align*}
Using \textit{Gronwall inequality}, we find
\begin{align}\label{A.78}
\begin{split}
  &\sum_{\substack{|\alpha|\leq m\\ \beta\leq m-1}} s_\alpha(x)+ 25\vartheta_0^{-4} s_\beta(x)\\
  \leq~ &(F_0+\intx F(s){\rm{d}}s)\{1-2C\vartheta_0^{-8}(F_0+\intx F(s){\rm{d}}s)^2 x\}^{-\frac{1}{2}}.
\end{split}
\end{align}
Together with the estimate in \eqref{A.75}, it follows that
\begin{equation}\label{A.79}
  \sup_{x\in[0,L]} E_{u,h}\leq (F_0+\intx F(s){\rm{d}}s)^{\frac{1}{2}}\{1-2C\vartheta_0^{-8}(F_0+\intx F(s){\rm{d}}s)^2 x\}^{-\frac{1}{4}},
\end{equation}
and hence, we have
\begin{equation}\label{A.80}
  \sup_{x\in[0,L]}\|(u,h)\|_{H_l^m}\leq (F_0+\intx F(s){\rm{d}}s)^{\frac{1}{2}}\{1-2C\vartheta_0^{-8}(F_0+\intx F(s){\rm{d}}s)^2 x\}^{-\frac{1}{4}}.
\end{equation}

Moreover, using \textit{Newton-Lebniz formula}, \textit{Sobolev embedding} and \eqref{A.79}, we know that for $i=1,2$
\begin{align}\label{A.81}
\begin{split}
  \|\y^{l+1}\p_y^i(u,h)\|_{L^\infty}
  \leq &Cx(F_0+\intx F(s){\rm{d}}s)^{\frac{1}{2}}\{1-2C\vartheta_0^{-8}(F_0+\intx F(s){\rm{d}}s)^2 x\}^{-\frac{1}{4}}\\
       &+\|\y^{l+1}\p_y^i(u_0,h_0)\|_{L^\infty}.
\end{split}
\end{align}
In addition,
\begin{align}\label{A.82}
  h(x,y)
  &\geq h_0(y)- Cx(F_0+\intx F(s){\rm{d}}s)^{\frac{1}{2}}\{1-2C\vartheta_0^{-8}(F_0+\intx F(s){\rm{d}}s)^2 x\}^{-\frac{1}{4}}.
\end{align}
And in a similar way, it follows that
\begin{align}\label{A.84}
\begin{split}
  (u-h)(x,y)
  &\geq- Cx(F_0+\intx F(s){\rm{d}}s)^{\frac{1}{2}}\{1-2C\vartheta_0^{-8}(F_0+\intx F(s){\rm{d}}s)^2 x\}^{-\frac{1}{4}}\\
  &\quad\quad +(u_0-h_0)(y).
\end{split}
\end{align}

It remains to give the estimates for the terms $F(s)$ and $F_0$ on the right-hand side of inequalities \eqref{A.80}-\eqref{A.84}, using the definition of $M_0$, $r_3$, and the bounds of $r_1,r_2$ in \eqref{2.14}, we can deduce that
\begin{equation}\label{A.85}
  \intx F(s){\rm{d}}s\leq C(u_b)\vartheta_0^{-8} M_0^6 x,
\end{equation}
and hence, by virtue of the definition of $s_\beta$ in \eqref{s_beta} and $u_{\beta 0},h_{\beta 0}$ in \eqref{A.58}, we have
\begin{equation}\label{A.86}
  F_0\leq C\vartheta_0^{-8}\mathcal{P}(M_0+ C(u_b)+ \|(u_0,h_0)\|_{H_l^m}),
\end{equation}
where $\mathcal{P}$ is a polynomial of $\|(u_0,h_0)\|_{H_l^m}$.

Therefore, plugging \eqref{A.85} and \eqref{A.86} into \eqref{A.80}-\eqref{A.84}, the proof of Proposition \ref{propA.1} is completed.

\smallskip
{\bf Acknowledgment.}

Ding's research is supported by the National Natural Science Foundation of China (No.11371152, No.11571117, No.11871005 and No.11771155) and Guangdong Provincial Natural Science Foundation (No.2017A030313003).

\bigskip

\end{document}